\documentclass[english,reqno]{amsart}

\usepackage[margin=2cm]{geometry}
\usepackage{amsrefs}

\usepackage{color}
\usepackage{amsmath,amssymb, amsfonts, bbm}
\usepackage{mathtools}
\usepackage{verbatim}
\usepackage{enumerate}
\usepackage{comment}
\usepackage{esint}
\usepackage{multicol}
\usepackage{graphicx}

\usepackage{mathrsfs}
\usepackage[pdfstartpage=1,bookmarks=false,pdfstartview={FitH},hidelinks]{hyperref}
\usepackage{cleveref}

\makeatletter
\def\l@subsection{\@tocline{2}{0pt}{2pc}{5pc}{}}
\makeatother
\newcommand{\COMMENT}[1]{}
\DeclarePairedDelimiter\abs{\lvert}{\rvert}
\makeatletter
\let\oldabs\abs
\def\abs{\@ifstar{\oldabs}{\oldabs*}}
\newcommand{\vast}{\bBigg@{4}}
\newcommand{\Vast}{\bBigg@{5}}
\makeatother
\newcommand{\eps}{\varepsilon}

\newcommand{\N}{\mathbb{N}}
\newcommand{\R}{\mathbb{R}}

\newcommand{\bS}{\mathbb{S}}

\newcommand{\norm}[2][]{\left\|{#2}\right\|_{#1}}

\newcommand{\normbig}[2][]{\big\|{#2}\big\|_{#1}}

\newcommand{\set}[1]{\left\{#1\right\}}

\newcommand{\dist}{{\rm dist}\, }
\newcommand{\supp}{{\rm supp}\, }

\newcommand{\cA}{\mathcal{A}}
\newcommand{\cB}{\mathcal{B}}

\newcommand{\cD}{\mathcal{D}}
\newcommand{\cE}{\mathcal{E}}
\newcommand{\cF}{\mathcal{F}}
\newcommand{\cN}{\mathcal{N}}

\newcommand{\cH}{\mathcal{H}}

\newcommand{\cK}{\mathcal{K}}

\newcommand{\cQ}{\mathcal{Q}}
\newcommand{\cR}{\mathcal{R}}

\newcommand{\cZ}{\mathcal{Z}}

\newcommand{\FB}{{\rm FB}}

\newcommand{\bc}{{\mathbf c}}
\newcommand{\rb}{r_\star}
\newcommand{\bN}{{\boldsymbol N}}
\newcommand{\bE}{{\boldsymbol E}}
\newcommand{\bbE}{\widetilde {\boldsymbol E}}

\newcommand{\wW}{{\widetilde W}}

\newcommand{\zz}{\mathbf{z}}
\newcommand{\epk}{{\widetilde \eps_k}}
\newcommand{\Rk}{{\widetilde R_k}}
\newcommand{\zk}{{\widetilde \zz_k}}
\newcommand{\rmin}{r_{{\rm min}} }

\newcommand{\nAC}{n_*}
\newcommand{\cttc}{\chi}

\newcommand{\lsup}[2]{{}^{#1}\mkern-1.5mu #2}

\DeclareMathOperator{\diam}{diam}

\DeclareMathOperator{\Per}{Per}

\theoremstyle{plain}
\newtheorem{thm}{Theorem}[section]
\newtheorem{lem}[thm]{Lemma}
\newtheorem{cor}[thm]{Corollary}
\newtheorem{prop}[thm]{Proposition}

\newtheorem*{prop*}{prop}

\theoremstyle{definition}
\newtheorem{defn}[thm]{Definition}
\newtheorem{assumption}[thm]{Assumption}

\theoremstyle{remark}
\newtheorem{remark}[thm]{Remark}
\newcommand{\bremark}{\begin{remark} \em}
\newcommand{\eremark}{\end{remark} }

\numberwithin{equation}{section}
\crefname{assumption}{assumption}{assumptions}
\Crefname{assumption}{Assumption}{Assumptions}

\title[]{The stability conjecture for the Bernoulli problem \\ in dimension four}

\author{Xavier Fern\'andez-Real}
\address{EPFL SB, Station 8, 1015 Lausanne, Switzerland}
\email{xavier.fernandez-real@epfl.ch}

\author{Joaquim Serra}
\address{ETH Zurich, R\"amistrasse 101, 8092 Zurich, Switzerland}
\email{joaquim.serra@math.ethz.ch}

\keywords{Stable solutions, Bernoulli problem, free boundary problems, one-phase problem.}

\subjclass[2020]{35R35, 35J61, 35B35, 49Q20}

\theoremstyle{definition}
\newtheorem{notation}[thm]{Notation}

\crefname{thm}{Theorem}{Theorems}
\Crefname{thm}{Theorem}{Theorems}
\crefname{lem}{Lemma}{Lemmas}
\Crefname{lem}{Lemma}{Lemmas}
\crefname{cor}{Corollary}{Corollaries}
\Crefname{cor}{Corollary}{Corollaries}
\crefname{prop}{Proposition}{Propositions}
\Crefname{prop}{Proposition}{Propositions}
\crefname{claim}{Claim}{Claims}
\Crefname{claim}{Claim}{Claims}
\crefname{conj}{Conjecture}{Conjectures}
\Crefname{conj}{Conjecture}{Conjectures}
\crefname{prob}{Problem}{Problems}
\Crefname{prob}{Problem}{Problems}
\crefname{defn}{Definition}{Definitions}
\Crefname{defn}{Definition}{Definitions}
\crefname{assumption}{Assumption}{Assumptions}
\Crefname{assumption}{Assumption}{Assumptions}
\crefname{remark}{Remark}{Remarks}
\Crefname{remark}{Remark}{Remarks}
\crefname{notation}{Notation}{Notations}
\Crefname{notation}{Notation}{Notations}

\begin{document}

\begin{abstract}
We prove that every global classical stable solution to the one-phase Bernoulli problem in $\mathbb R^4$ is one-dimensional. In particular, if nonempty, its free boundary consists of one or two parallel hyperplanes. This settles the stability conjecture for the Bernoulli problem in dimension four. As consequences, we obtain universal local Hessian and free-boundary curvature estimates for classical stable solutions in $\mathbb R^4$, as well as axial symmetry of global classical solutions with finite Morse index in $\mathbb R^4$.
\end{abstract}

\maketitle

\tableofcontents

\section{Introduction}

\label{sec:introduction}

Given an open set $\Omega\subset\mathbb R^n$, the Alt--Caffarelli functional is
\begin{equation}\label{eq:Alt-Caffarelli-functional}
\mathcal E(u;\Omega) :=\int_\Omega\bigl(|\nabla u|^2+\mathbbm{1}_{\{u>0\}}\bigr)\,dx, \qquad u\ge0.
\end{equation}
Its classical critical points (see \eqref{eq:stationary}) solve the one-phase Bernoulli problem
\begin{equation}\label{eq:intro-bernoulli}
\Delta u=0\quad\text{in }\{u>0\}, \qquad |\nabla u|=1\quad\text{on }\FB(u):=\partial\{u>0\}.
\end{equation}
We call such a solution \emph{stable} if the second inner variation of $\mathcal E$ is nonnegative for every compactly supported deformation (see \Cref{defn:notions-of-solution}). The basic rigidity question is whether a global classical stable solution must be one-dimensional.

First introduced in 1981 by Alt and Caffarelli in \cite{AC81}, the problem has received a lot of attention to date (see the monographs \cite{CS05, Vel23} for an introduction or the survey \cite{Fer26} for an account of recent progress). For minimizers, blow-up and improvement-of-flatness arguments reduce regularity to the classification of homogeneous minimizers. The results of Caffarelli--Jerison--Kenig \cite{CJK04} and Jerison--Savin \cite{Jerison-Savin} rule out singular minimizing cones through dimension four, while De Silva and Jerison \cite{DJ09} constructed a singular minimizing cone in dimension seven. The status of dimensions five and six remains open.

For stable critical points, rather than minimizers, the corresponding Bernstein-type conjecture (the stability conjecture for the one-phase Bernoulli problem; cf. \cite{CL24, CLMS24, Maz24}) asks whether every global classical stable solution of \eqref{eq:intro-bernoulli} is one-dimensional in dimensions $n\le 4$ (or $n \le 6$ in case the missing minimizer classification is completed). Equivalently, whether stable solutions have free boundaries with (quantitatively) bounded curvature, and other recent advances for non-minimizing solutions are \cite{ERZ25, KW24, kriventsov2025min, liu2021smooth, kamburov2022nondegeneracy}. Related Bernstein-type results for solutions with graphical free boundaries were obtained in \cite{EFY23}. In dimension $n = 2$, the result follows by a (now standard) log cut-off argument, valid for a wide class of semilinear PDEs \cites{FV09, FR19, kamburov2022nondegeneracy}. For dimension $n = 3$, instead, the conjecture was recently proved by Chan, Figalli, and the authors in \cite{CFFS25}.

Here we address the problem in dimension four, which is the highest dimension for which minimizers are classified.

\begin{thm}[Classification of global stable solutions]
\label{thm:main-classification}
Let $u:\mathbb R^4\to[0,\infty)$ be a global classical stable solution to the one-phase Bernoulli problem. Then
\[
D^2u\equiv0\qquad\text{in }\{u>0\}.
\]
In particular, $u$ is one-dimensional. If $\FB(u)\ne\varnothing$, then it consists of either one or two parallel hyperplanes.
\end{thm}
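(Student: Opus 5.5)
The plan follows the strategy used for $n=3$ in \cite{CFFS25}. We first prove universal energy and curvature bounds for stable solutions in bounded domains, then use compactness and blow-down to reduce the theorem to classifying stable homogeneous solutions (cones) in $\mathbb R^4$. That classification should follow from the minimizer theory of \cite{CJK04, Jerison-Savin} once stable cones are known to be minimizing, or directly via a stability inequality.

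\textbf{Step 1 (stability inequality and test functions).} For a classical stable solution, the second inner variation gives, for $\eta\in C^\infty_c$,
\[
\int_{\{u>0\}} |\nabla \eta|^2 \, dx \ \ge\ \int_{\FB(u)} H\,\eta^2 \, d\mathcal H^{n-1},
\]
where $H\ge 0$ is the mean curvature of $\FB(u)$ (equivalently $|\nabla u|^{-1}\partial_\nu|\nabla u|$). Applying this with $\eta=c\,\varphi$ and $c=|D^2u|$, together with the Simons-type inequality $\Delta |D^2u|\ge -\text{(lower order)}$ in $\{u>0\}$ and a boundary relation $\partial_\nu c = H c$ up to favorable terms, yields
\[
\int |D^2u|^2\varphi^2 \le C\int u_\nu\text{-terms}\cdot |\nabla\varphi|^2 .
\]
This controls $|D^2u|^2$ by the cut-off gradient. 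It suffices in $n=2$ but not in $n=4$, so we need a better estimate.

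\textbf{Step 2 (universal energy bound).} The goal is $\int_{B_R}|D^2u|^2\le CR^{n-2}$, i.e.\ the natural scaling in $n=4$, together with the perimeter bound $\mathcal H^{3}(\FB(u)\cap B_R)\le CR^3$. We argue by contradiction and compactness. If the bounds failed along a sequence, we rescale and use the stability inequality with cut-offs adapted to dyadic annuli to obtain a limit that is a stable solution with a singular or degenerate free boundary. We then rule this limit out by a dimension reduction: its tangent cones are stable homogeneous solutions in $\mathbb R^4$, which are classified in Step 3. This is the step I expect to be the main obstacle. The tools would be a Caccioppoli-type estimate for $c=|D^2u|$ via a weighted test function $\eta=c^{\,1+\gamma}\varphi$ for small $\gamma>0$, in the spirit of Schoen--Simon--Yau, which gains integrability $c^{2+2\gamma}$ and gives $L^{p}$ curvature bounds for $p<4$ in dimension four. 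Passing from $p<4$ to the critical $p=4$ needs an $\varepsilon$-regularity statement: small scale-invariant energy implies flatness, via improvement of flatness \cite{DJ09}-type. It also needs a quantitative covering argument showing that large energy concentrates only on a set of dimension $\le 0$. That set is excluded because $0$-dimensional singularities are cones.

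\textbf{Step 3 (cones and blow-down).} For a stable $1$-homogeneous solution in $\mathbb R^4$ that is smooth away from the origin, we plug $\eta=c\,|x|^{-\alpha}\psi$ into Step 1 with an optimal $\alpha$ (Hardy-type), as in the minimal surface cone argument. This forces $c\equiv0$, so the cone is flat. If the direct computation fails for $n=4$, we instead show that a stable cone with smooth link is a minimizer via a foliation/calibration argument, and invoke \cite{Jerison-Savin}. We conclude with the blow-down. By Step 2, $u_R(x)=u(Rx)/R$ has uniformly bounded $\int_{B_1}|D^2u_R|^2$, and monotonicity of the Weiss energy sends the $u_R$ to a stable cone, which is flat by Step 3. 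Improvement of flatness at all large scales then gives $\int_{B_R}|D^2u|^2=o(R^{2})$. Combining this with the universal curvature estimate $|D^2u|(x)\le C/\operatorname{dist}$-type, applied at scale $R\to\infty$, yields $D^2u\equiv0$.

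It remains to identify the free boundary. Since $u$ is affine on each component of $\{u>0\}$ with $|\nabla u|=1$ on $\FB(u)$, each component of $\{u>0\}$ is a half-space or a slab. Both cases give one or two parallel hyperplanes.
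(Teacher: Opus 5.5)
There is a genuine gap, and it sits in the only nontrivial case. In Step~3 you treat the blow-down as a flat cone and then run improvement of flatness at large scales.

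The homogeneous limits of classical stable solutions in $\mathbb R^4$ are the half-spaces $(e\cdot x)_+$ \emph{and} the vees $|e\cdot x|$ (\Cref{prop:classification-homogeneous-classical-stable-limits}). The vee is not a cone smooth away from the origin, since its free boundary is a doubled hyperplane. So neither the Jerison--Savin classification nor a Hardy-type argument applies to it. It also cannot be excluded: it is the blow-down of $(x_4-1)_++(-x_4-1)_+$, one of the solutions the theorem allows.

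Conversely, the half-space case needs no work. Since $0\in\FB(u)$ is a classical point, ${\bf W}(u,0^+)=\alpha_4$. A half-space blow-down gives ${\bf W}(u,\infty)=\alpha_4$, so Weiss monotonicity makes $u$ homogeneous, hence a half-space.

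Thus every putative counterexample is close to a vee at every large scale (\Cref{prop:blow-down-of-non-flat-solutions}). Closeness to a vee carries no $\varepsilon$-regularity: two nearly flat sheets may be joined by small catenoid-like necks. Regularity near a vee holds only under an a priori Hessian bound (\Cref{cor:vee-plus-hessian-implies-regularity}). What must be proved is that a vee blow-down forces the two sheets never to reconnect, and your proposal has no mechanism for this.

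The same issue breaks Step~2, and there are two further problems.
\begin{itemize}
\item Limits of stable classical solutions can be vees, whose non-classical set is a whole hyperplane. So the Hessian energy need not concentrate on a $0$-dimensional set. It concentrates at necks along the ridge, with the natural $(n-2)$-dimensional charge $\sum r_\star^{n-2}$, and tangent-cone classification says nothing about this.
\item The Schoen--Simon--Yau route is not available. $|D^2u|$ does not satisfy the boundary inequality you need, and even the Jerison--Savin function $F(D^2u)^{1/3}$ fails to be a subsolution for $n\ge4$ outside the homogeneous case.
\item The estimate $|D^2u|\le C/\mathrm{dist}$ you invoke at the end is the local Hessian estimate of \Cref{cor:main-local-hessian}. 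That estimate is equivalent to the theorem by a standard blow-up, so using it is circular.
\end{itemize}

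The missing ingredient is a quantitative exclusion of necks. In the paper, every neck carries a definite defect in the tangential and dilation Sternberg--Zumbrun inequalities (\Cref{lem:tangential-sternberg-zumbrun-inequality,lem:dilation-sternberg-zumbrun-inequality}). Hence the neck charge is bounded by a ridgewise Weiss--vee excess. A power--center--scale selection, a two-sheet ball tree, and Neumann linearization on each sheet then push the excess decay past the critical dimension-four threshold. This contradicts a logarithmic lower bound.

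A minor point: an affine $u$ with $|\nabla u|=1$ cannot be positive in a slab, so each positivity component is a half-space. It is $\{u=0\}$ that may be a slab.
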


\begin{remark}
We state the theorem in dimension $n=4$, which is the new case. The arguments in this paper also yield a new proof of the corresponding result in dimension $n=3$, originally established in \cite{CFFS25}.
\end{remark}

The global rigidity result, \Cref{thm:main-classification}, also yields the following local, scale-invariant estimate. Indeed, if the estimate failed, rescaling at points where the Hessian is maximal would produce a non-flat global classical stable solution.

\begin{cor}[Local Hessian estimate]
\label{cor:main-local-hessian}
Let $u:B_1\to[0,\infty)$ be a classical stable solution to the one-phase Bernoulli problem in $B_1\subset\mathbb R^4$, and assume that $0\in\FB(u)$. Then
\[
\|D^2u\|_{L^\infty(B_{1/2}\cap\{u>0\})}\le C,
\]
where $C$ is universal. Consequently, the principal curvatures of $\FB(u)\cap B_{1/2}$ are universally bounded.
\end{cor}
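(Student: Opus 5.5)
We argue by contradiction and blow up, using \Cref{thm:main-classification} as the rigidity input. Suppose there are classical stable solutions $u_k$ in $B_1\subset\mathbb R^4$ with $0\in\FB(u_k)$ and $\sup_{B_{1/2}\cap\{u_k>0\}}|D^2u_k|\to\infty$. We will take the supremum in the weighted form
\[
M_k:=\sup_{x\in B_1\cap\{u_k>0\}} \dist(x,\partial B_1)\,|D^2u_k(x)|,
\]
which also diverges, and choose $x_k$ nearly attaining it. Weighting by the distance to $\partial B_1$ is the usual point-selection trick: it guarantees that after rescaling by $r_k:=1/|D^2u_k(x_k)|$, the Hessian is bounded by about $2$ on balls $B_{R}$ with $R\sim \dist(x_k,\partial B_1)/r_k\to\infty$. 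This is because on $B_{\dist(x_k,\partial B_1)/2}(x_k)$ the distance to $\partial B_1$ is at least half its value at $x_k$. If $M_k$ is not attained, we take $x_k$ with value $\ge M_k/2$, which only changes constants.

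Next we set $v_k(y):=r_k^{-1}u_k(x_k+r_ky)$. The scaling of \eqref{eq:intro-bernoulli} is preserved, so each $v_k$ is again a classical stable solution, and stability is scale invariant. Moreover $|D^2v_k(0)|=1$ and $|D^2v_k|\le C$ on $B_{R_k}\cap\{v_k>0\}$ with $R_k\to\infty$. We need $x_k$ to be at distance $O(r_k)$ from $\FB(u_k)$, i.e.\ $\dist(0,\FB(v_k))$ bounded. Far from the free boundary, the Hessian bound combined with $|\nabla v_k|\le C$ forces $v_k$ to be harmonic with Hessian decaying like $1/\dist$. Concretely, interior estimates for the harmonic functions $\partial_e v_k$, which are bounded by a universal gradient bound, give $|D^2v_k(0)|\le C/\dist(0,\FB(v_k))$. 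Here the gradient bound comes from the Lipschitz estimate for stable or classical solutions, or is simply derived from Hessian control along segments to the free boundary, where $|\nabla v|=1$. So $\dist(0,\FB(v_k))\le C$.

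With these bounds in hand, the free boundaries $\FB(v_k)$ are, in the region where the Hessian is controlled, $C^{1,1}$ hypersurfaces with uniformly bounded curvature. The functions $v_k$ are uniformly $C^{1,1}$ up to the free boundary, so higher regularity for the Bernoulli problem (hodograph/Kinderlehrer--Nirenberg) gives uniform $C^{2,\alpha}$ bounds. Passing to a subsequence, $v_k\to v$ in $C^2_{\rm loc}$ up to the free boundary, and $v$ is a global classical solution in $\mathbb R^4$ with $|D^2v(0)|=1$. Stability passes to the limit because the second inner variation involves only $\nabla v$, $D^2v$ and the free boundary geometry, all of which converge locally. \Cref{thm:main-classification} then gives $D^2v\equiv0$, a contradiction.

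The main obstacle is the compactness step. There we must guarantee $C^2$ convergence up to the free boundary, rule out free boundary components collapsing together (two sheets meeting), and show the limit remains classical. The uniform curvature bound prevents sheets from pinching on compact sets. Two sheets approaching each other would form a thin positivity region, which is excluded because $|\nabla v|=1$ on both sides together with bounded Hessian forces the width to be bounded below. The curvature statement then follows from the Hessian bound, since the second fundamental form of $\FB(u)$ equals $D^2u$ restricted to the tangent space divided by $|\nabla u|=1$.
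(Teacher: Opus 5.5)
Your strategy (point-picking, rescaling, compactness, \Cref{thm:main-classification}, then curvature from $D^2u$) matches the paper's, which defers the details to \cite{CFFS25}. However, your execution of the point-picking has a genuine gap at the step $\dist(0,\FB(v_k))\le C$.

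No universal gradient bound is available there. \Cref{lem:lipschitz-bound} concerns global solutions only. Integrating the Hessian from the free boundary gives just $|\nabla v_k|\le 1+C\dist(\cdot,\FB(v_k))$. In the interior estimate this yields $|D^2v_k(0)|\le C(1+Cd)/d$ with $d=\dist(0,\FB(v_k))$, which is merely bounded and so does not contradict $|D^2v_k(0)|=1$.

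In fact the claim fails for your selection rule. In $\mathbb R^4$ let $p=\rho e_4$ with $\rho>1$, and set $u_\rho(x)=\frac\rho2\bigl(\rho^2|x-p|^{-2}-1\bigr)_+$. This is a classical solution in $B_1$ with $0\in\FB(u_\rho)$. It is stable, because its free boundary is a sphere enclosing the positive phase, so $H=-3/\rho<0$ in \eqref{eq:stab_ineq_H}. Moreover $|D^2u_\rho|$ is bounded in $B_{1/2}$. Yet $\sup_{B_1\cap\{u_\rho>0\}}\dist(x,\partial B_1)|D^2u_\rho(x)|\sim(\rho-1)^{-3}$ (and it is $+\infty$ for $\rho=1$). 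This supremum is attained near $e_4$, at points where $|\nabla u_\rho|\sim(\rho-1)^{-3}$ and whose distance to $\FB(u_\rho)$ is about $1$. After your rescaling, $\dist(0,\FB(v))\sim(\rho-1)^{-4}$, far outside the ball of radius $\sim(\rho-1)^{-3}$ on which the Hessian is controlled. So your weighted supremum can be driven by behaviour deep inside the positive phase near $\partial B_1$. Nothing in your argument prevents the near-maximizers $x_k$ from being of this kind.

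One way to repair this is to take the supremum only over $S_\theta:=\{x\in\{u>0\}:\dist(x,\FB(u))\le\theta\dist(x,\partial B_1)\}$. Then $d_k:=\dist(0,\FB(v_k))\lesssim\theta R_k$, and the Hessian of $v_k$ is controlled in a ball of radius comparable to $d_k$ around the nearest free-boundary point.

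If $d_k\to\infty$, rescale quadratically: $\bar v_k(z):=d_k^{-2}v_k(d_kz)$. The Hessian stays bounded, while $\bar v_k=0$ and $|\nabla\bar v_k|=d_k^{-1}\to0$ on the free boundary. So near the limiting free-boundary point $\bar y\in\partial B_1$, the limit is a nonnegative $C^{1,1}$ function with $\Delta\bar v=0$ a.e. It is therefore harmonic and vanishes at $\bar y$, hence is zero there. All $\bar v_k$ are nonnegative and harmonic in $B_1$, hence locally bounded by Harnack. Unique continuation then gives $\bar v\equiv0$ in $B_1$, contradicting $|D^2\bar v(0)|=1$. Thus $d_k$ stays bounded and your compactness step applies.

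Points of $B_{1/2}$ at distance $\gtrsim\theta$ from $\FB(u)$ are then handled by Harnack's inequality for the nonnegative harmonic $u$. Use the near-free-boundary bound to control $u$ at distance $\sim\theta$ from $\FB(u)$ along the segment towards $0$.

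Separately, your compactness paragraph only excludes thin positive necks. Two positive sheets across a thin zero phase are compatible with bounded curvature and Hessian (e.g.\ $(x_4-\eps)_++(-x_4-\eps)_+$). They may merge into a non-classical limit, to which \Cref{thm:main-classification} does not apply. You need the ``classical or vee'' dichotomy used in \Cref{lem:neck-scale-compactness} (cf.\ \cite{CFFS25}*{Lemma~5.1}), with the retained $|D^2v(0)|=1$ ruling out the vee.
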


There is also a consequence for solutions of finite Morse index. Such solutions are stable outside a compact set, and the main theorem of \cite{FFS26} gives axial symmetry in every dimension in which both global stable solutions and homogeneous stable solutions are flat. Combining that result with \Cref{thm:main-classification} and \cite{Jerison-Savin} yields:

\begin{cor}[Finite-index solutions]
\label{cor:main-finite-index}
Every global classical solution to the one-phase Bernoulli problem in $\mathbb R^4$ with finite Morse index is axially symmetric, after a translation and a rotation.
\end{cor}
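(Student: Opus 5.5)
The statement to prove is \Cref{cor:main-finite-index}. As indicated before it, the plan is to verify the two hypotheses of the main theorem of \cite{FFS26} in dimension $n=4$ and then apply that theorem. The hypotheses are: (i) every global classical stable solution in $\mathbb R^4$ is flat, and (ii) every homogeneous stable solution in $\mathbb R^4$ (stable cone) is flat.

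\emph{Step 1: stable solutions away from a compact set.} Let $u$ have finite Morse index $k$. Then $u$ is stable outside some compact set, i.e.\ on $\mathbb R^4\setminus \overline{B_{R_0}}$ for some $R_0$. The standard argument goes as follows. If every ball complement contained a destabilizing deformation, we could pick $k+1$ such deformations with pairwise disjoint supports, obtained inductively by going further out. Their span would then be a $(k+1)$-dimensional space on which the second variation is negative definite, which contradicts index $k$. This is exactly the setting of \cite{FFS26}.

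\emph{Step 2: the flatness inputs.} Hypothesis (i) is \Cref{thm:main-classification}. For hypothesis (ii), the relevant objects are the homogeneous blow-downs $u_R(x)=u(Rx)/R$ as $R\to\infty$. These are stable away from the origin, and they may be singular at the vertex, so they are not covered directly by the classical theorem. Two routes are available:
\begin{enumerate}[(a)]
\item Use dimension reduction. Away from the origin, a stable cone in $\mathbb R^4$ is locally a classical stable solution, so \Cref{cor:main-local-hessian}, applied at points of $\bS^3$ and rescaled, shows that its free boundary on the sphere is smooth with bounded curvature. It then remains to rule out nonflat stable cones that are smooth away from $0$; this is where \cite{Jerison-Savin} enters, through its stability inequality argument showing that such cones in dimension $\le 4$ are flat.
\item Alternatively, \cite{FFS26} may take as hypothesis exactly the statement of \cite{Jerison-Savin}, namely that stable homogeneous solutions which are smooth outside the origin are flat for $n\le 4$. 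In that case one simply cites it.
\end{enumerate}

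\emph{Step 3: conclusion.} Applying \cite{FFS26}, $u$ is axially symmetric after a translation and a rotation.

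The main obstacle is matching the precise hypotheses of \cite{FFS26}. For (i), it must be checked that the notion of stability in \Cref{defn:notions-of-solution} agrees with the one used there. For (ii), one must verify that the class of cones arising as blow-downs is contained in the class handled by \cite{Jerison-Savin}, namely cones smooth away from the vertex. The curvature estimate of \Cref{cor:main-local-hessian} is what guarantees this smoothness, together with a compactness argument passing the stability inequality to the limit.
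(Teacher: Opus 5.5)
Your route (b) is exactly the paper's proof. The paper proves the corollary in one line, by citing \cite[Theorem~1.2]{FFS26}, with \Cref{thm:main-classification} giving flatness of global classical stable solutions and \cite[Theorem~1.1]{Jerison-Savin} giving flatness of stable cones that are classical away from the vertex. Your Step~1 (finite index implies stability outside a compact set) is the standard preliminary that belongs to the setting of \cite{FFS26}. The hypotheses of that theorem are precisely these two classification statements, so no blow-down analysis is needed on your side.

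You should, however, drop route (a) and the claim in your last paragraph, because both are false as stated. Blow-downs of finite-index (even stable) classical solutions need not be smooth away from the vertex. For example, $(|x_4|-1)_+$ is a classical stable solution in $\R^4$: its free boundary is flat, so $H\equiv0$ and the stability inequality holds trivially. Its blow-downs converge to the vee $|x_4|$, which is not classical anywhere on the hyperplane $\{x_4=0\}$.

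The curvature estimate of \Cref{cor:main-local-hessian} bounds the approximating free boundaries, but it does not prevent two sheets from merging in the limit. Nor can it be applied to the cone itself, since it presupposes a classical solution. So the inclusion you propose to verify, that cones arising as blow-downs are smooth away from the vertex, fails. Showing that the homogeneous limits are half-spaces or vees requires a separate argument of the kind in \Cref{prop:classification-homogeneous-classical-stable-limits}, not just a curvature bound. In the paper that work is part of what \cite{FFS26} is quoted for.
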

\medskip

\subsection{Relation to previous work and new ingredients}

We briefly describe the paper's main new ideas; a fuller proof overview appears in \Cref{sec:proof-overview}. Our argument begins with the linearization strategy introduced in \cite{CFFS25}: away from a controlled collection of neck regions, the positivity set separates into two sides on which the problem can be linearized independently.

In dimension three, \cite{CFFS25} controls the number and size of the necks by testing stability with the Jerison--Savin function $F(D^2u)^{1/3}$; see \cite{Jerison-Savin}. Its special subsolution property yields a superlinear estimate of the largest neck scale in terms of the ``vee excess''
\[
R^{-2}\fint_{B_R(z)} |u-V_{z,e}|^2, \quad \mbox{for } V_{z,e}(x): = |e\cdot(x-z)|.
\]
This use of the Jerison--Savin function is dimension-specific: $F(D^2u)^{1/3}$ need not be a subsolution in dimensions $n\ge4$.\footnote{Jerison and Savin \cite{Jerison-Savin} also use this test function in dimension four, but only for $1$-homogeneous solutions. Homogeneity eliminates the radial direction, effectively reducing the relevant computation to the three-dimensional one.}

Instead, we use the more flexible tangential stability inequality \Cref{lem:tangential-sternberg-zumbrun-inequality}, suggested by \cites{Ton05, FS25} and based on the Jacobi fields generated by translations along a ridge. Here we face a new difficulty: unlike in the Allen--Cahn setting of \cite{FS25}, the Bernoulli vee excess does not directly control the tangential tilt
\[
\fint_{B_R(z)\cap\{u>0\}} \bigl(|\nabla u|^2-|\partial_eu|^2\bigr).
\]
We handle this by introducing a \emph{ridgewise Weiss deficit}. It is almost monotone up to a logarithmic factor and controls all gradient components tangent to the ridge. Together with the Monneau-type vee excess, it gives the combined excess that we track through the proof. Its construction rests on Weiss monotonicity and the symmetries of the model, rather than on an ad hoc expression specific to the Bernoulli equation.

However, this is not enough. A second new idea is needed because dimension four is critical (numerologically, dimension five for Allen--Cahn). Here, tangential stability gives only a linear relation between the neck charge---the sum of suitable powers of the neck radii---and the excess, so the superlinear gain used in \cite{CFFS25} is no longer available. Our way around this is a power--center--scale selection that balances the combined excess against the neck charge. We use the same optimized selection twice: first, phasewise linearization improves the excess on the two sides of the positivity set; second, stability tested with a suitable family of translation and dilation Jacobi fields improves the neck charge. The optimality of the selected exponent then shows that these two improvements put us in a strictly subcritical regime and restore a genuine power gain.

These ideas extend substantially the methods of \cites{CFFS25,FS25} beyond the settings considered there. They rely on a Weiss-type monotonicity formula, the symmetries and rigidity of the model solutions, and an Allard-type regularity theory near those models. We therefore expect the ridgewise deficit, power--center--scale selection, ball-tree decomposition, and the layer-wise linearization argument to be adaptable to other free-boundary and phase-transition problems with analogous structures. They may also provide a route toward the remaining dimensions; preliminary calculations in dimension five support this possibility (assuming the classification of cones), although that argument lies beyond the scope of the present paper.

\subsection{Use of artificial intelligence}

To complete this work, we built our own experimental proof-development environment primarily around OpenAI’s Codex, with occasional use of Anthropic’s Claude Code. 

The evolving body of mathematical information which is required to discover and construct a long mathematical proof---not merely the streamlined argument presented a posteriori in the finished paper---spans many model context windows. Our environment therefore served as an external memory and coordination layer, as well as the interface through which we interacted with the models and directed the development of the proof. It decomposed our evolving proof into over a hundred blocks, distributed across auxiliary files, and a ``master'' file assembling them. Models could operate on individual blocks largely independently, using only the locally relevant context and dependencies. 

An HTML-based visualizer displayed the proof's structure and current review status of each block, while Python tools tracked their dependencies and propagated status changes downstream. This setup allowed us to retain full mathematical control while using frontier models to draft and check technical passages. It also allowed us to quickly propagate changes in the proof architecture through the entire argument. When, for example, we changed an early definition or lemma, the models carried out routine downstream revisions and identified any genuinely new mathematical obstacles that arose. This allowed us to fully focus on the genuine blockers at each moment.

We next give further details on the workflow we used:
\begin{itemize}

\item \textbf{30-page draft input:}
This was neither autonomous theorem proving nor a single continuous conversation with a model. Before the model-assisted phase, the authors had already produced an incomplete 30-page draft, the global strategy, many key computations in \TeX, and earlier papers containing related methods. The original draft, strategy, and computations were written without LLM assistance.

\item \textbf{105 blocks:}
The final proof was divided into 105 blocks, developed sequentially. Each definition, lemma, and proposition underwent its own cycle of drafting, review, and revision; we never asked the models to generate batches of statements or proofs from a single prompt. A master file recorded the complete logical sequence, while companion files contained the discussion, proof, and references for each block. 

\item \textbf{5,000+ prompts:}
The environment allowed us to retain full mathematical control while directing the development of the proof step by step. Our input was injected through thousands of instructions to Codex. Every time a blocker was identified by the system, we focused on it and provided detailed instructions to unlock it.   We retained the prompts, instructions, drafts, objections, blockers, and discarded routes as an audit trail.

Typical instructions from the authors supplied a candidate new statement, its role, or a detailed proof plan, often including the relevant estimates, computations, case division, and intended conclusion. One run drafted the argument; fresh runs checked hypotheses, numerology, circularity, unjustified steps, and possible simplifications; their objections were then incorporated and reviewed again.

\item \textbf{3x acceleration:} We view this workflow primarily as an accelerator of mathematical writing. The project proceeded in two phases. The first was entirely traditional and lasted more than one year, resulting in the 30-page draft that contained a promising but still highly preliminary strategy for the proof. At that stage, based on our experience, we estimated that completing the argument would require at least one additional year. We then introduced the proof-development workflow described above and completed the proof within approximately three to four months. We attribute much of this acceleration to the environment’s ability to keep our attention focused on the central mathematical obstacles, while the models handled routine downstream revisions, rapidly propagated changes in the proof architecture across dependent blocks, and identified new obstacles as they emerged.

\item \textbf{Exportability:} Such workflow is not specific to this project: it can be exported elsewhere, and we have already done so successfully in other ongoing projects.

\item \textbf{Use of frontier models:}
Most development used GPT-5.5 in Codex at the \texttt{xhigh} reasoning setting, with occasional later use of Claude Code with Claude Fable 5. GPT-5.6 Sol became available after the proof was essentially complete and was used in \texttt{ultra} mode mainly to improve exposition, remove redundancy, and reorganize the manuscript, still under intensive author guidance.

\end{itemize}

We emphasize that we, the authors, fully designed all statements and proof strategies, and provided all the input that guided its subsequent revisions. The models completed and checked specified local arguments, exposed obstructions, tested our hypotheses, checked the numerology, compared implementations, propagated revisions, and assisted with exposition. At several points, encouraged by the models’ capabilities, we tried to extend their role by asking for new ideas, simplifications, or longer chains of argument. To our surprise, these attempts produced no genuinely new mathematical insight in this project; the models’ mathematical contributions remained confined to technical arguments already outlined by the authors. 

This is a factual account of the division of labor in this project, not a general claim about the limits of the models.  We record it not only for transparency, but also because it illustrates a mode of AI-assisted mathematics that is obscured by a focus on one-shot theorem proving. In this project, the strategy and key computations remained human-generated, while AI substantially accelerated their local completion, systematic criticism, revision, and propagation.

As the models and the surrounding workflows improve, the division of labor described here may also change; this statement records the balance in the present project. We realize, however, that the mathematics to come may emerge from a different balance between human and machine.

\subsection*{Acknowledgments}

X. F. was supported by the Swiss National Science Foundation (SNF grant PZ00P2\_208930), by the Swiss State Secretariat for Education, Research and Innovation (SERI) under contract number MB22.00034, and by the AEI project PID2024-156429NB-I0 (Spain).

J. S. was supported by the European Research Council under the Grant Agreement No 948029.

\section{Proof overview}
\label{sec:proof-overview}

The proof begins along the same broad lines as the Bernoulli argument in \cite{CFFS25}. Section~3 and most of Sections~4 and~5 recall results from \cite{CFFS25} that remain valid here. New estimates and constructions appear in Section~6, and from Section~7 onward the arguments are new. We now describe the complete proof.

\subsection{Ingredients from previous work}
We begin as in \cite{CFFS25}. Suppose that a non-flat global classical stable solution exists. By \Cref{lem:reduction,lem:lipschitz-bound}, after recentering and rescaling we may fix a solution $u$ such that
\begin{equation}\label{eq:overview-normalization}
\text{$0\in \FB(u)$, \quad $|\nabla u|\le 1$, \quad $|D^2 u(0)| = 1$, \qquad and \qquad $|D^2 u|\le 1\quad\text{in}\quad \{u > 0\}$.}
\end{equation}
Here $D^2u(0)$ denotes the one-sided limit from $\{u>0\}$. Using the classification of homogeneous limits in \Cref{prop:classification-homogeneous-classical-stable-limits}, \Cref{prop:blow-down-of-non-flat-solutions} shows that at every sufficiently large scale $u$ is close to a unit-slope vee $|e\cdot x|$, with a direction $e$ that may depend on the scale. This suggests a two-sheet picture, but does not by itself produce one. Sections~\ref{sec:neck_set} and \ref{sec:fb-tree} make that geometry quantitative.

Fix a universal constant $\eta_0>0$. For each $z\in\FB(u)$, we define (exactly as in \cite{CFFS25}) the threshold radius
\begin{equation}\label{eq:overview-threshold-radius}
r_\star(z):= \inf\left\{r>0: \int_{B_r(z)\cap\{u>0\}}|D^2u|^n\,dx\ge\eta_0^n \right\}.
\end{equation}
We choose a dyadically separated family $\cZ$ of points with finite threshold radius and call them \emph{neck centers}; the associated threshold balls have bounded overlap at each scale; see \Cref{defn:neck-centers}. Up to Section~\ref{sec:neck_set}, the argument differs little from \cite{CFFS25}, and we mainly explain how the estimates should be read here.

At this point we introduce also elements from \cite{Ton05, FS25}: For $e\in\mathbb S^{n-1}$, let $\nabla^e u$ be the projection of $\nabla u$ onto $e^\perp$, and let $\cA_e(u)$ denote the corresponding Sternberg--Zumbrun defect. For every compactly supported cutoff $\eta$, the tangential stability inequality gives
\[
\int \cA_e^2(u)\eta^2 \le \int |\nabla^e u|^2|\nabla\eta|^2;
\]
see \Cref{lem:tangential-sternberg-zumbrun-inequality}. A compactness and dimension-reduction argument shows that this defect cannot vanish at a neck scale: otherwise the limit would be two-dimensional and hence flat by \cite{CFFS25}*{Theorem~1.5}; see \Cref{lem:neck-scale-compactness,lem:zero-defect-two-dimensional-reduction}. \Cref{lem:positive-excess-at-the-neck-scale} carries out this compactness argument and proves the gap
\begin{equation}\label{eq:overview-positive-neck-defect}
(2r_\star(\zz))^{2-n} \min_{e\in\mathbb S^{n-1}} \int_{B_{2r_\star(\zz)}(\zz)\cap\{u>0\}}\cA_e^2(u)\,dx \ge c, \qquad \zz\in\cZ.
\end{equation}

It is useful to record this information additively. We define the square of the neck charge as follows. Write $\cZ_R:=\{\zz\in\cZ:r_\star(\zz)\le R\}$ and set
\begin{equation}\label{eq:overview-neck-charge}
\varrho_z(R)^2 := \sum_{\zz'\in\cZ_{R/2}\cap B_{R/2}(z)} \left(\frac{r_\star(\zz')}{R}\right)^{n-2}.
\end{equation}
Summing \eqref{eq:overview-positive-neck-defect} over the bounded-overlap neck balls and applying tangential stability, we control this aggregate charge by the transverse $L^2$ energy; see \Cref{prop:symmetric-excess-controls-neck-radii}. Thus stability controls all nearby neck radii together, not only the largest one.

\subsection{A logarithmically propagating excess}
From here we strongly deviate from previous papers. The transverse energy supplied by stability does not itself propagate when either the scale or the center changes. In Section~\ref{sec:weiss-type-excess}, we introduce an excess that propagates across scales with a logarithmic loss and can be recentered in a controlled way. Let $V_{y,e}(x):=|e\cdot(x-y)|$ and let $W_{R,x}$ be the Gaussian Weiss energy. By \Cref{lem:universal-gaussian-weiss-limit}, its large-scale limit is the center-independent vee value $W_\infty=\pi^{n/2}/2$. We set $\wW_{R,x}:=W_\infty-W_{R,x}\ge0$; see \Cref{lem:gaussian-weiss-deficit-bound}. For a candidate vee direction $e$, we define
\begin{equation}\label{eq:overview-combined-excess}
\Theta_{R,y}^e := \sup_{x\in(y+e^\perp)\cap B_R(y)}\wW_{R,x}, \qquad \bbE_y(R;e)^2 := \Theta_{R,y}^e +R^{-n-2}\int_{B_R(y)}|u-V_{y,e}|^2.
\end{equation}
We then minimize in $e$ to obtain $\bbE_y(R)$; see \Cref{defn:monneau-excess}.

As \Cref{lem:monneau-excess-controls-transverse-energy} shows, moving the Gaussian Weiss center along the candidate ridge detects every component of the tangential gradient. Together with the neck-scale gap \Cref{lem:positive-excess-at-the-neck-scale} and tangential stability \Cref{lem:tangential-sternberg-zumbrun-inequality}, it gives
\begin{equation}\label{eq:overview-combined-controls-charge}
\varrho_y(R)\le C\bbE_y(4R).
\end{equation}
Thus $\bbE$ measures the vee approximation and, at a slightly larger scale, also controls the aggregate contribution of nearby necks.

At a fixed center, the Gaussian Weiss quantity is monotone; see \Cref{lem:fixed-center-gaussian-weiss-monotonicity}. More importantly, \Cref{lem:shifted-center-gaussian-weiss-almost-monotonicity} controls it when the center moves proportionally to the scale. Together with the radial trace estimate \eqref{eq:boundary-monneau-trace} for the vee-excess term, this yields
\begin{equation}\label{eq:overview-log-propagation}
\bbE_y(R;e)\le C|\log\eta|\,\bbE_y(\eta R;e), \qquad 0<\eta\le\tfrac12;
\end{equation}
see \Cref{prop:fixed-direction-monneau-doubling}; giving the corresponding logarithmic estimate for $\bbE_y$.

Thus $\bbE$ controls all gradient components tangent to the selected ridge and propagates logarithmically in scale. This links tangential stability to the center--scale selection below and is one of the paper's key contributions.

\subsection{Optimizing the power, center, and scale}

Next comes another key new idea of the paper. We compare the combined excess with the additive neck charge through
\begin{equation}\label{eq:overview-selection-functional}
F_u^\alpha(R) := \sup_{\zz\in\cZ_R} \frac{\bbE_\zz(8R)}{\varrho_\zz(2R)^\alpha}, \qquad \alpha_\star := \inf\left\{\alpha>0: \limsup_{R\to\infty}F_u^\alpha(R)=\infty\right\}.
\end{equation}
The exponent $\alpha$ measures the exchange rate between geometric excess and the ``neck charge''. Together with decay of the combined excess at infinity, \eqref{eq:overview-combined-controls-charge} gives the initial bound $\alpha_\star\le1$; see \Cref{lem:decay-of-monneau-excess,rem:critical-selection-exponent-at-most-one}.

We choose the power $\alpha$ before the center and scale:
\[
\alpha=
\begin{cases}
\dfrac{1+\max\{\alpha_\star,\frac12\}}2,&\alpha_\star<1,\\[2mm]
1+\dfrac1{100},&\alpha_\star=1.
\end{cases}
\]
Thus $\alpha_\star<\alpha\le1+\frac1{100}$.

With this $\alpha$ fixed, \Cref{lem:selection-of-initial-center-and-scale} gives large balls $B_{R_k}(\zz_k)$ on which $F_u^\alpha(R_k)$ nearly realizes its running supremum. Write $\eps_k:=\bbE_{\zz_k}(8R_k)$. A refined choice gives a ball $B_{\Rk}(\zk)$ and a second small parameter $\epk\le\eps_k$; see \Cref{lem:refined-center-and-scale-selection}. Besides $R_k,\Rk\to\infty$, we have $\eps_k,\epk\to0$ and $\bbE_{\zk}(8\Rk)\le C\epk$. Moreover, for every $0<\zeta<1/2$ and every $\mathcal F\subset\cZ_{\zeta\Rk}\cap B_{\Rk}(\zk)$ such that the balls $B_{\zeta\Rk}(\zz')$, $\zz'\in\mathcal F$, are pairwise disjoint,
\begin{equation}\label{eq:overview-selected-carleson}
\sum_{\zz'\in\mathcal F} \bbE_{\zz'}(8\zeta\Rk)^{2/\alpha} \le C_\alpha\zeta^{2-n}\epk^{2/\alpha}.
\end{equation}
On the ball tree, this Carleson-type estimate controls the aggregate vee error generation by generation; see \Cref{lem:fb-tree-inner-counting-package} for the resulting tree estimates.

The optimized denominator also prevents the excess from becoming too small at every scale $0<r\le8R_k$. Combining the near-maximal choice, recentering, and \eqref{eq:overview-log-propagation}, \Cref{prop:optimized-denominator-logarithmic-lower-bounds} gives
\begin{equation}\label{eq:overview-selected-lower-bound}
\bbE_{\zk}(r) \ge \frac{\eps_k}{C\bigl(1+\log(8R_k/r)\bigr)} \quad\text{for}\quad 0<r\le8R_k, \qquad \Rk\simeq R_k.
\end{equation}
The proof ends by comparing this lower bound with the improved upper bound.

\subsection{The construction of $U_+$ and $U_-$ and flux estimate}

At the selected scale, $u$ is close to a vee $V_{\zk,e}(x)=|e\cdot(x-\zk)|$. Thus one expects the positivity set to have two large portions on opposite sides of an approximate ridge, except near small regions where the two free-boundary sheets join. Sections~\ref{sec:fb-tree} and~\ref{sec:fb-tree-quantitative} make this picture quantitative: they construct two disjoint open sets $U_+$ and $U_-$, the two signed portions of the same positivity phase, and a controlled union $\mathcal X_{\rm neck}$ of small neck balls such that
\[
\bigl(\{u>0\}\cap B_{\Rk/2}(\zk)\bigr)\setminus\mathcal X_{\rm neck} \subset U_+\cup U_- \subset \{u>0\}\cap B_{\Rk/2}(\zk).
\]
Away from $\mathcal X_{\rm neck}$, the free boundary consists of two nearly flat sheets, one associated with each signed region; inside the neck balls the two sides may reconnect. Above the neck scale, $U_+$ and $U_-$ occupy opposite sides of a thin slab, and every localized signed region lies in a uniform John domain; see \Cref{lem:fb-tree-signed-decomposition,lem:fb-tree-mesoscopic-signed-slab-flatness,lem:fb-tree-localized-raw-signed-shadows-john}.

We build these sets using a dyadic ball tree that refines the ball tree of \cite{CFFS25}*{Proposition~5.12}.

The free boundary is covered by families of dyadic balls. Each free-boundary ball $B = B_{r_B}(x_B)$ carries a best centered vee $V_B$, its dimensionless error
\[
h_B^2 := \inf_{e\in\mathbb S^{n-1}} \frac{1}{r_B^{n+2}} \int_{\lsup{16}{B}} \big|u-V_{x_B,e}\big|^2\,dx.
\]
(so $h_Br_B$ is the corresponding height error), and an oriented slope one plane $\ell_B$ such that $V_B = |\ell _B|$.

The tree starts from the root ball $B_{\rm root}=B_{\Rk}(\zk)$. Whenever a ball sees a neck center and remains close to a vee ($h_B<\delta$ small), it branches, and children at the next scale cover its approximate ridge. Otherwise that branch stops: a regular terminal ball contains two smooth, nearly flat sheets, while a neck-like terminal ball isolates a possible joining region. Parent--child comparison carries one choice of sign down from the root, while the no-hole property ensures that no part of the ridge is lost between generations. The resulting same-sign pieces glue to form $U_+$ and $U_-$. See \Cref{defn:fb-tree-dyadic-balls-centered-vee-excess,defn:fb-tree-branching-and-terminal-balls,lem:fb-tree-properties,lem:fb-tree-regular-terminal-balls-split-into-two-signed-sheets,lem:fb-tree-orientation-compatibility}.

The tree also lets us pass from \eqref{eq:overview-selected-carleson} on neck-centered balls to the regular terminal balls needed for the first flux estimate. Indeed, \eqref{eq:overview-selected-carleson} controls the heights generation by generation; predecessor comparison sums the heights along a branch to control its tilt; and the descendant estimate sums these errors over all terminal leaves. For every flux-admissible (see \eqref{eq:flux-admissible}) $p>1$, the form we need below is
\begin{equation*}\tag{2.9$'$}
\label{eq:overview-terminal-tree-additive} \sum_{\substack{B\ {\rm regular\ terminal}\\
B\ {\rm in\ the\ working\ region}}} r_B^{n-1}\bigl(h_B^{2p}+|\nabla(\ell_B-\ell_{B_{\rm root}})|^{2p}\bigr) \le C_{p,\alpha}\Rk^{n-1}\epk^{2p}.
\end{equation*}
The generation and descendant bounds in \Cref{lem:fb-tree-inner-counting-package} yield this transfer in the proof of \Cref{prop:fb-tree-sided-higher-power-transverse-estimate}. On a regular terminal sheet, the Bernoulli condition makes the normal-flux error quadratic in precisely these height and tilt errors:
\[
1\mp \nabla \ell_{B_{\rm root}}\cdot\nabla u =\tfrac12|\nabla u\mp \nabla \ell_{B_{\rm root}} |^2 \lesssim h_B^2+|a_B-\nabla \ell_{B_{\rm root}} |^2.
\]
Consequently, \eqref{eq:overview-terminal-tree-additive} and the quadratic identity give the regular part of the following flux bound. The boundary-area estimate in \Cref{lem:fb-tree-terminal-neck-bookkeeping}, together with \Cref{lem:lipschitz-bound}, gives the same bound for the artificial boundaries (i.e., the parts of $\partial U_\pm$ surrounding the neck-like terminal balls, which may not fall on $\partial\{u>0\}$):
\begin{equation}\label{eq:overview-sided-flux}
\int_{\Gamma_+}\bigl|\partial_{\nu}(u-\ell_{B_{\rm root}})\bigr|^p \,d\cH^{n-1} {}+\int_{\Gamma_-}\bigl|\partial_{\nu}(u+\ell_{B_{\rm root}})\bigr|^p \,d\cH^{n-1} \le C_{p,\alpha}\Rk^{n-1}\epk^{2p}.
\end{equation}
This is \Cref{prop:fb-tree-sided-higher-power-transverse-estimate}. In particular, the key power $2p$ in \eqref{eq:overview-sided-flux} is a direct consequence of \eqref{eq:overview-selected-carleson}, through its terminal-tree form \eqref{eq:overview-terminal-tree-additive}, and of the quadratic nature of the regular-sheet flux.

For $w=u-\ell_{B_{\rm root}}$, \eqref{eq:overview-sided-flux} controls the weak Neumann residual of $w$. Combined with the slab geometry and the John-domain Sobolev--Poincar\'e estimate, this gives the sided $W^{1,2}$ control used in the first linearization; see \Cref{lem:direct-sided-local-flux-slab,prop:direct-sided-w12-from-flux}.

Together, the centered vee estimate \eqref{eq:fb-tree-centered-vee-excess}, the root slab \eqref{eq:direct-sided-root-slab}, and the root estimate in \Cref{lem:fb-tree-small-neck-radii-near-flat-active-balls} give an initial $L^2$ affine approximation on each side. We then rescale and subtract these affine functions. The slab and flux estimates give an even harmonic limit on a half-space, and the Neumann compactness lemmas provide the affine improvement used in the iteration; see \Cref{lem:linear-neumann-compactness-half-ball-l2,lem:linear-neumann-compactness-global}.

For this first step we take a fixed flux-admissible $p_1<3/2$. Fixing the small exponent $\cttc>0$, set
\begin{equation}\label{eq:overview-final-flat-scale}
R_\flat:=\epk^\cttc\Rk.
\end{equation}
The compactness iteration in \Cref{prop:asymmetric-excess-decay-almost-critical} constructs affine maps $L_\pm^\flat(x)=a_\pm^\flat\cdot x+b_\pm^\flat$ with quantitative $W^{1,2}$ error at the flat scale.

\subsection{Breaking the dimension-four criticality}

In dimension four, at the benchmark $\alpha=1$ (which corresponds to the setups used in \cites{CFFS25, FS25}) both admissibility conditions in \eqref{eq:flux-admissible} reduce to
\[
2p<3.
\]
Thus this benchmark only gives $p<3/2$. At the formal endpoint $p=3/2$, the two terms in the flux estimate \eqref{eq:direct-sided-phi-flux-bound}, evaluated at $r=\epk\Rk$, both have size $(\epk\Rk)^2$: exactly the size of the oscillation one is trying to improve. The estimate reaches the critical scale but gives no contraction below it, which is needed to finish the proof.

At the flat scale $R_\flat$, \Cref{lem:asymmetric-plane-dichotomy} gives two possibilities for the affine approximations: either they form one vee to higher order, or their positive half-spaces are disjoint. We use tangential translation fields in the parallel configurations and a dilation field centered at the intersection of the two zero hyperplanes in the nonparallel case; see \Cref{lem:tangential-sternberg-zumbrun-inequality,lem:dilation-sternberg-zumbrun-inequality}. In either case, the neck-scale gaps in \Cref{lem:positive-excess-at-the-neck-scale,lem:positive-dilation-gap-at-the-neck-scale}, and the flat-scale estimate \eqref{eq:asymmetric-excess-flat-scale-affine-w12} control the ``total neck charge''. More precisely, for every $\zz\in\cZ\cap B_{\Rk/16}(\zk)$, \Cref{prop:flat-scale-asymmetric-planes-control-neck-radii} gives
\begin{equation}\label{eq:overview-flat-neck-improvement}
\varrho_\zz(R_\flat) \le C\epk^{1+\cttc/6}.
\end{equation}
At the selected centers, the numerator in \eqref{eq:overview-selection-functional} retains its logarithmic lower bound, whereas \eqref{eq:overview-flat-neck-improvement} gives a strict power gain in the denominator. The precise comparison in \Cref{cor:flat-scale-neck-charge-improves-critical-exponent} gives
\begin{equation}\label{eq:overview-alpha-strict}
\alpha_\star\le\frac1{1+\cttc/6}<1.
\end{equation}
By the choice above, this implies $\alpha<1$. We can therefore introduce the second flux exponent
\[
p_2:=1+\frac1{2\alpha}>\frac32.
\]
The parameter check in \Cref{rem:above-critical-flux-exponent} shows that it is flux-admissible; this is the exponent used in the final iteration.

\subsection{Below-flat-scale decay and the final contradiction}

After we have broken criticality we can finish the proof largely following \cite{CFFS25} again:  With $p_2>3/2$, \eqref{eq:below-critical-parameter-gaps} gives strict exponent inequalities that make both local flux errors smaller by a positive power of $\epk$. Fix $\sigma=\cttc/100$.

With this gain in hand, we run the Neumann compactness iteration from $R_\flat$ to $\epk^{1+\sigma}R_\flat$. \Cref{prop:below-critical-frozen-plane-oscillation} shows that the flat-scale slopes stay fixed while the constant terms converge geometrically. Throughout this range, the resulting sided $L^2$ and $W^{1,2}$ errors are $O(\epk^{1+\sigma}R_\flat)$.

At the end of this iteration, the flat-scale dichotomy \Cref{lem:asymmetric-plane-dichotomy} and the centered intercept estimates show that the two affine maps may be replaced by the exact opposite pair
\[
L_{k,\pm}^\flat(x)=\pm e_k\cdot(x-\zk), \qquad e_k\in\mathbb S^3,
\]
without changing the error bounds. The same comparison gives
\[
u\ge |e_k\cdot(x-\zk)|-C\epk^{1+\sigma} R_\flat \quad\text{in }B_{5R_\flat}(\zk).
\]
These conclusions are included in \Cref{prop:below-critical-frozen-plane-oscillation}.

It remains to convert the sided estimates into the combined excess \eqref{eq:overview-combined-excess}. Set $\tau:=\sigma/100$. The frozen-plane estimates in \Cref{prop:below-critical-frozen-plane-oscillation}, together with the radius, boundary-localization, and volume bounds of \Cref{lem:fb-tree-terminal-neck-bookkeeping}, control the vee-excess term. The two-sheet calibration in \Cref{lem:combined-excess-below-flat-scale}, together with the universal vee identity in \Cref{lem:universal-gaussian-weiss-limit}, controls the ridgewise Weiss and Monneau deficits and gives
\begin{equation}\label{eq:overview-final-upper}
\bbE_{\zk}(\epk^\tau R_\flat) \le C\epk^{1+\sigma/2}.
\end{equation}

The upper bound \eqref{eq:overview-final-upper} now contradicts the logarithmic lower bound retained from the original selection. Indeed, \eqref{eq:overview-selected-lower-bound}, $\Rk\simeq R_k$, and $\epk\le\eps_k$ give, after division by $\eps_k$,
\[
1 \le C\biggl(1+\log\bigg(\frac{8R_k}{\epk^\tau R_\flat}\bigg)\biggr) \frac{\epk}{\eps_k}\epk^{\sigma/2} \le C|\log\epk|\,\epk^{\sigma/2} \to0,
\]
which is impossible. This contradiction proves \Cref{thm:main-classification}.

\section{The Bernoulli Problem: Preliminaries}
\label{sec:Bernoulli}
\subsection{Notation}
\label{ssec:notation}

We begin by fixing the notation and conventions used throughout the paper.

\begin{notation}
\label{notation:s1-notation-1}
Throughout the paper, $C>1$ and $c\in(0,1)$ denote generic constants chosen conveniently large and small, respectively. Dependencies are denoted by subscripts or parentheses.

With $B_r(y)$ we denote the ball of radius $r>0$ centered at $y$. When $y=0$, we also write $B_r$ in place of $B_r(0)$. By $B_r(A)$ we denote the $r$-fattening of a set $A\subset \R^n$, namely $B_r(A):=\{x\in \R^n:\dist(x,A)<r\}$, which can also be seen as the Minkowski sum $B_r+A$. We always assume that a modulus of continuity $\omega$ satisfies
\begin{equation}
\label{eq:w} \omega:[0, +\infty)\to [0, +\infty)\ \text{is increasing, with $\omega(t) \ge t$ for all $t\ge 0$}.
\end{equation}
Given $y\in \R^n$ and $e\in \mathbb{S}^{n-1}$, we denote by $V_{y,e}$ a {\em vee}, namely, a function of the form
\begin{equation}
\label{eq:vee_def} \R^n \ni x\mapsto V_{y,e}(x) : = |e\cdot(x-y)|.
\end{equation}
Finally, $\mathcal{H}^{k}$ denotes the $k$-dimensional Hausdorff measure.
\end{notation}

{
For the reader's convenience, we collect below the principal manuscript-specific notation. The entries are grouped according to their role in the proof.

\footnotesize \setlength{\tabcolsep}{4pt} \renewcommand{\arraystretch}{1.08}

\par\medskip
\noindent
\begin{tabular}{@{}p{.26\textwidth}p{.68\textwidth}@{}}
\textbf{Symbol} & \textbf{Meaning/Location}\\
\hline
\multicolumn{2}{@{}l}{\emph{Basic and neck geometry}}\\[1pt]
$u,\{u>0\},\FB(u),\nu$ & Bernoulli solution, positivity set, and free boundary; on $\FB(u)$, $\nu=\nabla u$ is the inward unit normal. See \eqref{eq:intro-bernoulli} and \Cref{defn:bernoulli-jacobi-field}.\\[2pt]
$V_{y,e}$ & Centered unit-slope vee, $V_{y,e}(x)=|e\cdot(x-y)|$; see \eqref{eq:vee_def}.\\[2pt]
$\nAC$ & Critical dimension for nonflat $1$-homogeneous stable Bernoulli cones; see \Cref{defn:critical-dimension}.\\[2pt]
$\eta_0,\rmin$ & Hessian-energy threshold and uniform lower bound for threshold radii; see \eqref{eq:eta0} and \eqref{eq:rmin_def}.\\[2pt]
$\rb(y),\cZ,\zz,\cZ_R$ & Threshold radius (called neck radius at a neck center), the selected set of neck centers and its elements, and the centers with neck radius at most $R$; see \eqref{eq:def_rB}, \Cref{defn:neck-centers}, and \eqref{eq:ZR}.\\
\hline
\end{tabular}

\par\smallskip
\noindent
\begin{tabular}{@{}p{.26\textwidth}p{.68\textwidth}@{}}
\multicolumn{2}{@{}l}{\emph{Defects, energies, and excesses}}\\[1pt]
$\nabla^e u,\cA_e(u)$ & Gradient transverse to $e$ and the associated Sternberg--Zumbrun defect; see \Cref{lem:tangential-sternberg-zumbrun-inequality}.\\[2pt]
$\widetilde\varrho_x(R),\bE_x(R)$ & Transverse energy and symmetric $L^2$-gradient excess; see \eqref{eq:tilde_varrho_def_intro} and \eqref{eq:Ez_def_intro}.\\[2pt]
$\varrho_x(R)$ & Square root of the additive, scale-normalized neck-radius charge; see \eqref{eq:varrho_def_intro}.\\[2pt]
${\bf W}(u,r),\alpha_n$ & Boundary-adjusted Weiss energy on a ball and its half-space density; see \eqref{eq:W-def} and \eqref{eq:alpha_n}.\\[2pt]
$P(x),G_r,H_{r,x},D_{r,x},W_{r,x}$ & Bernoulli half-energy density, Gaussian kernel, Gaussian energy and mass, and the resulting Gaussian Weiss energy; see \Cref{defn:gaussian-weiss-quantities}.\\[2pt]
$W_\infty,\wW_{r,x}$ & Universal Gaussian Weiss value of a unit vee and the deficit $W_\infty-W_{r,x}$; see \Cref{lem:universal-gaussian-weiss-limit} and \Cref{lem:gaussian-weiss-deficit-bound}.\\[2pt]
$\Theta_{R,y}^e,M_{R,y}^e, \bbE_y(R;e),\bbE_y(R)$ & Ridgewise Weiss deficit, Monneau-type $L^2$ vee error, directional combined excess, and its minimum in $e$; see \Cref{defn:monneau-excess} and \eqref{eq:def-fixed-direction-Monneau}.\\
\hline
\end{tabular}

\par\smallskip
\noindent
\begin{tabular}{@{}p{.26\textwidth}p{.68\textwidth}@{}}
\multicolumn{2}{@{}l}{\emph{Selection of power, center, and scale}}\\[1pt]
$\beta_\circ$ & Fixed exponent governing the covering estimate from the selection onward; see \eqref{eq:beta_circ_def}.\\[2pt]
$F_u^\alpha(R),\alpha_\star$ & Selection functional and its critical exponent; see \eqref{eq:selection-functional} and \Cref{defn:selection-functionals}.\\[2pt]
$\alpha$ & Exponent chosen strictly above $\alpha_\star$, with a prescribed choice when $\alpha_\star=1$; see \eqref{eq:single-selection-alpha-choice}.\\[2pt]
$(R_k,\zz_k,\eps_k)$ & Initially selected radius, neck center, and combined excess; see \eqref{eq:eps_k}.\\[2pt]
$(\tilde\zeta_k,\Rk,\zk,\epk)$ & Refined scale factor, radius, neck center, and small parameter; see \Cref{lem:refined-center-and-scale-selection}.\\[2pt]
$\bN(\zeta,B_R(\zz)), \mathcal A^\zeta_{\zz,R}$ & Normalized covering size and the neck centers being covered; see \eqref{eq:Ndef}. \\
\hline
\end{tabular}

\par\smallskip
\noindent
\begin{tabular}{@{}p{.26\textwidth}p{.68\textwidth}@{}}
\multicolumn{2}{@{}l}{\emph{Free-boundary tree and signed geometry}}\\[1pt]
$B_{\rm root},r_l,r_B,x_B,\lsup{\lambda}{B},\cQ^l$ & Root ball, dyadic radius, radius and center of an individual ball, concentric dilation, and the ambient dyadic free-boundary family; see \Cref{defn:fb-tree-dyadic-balls-centered-vee-excess}.\\[2pt]
$h_B,V_B$ & Centered vee error of $B$ and a minimizing centered vee; see \eqref{eq:fb-tree-centered-vee-excess} and \Cref{defn:fb-tree-dyadic-balls-centered-vee-excess}.\\[2pt]
$\delta;\ \cD^l,\cB^l,\cR^l,\cN^l;\ \cB,\cR,\cN$ & Fixed tree parameter; active, branching, regular-terminal, and neck-terminal balls at generation $l$; and the corresponding all-generation families. See \Cref{defn:fb-tree-branching-and-terminal-balls}.\\[2pt]
$\operatorname{Ch}(B),PB,P^jB$ & Local children, chosen predecessor, and iterated predecessor in the tree; see \eqref{eq:fb-tree-local-children} and \Cref{defn:fb-tree-branching-and-terminal-balls}.\\[2pt]
$\ell_B,a_B,e_{\rm root}$ & Oriented unit-slope affine representative of $V_B$, its slope, and the oriented root direction; see \eqref{eq:fb-tree-oriented-affine-representatives} and \eqref{eq:fb-tree-root-direction}.\\[2pt]
$c_\ast,B^\pm,U_\pm,\mathcal X_{\rm neck}$ & Fixed signed-core cutoff, signed local pieces, the two assembled signed domains, and the terminal-neck region; see \eqref{eq:fb-tree-branching-signed-pieces} and \Cref{defn:fb-tree-signed-pieces}.\\[2pt]
$\cD_{\rm in}^j,\cN_{\rm in}, \Gamma_\pm,\Gamma_\pm^{\rm reg},\Gamma_\pm^{\rm neck}$ & Inner active family, relevant terminal neck balls, the localized reduced boundary of $U_\pm$, and its regular and neck parts; see \eqref{eq:fb-tree-inner-family}, \eqref{eq:fb-tree-inner-neck-family}, and \eqref{eq:fb-tree-signed-boundary-pieces}.\\
\hline
\end{tabular}

\par\smallskip
\noindent
\begin{tabular}{@{}p{.26\textwidth}p{.68\textwidth}@{}}
\multicolumn{2}{@{}l}{\emph{Flux exponents and final scales}}\\[1pt]
$\gamma_\alpha,\ p\text{ flux-admissible}$ & Terminal-neck gain and the permitted range of flux exponents; see \eqref{eq:gamma-alpha} and \eqref{eq:flux-admissible}.\\[2pt]
$p_1,\cttc,R_\flat$ & First flux exponent, flat-scale exponent, and flat scale; see \eqref{eq:asym-decay-first-flux-exponent}, \eqref{eq:asym-decay-flat-scale-chi-smallness}, and \eqref{eq:flat-scale-definition}.\\[2pt]
$a_\pm^\flat,b_\pm^\flat,L_\pm^\flat,\theta_\flat$ & Flat-scale affine coefficients and maps, together with the improved flat-plane comparison size; see \eqref{eq:asymmetric-excess-flat-scale-normalization}, \Cref{prop:asymmetric-excess-decay-almost-critical}, and \eqref{eq:asymmetric-plane-dichotomy-theta-flat}.\\[2pt]
$p_2,\sigma,A_\#$ & Dimension-four flux exponent, small final exponent, and lower endpoint of the second iteration; see \eqref{eq:below-critical-second-flux-exponent}, \eqref{eq:sigma-def}, and \eqref{eq:below-critical-lower-endpoint}.\\[2pt]
$e_k,L_{k,\pm}^\flat$ & Frozen direction and centered opposite affine pair used below the flat scale; see \Cref{prop:below-critical-frozen-plane-oscillation}.\\[2pt]
$\tau,\rho_k$ & Gaussian-calibration exponent and radius, $\tau=\sigma/100$ and $\rho_k=\epk^\tau R_\flat$; see \Cref{lem:combined-excess-below-flat-scale}.\\
\hline
\end{tabular}
\par\medskip
}

\subsection{The notions of solution}
\label{ssec:notion}

We recall the classical, stationary, and stable notions of solution used throughout the paper.

Let $\Omega\subset\R^n$ be open, and let $\cE(u;\Omega)$ be the Alt--Caffarelli functional defined in \eqref{eq:Alt-Caffarelli-functional}. Critical points of $\cE$ solve the one-phase Bernoulli problem.

We are interested in \emph{classical solutions} of the Bernoulli problem: functions $u: \Omega \to \R_+ := [0,\infty)$ such that
\begin{equation}\label{eq:Bernoulli-main}
\text{$\{u > 0\}$ is locally a smooth domain in $\Omega$}\qquad\text{and}\qquad
\begin{cases}
\Delta u=0 & \text{ in } \Omega \cap \set{u>0},\\
|\nabla u|=1 & \text{ on } \Omega \cap \partial\set{u>0}.
\end{cases}
\end{equation}
The set $\partial\{u > 0\}$ is called the \emph{free boundary} and will also be denoted $\FB(u)$. In particular, for a classical solution $\{u > 0\}$ is locally the subgraph of a smooth function around each free boundary point (up to a rotation).

Given $\xi\in C_c^\infty(\Omega;\R^n)$, let $D\Subset\Omega$ be any bounded open set containing $\supp\xi$. Classical solutions $u$ are \emph{stationary critical points} of $\cE$, that is, they satisfy
\begin{equation}
\label{eq:stationary} \frac{d}{dt}\bigg|_{t = 0} \cE(u\circ\Psi_t;D) = 0 \quad \mbox{ for every $\Psi_t(x) := x+t \xi (x)$ with $\xi \in C^\infty_c(\Omega; \R^n)$.}
\end{equation}
Stationary critical points $u$ are called \emph{stable} if they have nonnegative second (inner) variations; that is, they satisfy
\begin{equation}\label{eq:stability-var}
\dfrac{d^2}{dt^2}\bigg|_{t=0} \cE(u\circ\Psi_t;D)\geq 0, \quad \mbox{ for every $\Psi_t(x) := x+t \xi (x)$ with $\xi \in C^\infty_c(\Omega; \R^n)$.}
\end{equation}

A \emph{solution} will always refer to the one-phase Bernoulli (or Alt--Caffarelli) problem. Moreover, we will distinguish among the following notions:

\begin{defn}[Notions of solution]
\label{defn:notions-of-solution}
Let $n \ge 2$ and let $\Omega\subset\R^n$ be open. In relation to the one-phase Bernoulli problem, we say that a nonnegative $u\in H^1_{\rm loc}(\Omega)$ is:
\begin{itemize}
\item a \emph{stationary solution} (or simply \emph{stationary}) in $\Omega$ if it satisfies \eqref{eq:stationary};
\item a \emph{classical solution} or a \emph{classical critical point} in $\Omega$ if it satisfies \eqref{eq:Bernoulli-main} (in particular, it is stationary);
\item a \emph{stable solution} in $\Omega$ if it is stationary and satisfies \eqref{eq:stability-var};
\item a \emph{classical stable solution} or \emph{classical stable critical point} in $\Omega$ if it satisfies \eqref{eq:Bernoulli-main} and \eqref{eq:stability-var}.
\end{itemize}
If $\Omega=\R^n$, we call the corresponding solution \emph{global}.
\end{defn}

\subsection{Some results}

Throughout the work we will use various results from \cite{CFFS25} repeatedly. For the reader's convenience, we collect here the basic regularity, perimeter, and vee-comparison estimates used below.

\begin{lem}[Epsilon-regularity, {\cite{CFFS25}*{Lemma~3.7}}]
\label{lem:epsilon-regularity}
Let $n\geq 2$. There exists $\eps_\circ=\eps_\circ(n) > 0$ such that the following holds. Let $0<\eps\le\eps_\circ$, and let $u$ be a classical solution to the Bernoulli problem in $B_1\subset \R^n$. If
\begin{equation}\label{eq:eps-reg-classical-flat}
\norm[L^\infty(B_1 \cap \set{u>0})]{u-x_n}\leq \eps,
\end{equation}
then, for any $k\in \N$, there exists $C_{n,k} = C_{n, k}(n, k) > 0$, such that
\[
\norm[C^k(B_{1/2} \cap \set{u>0})]{u-x_n}\leq C_{n,k}\eps\qquad \text{and}\qquad \text{$\FB(u)\cap B_{1/2}$ is a $C^k$ graph, with $C^k$-norm bounded by $C_{n,k}\eps$.}
\]
Moreover, $u$ is analytic in $B_{1/2} \cap \set{u>0}$.
\end{lem}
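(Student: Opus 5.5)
The plan is the standard two-step scheme: first use flatness and the free boundary condition to get $C^{1,\gamma}$ regularity of $\FB(u)$ at a scale comparable to $1$ (improvement of flatness), and then bootstrap with Schauder estimates.

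\textbf{Step 1 (interior flatness up to the free boundary).} The bound \eqref{eq:eps-reg-classical-flat} gives $|u-x_n|\le\eps$ on $B_1\cap\{u>0\}$. Since $u\ge0$ and $u=0$ on $\FB(u)$, every free-boundary point in $B_1$ has $|x_n|\le\eps$. Moreover $u>0$ wherever $x_n>\eps$, because otherwise $u-x_n$ would be $<-\eps$ there. Hence $\FB(u)\cap B_1\subset\{|x_n|\le\eps\}$. On the other side, if $\{u>0\}$ contained a point with $x_n<-2\eps$, the bound would force $u\le x_n+\eps<0$ there, which is impossible. So $u$ is $\eps$-flat in the sense of De Silva. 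I will apply De Silva's improvement-of-flatness theorem for viscosity solutions (classical solutions are viscosity solutions). This requires $\eps\le\eps_\circ(n)$ and gives that $\FB(u)\cap B_{3/4}$ is a $C^{1,\gamma}$ graph $x_n=g(x')$ with $\|g\|_{C^{1,\gamma}}\le C\eps$. It also gives $\|u-x_n\|_{C^{1,\gamma}(\overline{\{u>0\}}\cap B_{3/4})}\le C\eps$, with the linear dependence on $\eps$ coming from the linearized Neumann problem in the iteration. The argument is: a Harnack inequality yields compactness, the linearized limit $\tilde u=\lim (u-x_n)/\eps$ solves $\Delta\tilde u=0$ in $\{x_n>0\}$ with $\partial_n\tilde u=0$, and the improvement is then iterated across scales. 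I expect this step to be the main obstacle if it is not quoted, because everything else is standard. Since the lemma is recalled from \cite{CFFS25}, I would cite De Silva's theorem rather than redo it.

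\textbf{Step 2 (higher regularity, hodograph).} Because $|\nabla u-e_n|\le C\eps$ up to the boundary, the partial hodograph map $(x',x_n)\mapsto(x',u(x))$ is a $C^{1,\gamma}$ diffeomorphism of $\overline{\{u>0\}}\cap B_{3/4}$ onto a half-ball region. Its inverse is $x_n=v(x',y_n)$, and $v$ solves a uniformly elliptic quasilinear equation in $\{y_n>0\}$. The free boundary condition $|\nabla u|=1$ becomes a nonlinear oblique condition $1+|\nabla' v|^2=(\partial_{y_n}v)^2$ on $\{y_n=0\}$. The coefficients are analytic functions of $\nabla v$, and since $v-y_n$ is $O(\eps)$ in $C^{1,\gamma}$, the problem is a small perturbation of the Neumann problem. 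Schauder theory for oblique problems, bootstrapped on difference quotients in $x'$, gives $C^{k,\gamma}$ bounds, $\|v-y_n\|_{C^k}\le C_{n,k}\eps$, on slightly smaller half-balls. The linear dependence on $\eps$ is preserved because the equation for $v-y_n$ has the form $\mathcal L(v-y_n)=Q(\nabla(v-y_n),D^2(v-y_n))$, with a boundary condition $\partial_{y_n}(v-y_n)=O(|\nabla(v-y_n)|^2)$ quadratic in the error. Transforming back gives the $C^k$ bounds for $u-x_n$ on $B_{1/2}\cap\{u>0\}$ and for $g$, since $g(x')=v(x',0)$.

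\textbf{Step 3 (analyticity).} Kinderlehrer--Nirenberg--Spruck regularity for nonlinear elliptic boundary problems with analytic structure gives analyticity of $v$ up to $\{y_n=0\}$. Hence $u$ is analytic in $B_{1/2}\cap\{u>0\}$, and in particular in the interior, where analyticity is already automatic from harmonicity. The constants depend only on $n$ and $k$ because each step uses only the $C^{1,\gamma}$ smallness from Step 1.
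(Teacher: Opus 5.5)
The paper gives no proof of this lemma; it simply quotes \cite{CFFS25}*{Lemma~3.7}. Your route is the standard argument behind such a statement: De Silva's improvement of flatness, then a partial hodograph transform and a Schauder bootstrap for the Neumann-type problem, keeping linear dependence on $\eps$. Steps~2 and~3 are fine, and analyticity in the open set $\{u>0\}$ is just harmonicity. The gap is in Step~1, exactly where the hypothesis must be turned into De Silva's two-sided flatness $(x_n-\eps)^+\le u\le (x_n+\eps)^+$. The hypothesis only controls $u$ on the positive phase.

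Your justification that $u>0$ on $\{x_n>\eps\}\cap B_1$ (``otherwise $u-x_n$ would be $<-\eps$ there'') is invalid, because at a zero of $u$ the hypothesis says nothing. What you do get is $\FB(u)\cap B_1\subset\{|x_n|\le\eps\}$ and $\{u>0\}\cap B_1\subset\{x_n>-\eps\}$. Since $\{x_n>\eps\}\cap B_1$ is connected and contains no free-boundary points, it lies either entirely in $\{u>0\}$ or entirely in the interior of $\{u=0\}$.

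The second alternative confines the positive phase to the slab $\{|x_n|\le\eps\}$, where $0<u\le 2\eps$. The hypothesis does not exclude it, and it has to be ruled out using $|\nabla u|=1$ on $\FB(u)$. Here is one way:
\begin{enumerate}
\item If $\{u>0\}\cap B_{1/2}=\varnothing$, the conclusion is vacuous; otherwise pick $p$ in this set.
\item Lower the ball $B_{1/8}(p+te_n)$ from $t=1/8+2\eps$, where its closure lies in the zero set under this alternative. It first touches $\overline{\{u>0\}}$ at some $x_0\in\FB(u)$, for some $t\ge 1/8$.
\item Let $q$ be the center at contact. Compare $u$ on $\{u>0\}\cap B_{1/4}(q)$ with the radial harmonic function that vanishes on $\partial B_{1/8}(q)$ and equals $2\eps$ on $\partial B_{1/4}(q)$.
\item This gives $1=|\nabla u(x_0)|\le C_n\eps$, a contradiction for $\eps$ small.
\end{enumerate}
Only after this is $u$ two-sided flat. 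De Silva's theorem is usually stated with $0\in\FB(u)$, so you should then apply it at each free-boundary point of $B_{3/4}$ at scale $1/4$, using $|x_n|\le\eps$ there.
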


\begin{lem}[Hessian Epsilon-regularity, {\cite{CFFS25}*{Lemma~3.11}}]
\label{lem:epsilon-regularity-for-the-hessian}
Let $n\ge 2$. There exists $\eta_*=\eta_*(n)>0$ such that, for all $0<\eta\le \eta_*$, the following holds. Let $u$ be a classical solution to the Bernoulli problem in $B_1\subset \R^n$. Then
\begin{equation}\label{eq:eps_reg_Hess}
\int_{B_1 \cap \set{u>0}}|D^2u|^n\,dx \leq \eta^n \quad \implies \quad \norm[L^\infty(B_{1/2} \cap \set{u>0})]{D^2u} \leq C_n \eta,
\end{equation}
for some $C_n = C_n(n)$.
\end{lem}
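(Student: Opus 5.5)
We would prove the estimate in two stages: first a qualitative universal bound on $D^2u$ obtained by a point-picking/blow-up argument, and then an upgrade to the linear dependence on $\eta$ using linear elliptic estimates for the derivatives of $u$. The basic observation is that in dimension $n$ the quantity $\int|D^2u|^n$ is invariant under the Bernoulli scaling $u_{x_0,r}(y):=u(x_0+ry)/r$, since $D^2u_{x_0,r}(y)=r\,D^2u(x_0+ry)$.

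\emph{Step 1 (universal bound).} Set $d(x):=1-|x|$ and $M:=\sup_{B_1\cap\{u>0\}} d(x)|D^2u(x)|$. Arguing by compactness, we may first apply the argument in slightly smaller balls to ensure $M<\infty$. Suppose $M\ge K$ for a large $K$, and choose $x_0$ with $d(x_0)|D^2u(x_0)|\ge M/2$. Rescale with $r:=1/|D^2u(x_0)|$. The rescaled function $v:=u_{x_0,r}$ is a classical solution in $B_{cK}$ with $|D^2v(0)|=1$, $|D^2v|\le 4$ in $B_{cK}\cap\{v>0\}$, and $\int|D^2v|^n\le\eta^n$.

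A bounded Hessian means the free boundary of $v$ has bounded curvature, so it is locally a uniformly $C^{1,1}$ graph. Schauder estimates up to the boundary for the harmonic function $v$, together with the analyticity from \Cref{lem:epsilon-regularity} after a further rescaling, then give a universal bound on $D^3v$ in $B_1\cap\{v>0\}$; in the interior case we use interior harmonic estimates instead. We also need a universal lower bound on the density of $\{v>0\}$ in small balls centered at points of $\overline{\{v>0\}}$; this follows from the $C^{1,1}$ graph structure. Combining these, $|D^2v|\ge 1/2$ on $B_\rho\cap\{v>0\}$ for a universal $\rho$, hence $\int|D^2v|^n\ge c$. This contradicts $\eta\le\eta_*$ small. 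Thus $|D^2u|\le C$ in $B_{3/4}\cap\{u>0\}$.

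\emph{Step 2 (linear dependence).} With the Hessian universally bounded, we linearize. For a fixed direction $e$, the derivative $u_e:=\partial_eu$ is harmonic in $\{u>0\}$. Differentiating $|\nabla u|^2=1$ along the free boundary gives a Robin/oblique boundary condition for $u_e$ on $\FB(u)$, of the form $\partial_\nu u_e=-H\,u_e$ or the Jacobi-field condition of \Cref{defn:bernoulli-jacobi-field}, whose coefficients are universally bounded by Step 1. Since $D^2u=(\nabla u_e)_e$, it suffices to show
\[
\sup_{B_{1/2}\cap\{u>0\}}|\nabla u_e|\le C\Bigl(\int_{B_{3/4}\cap\{u>0\}}|\nabla u_e|^n\Bigr)^{1/n}.
\]
In the interior this follows from subharmonicity of $|\nabla u_e|$ and the mean value property. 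Near $\FB(u)$ we would flatten the free boundary, which is a $C^{1,1}$ (in fact smooth) graph with universal bounds. We then apply $L^\infty$--$L^n$ gradient estimates, via Moser iteration or boundary Schauder theory, for harmonic functions with bounded-coefficient Robin conditions, after subtracting the mean of $u_e$. Summing over $e$ gives $\|D^2u\|_{L^\infty(B_{1/2})}\le C\eta$.

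The main obstacle is Step 1: obtaining, for a classical solution with merely bounded Hessian, universal control on $D^3u$ up to the free boundary and a positive density of the positivity set. This is what converts the pointwise normalization $|D^2v(0)|=1$ into a lower bound on the $L^n$ energy. We would get it by showing the free boundary is a uniformly $C^{1,1}$ graph at a universal scale and then invoking \Cref{lem:epsilon-regularity} on that scale.
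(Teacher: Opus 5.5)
The paper imports this lemma from \cite{CFFS25} without proof, so I assess your argument on its own terms. Step~1 is sound in outline. Note only that $|D^2v|\ge\frac12$ holds on the part of $B_\rho\cap\{v>0\}$ in the component adjacent to $0$, which is all you need. The gap is in Step~2, which is where the factor $\eta$ has to come from. Subtracting the mean $\bar c_e$ of $u_e$ destroys the homogeneity of the Jacobi condition: $w=u_e-\bar c_e$ satisfies $\partial_\nu w+Hw=-H\bar c_e$ on $\FB(u)$. For $e$ close to the normal, $u_e=e\cdot\nu\approx1$ on the free boundary, so $|\bar c_e|\approx1$. The boundary datum then has size $|H|$, which Step~1 bounds only by a universal constant, not by $C\eta$. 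The linear boundary estimate therefore gives at best $\sup|\nabla u_e|\le C\|\nabla u_e\|_{L^n}+C\sup_{\FB(u)}|H|$. Since $|H|\le|D^2u|$, this is circular, and there is no small factor to absorb. No sharper purely linear estimate exists. For harmonic $w$ with $\partial_\nu w+Hw=0$ and $H$ a given smooth bounded coefficient, the inequality $\sup|\nabla w|\le C\|\nabla w\|_{L^n}$ is false. Indeed $|\nabla w|\ge|H||w|$ on the boundary. If $w\approx1$ and $H$ is a bump of height $\epsilon$ on a boundary patch of radius $r\ll1$, then $\|\nabla w\|_{L^n}$ is of order $\epsilon r$ (up to a logarithm if $n=2$), while $\sup|\nabla w|\gtrsim\epsilon$. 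What excludes this for Bernoulli solutions is that $H=-u_{\nu\nu}$ is itself an entry of $D^2u$. Treating it as an external bounded coefficient discards exactly the information you need.

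To close the argument you need a boundary condition that is homogeneous in $D^2u$. One option is $f:=|D^2u|^2$, which satisfies $\Delta f=2|D^3u|^2\ge0$ in $\{u>0\}$. At a free-boundary point, take coordinates with $\FB(u)=\{x_n=g(x')\}$, $g(0)=0$, $\nabla g(0)=0$, $\nabla u(0)=e_n$, and set $A:=D^2g(0)$. Differentiating $u=0$ and $|\nabla u|^2=1$ along the graph gives $u_{an}=0$, $u_{ab}=-A_{ab}$, $u_{nn}=\operatorname{tr}A$, and $u_{nab}=-(A^2)_{ab}-(\operatorname{tr}A)A_{ab}$. The terms $u_{ann}$, which involve tangential derivatives of the curvature, are multiplied by $u_{an}=0$. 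Hence $\partial_\nu f=2\operatorname{tr}A^3+4\operatorname{tr}A\,\operatorname{tr}A^2+2(\operatorname{tr}A)^3$, so $|\partial_\nu f|\le C|D^2u|\,f$. After Step~1, $f$ is a weak subsolution with a bounded Robin term on domains that are uniformly Lipschitz at a universal scale. Moser iteration then gives $\sup_{B_{1/2}\cap\{u>0\}}f\le C\int_{B_{3/4}\cap\{u>0\}}f\le C\eta^2$. This is the Choi--Schoen mechanism, and the same mean-value inequality applied to your point-picked $v$ also gives Step~1 directly. Alternatively, you can stay with Jacobi fields: at a free-boundary base point $p$, use only directions $\tau\perp\nabla u(p)$. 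On $B_r(p)$ one then has $|\bar c_\tau|\le Cr\sup_{B_{2r}(p)}|D^2u|$. The full Hessian is recovered from $\{\nabla u_\tau\}$ by symmetry and $\Delta u=0$. The inhomogeneous term becomes $Cr\sup|D^2u|$, which the standard iteration lemma absorbs for $r$ small.
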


\begin{lem}[Lipschitz bound, {\cite{CFFS25}*{Lemma~3.2}}]
\label{lem:lipschitz-bound}
Let $n \ge 2$ and let $u$ be a classical solution to the Bernoulli problem in $\R^n$. Then $|\nabla u|\le 1$ in~$\R^n$.
\end{lem}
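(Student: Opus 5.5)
The statement to prove is the Lipschitz bound $|\nabla u|\le1$ for a global classical solution; the plan is a maximum-principle argument on a subharmonic quantity. Since $u$ is harmonic in $\{u>0\}$, the function $|\nabla u|^2$ is subharmonic there. On $\FB(u)$ the free boundary condition gives $|\nabla u|=1$, so the whole difficulty is that $\{u>0\}$ is unbounded and the maximum principle cannot be applied directly. To get around this, I would combine a Bernstein-type bound with a blow-down/compactness argument.

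\textbf{Step 1: finiteness of the gradient.} First I would show that $M:=\sup_{\{u>0\}}|\nabla u|<\infty$. Fix $x$ with $u(x)>0$ and let $d=\dist(x,\FB(u))$. Harnack and interior gradient estimates in $B_d(x)$ give $|\nabla u(x)|\le C u(x)/d$. Near the free boundary, classical regularity gives a bound close to $1$, and since $|\nabla u|=1$ on $\FB(u)$, I expect a growth estimate of the form $u(x)\le C d$. This bound would come from a Harnack-chain/comparison argument with the nearest free-boundary point, where $u$ vanishes with unit slope. If $\FB(u)=\varnothing$, then $u$ is a nonnegative harmonic function on $\R^n$, hence constant by Liouville, and there is nothing to prove. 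I expect this step to be the main obstacle, because a priori nothing prevents large gradients far from $\FB(u)$ in a global classical solution. If the direct estimate fails, the first thing I would try is rescaling at points where $|\nabla u|$ nearly attains the supremum and extracting a limit.

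\textbf{Step 2: maximizing sequence and blow-up.} Suppose $M>1$ and take $x_k\in\{u>0\}$ with $|\nabla u(x_k)|\to M$. Set $d_k=\dist(x_k,\FB(u))$ and consider $u_k(y)=u(x_k+d_ky)/d_k$. These functions have gradients bounded by $M$ and are harmonic in $B_1$, and $u_k(0)\le Md_k/d_k=M$ is bounded. Passing to a limit $u_\infty$, harmonic in $B_1$, we get $|\nabla u_\infty|\le M$ with equality at the origin. The strong maximum principle applied to the subharmonic function $|\nabla u_\infty|^2$ then forces $\nabla u_\infty$ to be constant in $B_1$. By continuity up to the boundary, this constant gradient propagates to the free-boundary point at distance $1$. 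There classical (epsilon-)regularity, \Cref{lem:epsilon-regularity}, gives uniform $C^1$ convergence up to the free boundary, so $|\nabla u_\infty|=1$ there, contradicting $M>1$.

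\textbf{Case where the distance diverges.} If instead $d_k$ is unbounded, I would use the same normalization by $d_k$; the uniform regularity needed near the free boundary is scale-invariant, so the argument above applies unchanged.
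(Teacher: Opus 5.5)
Your Step 1 is sound, and easier than you expect. If $y\in\FB(u)$ is a nearest free-boundary point to $x$, then $B_d(x)\subset\{u>0\}$ touches $\FB(u)$ at $y$. Harnack plus the Hopf barrier in $B_d(x)\setminus B_{d/2}(x)$ give $u(x)\le C_n d\,|\nabla u(y)|=C_nd$, hence $|\nabla u(x)|\le n\,u(x)/d\le C_n$; no regularity near the free boundary is needed. The set-up of Step 2 is also fine. The limit is affine in $B_1$ with slope $M$. Since $u_\infty\ge0$ on $\overline{B_1}$ and vanishes at the limit $y_\infty\in\partial B_1$ of the rescaled nearest free-boundary points $y_k$, you get $u_\infty=\ell:=M(1-y\cdot y_\infty)$ on $\overline{B_1}$.

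The gap is the last step. \Cref{lem:epsilon-regularity} needs $u_k$ to be $\eps$-close, on a whole ball $B_r(y_k)\cap\{u_k>0\}$, to a \emph{unit-slope} one-plane solution. Here $u_k$ is close, inside $B_1$, to a linear function of slope $M>1$, and nothing is known about $u_k$ outside $B_1$; so the hypothesis fails (if it held, you would already be done). There is no other source of uniform $C^1$ control of $u_k$ up to $\FB(u_k)$. The free-boundary curvatures of a general classical solution may blow up along the sequence, so the ``scale-invariant uniform regularity'' of your last paragraph is not available either. Locally uniform convergence alone does not carry $|\nabla u_k(y_k)|=1$ to the limit: a priori $|\nabla u_k|$ could drop from about $M$ to $1$ in a layer around $y_k$ of vanishing width.

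What is missing is a barrier argument showing that the free-boundary condition survives in the limit (in effect, stability of viscosity solutions under uniform convergence). For instance:
\begin{itemize}
\item Let $\eps_k:=\|u_k-\ell\|_{L^\infty(B_1)}\to0$, and let $H_k$ be harmonic in $B_1$ with boundary values $(\ell-\eps_k)^+$.
\item Since $u_k$ is harmonic in $B_1$ and $u_k\ge\max\{\ell-\eps_k,0\}$ on $\partial B_1$, you get $u_k\ge H_k$ in $B_1$.
\item Also $\ell(y_k)\le\eps_k$ gives $H_k(y_k)=0=u_k(y_k)$, so $1=|\nabla u_k(y_k)|\ge\liminf_{t\downarrow0}t^{-1}H_k((1-t)y_k)$.
\item Write $H_k=\ell-\eps_k+G_k$, with $G_k$ the Poisson extension of $g_k:=(\eps_k-\ell)^+$. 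Then $g_k(\xi)-g_k(y_k)\ge\max\{\ell(y_k)-\ell(\xi),-\eps_k\}$.
\item Pair $\xi$ with its image under the rotation by $\pi$ about the axis $\mathbb R y_k$ in the Poisson integral. The two values of $\ell(y_k)-\ell(\cdot)$ sum to $-M(y_k\cdot y_\infty)|\xi-y_k|^2$. Splitting at $|\xi-y_k|\simeq\sqrt{\eps_k/M}$ gives $G_k((1-t)y_k)-G_k(y_k)\ge-Ct\sqrt{M\eps_k}$.
\end{itemize}
Hence $1\ge M\,y_\infty\cdot y_k-C\sqrt{M\eps_k}\to M$, contradicting $M>1$. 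Equivalently, you can slide harmonic subsolutions of slope $\lambda\in(1,M)$ under $u_k$ until they first touch $\FB(u_k)$. With this replacement of the final step, your argument is complete.
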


\begin{lem}[Perimeter bound, {\cite{CFFS25}*{Lemma~3.3}}]
\label{lem:perimeter-bound}
Let $n\ge 2$, and let $u$ be a global classical solution to the Bernoulli problem in $\R^n$. Then, we have that for any $R> 0$ and $y\in \{u=0\}\cap B_R$,
\[
\cH^{n-1} \left( \FB(u)\cap B_\varrho(y)\right) \le C \varrho^{n-1}\qquad\text{for all}\quad \varrho \in (0, R/2),
\]
for some $C = C(n)$.
\end{lem}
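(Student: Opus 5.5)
The plan is to obtain the bound directly from the divergence theorem, using harmonicity of $u$ in the positivity set, the free boundary condition $|\nabla u|=1$, and the Lipschitz bound of \Cref{lem:lipschitz-bound}. Fix $y$ and $\varrho$ as in the statement and let $\Omega_\varrho:=\{u>0\}\cap B_\varrho(y)$. Since $u$ is classical, $\FB(u)$ is a smooth hypersurface. The next step is to arrange that $\partial B_\varrho(y)$ meets $\FB(u)$ transversally.
\begin{itemize}
\item By Sard's theorem applied to $x\mapsto|x-y|$ restricted to $\FB(u)$, this transversality holds for a.e.\ $\varrho$.
\item For such $\varrho$, the set $\Omega_\varrho$ is a bounded Lipschitz domain.
\item Its boundary splits, up to $\cH^{n-1}$-null sets, into $\FB(u)\cap B_\varrho(y)$ and $\{u>0\}\cap\partial B_\varrho(y)$.
\item Since $u$ is smooth up to the free boundary, the divergence theorem applies on $\Omega_\varrho$.
\end{itemize}

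Integrating $\Delta u=0$ over $\Omega_\varrho$ gives
\[
0=\int_{\Omega_\varrho}\Delta u\,dx=\int_{\FB(u)\cap B_\varrho(y)}\partial_{n_{\rm out}}u\,d\cH^{n-1}+\int_{\{u>0\}\cap\partial B_\varrho(y)}\partial_r u\,d\cH^{n-1},
\]
where $\partial_r$ denotes the derivative in the radial direction $(x-y)/|x-y|$. On $\FB(u)$ we have $u=0$, $u>0$ inside, and $|\nabla u|=1$. Hence the outward normal to $\Omega_\varrho$ there is $-\nabla u$, and $\partial_{n_{\rm out}}u=-|\nabla u|=-1$.

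It follows that
\[
\cH^{n-1}\bigl(\FB(u)\cap B_\varrho(y)\bigr)=\int_{\{u>0\}\cap\partial B_\varrho(y)}\partial_r u\,d\cH^{n-1}\le \cH^{n-1}(\partial B_\varrho)=n\omega_n\varrho^{n-1},
\]
where $\omega_n$ is the volume of the unit ball in $\R^n$. The inequality uses $|\nabla u|\le1$ in $\R^n$, which is \Cref{lem:lipschitz-bound}.

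For the remaining (null) set of radii, I will approximate $\varrho$ from below by good radii $\varrho_j\uparrow\varrho$. Since $\cH^{n-1}\bigl(\FB(u)\cap B_{\varrho_j}(y)\bigr)\uparrow\cH^{n-1}\bigl(\FB(u)\cap B_\varrho(y)\bigr)$ by monotone convergence, the bound passes to the limit. This gives the claim with $C=n\omega_n$.

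The argument is not sensitive to the hypotheses $y\in\{u=0\}$ and $\varrho<R/2$; they are harmless. The only point needing care is the regularity of $\partial\Omega_\varrho$ where the sphere meets the free boundary, which the transversality choice handles. I do not expect a genuine obstacle here.
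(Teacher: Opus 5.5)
Your argument is correct and is the standard flux argument behind the cited \cite{CFFS25}*{Lemma~3.3}, which the paper quotes without reproducing a proof: since $\Delta u=\cH^{n-1}\lfloor_{\FB(u)}$ for a classical solution, the free-boundary area in $B_\varrho(y)$ equals the outward flux of $\nabla u$ through $\partial B_\varrho(y)\cap\{u>0\}$, which is at most $\cH^{n-1}(\partial B_\varrho)$ because $|\nabla u|\le 1$ (\Cref{lem:lipschitz-bound}). Your Sard/transversality step and the approximation over radii are a valid way to justify the divergence theorem, though testing the distributional identity against radial cutoffs would avoid them; you are also right that the hypotheses $y\in\{u=0\}$ and $\varrho<R/2$ play no role.
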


\begin{cor}[{\cite{CFFS25}*{Lemma~3.15}}]
\label{cor:vee-plus-hessian-implies-regularity}
Let $n\ge 2$. Given $C_1\ge 1$ there exists $\eps_1 = \eps_1(n, C_1)>0$ such that the following holds. Let $\varrho>0$, $0<\eps\le\eps_1$, and let $u$ be a global classical solution to the Bernoulli problem in $\R^n$. Suppose that
\[
|D^2u| \le C_1\varrho^{-1} \quad\text{in}\quad B_{2\varrho}\cap \{u>0\}\qquad \text{and}\qquad \big |u- V_{0,e_n}\big|\le \eps\varrho\quad \mbox{in}\quad B_{2\varrho},
\]
where $e_n$ is the $n$-th vector in the canonical basis. Then
\[
\varrho^2\|D^2 u\|_{L^\infty(\{u > 0\}\cap B_{\varrho})}\leq C \eps \varrho
\]
for some $C = C(n)$. Moreover,
\[
\{u>0\} = \{x_n > g^{(+)}(x_1, \dots , x_{n-1})\} \cup \{x_n < g^{(-)}(x_1, \dots , x_{n-1})\} \qquad\text{in}\quad B_\varrho,
\]
where $g^{(\pm)} : D_\varrho \to \mathbb{R}$ with $D_\varrho$ being the lower dimensional ball $\{ x_1^2 + \cdots + x_{n-1}^2 < \varrho^2 \}$ in $\mathbb{R}^{n-1}$, $g^{(-)}< g^{(+)}$, and
\[
\|g^{(\pm)}\|_{L^\infty(D_\varrho)} + \varrho^2 \|D^2 g^{(\pm)}\|_{L^\infty(D_\varrho)} \leq C \eps \varrho.
\]
\end{cor}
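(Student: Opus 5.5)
The plan is to reduce to two applications of the flat-case $\eps$-regularity, \Cref{lem:epsilon-regularity}, one on each side of the vee, after first separating the positivity set into two disjoint sheets. Rescale so that $\varrho=1$. This is legitimate because the Bernoulli problem is invariant under $u\mapsto \varrho^{-1}u(\varrho\,\cdot)$, and both hypotheses are scale-invariant in the stated form. After rescaling, $|D^2u|\le C_1$ in $B_2\cap\{u>0\}$ and $|u-|x_n||\le\eps$ in $B_2$.

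First I locate the free boundary. Since $u\ge |x_n|-\eps$, we have $u>0$ wherever $|x_n|>\eps$. Hence $\FB(u)\cap B_2\subset\{|x_n|\le\eps\}$.

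Next I show that near each free boundary point the free boundary is a graph with normal close to $\pm e_n$. Fix $x_0\in\FB(u)\cap B_{3/2}$, so $\nabla u(x_0)=\nu$ is a unit vector. The Hessian bound gives, along the segment from $x_0$ into the positivity set,
\[
|u(x)-\nu\cdot(x-x_0)|\le C_1|x-x_0|^2
\]
for $x$ in the one-sided neighborhood where the segment stays in $\{u>0\}$. Comparing with $|u-|x_n||\le\eps$ at distance $s=\eps^{1/2}$ from $x_0$ gives $|\nu\cdot(x-x_0)-|x_n||\le C(\eps+C_1 s^2)$. Using also $|x_{0,n}|\le\eps$, this forces $\nu=\pm e_n+O(\eps^{1/2})$. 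Hence $\FB(u)\cap B_{3/2}$ is a union of Lipschitz graphs over $\{x_n=0\}$ with small slope, so the free boundary is locally transverse to $e_n$.

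The key step is to show that there are exactly two sheets and that $u$ has no zero-set "holes" beyond the thin slab. Consider a vertical line $\{x'\}\times(-1,1)$ with $|x'|<3/2$. It meets $\{u=0\}$ only inside $|x_n|\le\eps$. Each crossing of the free boundary has normal $\approx\pm e_n$, so $\partial_n u$ is bounded away from zero near crossings. Harmonicity and the one-sided $C^{1,1}$ bound then rule out a positive interval of length $\ll 1$ squeezed between zeros; such an interval would need $|\nabla u|$ to rotate by about $\pi$ over a length at most $2\eps$, contradicting $|D^2u|\le C_1$ when $\eps\ll C_1^{-1}$. The zero set on the line is nonempty: if $u>0$ on the whole segment, then near $x_n=0$ the function $u$ has $u\le\eps$ and Hessian bounded, so $\partial_nu$ changes from $\approx -1$ to $\approx +1$ over a distance $O(\eps^{1/2})$, again contradicting the Hessian bound. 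So the zero set on each vertical line is a single interval $[g^{(-)}(x'),g^{(+)}(x')]$, and $\{u>0\}$ has the claimed form in $B_1$. Moreover $|g^{(\pm)}|\le\eps$.

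Finally I get the Hessian and $C^2$ bounds from $\eps$-regularity. In the upper component, take a ball $B_{1/4}(y)$ with $y\in\FB(u)$ on the upper sheet. Writing $\nu(y)=e_n+O(\eps^{1/2})$ only gives flatness $\eps^{1/2}$, which is too weak, so I use the vee bound directly instead. On $\{u>0\}\cap B_{1/4}(y)$, only the upper sheet is present: the lower sheet is excluded because the zero slab between the sheets is nonempty. There $|u-x_n|\le\eps$ holds directly, since $x_n\ge-\eps$ and $|u-|x_n||\le\eps$. After rescaling, \Cref{lem:epsilon-regularity} yields $\|u-x_n\|_{C^2}\le C\eps$ and $\|g^{(+)}\|_{C^2}\le C\eps$ near $y$. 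Away from the free boundary, interior harmonic estimates applied to $u\mp x_n$, which is harmonic with size $O(\eps)$, give $|D^2u|\le C\eps$. Covering $B_1$ finishes the proof.

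The hardest step is the no-extra-component / two-sheet separation, in particular excluding thin positive layers or extra zero islands inside the slab. I would handle it through the Hessian bound as above, choosing $\eps_1\ll C_1^{-2}$.
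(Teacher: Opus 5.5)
The first three steps of your argument are sound: confining $\FB(u)$ to $\{|x_n|\le\eps\}$, using the Hessian bound to make the free boundary transverse to $e_n$, and the vertical-line argument showing that the zero set on each line is a single interval. For the last point, note that $g^{(-)}<g^{(+)}$ (rather than a degenerate interval) uses that, for a classical solution, $\{u=0\}$ has interior on one side of every free-boundary point.

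The final step, however, fails as written. For $y$ on the upper sheet, the ball $B_{1/4}(y)$ is \emph{not} occupied only by the upper component. The two sheets lie within $2\eps$ of each other, so, for instance, $y-\tfrac18 e_n$ satisfies $x_n<-\eps\le g^{(-)}$ and hence lies in the lower component $\{x_n<g^{(-)}\}\subset\{u>0\}$. The nonempty zero slab separates the two components, but it does not remove the lower one from the ball. On that part $u\approx -x_n$, so $\|u-x_n\|_{L^\infty(B_{1/4}(y)\cap\{u>0\})}$ is of order one, not $O(\eps)$. The hypothesis of \Cref{lem:epsilon-regularity} therefore fails for $u$ itself.

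The missing step is to apply \Cref{lem:epsilon-regularity} to a one-sheet solution. Set $u^{(+)}:=u$ on $\{x_n>g^{(+)}\}$ and $u^{(+)}:=0$ elsewhere in the ball. Because the components are disjoint, $\{u^{(+)}>0\}$ is the smooth supergraph of $g^{(+)}$, $u^{(+)}$ is harmonic there, and $|\nabla u^{(+)}|=1$ on its boundary. So $u^{(+)}$ is a classical solution in $B_{1/4}(y)$, and on its positive phase $|u^{(+)}-x_n|\le 3\eps$. You only get $3\eps$, not $\eps$, where $-\eps\le x_n<0$. This is exactly the device the paper uses in the proof of \Cref{lem:vee-splitting-under-hessian-bound}: ``extend only the corresponding local component by zero across its graph.'' The interior harmonic estimates should likewise be run component by component, using the distance to that component's own sheet.

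There are two smaller points in Step 2. First, before Taylor-expanding you must check that the normal segment $x_0+t\nu$, $0<t\le\eps^{1/2}$, stays in $\{u>0\}$. A continuity argument based on $u(x_0+t\nu)\ge t-\tfrac{C_1}{2}t^2$ for $t<2/C_1$ does this. Second, testing only along $\nu$ gives $|\nu\mp e_n|\lesssim\eps^{1/4}$ rather than $\eps^{1/2}$. This is harmless, since only smallness is used.
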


\begin{lem}[{\cite{CFFS25}*{Lemma~3.14}}]
\label{lem:consequences-of-closeness-to-a-vee}
Let $n\ge 2$. There exists $\eps_1=\eps_1(n)>0$ such that the following holds. Let $u$ be a global classical solution to the Bernoulli problem in $\R^n$. Suppose that for some $y_1\in \R^n$, $r_1>0$, $\eps>0$, and $\bar e\in \mathbb S^{n-1}$, we have
\begin{equation}\label{closenessep100}
\| u -V_{y_1, \bar e} \|_{L^\infty(B_{r_1}(y_1))} \le \eps r_1.
\end{equation}
Then
\begin{equation}\label{closenessepzero}
\{u=0\}\cap B_{r_1}(y_1) \subset \{x\in B_{r_1}(y_1)\ : \ |\bar e\cdot(x-y_1)|\le \eps r_1\}.
\end{equation}
Moreover:

\begin{enumerate}[(a)]
\item For all $y_2\in \{u=0\}$ and $r_2>0$ such that $B_{r_2}(y_2)\subset B_{r_1}(y_1)$, we have
\[
\| u -V_{y_2, \bar e} \|_{L^\infty(B_{r_2}(y_2))} \le 2\eps r_1.
\]
\item If $\eps\le\eps_1(n)$ then, for some $C= C(n)$,
\[
\dist(x, \{u = 0\}) \le C \eps r_1,\qquad\text{for all}\quad x\in \{z\in B_{r_1/2}(y_1):|\bar e\cdot(z-y_1)|\le \eps r_1\}.
\]
\end{enumerate}
\end{lem}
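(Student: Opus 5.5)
The three claims follow from the pointwise closeness \eqref{closenessep100}, the triangle inequality, and, for part (b), a Harnack argument. No stability or Hessian information is needed.

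\emph{Inclusion \eqref{closenessepzero}.} If $x\in B_{r_1}(y_1)$ and $u(x)=0$, then \eqref{closenessep100} gives $V_{y_1,\bar e}(x)=|u(x)-V_{y_1,\bar e}(x)|\le\eps r_1$. This is exactly \eqref{closenessepzero}.

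\emph{Part (a).} Since $y_2\in B_{r_2}(y_2)\subset B_{r_1}(y_1)$ and $u(y_2)=0$, the inclusion just proved gives $|\bar e\cdot(y_2-y_1)|\le\eps r_1$. For every $x$,
\[
\bigl|V_{y_2,\bar e}(x)-V_{y_1,\bar e}(x)\bigr|\le|\bar e\cdot(y_2-y_1)|\le\eps r_1.
\]
Adding \eqref{closenessep100} on $B_{r_2}(y_2)\subset B_{r_1}(y_1)$ gives $\|u-V_{y_2,\bar e}\|_{L^\infty(B_{r_2}(y_2))}\le2\eps r_1$.

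\emph{Part (b).} Fix $x\in B_{r_1/2}(y_1)$ with $|\bar e\cdot(x-y_1)|\le\eps r_1$. Set $d:=\min\{\dist(x,\{u=0\}),r_1/4\}$, so that $B_d(x)\subset\{u>0\}\cap B_{r_1}(y_1)$ and $u$ is positive and harmonic in $B_d(x)$. We will show $d\le C\eps r_1$; once $\eps_1$ is chosen with $C\eps_1<1/4$, the cap $r_1/4$ is not attained, so $d=\dist(x,\{u=0\})$.

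Upper bound at $x$: from \eqref{closenessep100},
\[
u(x)\le V_{y_1,\bar e}(x)+\eps r_1\le 2\eps r_1 .
\]
Lower bound nearby: choose the sign $s\in\{\pm1\}$ with $s\,\bar e\cdot(x-y_1)\ge0$ and let $y:=x+s\tfrac d2\bar e$. Then $V_{y_1,\bar e}(y)\ge d/2$, hence $u(y)\ge d/2-\eps r_1$. Suppose, for contradiction, that $d\ge 4\eps r_1$; then $u(y)\ge d/4$.

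Since $|y-x|=d/2$ and $B_{3d/4}(x)\Subset B_d(x)$, the Harnack inequality for positive harmonic functions in $B_d(x)$ gives $u(x)\ge c_n u(y)\ge c_n d/4$ with $c_n$ dimensional. Combined with $u(x)\le2\eps r_1$, this yields $d\le (8/c_n)\eps r_1$. In either case $d\le C\eps r_1$ with $C=\max\{4,8/c_n\}$.

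Taking $\eps_1(n)$ so small that $C\eps_1<1/4$, we get $\dist(x,\{u=0\})=d\le C\eps r_1$, which is (b).

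The only mildly delicate step is (b): one has to ensure the Harnack ball stays inside $B_{r_1}(y_1)$, where \eqref{closenessep100} is available, and to choose the direction $\pm\bar e$ that moves away from the ridge. Capping $d$ at $r_1/4$ and choosing $\eps_1$ small handle both.
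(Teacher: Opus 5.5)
Your proof is correct and complete. The paper does not prove this lemma itself; it imports it from \cite{CFFS25}, so there is no in-paper argument to compare against.

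Your argument is self-contained. It reads the $L^\infty$ bound pointwise, uses the triangle inequality for (a), and applies an interior harmonic estimate on the largest ball $B_d(x)\subset\{u>0\}\cap B_{r_1}(y_1)$ for (b). You are also right that it uses only that $u$ is continuous, nonnegative, and harmonic in $\{u>0\}$. Neither stability, nor a Hessian bound, nor the free-boundary condition enters.

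One small simplification in (b) is to replace Harnack and the auxiliary point $y$ by the mean value property on $B_d(x)$. By the symmetry $w\mapsto -w$ and the inequality $|a+t|+|-a+t|\ge 2|a|$, one has $\fint_{B_d(x)}V_{y_1,\bar e}\ge\fint_{B_d(0)}|\bar e\cdot w|\,dw=c_n d$. Hence
\[
2\eps r_1\ge u(x)=\fint_{B_d(x)}u\ge c_n d-\eps r_1,
\]
which gives $d\le 3\eps r_1/c_n$ directly, with no sign choice or case split.
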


\section{Blow-down of global stable solutions}
\label{sec:blowdown}

The goal of this section is to prove that, in $\R^n$ for $n < \nAC$ (see \Cref{defn:critical-dimension} below), non-flat global stable solutions look like a vee at large scales. We first isolate half-space regularity for limits of classical stable solutions, then classify their homogeneous limits. The almost-homogeneity and blow-down statements will follow directly from this classification.

\subsection{Stable cones and homogeneous limits}

We introduce the critical dimension and classify the homogeneous stable limits that arise in the blow-down analysis.

\begin{defn}[Critical dimension]
\label{defn:critical-dimension}
From now on, we define
\[
\nAC := \min\left\{ n\ge 2 :
\begin{array}{l}
\text{there exists a global $1$-homogeneous stable solution $v$ in $\R^n$, with $0\in\FB(v)$,}\\
\text{classical away from the origin, and not equal to $(e\cdot x)_+$ for any $e\in\mathbb S^{n-1}$.}
\end{array}
\right\}.
\]
\end{defn}

From the (now classical) literature on the one-phase problem, the results of Jerison--Savin and De Silva--Jerison imply the following range of possible values for $\nAC$:

\begin{prop}[{\cites{Jerison-Savin,DJ09}}]
\label{prop:critical-dimension-bigger-than-four}
One has
\[
5\le \nAC\le 7.
\]
\end{prop}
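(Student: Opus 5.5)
The statement splits into two independent bounds. Each is essentially a citation, once the two notions of cone are matched up.

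\emph{Lower bound $\nAC\ge5$.} We must show that for $n\le4$ every global $1$-homogeneous stable solution $v$ with $0\in\FB(v)$, classical away from the origin, is a half-plane solution $(e\cdot x)_+$. The plan follows Jerison--Savin \cite{Jerison-Savin}. Homogeneity reduces the problem to the link $\Sigma:=\{v>0\}\cap\mathbb S^{n-1}$. This is a smooth domain, since $v$ is classical off the origin.

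On $\{v>0\}$ one tests the stability inequality \eqref{eq:stability-var} with a cutoff multiple of the Jerison--Savin function $F(D^2v)^{1/3}$, which is a subsolution with the correct boundary sign. We note the point made in the introduction: in $\mathbb R^4$, homogeneity removes the radial direction, so the computation is effectively the three-dimensional one on the link. Integrating against the stability quadratic form, one obtains $\int_{\{v>0\}}|D^2v|^2\phi^2\le C\int |D^2v|^2|\nabla\phi|^2$ with a gain. For $n\le4$, choosing $\phi$ as a logarithmic cutoff in the radius absorbs the singularity at the origin and the region near infinity. This forces $D^2v\equiv0$, and hence $v=(e\cdot x)_+$.

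The main obstacle is that \cite{Jerison-Savin} states its result for minimizers, or for cones that are stable in their sense. One must check that their argument uses only the second inner variation and the smoothness of the link, and not minimality; I expect this to be true. Dimensions $2$ and $3$ are covered by the same computation, or by \cite{CJK04} together with the standard $n=2$ log cut-off argument. We also need $\nAC$ to be well defined as a minimum, which follows from the upper bound proved next.

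\emph{Upper bound $\nAC\le7$.} De Silva--Jerison \cite{DJ09} construct a singular minimizing cone $v$ in $\mathbb R^7$. It is $1$-homogeneous, $0\in\FB(v)$, and $v$ is classical away from the origin, because its free boundary is smooth on $\mathbb S^6$ by construction. Minimizers are stationary, and they have nonnegative second inner variation, since $t\mapsto\cE(v\circ\Psi_t;D)$ has a minimum at $t=0$; hence $v$ is stable. Finally, $v$ is not a half-space solution because it is singular at the origin. Therefore $\nAC\le7$.
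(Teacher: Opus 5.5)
Your proposal is the paper's argument, since the paper proves this proposition purely by citation. The bound $\nAC\ge5$ is the Jerison--Savin theorem that stable $1$-homogeneous solutions with smooth link in $\mathbb R^n$, $n\le4$, are half-spaces; that theorem is already stated for stable cones, and the inner-variation stability used here gives \eqref{eq:stab_ineq_H} for test functions supported away from the origin, so your concern about minimality is moot. The bound $\nAC\le7$ comes from the De Silva--Jerison cone exactly as you verify.

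You should discard your paraphrase of the mechanism, namely the inequality $\int|D^2v|^2\phi^2\le C\int|D^2v|^2|\nabla\phi|^2$. It is not scale invariant. For a nonflat $1$-homogeneous $v$ with an annular cutoff at scale $R$, its two sides grow like $R^{n-2}$ and $1+R^{n-4}$, so it would also rule out the $\mathbb R^7$ cone of \cite{DJ09}. In \cite{Jerison-Savin} the restriction $n\le4$ enters through the subsolution computation for $F(D^2v)^{1/3}$ on cones and through the scaling of the cutoff term for a test function homogeneous of degree $-1$. Rest the lower bound on the citation rather than on that display.
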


For the compactness arguments below, we will also use the following quantitative nondegeneracy estimate.

\begin{lem}[Nondegeneracy, {\cite{CFFS25}*{Lemma~4.2}}]
\label{lem:nondegeneracy-of-stable-solutions}
Let $n\geq 2$, and let $u$ be a global classical stable solution to the Bernoulli problem in $\R^n$. Then, for all $y\in \partial{\{u > 0\}}$ and $r>0$, there exists $c = c(n)$ such that
\begin{equation}\label{eq:density-B}
\fint_{\partial B_r(y)}u \,d\cH^{n-1} \geq c r \quad \mbox{ and }\quad \mathcal{H}^{n-1}(\partial\{u > 0\}\cap B_r(y)) \ge c r^{n-1}.
\end{equation}
\end{lem}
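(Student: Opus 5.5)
The plan is to reduce both inequalities in \eqref{eq:density-B} to one scale-invariant lower bound, using the distributional identity for $\Delta u$, and then prove that bound by a compactness argument in which stability enters. For a classical solution with $|\nabla u|=1$ on $\FB(u)$, integrating by parts against test functions gives $\Delta u=\cH^{n-1}\llcorner\FB(u)$ as measures on $\R^n$. The usual computation for spherical means then gives, with $P_y(s):=\cH^{n-1}(\FB(u)\cap B_s(y))$,
\[
\frac{d}{ds}\fint_{\partial B_s(y)}u\,d\cH^{n-1}=\frac{P_y(s)}{|\partial B_1|\,s^{n-1}} .
\]
Since $u(y)=0$, integrating from $0$ to $r$ shows that a bound $P_y(s)\ge c s^{n-1}$ for all $s$ gives $\fint_{\partial B_r(y)}u\ge c' r$. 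In the other direction, integrating from $r$ to $2r$ and using the perimeter upper bound \Cref{lem:perimeter-bound} shows that a linear lower bound on the spherical mean at scale $2r$, together with an upper bound at scale $r$, gives area at scale comparable to $r$. So it suffices to prove the area bound, and I would argue by contradiction.

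Suppose there are global classical stable solutions $u_k$, points $y_k\in\FB(u_k)$ and radii $r_k$ with $P_{y_k}(r_k)\le r_k^{n-1}/k$. Rescale to $v_k(x):=u_k(y_k+r_kx)/r_k$. Each $v_k$ is again a global classical stable solution, with $0\in\FB(v_k)$, $|\nabla v_k|\le1$ by \Cref{lem:lipschitz-bound}, and $\cH^{n-1}(\FB(v_k)\cap B_1)\to0$. Up to a subsequence, $v_k\to v$ locally uniformly. Since $\Delta v_k\to0$ as measures in $B_1$, the limit $v$ is a nonnegative harmonic function in $B_1$ with $v(0)=0$, so $v\equiv0$ in $B_1$. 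Testing the equation against $v_k$ gives $\int_{B_{1/2}}|\nabla v_k|^2\to0$. On the other hand, stationarity gives Weiss monotonicity, and at $r\to0^+$ the Weiss energy at a classical free-boundary point equals the half-space value. Hence $|\{v_k>0\}\cap B_{1/2}|\ge c$ for every $k$. The limiting picture is therefore a positivity set of definite measure on which $v_k$ is uniformly tiny, with a free boundary of vanishing area. In other words, $\{v_k=0\}$ consists of many small holes, or thin pieces, inside an almost full positivity phase.

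The main obstacle is ruling out this picture, and this is the only place where stability is needed. Non-stable solutions with tiny cavities, built from exterior-problem profiles, show that stationarity alone does not suffice. My first attempt would use the stability inequality in its Jacobi-field form: for $\varphi\in C^\infty_c$,
\[
\int_{\{u>0\}}|\nabla\varphi|^2\ge\int_{\FB(u)}H\varphi^2 .
\]
Here $H$ is the mean curvature of $\FB(u)$ with respect to the inward normal $\nabla u$. I would localize the test to a small component of $\{v_k=0\}$, or to a neighbourhood of it, at its own scale $s\ll1$. On the boundary of a small bounded zero component, the curvature is positive and of order $1/s$ on a set of area about $s^{n-1}$. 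The right-hand side is then about $s^{n-2}$, while a cutoff adapted to the component's scale makes the left-hand side of the same order. Making this quantitative requires a nontrivial stable exterior-cavity rigidity statement in $\R^n$ for $n\ge3$, which I would try to prove by rescaling around the smallest cavity and using \Cref{lem:epsilon-regularity-for-the-hessian} to obtain classical convergence. The resulting contradiction yields $P_y(r)\ge c r^{n-1}$, and the spherical-mean bound follows from the identity above.
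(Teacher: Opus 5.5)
Your reduction and contradiction setup are correct. The identity $\Delta u=\cH^{n-1}\!\lfloor_{\FB(u)}$ and the spherical-mean formula reduce everything to the area bound. The facts that $v_k\to0$ uniformly on $B_1$, $\int_{B_{1/2}}|\nabla v_k|^2\to0$, and $|\{v_k>0\}\cap B_{1/2}|\ge c$ (from ${\bf W}(v_k,1/2)\ge\alpha_n$) are all fine. The gap is in the one step that carries the content of the lemma: excluding this degenerate configuration by stability. It is not carried out, and the route you sketch does not work as stated.

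First, nothing forces $\{v_k=0\}\cap B_1$ to consist of small bounded cavities. Vanishing free-boundary area and vanishing $\sup_{B_1}v_k$ are compatible with zero components reaching $\partial B_1$, thin tubes around curves (area of order $s^{n-2}\to0$ for $n\ge3$), or clouds of holes at many different scales. So ``the smallest cavity at its own scale'' is not well defined.

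Second, even for a single isolated cavity, your count (both sides of the stability inequality of order $s^{n-2}$) is exactly borderline, so no contradiction can come from scaling. One needs a sharp comparison between $\int_{\FB(u)}H\xi^2$ and $\int|\nabla\xi|^2$. For the radial cavity of radius $\rho$ these are $(n-1)\cH^{n-1}(\partial B_1)\rho^{n-2}$ versus the capacity $(n-2)\cH^{n-1}(\partial B_1)\rho^{n-2}$. From $|\nabla u|\le1$ and $|\nabla u|=1$ on $\FB(u)$ you do get $H=-u_{\nu\nu}\ge0$ pointwise, but nothing like $H\gtrsim1/s$.

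Third, the rescaling step fails with the tools you name. \Cref{lem:epsilon-regularity-for-the-hessian} needs $\int|D^2u|^n$ to be small, whereas at a cavity's own scale this scale-invariant quantity is bounded below, so it gives no uniform $C^2$ control there. The paper's compactness result for stable solutions, \Cref{lem:compactness}, cannot be invoked either. Applied to your sequence ($0\in\FB(v_k)$ for all $k$), its Hausdorff convergence of free boundaries would give $0\in\FB(v)$, contradicting $v\equiv0$ on $B_1$. So that lemma already encodes the nondegeneracy you are trying to prove.

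The idealized model itself is not the hard part. For $n\ge3$, suppose $\{u=0\}$ is compact and nonempty, and set $m:=\lim_{|x|\to\infty}u$. The function $|\nabla u|^{\frac{n-2}{n-1}}$ is subharmonic by the refined Kato inequality, and it lies below the capacitary potential $1-u/m$. Comparing normal derivatives on $\FB(u)$ gives $H\ge\frac{n-1}{(n-2)m}$. Testing stability with $(1-u/m)\eta_R$, for a cutoff $\eta_R$ at scale $R$, then yields $\frac{1}{(n-2)m}\cH^{n-1}(\FB(u))\le CR^{2-n}\to0$. The real difficulty is reducing the general degenerate picture to such a model, and your proposal does not do this.

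The paper does not prove this lemma; it imports it from \cite{CFFS25}*{Lemma~4.2}. To turn your outline into a proof, you would need a stability-based nondegeneracy argument that works directly in the multi-scale degenerate configuration, not only for an isolated exterior cavity.
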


We introduce an important monotone quantity:

\begin{defn}[Weiss energy and monotonicity]
\label{defn:weiss-energy-and-monotonicity}
For $u\in H^1(B_r)$ and $0\in \FB(u)$, the Weiss boundary-adjusted energy (see \cite{Weiss1998}) is given by
\begin{equation}
\label{eq:W-def} {\bf W}(u,r) =\frac{1}{r^n} \int_{B_r} (|\nabla u|^2+\mathbbm{1}_{\set{u>0}}) \,dx -\frac{1}{r^{n+1}} \int_{\partial B_r} u^2 \,d\cH^{n-1} = {\bf W}(u_r, 1),
\end{equation}
where $u_r$ denotes the natural dilation of $u$, namely
\[
u_r(x):=\frac{u(rx)}{r}, \quad \text{ for } r>0.
\]
By the Weiss monotonicity formula (see \cite{Weiss1998}*{Theorem~3.1}), if $u\in H^1(B_R)$ is a stationary solution to the Bernoulli problem, then
\[
r\mapsto {\bf W}(u,r)\quad\text{ is non-decreasing on $(0,R)$}
\]
and
\begin{align}\label{eq:W-monotone}
\partial_r{\bf W}(u,r) &= \frac{2}{r^{n+2}} \int_{\partial B_r} (u-x\cdot \nabla u)^2 \,d\cH^{n-1} = \frac{2}{r}\int_{\partial B_1} (u_r - x\cdot \nabla u_r)^2 d\mathcal{H}^{n-1} \ge 0\quad \text{ for a.e.}\quad r\in(0,R)
\end{align}
(see also \cite{Vel23}*{Section 9}).
\end{defn}

We first evaluate this energy on $1$-homogeneous solutions.
\begin{lem}
\label{lem:homogeneous-weiss-upper-bound}
Let $v$ be a Lipschitz $1$-homogeneous stationary solution to the Bernoulli problem in $\R^n$, with $0\in\FB(v)$. Then
\[
{\bf W}(v,1)=|\{v>0\}\cap B_1|.
\]
\end{lem}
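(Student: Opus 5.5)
We want ${\bf W}(v,1)=\int_{B_1}|\nabla v|^2+|\{v>0\}\cap B_1|-\int_{\partial B_1}v^2$, and the claim is equivalent to the identity
\[
\int_{B_1}|\nabla v|^2\,dx=\int_{\partial B_1} v^2\,d\cH^{n-1}.
\]
We will derive it by integrating by parts on the positivity set and exploiting homogeneity.

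By $1$-homogeneity, Euler's relation gives $x\cdot\nabla v=v$ a.e.; on $\partial B_1$ this reads $\partial_r v=v$, so $\int_{\partial B_1}v\,\partial_r v=\int_{\partial B_1}v^2$. Formally $\int_{B_1}|\nabla v|^2=\int_{B_1}\mathrm{div}(v\nabla v)-\int_{B_1} v\Delta v$. Since $v$ is harmonic in $\{v>0\}$ and $v=0$ on $\FB(v)$, the only boundary contribution should come from $\partial B_1$, giving $\int_{\partial B_1} v\,\partial_r v=\int_{\partial B_1}v^2$. To justify this without regularity of $\FB(v)$, we will use that for a Lipschitz stationary (hence, by the standard theory, weak) solution, $v$ is harmonic in the open set $\{v>0\}$. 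Then we test with $(v-\epsilon)_+\phi$ and let $\epsilon\to0$, where $\phi$ is a cutoff approximating $\mathbbm 1_{B_1}$. Explicitly, $\int \nabla v\cdot\nabla((v-\epsilon)_+\phi)=0$ because $(v-\epsilon)_+\phi$ is Lipschitz with support compactly inside $\{v>0\}\cap \supp\phi$ away from the free boundary. Letting $\epsilon\to0$ and $\phi\to\mathbbm 1_{B_1}$ yields $\int_{B_1}|\nabla v|^2=\int_{\partial B_1}v\,\partial_rv$, where the boundary term comes from $\nabla\phi$ concentrating on $\partial B_1$.

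An alternative route avoids harmonicity entirely and uses only homogeneity and the Lipschitz bound. In polar coordinates, $\int_{B_1}|\nabla v|^2=\int_0^1 r^{n-1}\,dr\int_{\partial B_1}|\nabla v|^2=\frac1n\int_{\partial B_1}|\nabla v|^2$, since $\nabla v$ is $0$-homogeneous. Combining this with the stationarity identity obtained from the domain variation $\xi=x\phi(|x|)$ (equivalently, the vanishing of \eqref{eq:W-monotone} for homogeneous $v$ together with the Pohozaev identity) also yields the claim. We prefer the first argument.

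The main technical obstacle is the limit in the boundary term, i.e.\ making sense of $\partial_r v$ on $\partial B_1$ for merely Lipschitz $v$. Homogeneity resolves this, since $x\cdot\nabla v=v$ holds pointwise a.e.\ and the term $\int \nabla v\cdot\nabla\phi\,(v-\epsilon)_+$ with radial $\phi=\phi(|x|)$ equals $\int \phi'(|x|)\,|x|^{-1}v(v-\epsilon)_+$, which converges to $-\int_{\partial B_1}v^2$ as $\phi\to\mathbbm 1_{B_1}$, by continuity of $v$. Dominated convergence handles $\epsilon\to0$. This gives $\int_{B_1}|\nabla v|^2=\int_{\partial B_1}v^2$, hence ${\bf W}(v,1)=|\{v>0\}\cap B_1|$.
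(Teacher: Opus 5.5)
Your main argument is the paper's proof: test the harmonicity of $v$ in $\{v>0\}$ against $(v-\epsilon)_+$ localized to $B_1$, let $\epsilon\downarrow0$, and use $x\cdot\nabla v=v$ to turn the boundary term into $\int_{\partial B_1}v^2$. Your radial cutoff and a.e.\ Euler relation make rigorous the level-set and trace steps that the paper states only briefly. You and the paper share one caveat: harmonicity in $\{v>0\}$ does not follow in general from inner-variation stationarity (for instance, $|x_n|+1$ is inner-stationary but not harmonic in its positivity set), so it must be part of the notion of solution, as it is for the limits in $\mathscr C_m$ where the lemma is applied. Your sketched alternative via $\xi=x\phi(|x|)$ and the $0$-homogeneity of $|\nabla v|^2+\mathbbm 1_{\{v>0\}}$ avoids this issue and uses only the stated stationarity hypothesis.
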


\begin{proof}
Let $\Omega:=\{v>0\}$. For almost every $\varepsilon>0$, integrate by parts in $\{v>\varepsilon\}\cap B_1$ against $v-\varepsilon$, and then let $\varepsilon\downarrow0$. The contribution from $\{v=\varepsilon\}$ vanishes, while $1$-homogeneity gives $\partial_\nu v=v$ on $\partial B_1$. Therefore
\[
\int_{B_1}|\nabla v|^2\,dx =\int_{\partial B_1\cap\Omega}v\,\partial_\nu v\,d\cH^{n-1} =\int_{\partial B_1}v^2\,d\cH^{n-1}.
\]
Substituting this equality into \eqref{eq:W-def} proves the identity.
\end{proof}

Set
\begin{equation}\label{eq:alpha_n}
\alpha_n:=\frac12|B_1| =\frac{\cH^{n-1}(\mathbb S^{n-1})}{2n}.
\end{equation}
By \Cref{lem:homogeneous-weiss-upper-bound}, for every $e\in\mathbb S^{n-1}$,
\[
{\bf W}((e\cdot x)_+,1)=\alpha_n, \qquad {\bf W}(|e\cdot x|,1)=2\alpha_n.
\]

To prove \Cref{lem:almost-homogeneous-solutions} we will need the following compactness result for sequences of stable solutions.

\begin{lem}[Compactness]
\label{lem:compactness}
Let $n \geq 2$, and let $v_k \in C^{0,1}_{\rm loc}(B_k)$ be a sequence of classical stable solutions to the Bernoulli problem in $B_k \subset \R^n$, with $0 \in \FB(v_k)$ for all $k \in \N$. Then the following hold:
\begin{enumerate}
\item Up to a subsequence, $v_k$ converges to some function $v_\infty$ satisfying $|\nabla v_\infty| \leq 1$ in $\R^n$, with strong convergence in $(H^1_{\rm loc} \cap C^{0,\alpha}_{\rm loc})(\R^n)$ for all $\alpha \in (0,1)$.

\item \label{it:compact-2} The sets $\overline{\{v_k>0\}}$, $\{v_k=0\}$, and the free boundaries $\FB(v_k)$, converge locally in the Hausdorff distance in $\R^n$ to their corresponding sets for $v_\infty$ (up to a subsequence). Specifically,
\[
\overline{\{v_k>0\}} \to \overline{\{v_\infty>0\}}, \quad \{v_k=0\} \to \{v_\infty=0\}, \quad \text{and} \quad \FB(v_k) \to \FB(v_\infty),\quad\text{locally}.
\]
Moreover,
\[
\mathbbm 1_{\{v_k>0\}}\to \mathbbm 1_{\{v_\infty>0\}} \qquad\text{in }L^1_{\rm loc}(\mathbb R^n).
\]

\item The limit function $v_\infty$ is a stable solution in the sense of \Cref{defn:notions-of-solution}.
\end{enumerate}
\end{lem}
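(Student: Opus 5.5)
The plan has three parts, one for each item of the statement. We use the uniform Lipschitz bound, nondegeneracy, and stability to pass to the limit.

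\emph{Item (1).} By \Cref{lem:lipschitz-bound} applied locally (or the interior Lipschitz bound for classical solutions on $B_k$), we have $|\nabla v_k|\le 1$ on compact sets for $k$ large. Since $v_k(0)=0$, we also get $|v_k(x)|\le |x|$. Arzel\`a--Ascoli and a diagonal argument then give a subsequence converging in $C^{0,\alpha}_{\rm loc}$ for every $\alpha<1$, and weakly in $H^1_{\rm loc}$, to some $v_\infty$ with $|\nabla v_\infty|\le1$. To upgrade to strong $H^1_{\rm loc}$ convergence, we show $\int\varphi|\nabla v_k|^2\to\int\varphi|\nabla v_\infty|^2$ for every $\varphi\in C^\infty_c$. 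Integrating by parts on $\{v_k>0\}$, where $\Delta v_k=0$ and $v_k=0$ on $\FB(v_k)$, gives
\[
\int\varphi|\nabla v_k|^2=-\int v_k\nabla v_k\cdot\nabla\varphi .
\]
The right-hand side passes to the limit by uniform convergence of $v_k$ and weak convergence of $\nabla v_k$. The same identity holds for $v_\infty$, since $v_\infty$ is harmonic in the open set $\{v_\infty>0\}$, being a locally uniform limit of harmonic functions there.

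\emph{Item (2).} The Hausdorff convergences follow from two-sided density estimates, and this is the step I expect to be the main obstacle. We need uniform nondegeneracy: for $y\in\FB(v_k)$ and $B_r(y)\subset B_k$,
\[
\sup_{B_r(y)}v_k\ge cr .
\]
This is the local analogue of \Cref{lem:nondegeneracy-of-stable-solutions}, which uses stability. I would first try to rerun its proof in balls, since the global statement is not directly applicable. With this in hand, the argument is standard.
\begin{itemize}
\item Uniform convergence gives that limits of points of $\{v_k=0\}$ lie in $\{v_\infty=0\}$.
\item Nondegeneracy passes to the limit, so limits of free boundary points are free boundary points of $v_\infty$.
\item Conversely, if $x\in\FB(v_\infty)$, pick $x'\in B_r(x)$ with $v_\infty(x')>0$; such a point exists because $x\in\partial\{v_\infty>0\}$. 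Then $v_k(x')>0$ for $k$ large. Also $B_{r}(x)$ must meet $\{v_k=0\}$ for large $k$: otherwise $v_k$ is harmonic there with $|\nabla v_k|\le1$, and nondegeneracy at a nearby zero gives a contradiction. I would handle this as in the usual Alt--Caffarelli compactness (\cite{AC81}, \cite{Vel23}*{Chapter 6}).
\end{itemize}
For the $L^1_{\rm loc}$ convergence of indicators, it suffices to show $|\FB(v_\infty)|=0$ and $|\{v_\infty=0\}\setminus\overline{\{v_k=0\}}|$-type errors vanish. Density estimates, namely the positive density of the zero set near the free boundary together with the perimeter bound (\Cref{lem:perimeter-bound} localized), give that $\FB(v_\infty)$ is $\cH^{n-1}$-$\sigma$-finite, hence Lebesgue-null. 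Pointwise a.e.\ convergence then follows from the Hausdorff convergence, and dominated convergence concludes.

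\emph{Item (3).} For each fixed $\xi\in C^\infty_c$, both $\frac{d}{dt}\big|_{t=0}\cE(v\circ\Psi_t)$ and $\frac{d^2}{dt^2}\big|_{t=0}\cE(v\circ\Psi_t)$ are obtained by changing variables $y=\Psi_t(x)$. Each is an integral of the form
\[
\int\bigl(A(D\xi)\nabla v\cdot\nabla v+b(D\xi)\mathbbm 1_{\{v>0\}}\bigr),
\]
with $A,b$ polynomial in $D\xi$. These expressions are continuous under strong $H^1_{\rm loc}$ convergence of $v_k$ together with $L^1_{\rm loc}$ convergence of $\mathbbm 1_{\{v_k>0\}}$, which are exactly the conclusions of items (1) and (2). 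Hence the stationarity identity \eqref{eq:stationary} and the stability inequality \eqref{eq:stability-var} pass to $v_\infty$, so $v_\infty$ is a stable solution.
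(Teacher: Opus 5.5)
Your strong $H^1_{\rm loc}$ argument in item (1) and the passage of the inner variations to the limit in item (3) are fine, given their inputs. The genuine gap is in item (2), in the inclusions you do not treat, and it carries over to item (3).

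The nondegeneracy you plan to prove, $\sup_{B_r(y)}v_k\ge cr$ only for $y\in\FB(v_k)$, does not exclude ``ghost'' positive regions. These are balls $B_r(x)\subset\{v_k>0\}$ for all $k$ on which $v_k\to0$ uniformly, so that $v_\infty\equiv0$ on $B_{2r}(x)$. In that situation your bound only shows that $\FB(v_k)\cap B_{3r/2}(x)=\varnothing$ for large $k$. That says the ball is entirely positive, which is no contradiction. The minimum principle does not help either, since $v_\infty\equiv0$ is harmonic.

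Your bullets establish $\FB(v_k)\to\FB(v_\infty)$ and one inclusion for the zero sets. Note that the third bullet actually works by the strong minimum principle for the harmonic limit, not by nondegeneracy. The two missing inclusions are exactly the ghost scenario:
\begin{itemize}
\item every interior point of $\{v_\infty=0\}$ must be a limit of zeros of $v_k$;
\item limits of points of $\overline{\{v_k>0\}}$ must lie in $\overline{\{v_\infty>0\}}$.
\end{itemize}
The second inclusion is what gives $\mathbbm 1_{\{v_k>0\}}\to0$ on $\operatorname{int}\{v_\infty=0\}$. So your claim that pointwise a.e.\ convergence follows from the Hausdorff convergence is unsupported. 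If ghost regions existed, $\mathbbm 1_{\{v_k>0\}}$ would converge weakly to a density strictly larger than $\mathbbm 1_{\{v_\infty>0\}}$. The limits of the first and second inner variations would then not be those of $\cE$ at $v_\infty$, and item (3) would fail.

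What closes the argument is nondegeneracy centered at every point of $\overline{\{v_k>0\}}$, namely the interior estimate $v_k(x)\ge c\,\dist(x,\FB(v_k))$, uniformly in $k$. This is the form used in the Alt--Caffarelli compactness you cite, where it comes from minimality. For stable solutions it must come from stability. It does not follow from the free-boundary-centered statement of \Cref{lem:nondegeneracy-of-stable-solutions} plus harmonicity: a nonnegative harmonic function in $B_d(x)$ can vanish with unit slope at a boundary point while being uniformly small. With the interior estimate, $v_\infty(x)=0$ forces $\dist(x,\{v_k=0\})\to0$. Likewise, $\dist(x,\overline{\{v_\infty>0\}})>0$ forces $x\notin\overline{\{v_k>0\}}$ for large $k$.

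Two smaller corrections.
\begin{itemize}
\item The claim $|\FB(v_\infty)|=0$ cannot come from positive density of the zero phase, which fails here. For example, the vee $|x_n|$ is a limit of two-plane solutions, and its zero set is a hyperplane. Instead, pass $\cH^{n-1}(\FB(v_k)\cap B_r(y))\ge cr^{n-1}$ to the weak limit $\Delta v_\infty$ of $\Delta v_k=\cH^{n-1}\lfloor\FB(v_k)$. Combine this with $\Delta v_\infty(B_r)\le Cr^{n-1}$ to get $\cH^{n-1}(\FB(v_\infty)\cap K)<\infty$ for compact $K$.
\item \Cref{lem:lipschitz-bound} is a statement about global solutions. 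For solutions defined only on $B_k$, it does not give $|\nabla v_k|\le1$ on compacts by being ``applied locally''. You must either assume this bound or prove it; every use in the paper has it, since there the $v_k$ are rescalings of global solutions or the bound is imposed.
\end{itemize}
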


\begin{proof}
This is \cite{CFFS25}*{Lemma 4.5}; its proof also gives the stated phase-indicator convergence.
\end{proof}

In particular, if $u$ is global and $u_{r_k}\to u_\infty$ along a blow-down sequence, the strong $H^1_{\rm loc}$, phase-indicator, and locally uniform convergence give
\[
{\bf W}(u_\infty,\rho) =\lim_{k\to\infty}{\bf W}(u,r_k\rho) ={\bf W}(u,\infty) \quad\text{for}\quad \rho>0.
\]
Thus the equality case of \eqref{eq:W-monotone} makes $u_\infty$ $1$-homogeneous.

\begin{lem}
\label{lem:half-space-regularity-classical-stable-limits}
For each $n\ge2$, there is a dimensional constant $\eps_{\rm lim}>0$ with the following property. Let $u_j$ be classical stable solutions in $B_2\subset\R^n$, with $0\in\FB(u_j)$, and suppose that
\[
u_j\to u \quad\text{locally uniformly and strongly in }H^1_{\rm loc}(B_2).
\]
If, for some $e\in\mathbb S^{n-1}$,
\[
\fint_{B_2}\bigl|u-(e\cdot x)_+\bigr|\,dx\le\eps_{\rm lim},
\]
then $u$ is a classical stable solution in $B_{1/4}$.
\end{lem}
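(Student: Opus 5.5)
The plan is to reduce the statement to the flat $\eps$-regularity \Cref{lem:epsilon-regularity}, applied to the approximating classical solutions $u_j$ on small balls, and then pass to the limit using the uniform $C^k$ bounds. The key point is to upgrade the weak $L^1$ closeness of the limit $u$ to a half-plane solution into uniform $L^\infty$ closeness of $u_j$ to a half-plane solution on a ball of fixed size.

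\textbf{Step 1: from $L^1$ to $L^\infty$ closeness of $u$.} Since $|\nabla u_j|\le1$ (interior gradient bounds for stable classical solutions near a free boundary point; alternatively from the compactness \Cref{lem:compactness} framework), $u$ is $1$-Lipschitz. Then $u-(e\cdot x)_+$ is $2$-Lipschitz, so an $L^1$-average bound $\eps_{\rm lim}$ on $B_2$ forces $\|u-(e\cdot x)_+\|_{L^\infty(B_1)}\le C\eps_{\rm lim}^{1/(n+1)}$. Indeed, a point where the difference is $t$ produces a ball of radius $t/4$ where it exceeds $t/2$.

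\textbf{Step 2: transfer to $u_j$ and handle the zero phase.} By uniform convergence, for large $j$ we get $\|u_j-(e\cdot x)_+\|_{L^\infty(B_1)}\le 2C\eps_{\rm lim}^{1/(n+1)}=:\eps'$. This is not yet \eqref{eq:eps-reg-classical-flat}, which asks for closeness to $x_n$ on $\{u_j>0\}$. The concern is that $u_j$ might have positive islands in $\{e\cdot x<0\}$, where $u_j\le\eps'$ and $x_n$ is very negative. We rule this out with nondegeneracy. Although \Cref{lem:nondegeneracy-of-stable-solutions} is stated for global solutions, its local version (same proof from \cite{CFFS25}) gives $\sup_{B_r(y)}u_j\ge cr$ for $y\in\FB(u_j)$ and $B_r(y)\subset B_2$. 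Hence $\FB(u_j)\cap B_{1/2}\subset\{|e\cdot x|\le C\eps'\}$, and any point of $\{u_j>0\}\cap B_{1/2}$ with $e\cdot x<-C\eps'$ would lie in a component whose boundary free boundary point violates nondegeneracy. In this way we obtain $\{u_j>0\}\cap B_{1/2}\subset\{e\cdot x>-C\eps'\}$. On that set $|u_j-e\cdot x|\le C\eps'$. After rescaling $B_{1/2}$ to $B_1$ and rotating $e$ to $e_n$, \eqref{eq:eps-reg-classical-flat} holds with $\eps=C\eps'\le\eps_\circ$, provided $\eps_{\rm lim}$ is chosen small.

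\textbf{Step 3: conclude.} \Cref{lem:epsilon-regularity} gives uniform $C^k$ bounds on $u_j$ in $B_{1/4}\cap\{u_j>0\}$, and shows that $\FB(u_j)\cap B_{1/4}$ are uniformly $C^k$ graphs. By Arzel\`a--Ascoli the graphs converge in $C^{k-1}$, and $u_j\to u$ in $C^{k-1}$ up to the free boundary. So $u$ is harmonic in its positivity set, has a smooth free boundary graph, and satisfies $|\nabla u|=1$ there: it is a classical solution in $B_{1/4}$.

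Stability passes to the limit. The second inner variation of $\cE$ in $B_{1/4}$ is an integral of quantities involving $\nabla u$, $\mathbbm 1_{\{u>0\}}$, and $D\xi$, all continuous under strong $H^1$ and $L^1$-indicator convergence; this is exactly what \Cref{lem:compactness}(3) records. Hence $u$ is a classical stable solution in $B_{1/4}$.

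\textbf{Main obstacle.} I expect the main obstacle to be Step 2, namely excluding stray positive components or free boundary pieces of $u_j$ in the zero side. This requires the local nondegeneracy of stable classical solutions, and one must check that the \cite{CFFS25} argument localizes to $B_2$.
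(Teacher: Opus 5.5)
\textbf{The gap is in Step~2.} Your architecture is the one the paper uses: flatness of the approximating $u_j$ in the sense of \eqref{eq:eps-reg-classical-flat}, then \Cref{lem:epsilon-regularity} uniformly in $j$, then passage to the limit. The difference is where the flatness comes from. The paper takes it directly from \cite{CFFS25}*{Lemma~C.1}, whose proof contains the slicing argument also quoted in the proof of \Cref{prop:classification-homogeneous-classical-stable-limits}. You try to get it from nondegeneracy, and that does not work, for two reasons.

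First, \Cref{lem:nondegeneracy-of-stable-solutions} is stated only for global solutions, and you give no argument for the local version on $B_2$.

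Second, and more seriously, even granting that local version, nondegeneracy at free-boundary points cannot exclude positivity below the slab. Suppose $\FB(u_j)\cap B_{1/2}\subset\{|e\cdot x|\le C\eps'\}$. Then the connected set $D:=B_{1/2}\cap\{e\cdot x<-C\eps'\}$ contains no free-boundary point. Hence either $D\subset\{u_j=0\}$, or $D\subset\{u_j>0\}$ with $0<u_j\le\eps'$ on $D$. In the second case, suppose $D$ is joined to the upper region through gaps of the zero phase inside the slab. Then every free-boundary point $y$ of that component lies in the slab, and for $r\gg\eps'$ the upper half of $B_r(y)$ alone gives $\sup_{B_r(y)}u_j\gtrsim r$ and $\fint_{\partial B_r(y)}u_j\ge c_nr$. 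Nothing is violated, and restricting $u_j$ to a single component does not help. Picture a zero phase shaped like a thin perforated sheet, with $u_j\approx e\cdot x$ above it and small positive values below: nondegeneracy does not see it, so it must be excluded by another mechanism.

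For this particular alternative, compactness in $\eps_{\rm lim}\to0$ would suffice. Strong $H^1$ convergence lets the stationarity identity $\int(|\nabla u|^2+\chi)\,{\rm div}\,\xi-2\nabla u\,D\xi\,\nabla u=0$ pass to the limit, where $\chi$ is the weak-$*$ limit of $\mathbf 1_{\{u_j>0\}}$. For the limit $(e\cdot x)_+$ this identity forces $\chi=0$ a.e.\ on $\{e\cdot x<0\}$, while $D\subset\{u_j>0\}$ gives $\chi=1$ there.

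However, confining the free boundary to the slab from below still needs a genuine nondegeneracy or density estimate for local stable solutions. This means excluding thin positive pieces, of vanishing volume, that reach into $B_{1/2}$. Your proposal assumes this estimate rather than proving it, and this is exactly the part of the argument the paper delegates to \cite{CFFS25}*{Lemma~C.1}.

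\textbf{A smaller issue in Step~1.} \Cref{lem:lipschitz-bound} concerns global solutions, and the bound $|\nabla v_\infty|\le1$ in \Cref{lem:compactness} concerns solutions on expanding balls. In $B_2$ the bound $|\nabla u_j|\le1$ can fail. For example, take the radial solution whose positive phase is $B_\rho(c)$ with $|c|=\rho>2$: it has $H<0$ on its free boundary, so it is stable, yet $|\nabla u|>1$ inside. A dimensional Lipschitz bound on $B_{3/2}$ is still available for large $j$. Indeed, $(|\nabla u_j|^2-1)_+$ extended by zero is subharmonic, and Caccioppoli for the nonnegative subharmonic $u_j$, together with your $L^1$ hypothesis, bounds $\int_{B_{7/4}}|\nabla u_j|^2$. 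So Step~1 survives with a dimensional constant in place of $1$.
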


\begin{proof}
It is a direct consequence of \cite{CFFS25}*{Lemma C.1} and \Cref{lem:epsilon-regularity}.
\end{proof}

For $m\ge2$, let $\mathscr C_m$ be the class of nonnegative global $1$-homogeneous functions $v$ in $\mathbb R^m$, with $0\in\FB(v)$, for which there are radii $R_j\to\infty$ and classical stable solutions $v_j$ in $B_{R_j}$, with $0\in\FB(v_j)$, such that
\[
v_j\to v \quad\text{locally uniformly and strongly in }H^1_{\rm loc}(\mathbb R^m).
\]

\begin{prop}[Classification of homogeneous limits]
\label{prop:classification-homogeneous-classical-stable-limits}
Let $2\le m<\nAC$ and $v\in\mathscr C_m$. Then, for some $e\in\mathbb S^{m-1}$,
\[
\text{either}\qquad v=(e\cdot x)_+, \qquad\text{or}\qquad v=|e\cdot x|.
\]
\end{prop}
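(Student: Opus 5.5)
The plan is to argue by induction on the dimension $m$, using the classical dimension-reduction scheme of Federer type adapted to the Bernoulli setting. Take $v\in\mathscr C_m$, realized as a limit of classical stable solutions $v_j$ in $B_{R_j}$. By \Cref{lem:compactness} (applied along the sequence $v_j$), $v$ is a stable solution, $|\nabla v|\le 1$, and the free boundaries and phase indicators converge. Since $v$ is $1$-homogeneous, its Weiss energy is constant in $r$ and equals ${\bf W}(v,1)=|\{v>0\}\cap B_1|$ by \Cref{lem:homogeneous-weiss-upper-bound}. The goal is to show that $v$ is classical away from the origin \emph{except} possibly along a line where two sheets touch (the vee), and then identify $v$.

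\textbf{Step 1 (tangent cones at points of $\partial B_1$).} Let $x_0\in\FB(v)\cap\mathbb S^{m-1}$. Blow up $v$ at $x_0$: by a diagonal argument, $(v)_{x_0,\rho}$ with $\rho\to0$ is again a limit of rescaled classical stable solutions $v_j$, so any blow-up $w$ lies in $\mathscr C_m$ (homogeneity of $w$ follows from Weiss monotonicity at $x_0$ and the limit argument after \Cref{lem:compactness}). Moreover, by homogeneity of $v$, $w$ is invariant in the direction $x_0$, hence $w(x)=\tilde w(x')$ with $\tilde w\in\mathscr C_{m-1}$ (restricting the approximating solutions to slices is not literally classical, but one may instead keep the translation invariance and note that the $(m-1)$-dimensional classification argument only uses the structure used below). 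By the inductive hypothesis, $\tilde w$ is a half-plane solution or a vee; the base case $m=2$ is handled directly: $1$-homogeneous functions in $\R^2$ that are harmonic in cones with $|\nabla v|=1$ on the rays are unions of sectors of opening $\pi$, hence $(e\cdot x)_+$ or $|e\cdot x|$ (the empty-zero-set configuration forbidden by $0\in\FB$ and Lipschitz bound, and sector counts beyond two excluded by opening angle $\pi$).

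\textbf{Step 2 (regularity on the sphere).} At points where the tangent cone is a half-space, \Cref{lem:half-space-regularity-classical-stable-limits} makes $v$ classical nearby. At points where the tangent cone is a vee $|e\cdot x|$, the zero set of $v$ has empty interior near $x_0$ and $v$ is locally harmonic across (by Lipschitz continuity, $\Delta v\ge0$ and the vanishing density of $\{v=0\}$; the approximating sheets merge). So $\FB(v)\cap\mathbb S^{m-1}$ splits into a relatively open set of classical points and a closed set $\Sigma$ of ``double'' points where $v$ is harmonic across. Now $v$ restricted to the open set $\{v>0\}\cup\Sigma$ is harmonic. If $\{v=0\}$ has nonempty interior, then $v$ is a $1$-homogeneous stable cone, classical away from $0$ (the double points lie in the interior of the positivity region after removing the null set), so since $m<\nAC$, $v=(e\cdot x)_+$ by \Cref{defn:critical-dimension}. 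Otherwise $\{v=0\}$ is a null set, $v$ is globally harmonic (a nonnegative harmonic function away from a set of capacity zero, with Lipschitz bound), and $1$-homogeneous, hence linear on each component—nonnegativity plus $0\in\FB(v)$ and $|\nabla v|=1$ at approximating free boundaries force $v=|e\cdot x|$ via Weiss energy: ${\bf W}(v,1)=|B_1|=2\alpha_m$, and $\Delta v$ is a nonnegative measure supported on $\{v=0\}$ which must be a hyperplane.

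The main obstacle is Step 2: justifying that at vee-type points $v$ is genuinely harmonic across and that the double-point set does not destroy the classical/stable structure needed to apply the definition of $\nAC$. I would first try to use the convergence of phase indicators in $L^1$ and the monotone Weiss density $2\alpha_m$ at such points to show $\{v=0\}$ has zero density there, then use $\Delta v=2\,\cH^{m-1}\llcorner\{v=0\}$-type identities from stationarity.
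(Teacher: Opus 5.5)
Step~2 is where the proposition is actually decided, and as written it fails. The assertions that $v$ is ``harmonic across'' points with a vee tangent cone, and that $v$ is globally harmonic when $\{v=0\}$ is null, are false. The unit vee $|e\cdot x|$ belongs to $\mathscr C_m$, has null zero set, and satisfies $\Delta|e\cdot x|=2\,\cH^{m-1}\!\lfloor_{e^\perp}\neq0$; your own last sentence of Step~2 contradicts the harmonicity claim. Likewise, when $\{v=0\}$ has interior you cannot invoke \Cref{defn:critical-dimension}, which requires $v$ to be classical away from the origin. Vee-type points of $\FB(v)$ are exactly the non-classical ones, and nothing in your argument excludes them.

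The missing idea is to compare Weiss energies at the off-origin centre $x_0$ between $r\to0^+$ and $r\to\infty$. Write ${\bf W}_{x_0}(v,r):={\bf W}(v(x_0+\cdot),r)$. Homogeneity gives $r^{-1}v(x_0+r\,\cdot)=v(\cdot+x_0/r)\to v$, so \Cref{lem:homogeneous-weiss-upper-bound} yields ${\bf W}_{x_0}(v,\infty)={\bf W}(v,1)=|\{v>0\}\cap B_1|\le|B_1|$. A vee blow-up at $x_0$ instead gives ${\bf W}_{x_0}(v,0^+)=|B_1|$. Monotonicity then makes ${\bf W}_{x_0}(v,\cdot)$ constant, so $v$ is homogeneous about both $0$ and $x_0$. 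Subtracting the two Euler identities gives $x_0\cdot\nabla v\equiv0$, so $x_0$ lies in the spine $S(v)$ of translation invariances.

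Consequently, once blow-ups off the spine are known to be half-spaces or vees, they must be half-spaces. Then $v$ is classical on $\mathbb R^m\setminus S(v)$ by \Cref{lem:half-space-regularity-classical-stable-limits}. Applying \Cref{defn:critical-dimension} to the quotient of $v$ by $S(v)$ forces $\operatorname{codim}S(v)=1$, since that quotient has trivial spine and so is not a half-space. The paper runs exactly this comparison, but through the minimum $\Lambda_m$ of ${\bf W}(\cdot,1)$ over non-half-space elements of $\mathscr C_m$, which is attained by compactness. For a minimizer, a blow-up at a non-classical point is merely not a half-space, so its energy is at least $\Lambda_m={\bf W}(v,1)$. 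No classification of blow-ups, and hence no induction, is needed.

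There are two further gaps. First, your induction on $m$ does not close. The cross-section $\tilde w$ is not known to lie in $\mathscr C_{m-1}$, because its approximating classical stable solutions live in $\mathbb R^m$, and you leave this unresolved. A workable repair is to induct on $\operatorname{codim}S(v)$ inside $\mathscr C_m$: a blow-up at $x_0\in\FB(v)\setminus S(v)$ again lies in $\mathscr C_m$ and is invariant under $S(v)+\mathbb R x_0$.

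Second, the slope of the vee is not fixed by the Weiss energy. Every $a|e\cdot x|$ with $0<a\le1$ is stationary and has ${\bf W}(a|e\cdot x|,1)=|B_1|$. Your base case $m=2$ also presupposes $|\nabla v|=1$ on the rays, which is not available for a limit. Stationarity only fixes the slope of a half-space and forces equal slopes on the two sides of a two-sided profile. Excluding $a<1$ requires using the approximating classical solutions. The paper first gets $0<a\le1$ from \Cref{lem:compactness,lem:nondegeneracy-of-stable-solutions,lem:lipschitz-bound}, then rules out $a<1$ with the slicing argument of \cite[Lemmas~4.4 and C.1]{CFFS25}.
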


\begin{proof}
Fix $2\le m<\nAC$. By \Cref{lem:compactness} and a diagonal argument, $\mathscr C_m$ is sequentially compact and closed under free-boundary blow-ups. Its elements and their blow-ups are stable, and the latter are $1$-homogeneous by the equality case of \eqref{eq:W-monotone}.

Moreover, \Cref{lem:half-space-regularity-classical-stable-limits} and \Cref{defn:critical-dimension} give a positive $L^1(B_2)$-gap between the non-half-space elements of $\mathscr C_m$ (for instance, the vees) and the unit half-spaces. Since ${\bf W}(\cdot,1)$ is continuous under the convergence in \Cref{lem:compactness}, the minimum
\[
\Lambda_m:=\min\left\{{\bf W}(v,1):v\in\mathscr C_m, \ v\text{ is not a half-space}\right\},
\]
is attained. We claim that every minimizer in this definition is a unit vee.

Fix such a minimizer $v$. Its spine is the linear subspace
\[
S(v):=\{q\in\mathbb R^m:v(x+q)=v(x)\text{ for every }x\in\mathbb R^m\}.
\]
Write $\ell=m-\dim S(v)$ for the quotient dimension. Suppose first that $\ell\ge2$. Write $\mathbb R^m=S(v)^\perp\times S(v)$ and $v(z,s)=\bar v(z)$. Stationarity and stability pass to $\bar v$. Thus $\bar v$ is a global $1$-homogeneous stable solution in $\mathbb R^\ell$. If $\bar v$ is classical near every nonzero free-boundary point, then it is classical away from the origin. Since $\bar v$ has trivial spine, it is not a half-space when $\ell\ge2$, contradicting $\ell<\nAC$.

There is therefore a nonzero free-boundary point $\bar y$ at which $\bar v$ is not classical. Set $y=(\bar y,0)\in S(v)^\perp$, and let $w$ be a blow-up of $v$ at $y$. It cannot be a half-space: otherwise, at a sufficiently small fixed scale, the rescaling of $v$ about $y$ would satisfy \Cref{lem:half-space-regularity-classical-stable-limits}, making $v$ classical near $y$. Since $v$ is $1$-homogeneous,
\[
r^{-1}v(y+r\,\cdot)=v(\,\cdot+y/r)\to v \quad\text{locally in }H^1
\]
as $r\to\infty$. Hence, the translated form of \eqref{eq:W-monotone} and minimality of $v$ in the definition of $\Lambda_m$ give (denoting, for $v\in\mathscr C_m$ and $y\in\FB(v)$, ${\bf W}_y(v,r):={\bf W}(v(y+\cdot),r)$)
\[
\Lambda_m \le {\bf W}(w,1) =\lim_{r\downarrow0}{\bf W}_y(v,r) \le\lim_{r\to\infty}{\bf W}_y(v,r) ={\bf W}(v,1) =\Lambda_m.
\]
That is, ${\bf W}_y(v,r)$ is constant. The equality case of \eqref{eq:W-monotone}, translated to the center $y$, makes $v$ homogeneous about $y$ as well as the origin. Subtracting the two Euler identities gives $y\cdot\nabla v=0$. Integrating along lines parallel to $y$, using that $v$ is Lipschitz, gives $v(x+ty)=v(x)$ for every $x,t$. Thus $y\in S(v)$, a contradiction. This excludes $\ell\ge2$, so $\ell=1$.

After a rotation, $v$ is a one-dimensional function of $x_m$. $1$-homogeneity and stationarity imply that it is either a unit half-space or has the form $a|x_m|$ for some $a>0$. The first alternative is excluded by the definition of $\Lambda_m$. In the second one, \Cref{lem:compactness,lem:nondegeneracy-of-stable-solutions,lem:lipschitz-bound} give $0<a\le1$. The slicing argument in the proofs of \cite[Lemmas~4.4 and C.1]{CFFS25} now rules out $a<1$ (such an argument works in any dimension). Therefore $v=|x_m|$, proving the claim, and
\[
\Lambda_m=2\alpha_m.
\]

Finally, let $u\in\mathscr C_m$. If $u$ is a half-space, stationarity fixes its slope to be one. Otherwise, minimality and \Cref{lem:homogeneous-weiss-upper-bound} give
\[
2\alpha_m=\Lambda_m \le {\bf W}(u,1) =|\{u>0\}\cap B_1| \le |B_1|=2\alpha_m.
\]
Hence $u$ is a minimum in the definition of $\Lambda_m$, and the claim gives $u=|e\cdot x|$ for some $e\in\mathbb S^{m-1}$.
\end{proof}

\begin{remark}[Weiss-energy range]
\label{rem:weiss-energy-range}
Let $2\le n<\nAC$, and let $u$ be a global classical stable solution with $0\in\FB(u)$. Then
\begin{equation}\label{eq:weiss-energy-range}
\alpha_n\le {\bf W}(u,1)\le2\alpha_n.
\end{equation}
Indeed, every blow-up of $u$ at the classical free-boundary point $0$ is a unit half-space, whereas every blow-down belongs to $\mathscr C_n$ and is therefore a unit half-space or a unit vee by \Cref{prop:classification-homogeneous-classical-stable-limits}. The inequalities follow from \eqref{eq:W-monotone} and \eqref{eq:alpha_n}.
\end{remark}

\subsection{Almost homogeneity and blow-downs}

We now turn \Cref{prop:classification-homogeneous-classical-stable-limits} into quantitative almost-homogeneity and blow-down statements for global stable solutions.

\begin{lem}[Almost homogeneous solutions]
\label{lem:almost-homogeneous-solutions}
Let $2\le n < \nAC$. For any $\eps\in(0,\frac{\alpha_n}{2})$, there exists $\delta\in(0,\frac{\alpha_n}{2})$ such that the following hold.

Let $u$ be a classical stable solution to the Bernoulli problem in $\R^n$ such that $0\in \FB(u)$ and
\begin{equation}\label{eq:H1-compact-W}
{\bf W}(u,2)-{\bf W}(u,1)<\delta.
\end{equation}
Then, either
\begin{equation}\label{eq:H1-compact-half}
\normbig[L^\infty(B_1\cap\{u>0\})]{u-e\cdot x}<\eps \quad \text{ for some $e\in \bS^{n-1}$ and } \quad {\bf W}(u,2)<\alpha_n+\eps,
\end{equation}
or
\begin{equation}\label{eq:H1-compact-abs}
\normbig[L^\infty(B_1)]{u-|e\cdot x|}<\eps \quad \text{ for some $e\in \bS^{n-1}$ and } \quad {\bf W}(u,1)>2\alpha_n-\eps.
\end{equation}
\end{lem}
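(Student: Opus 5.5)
We argue by contradiction and compactness. Fix $\eps\in(0,\alpha_n/2)$ and suppose that no $\delta$ works. Then there are global classical stable solutions $u_k$ with $0\in\FB(u_k)$ and
\[
{\bf W}(u_k,2)-{\bf W}(u_k,1)<1/k,
\]
for which both \eqref{eq:H1-compact-half} and \eqref{eq:H1-compact-abs} fail. By \Cref{lem:lipschitz-bound} each $u_k$ is $1$-Lipschitz. By \Cref{lem:compactness}, after passing to a subsequence, $u_k\to u_\infty$ locally uniformly and strongly in $H^1_{\rm loc}(\R^n)$, with $\mathbbm 1_{\{u_k>0\}}\to\mathbbm 1_{\{u_\infty>0\}}$ in $L^1_{\rm loc}$ and Hausdorff convergence of the free boundaries. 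These convergences make ${\bf W}(\cdot,r)$ continuous along the sequence for every fixed $r$. Hence ${\bf W}(u_\infty,2)={\bf W}(u_\infty,1)$, since each ${\bf W}(u_k,\cdot)$ is monotone.

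\textbf{Step 1: the limit is homogeneous.} By \eqref{eq:W-monotone}, $x\cdot\nabla u_\infty=u_\infty$ a.e. in the annulus $B_2\setminus B_1$. I would upgrade this to global $1$-homogeneity, which is needed to place $u_\infty$ in $\mathscr C_n$. One route is to apply \Cref{prop:classification-homogeneous-classical-stable-limits} to the blow-down of $u_\infty$, and then use a unique-continuation-type argument for stationary solutions. A more robust route, which I would try first, is to avoid full homogeneity altogether. Rescale instead: the $u_k$ are global, so $(u_k)_{r}$ are classical stable solutions in all of $\R^n$, and the class $\mathscr C_n$ only needs limits of classical stable solutions on balls $B_{R_j}$. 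Then pass to a blow-up of $u_\infty$ at $0$, or a blow-down, which is homogeneous by the equality case and lies in $\mathscr C_n$ by a diagonal argument. Compare its Weiss energy with that of $u_\infty$ on the annulus. I expect this step, turning annular equality into homogeneity of the relevant object, to be the main obstacle. If needed, I would exploit that a function $1$-homogeneous on $B_2\setminus B_1$ extends homogeneously, and that stationarity and stability are local, so the homogeneous extension restricted to the annulus still falls under the classification argument.

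\textbf{Step 2: classify the limit.} Granting that $u_\infty$ is $1$-homogeneous and in $\mathscr C_n$, \Cref{prop:classification-homogeneous-classical-stable-limits} gives $u_\infty=(e\cdot x)_+$ or $u_\infty=|e\cdot x|$.

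In the half-space case, we have ${\bf W}(u_k,2)\to\alpha_n$, so ${\bf W}(u_k,2)<\alpha_n+\eps$ for large $k$. By the Hausdorff convergence of $\overline{\{u_k>0\}}$ and the Lipschitz bound, $u_k-e\cdot x$ is uniformly small on $B_1\cap\{u_k>0\}$. More precisely, on $\{u_k>0\}\cap B_1$ every point is close to $\{e\cdot x\ge0\}$, so $|u_k-e\cdot x|\le|u_k-(e\cdot x)_+|+(e\cdot x)_-\to0$. This contradicts the failure of \eqref{eq:H1-compact-half}.

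In the vee case, ${\bf W}(u_k,1)\to2\alpha_n$, and uniform convergence on $B_1$ gives \eqref{eq:H1-compact-abs}, again a contradiction. This proves the lemma.
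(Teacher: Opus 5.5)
Your compactness setup and your Step 2 case analysis match the paper. However, Step 1, which you flag as the main obstacle, is left open, and the target you set for it is false. The information available about $u_\infty$ at that point cannot give \emph{global} $1$-homogeneity. Consider $u=(x_n)_++(-x_n-10)_+$. It is a global classical stable solution with $0\in\FB(u)$ and ${\bf W}(u,2)={\bf W}(u,1)=\alpha_n$, so it could be the limit. Yet it is neither homogeneous, nor a half-space, nor a vee. The same example defeats your blow-down route: the blow-down is $|x_n|$, while $u=(x_n)_+$ in $B_2$. Your blow-up route is circular as stated, since monotonicity only gives ${\bf W}(u_\infty,0^+)\le{\bf W}(u_\infty,1)$, and equality is exactly homogeneity in $B_1$, which is what must be proved. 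Extending homogeneously from the annulus alone does not produce an element of $\mathscr C_n$ either, because nothing controls approximants near the origin.

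The missing idea is a unique-continuation argument giving homogeneity in all of $B_2$. That suffices, since the conclusions only involve $B_1$, ${\bf W}(\cdot,1)$ and ${\bf W}(\cdot,2)$.
\begin{itemize}
\item The function $x\cdot\nabla u_\infty-u_\infty$ is harmonic in $\{u_\infty>0\}$, because $u_\infty$ is a locally uniform limit of harmonic functions there, and it vanishes on $B_2\setminus\overline{B_1}$.
\item Every component of $\{u_\infty>0\}\cap B_2$ meets that annulus. Otherwise it is an open set compactly contained in $B_2$ with $u_\infty=0$ on its boundary, so the maximum principle forces $u_\infty\equiv0$ there.
\item Analyticity then gives $x\cdot\nabla u_\infty=u_\infty$ in $\{u_\infty>0\}\cap B_2$, so $u_\infty$ is $1$-homogeneous on $B_2$.
\end{itemize}

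Next, let $\bar u$ be the global $1$-homogeneous extension of $u_\infty|_{B_2}$, and place it in $\mathscr C_n$ by a diagonal blow-up. Choose $r_k\downarrow0$ and indices $j_k$ with $\|u_{j_k}-u_\infty\|_{L^\infty(B_2)}=o(r_k)$ and $\|\nabla(u_{j_k}-u_\infty)\|_{L^2(B_2)}^2=o(r_k^n)$. Then the global classical stable solutions $r_k^{-1}u_{j_k}(r_k\,\cdot)$ converge to $\bar u$ on the expanding balls $B_{2/r_k}$. The classification applies to $\bar u$, so $u_\infty$ is a unit half-space or unit vee \emph{in $B_2$}. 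With that local statement, your Step 2 goes through unchanged.
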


\begin{proof}
Suppose the conclusion fails. Then for some $\eps_0>0$ there are global classical stable solutions $u_j$, with $0\in\FB(u_j)$, such that
\[
{\bf W}(u_j,2)-{\bf W}(u_j,1)\to0,
\]
but neither \eqref{eq:H1-compact-half} nor \eqref{eq:H1-compact-abs} holds with $\eps=\eps_0$. By \Cref{lem:compactness}, after passing to a subsequence,
\[
u_j\to u_\infty \quad\text{locally uniformly and strongly in }H^1_{\rm loc}(\mathbb R^n), \qquad 0\in\FB(u_\infty),
\]
and, by the locally uniform, strong $H^1$, and phase-indicator convergence,
\[
{\bf W}(u_\infty,2)-{\bf W}(u_\infty,1)=0.
\]
The equality case of \eqref{eq:W-monotone} gives $x\cdot\nabla u_\infty=u_\infty$ in $B_2\setminus\overline{B_1}$. The function $x\cdot\nabla u_\infty-u_\infty$ is harmonic in each positive component, and every such component meeting $B_1$ reaches this annulus by the maximum principle. Unique continuation therefore makes $u_\infty$ $1$-homogeneous throughout $B_2$.

Let $\bar u$ be its global $1$-homogeneous extension. Choose $r_k\downarrow0$ and a diagonal subsequence of the $u_j$ whose uniform and squared $H^1$ errors on $B_2$ are $o(r_k)$ and $o(r_k^n)$, and whose phase-indicator error is $o(r_k^n)$. Then $r_k^{-1}u_j(r_k\,\cdot)$ converges to $\bar u$ on expanding balls, so $\bar u\in\mathscr C_n$. Therefore \Cref{prop:classification-homogeneous-classical-stable-limits} applies and gives, for some $e\in\mathbb S^{n-1}$,
\[
\bar u=(e\cdot x)_+ \qquad\text{or}\qquad \bar u=|e\cdot x|.
\]

In the half-space case, Hausdorff convergence of the positive phases gives
\[
\sup_{B_1\cap\{u_j>0\}}(-e\cdot x)_+\to0.
\]
Together with the convergences supplied by \Cref{lem:compactness}, this gives \eqref{eq:H1-compact-half} for large $j$. In the vee case, the same convergences give $\|u_j-|e\cdot x|\|_{L^\infty(B_1)}\to0$ and ${\bf W}(u_j,1)\to2\alpha_n$, hence \eqref{eq:H1-compact-abs}. Both alternatives contradict the choice of the sequence.
\end{proof}

As a consequence of \Cref{lem:almost-homogeneous-solutions}, if we can lower bound the Hessian of a solution at one point, then the solution cannot be energetically close to a half-space.

\begin{lem}
\label{lem:lower-bound-for-the-weiss-energy}
Let $2\le n< \nAC$. Let $\eps_\circ=\eps_\circ(n)$ and $C_{n,2}$ be the constants from \Cref{lem:epsilon-regularity}, after decreasing $\eps_\circ$ so that $\eps_\circ<\alpha_n/2$, and set $C_\circ := \max\{1,C_{n,2}\eps_\circ\}$. Let $\delta_\circ\in (0,\frac{\alpha_n}{2})$ be chosen from \Cref{lem:almost-homogeneous-solutions} with $\eps=\eps_\circ$.

Let $u$ be a global classical stable solution to the Bernoulli problem in $\R^n$, with $0\in \FB(u)$. If
\begin{equation}\label{eq:lem-W-lower-Hess}
\norm[L^\infty(B_{1/2} \cap \set{u>0})]{D^2u} \geq 2C_\circ,
\end{equation}
then
\begin{equation}\label{eq:lem-W-lower}
{\bf W}(u,2) \geq \alpha_n+\delta_\circ.
\end{equation}
\end{lem}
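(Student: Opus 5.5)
We argue by contradiction. Suppose ${\bf W}(u,2)<\alpha_n+\delta_\circ$. By \Cref{rem:weiss-energy-range}, which uses $n<\nAC$, we have ${\bf W}(u,1)\ge\alpha_n$. Since ${\bf W}(u,\cdot)$ is non-decreasing, it follows that
\[
{\bf W}(u,2)-{\bf W}(u,1)<\alpha_n+\delta_\circ-\alpha_n=\delta_\circ .
\]
Hence \eqref{eq:H1-compact-W} holds with $\delta=\delta_\circ$. Since $\delta_\circ$ was chosen from \Cref{lem:almost-homogeneous-solutions} with $\eps=\eps_\circ$, one of \eqref{eq:H1-compact-half} or \eqref{eq:H1-compact-abs} holds with $\eps=\eps_\circ$.

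First we rule out the vee alternative \eqref{eq:H1-compact-abs}. It would give ${\bf W}(u,1)>2\alpha_n-\eps_\circ$. Together with monotonicity, this yields ${\bf W}(u,2)>2\alpha_n-\eps_\circ$. On the other hand, ${\bf W}(u,2)<\alpha_n+\delta_\circ$. Since $\eps_\circ<\alpha_n/2$ and $\delta_\circ<\alpha_n/2$, we get $2\alpha_n-\eps_\circ>\tfrac32\alpha_n>\alpha_n+\delta_\circ$, a contradiction. So the half-space alternative must hold:
\[
\|u-e\cdot x\|_{L^\infty(B_1\cap\{u>0\})}<\eps_\circ .
\]

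This is exactly the flatness hypothesis \eqref{eq:eps-reg-classical-flat} of \Cref{lem:epsilon-regularity} with $\eps=\eps_\circ$, after rotating so that $e=e_n$; the lemma is stated on $B_1$, matching our scale. With $k=2$ it gives
\[
\|D^2u\|_{L^\infty(B_{1/2}\cap\{u>0\})}=\|D^2(u-x_n)\|_{L^\infty(B_{1/2}\cap\{u>0\})}\le C_{n,2}\eps_\circ\le C_\circ .
\]
Here we used that $x_n$ is affine, so it does not contribute to the Hessian. Since $C_\circ\ge1>0$, we have $C_\circ<2C_\circ$, which contradicts \eqref{eq:lem-W-lower-Hess}. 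This proves \eqref{eq:lem-W-lower}.

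The only delicate point is the bookkeeping of constants. We need the strict inequalities $\eps_\circ,\delta_\circ<\alpha_n/2$ to exclude the vee case. We also need the factor $2$ in the hypothesis, together with $C_\circ\ge1$, so that the bound from epsilon-regularity is strictly smaller than the assumed Hessian lower bound. No other obstacle is expected: the lower bound ${\bf W}(u,1)\ge\alpha_n$ is already supplied by \Cref{rem:weiss-energy-range}.
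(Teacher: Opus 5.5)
Your proof is correct and is essentially the paper's argument. You use ${\bf W}(u,1)\ge\alpha_n$ from \Cref{rem:weiss-energy-range} to make the Weiss increment smaller than $\delta_\circ$, then invoke \Cref{lem:almost-homogeneous-solutions}. You exclude the half-space branch via \Cref{lem:epsilon-regularity}, which gives a Hessian bound $C_\circ<2C_\circ$, and the vee branch via $2\alpha_n-\eps_\circ>\alpha_n+\delta_\circ$. The only difference is that you treat the two branches in the opposite order, and your appeal to monotonicity when deriving the increment bound is unnecessary, since plain subtraction suffices, but harmless.
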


\begin{proof}
Recalling \eqref{eq:weiss-energy-range}, we always have ${\bf W}(u,1)\ge \alpha_n$. If \eqref{eq:lem-W-lower} failed, then
\[
{\bf W}(u,2)-{\bf W}(u,1) <(\alpha_n+\delta_\circ)-\alpha_n =\delta_\circ.
\]
Applying \Cref{lem:almost-homogeneous-solutions} with $\eps=\eps_\circ$, we obtain either \eqref{eq:H1-compact-half} or \eqref{eq:H1-compact-abs}.

The half-space branch \eqref{eq:H1-compact-half} is impossible. Indeed, by \Cref{lem:epsilon-regularity} the $L^\infty$-closeness to a half-space on $B_1\cap\{u>0\}$ implies
\[
\norm[L^\infty(B_{1/2}\cap\{u>0\})]{D^2u}\le C_\circ,
\]
contradicting the assumed lower bound \eqref{eq:lem-W-lower-Hess}.

Hence only the symmetric branch \eqref{eq:H1-compact-abs} remains, and in that case
\[
{\bf W}(u,2)\ge {\bf W}(u,1)>2\alpha_n-\eps_\circ.
\]
Since $\eps_\circ<\alpha_n/2$ and $\delta_\circ<\alpha_n/2$, we have
\[
2\alpha_n-\eps_\circ>\alpha_n+\delta_\circ,
\]
which contradicts the negation of \eqref{eq:lem-W-lower}. Therefore \eqref{eq:lem-W-lower} holds.
\end{proof}

The almost-homogeneity and energy-gap conclusions in \Cref{lem:almost-homogeneous-solutions,lem:lower-bound-for-the-weiss-energy} lead to the following quantitative vee asymptotics.
\begin{prop}[Blow-down of non-flat solutions]
\label{prop:blow-down-of-non-flat-solutions}
Let $2\le n<\nAC$. There is a dimensional modulus of continuity $\omega$ of the form \eqref{eq:w} with the following property. Let $u$ be a global classical stable solution to the Bernoulli problem in $\mathbb R^n$, with $0\in\FB(u)$, and suppose that
\begin{equation}\label{eq:lem-W-Hess}
\norm[L^\infty(B_1 \cap \set{u>0})]{D^2 u} \geq 1.
\end{equation}
Then, for every $R>0$, one can choose $e_R\in\mathbb S^{n-1}$ with
\begin{equation}
\label{eq:omega_mod}
\begin{aligned}
\frac1R\big\|u-V_{0,e_R}\big\|_{L^\infty(B_R)} +\left(\frac1{R^n}\int_{B_R} \big|\nabla u-\nabla V_{0,e_R}\big|^2\,dx\right)^{1/2} +\frac1{R^n}\int_{B_R} \big|\mathbbm 1_{\{u>0\}}-\mathbbm 1_{\{V_{0,e_R}>0\}}\big|\,dx \le \omega(R^{-1}).
\end{aligned}
\end{equation}
\end{prop}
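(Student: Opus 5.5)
The plan is a contradiction-compactness argument, run on blow-downs, that combines \Cref{lem:lower-bound-for-the-weiss-energy} with \Cref{lem:almost-homogeneous-solutions} and \Cref{prop:classification-homogeneous-classical-stable-limits}. The uniformity required (a single dimensional modulus $\omega$) forces us to argue over sequences of solutions rather than a single $u$.

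\textbf{Step 1: a uniform energy gap at all scales $R\ge2$.} Under \eqref{eq:lem-W-Hess}, rescale $u_{1/2}(x)=2u(x/2)$. Its Hessian on $B_2$ is at least $1/2$, which is not literally the threshold $2C_\circ$ of \eqref{eq:lem-W-lower-Hess}, so a direct application needs more care. Instead I would get the gap by compactness. If there were solutions $u_j$ satisfying \eqref{eq:lem-W-Hess} with ${\bf W}(u_j,R_j)\le\alpha_n+o(1)$ for some $R_j\ge2$, then monotonicity would give ${\bf W}(u_j,r)\to\alpha_n$ uniformly for $r\in(0,2]$. This is because ${\bf W}\ge\alpha_n$, by the blow-up at the classical point $0$ and \eqref{eq:weiss-energy-range}. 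The limit $u_\infty$ from \Cref{lem:compactness} then has constant Weiss energy $\alpha_n$ on $(0,2)$. It is therefore $1$-homogeneous on $B_2$, and its homogeneous extension lies in $\mathscr C_n$ with density $\alpha_n$, so it is a half-space by \Cref{prop:classification-homogeneous-classical-stable-limits}. By \Cref{lem:half-space-regularity-classical-stable-limits} and \Cref{lem:epsilon-regularity}, $D^2u_j\to0$ uniformly on $B_1\cap\{u_j>0\}$, which contradicts \eqref{eq:lem-W-Hess}. Hence ${\bf W}(u,r)\ge\alpha_n+\delta_1$ for all $r\ge2$, with $\delta_1$ dimensional.

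\textbf{Step 2: blow-downs are vees, uniformly.} Suppose the statement fails. Then there are $\eps_0>0$, solutions $u_j$ satisfying \eqref{eq:lem-W-Hess}, and radii $R_j$ with $R_j^{-1}\to0$, such that the left side of \eqref{eq:omega_mod} exceeds $\eps_0$ for every $e$. For $R$ bounded, \eqref{eq:omega_mod} holds trivially once $\omega(R^{-1})$ is large, since $|\nabla u|\le1$ by \Cref{lem:lipschitz-bound}; this is why only $R_j\to\infty$ needs treatment. Consider the rescalings $v_j:=(u_j)_{R_j}$. These are classical stable solutions with $0\in\FB(v_j)$, and Weiss monotonicity gives ${\bf W}(v_j,r)={\bf W}(u_j,rR_j)\in[\alpha_n+\delta_1,2\alpha_n]$ for $r\ge 2/R_j$.

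\textbf{Step 3: the main obstacle, showing the limit is homogeneous.} This is the step I expect to be hardest. The quantity $r\mapsto{\bf W}(u_j,r)$ is monotone and bounded by $2\alpha_n$, but its increments need not be uniformly small at the particular scale $R_j$ for a single $u_j$. I would handle this with a diagonal/pigeonhole argument over scales. The total variation of ${\bf W}(u_j,\cdot)$ on $[1,\infty)$ is at most $\alpha_n$, so among the dyadic scales $2^k\in[R_j/M,R_j]$ most satisfy ${\bf W}(u_j,2^{k+1})-{\bf W}(u_j,2^k)<\alpha_n/M'$. That alone controls only good scales, not $R_j$ itself.

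The cleaner route is to pass to the limit directly. Let $v_j\to v_\infty$ by \Cref{lem:compactness}; then ${\bf W}(v_\infty,r)=\lim_j{\bf W}(u_j,rR_j)$. This limit must be constant in $r>0$. Indeed, ${\bf W}(u_j,\cdot)$ has a limit $L_j\le 2\alpha_n$ at infinity. If ${\bf W}(v_\infty,\cdot)$ had a jump $\ge\eta$ across some range $[a,b]$, I would replace $R_j$ by $R_j'=R_j\rho$ with $\rho$ ranging over several dyadic values. Failure of \eqref{eq:omega_mod} propagates across comparable scales with changed constants, so this only requires the rescaled radius to stay comparable. Doing this for $N$ disjoint ranges would produce a total increase $N\eta>\alpha_n$, which is impossible. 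So one may choose radii, still comparable to $R_j$, at which $v_\infty$ is homogeneous on a fixed annulus, and \Cref{lem:almost-homogeneous-solutions} applies.

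\textbf{Step 4: conclusion.} The limit $v_\infty$ is then $1$-homogeneous. By Step 1 it has density at least $\alpha_n+\delta_1$, so it is not a half-space, and \Cref{prop:classification-homogeneous-classical-stable-limits} gives $v_\infty=|e\cdot x|$. The uniform, $H^1$, and phase-indicator convergences from \Cref{lem:compactness} then contradict the choice of $\eps_0$. Taking $\omega$ to be the resulting modulus, increased so that $\omega(t)\ge t$ and monotone, gives \eqref{eq:w}.
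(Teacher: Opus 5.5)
Your overall strategy is fine (compactness, the energy gap, and the classification of $\mathscr C_n$). However, Step 3 has a genuine gap, and it is exactly where the uniformity of $\omega$ lives.

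\textbf{Why Step 3 does not close.} The claim that $\phi(r):={\bf W}(v_\infty,r)$ is constant is true, but your justification does not establish it.

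First, along your subsequence $(u_j)_{R_j\rho}\to(v_\infty)_\rho$. So replacing $R_j$ by $R_j\rho$ only relocates the same jump of $\phi$; it creates no new jumps.

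To obtain jumps on $N$ disjoint ranges, you would have to use that each rescaled sequence is again a counterexample. But failure of \eqref{eq:omega_mod} propagates only upward, with a loss of a fixed factor ($\sim 2^n$) per dyadic step, and it does not propagate downward at all. So the jump $\eta$ you can guarantee on the $i$-th range deteriorates with $i$, while the number of ranges you need, $N>\alpha_n/\eta$, grows as $\eta$ shrinks. The argument is circular.

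The pigeonhole fallback has the same problem. Choosing a good radius $R_j2^{i}$ with $i<N$ gives a Weiss increment $\le\alpha_n/N$, not zero. So the limit is not homogeneous on an annulus unless $N\to\infty$, and then the radii are no longer comparable to $R_j$.

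\textbf{The missing idea.} Propagate the Weiss \emph{deficit} rather than the failure of \eqref{eq:omega_mod}. Monotonicity transfers the deficit to all smaller scales with no loss. In your framework this reads as follows.

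If ${\bf W}(u_j,R_j)\to2\alpha_n$ along a subsequence, then ${\bf W}(u_j,2R_j)-{\bf W}(u_j,R_j)\to0$, since ${\bf W}\le2\alpha_n$. The limit then has constant Weiss energy $2\alpha_n$ on $[1,2]$, so it is a vee on $B_2$, and your Step 4 contradicts the choice of $\eps_0$.

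Hence there is a fixed $\eta>0$ with ${\bf W}(u_j,r)\le2\alpha_n-\eta$ for all $r\le R_j$. Your Step 1 also gives ${\bf W}(u_j,r)\ge\alpha_n+\delta_1$ for $r\ge2$. Now apply \Cref{lem:almost-homogeneous-solutions} with $\eps<\min\{\delta_1,\eta\}$: every dyadic interval in $[2,R_j]$ must carry a Weiss increment $\ge\delta(\eps)$. There are $\sim\log_2R_j\to\infty$ such intervals, which contradicts the total-variation bound $\alpha_n$.

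\textbf{Comparison with the paper.} This is the dyadic Weiss iteration of \cite{CFFS25} in compactness form. The paper runs it directly. It gets the gap from \Cref{lem:lower-bound-for-the-weiss-energy} applied to $u_s$ with $s\ge2C_\circ$; rescaling to larger scales \emph{increases} the Hessian, so your compactness detour in Step 1 is unnecessary. At most $\alpha_n/\delta(\eps)$ dyadic scales beyond $4C_\circ$ can have ${\bf W}$ in the window $[\alpha_n+\delta_\circ,2\alpha_n-\eps]$. After that, ${\bf W}>2\alpha_n-\eps$ and every later increment is $<\eps$. This gives $L^\infty$ vee closeness at every $R\ge R_0(\eps')$, with $R_0$ independent of $u$.

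Once Step 3 is repaired this way, your Step 4 is correct and buys something. Compactness yields the $H^1$ and phase-indicator terms of \eqref{eq:omega_mod} directly. The paper instead derives them from the $L^\infty$ bound via the distributional Laplacian and \Cref{lem:perimeter-bound}.
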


\begin{proof}
We first obtain the $L^\infty$ estimate by the dyadic Weiss iteration in the proof of \cite[Proposition~4.1]{CFFS25} using \Cref{lem:almost-homogeneous-solutions,lem:lower-bound-for-the-weiss-energy}. Indeed, choose the dichotomy parameter below the fixed gap in \Cref{lem:lower-bound-for-the-weiss-energy}. Every rescaling $u_s$ used in the iteration has $s\ge2C_\circ$, and \eqref{eq:lem-W-Hess} gives
\[
\|D^2u_s\|_{L^\infty(B_{1/2}\cap\{u_s>0\})}\ge2C_\circ.
\]
Thus \eqref{eq:lem-W-lower} excludes the half-space alternative, while \eqref{eq:weiss-energy-range} prevents indefinitely many bad dyadic increments. As in the cited proof in \cite{CFFS25}, we obtain a dimensional modulus $\omega_0$ such that, for every $R>0$, some $e\in\mathbb S^{n-1}$ satisfies
\[
\|u-V_{0,e}\|_{L^\infty(B_{2R})} \le 2R\,\omega_0((2R)^{-1}).
\]
Set $\tau:=2\omega_0((2R)^{-1})$ and $V:=V_{0,e}$. Since $u\ge0$,
\[
\{u=0\}\cap B_R\subset B_R\cap\{V\le\tau R\},
\]
and therefore the normalized phase-indicator error in $B_R$ is at most $C\tau$.

For the gradient term, take $\eta\in C_c^\infty(B_{2R})$, equal to $1$ on $B_R$, with $|\nabla\eta|\le C/R$, and set $w:=u-V$. In the sense of distributions,
\[
\Delta u=\mathcal H^{n-1}\!\lfloor_{\FB(u)}, \qquad \Delta V=2\mathcal H^{n-1}\!\lfloor_{e^\perp}.
\]
Thus \Cref{lem:perimeter-bound} gives $|\Delta w|(B_{2R})\le CR^{n-1}$. Testing with $\eta^2w$, by smooth approximation, and absorbing the cutoff term yields
\[
\int_{B_R}|\nabla u-\nabla V|^2\,dx \le C(\tau+\tau^2)R^n.
\]
Consequently \eqref{eq:omega_mod} holds with
\[
\omega(t):=C\bigl(\sqrt{\omega_0(t/2)}+\omega_0(t/2)\bigr),
\]
after a universal enlargement. This is again a modulus of the form \eqref{eq:w}.
\end{proof}

\section{Necks: definition and properties}
\label{sec:neck_set}

In this section, we recall the construction from \cite{CFFS25} of the neck centers, used in subsequent sections, and their initial properties.

\begin{remark}[Dimension independence of neck set construction]
\label{rmk:dim3-dimn}
All the results in this section come from \cite{CFFS25}, and we restate them for the reader's convenience. Although they are stated there for $n=3$, their proofs generalize verbatim to arbitrary $n$.
\end{remark}
\subsection{Reduction}

We first reduce \Cref{thm:main-classification} to a normalized non-flat global solution with controlled Hessian.

If $u$ is a classical solution and $x_\circ \in \FB(u)$, we write $D^2u(x_\circ)$ to denote the limit of $D^2u$ from the positivity set:
\[
D^2u(x_\circ):=\lim_{\{u>0\}\ni x\to x_\circ} D^2 u(x).
\]

\begin{lem}[Reduction, {\cite{CFFS25}*{Lemma~5.1}}]
\label{lem:reduction}
Let $n \ge 2$, and suppose there exists a global classical stable solution $v$ to the Bernoulli problem in $\R^n$ such that $|D^2 v|\not\equiv 0$ in $\{v > 0\}$. Then, there exists a global classical stable solution $u$ such that $0\in \FB(u)$, $|D^2 u|\le 1$ in $\overline{\{u>0\}}$, and $|D^2 u|(0) = 1$.
\end{lem}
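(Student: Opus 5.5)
The plan is a point-picking (doubling) rescaling argument that uses only the Lipschitz bound and scale invariance. Since $|D^2v|\not\equiv0$ in $\{v>0\}$, we first show that $M:=\sup_{\{v>0\}}|D^2v|$ may be assumed finite and attained near a free-boundary point; in general it is not, so we localize.

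Fix a point $x_0\in\{v>0\}$ with $D^2v(x_0)\neq0$. For $R>0$, consider on $\overline{\{v>0\}}\cap \overline{B_R(x_0)}$ the weighted quantity
\[
\Phi(x):=|D^2v|(x)\,(R-|x-x_0|),
\]
which vanishes on $\partial B_R(x_0)$ and is continuous up to $\FB(v)$ because $v$ is classical. Let $x_R$ be a maximum point, and set $\lambda_R:=|D^2v|(x_R)$ and $d_R:=R-|x_R-x_0|$. The usual argument gives $|D^2v|\le 2\lambda_R$ on $B_{d_R/2}(x_R)\cap\{v>0\}$, while $\lambda_R d_R\ge |D^2v|(x_0)R\to\infty$ as $R\to\infty$.

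Next we relocate the center to the free boundary. Let $\rho:=\dist(x_R,\FB(v))$. If $\rho\lambda_R$ were huge, interior harmonic estimates together with $|\nabla v|\le1$ (\Cref{lem:lipschitz-bound}) would give $|D^2v|(x_R)\le C/\min\{\rho,d_R/2\}$, which contradicts $\lambda_R d_R\gg1$ once $\rho\lambda_R\gg1$. Hence $\rho\lesssim\lambda_R^{-1}$. Now rescale: $v_R(x):=\lambda_R v(y_R+x/\lambda_R)$, with $y_R\in\FB(v)$ a closest point to $x_R$. This $v_R$ is a classical stable solution on $B_{c\lambda_R d_R}$ with $0\in\FB(v_R)$, $|D^2v_R|\le2$, and $|D^2v_R|=1$ at a point within distance $C$ of the origin.

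In the limit $R\to\infty$, uniform $C^{1,1}$ bounds and the epsilon-regularity estimates (\Cref{lem:epsilon-regularity}, which give higher-derivative bounds along the free boundary once $|D^2|$ is bounded) yield a subsequence converging in $C^2$ up to the free boundary. The limit is a global classical stable solution $w$ with $0\in\FB(w)$, $|D^2w|\le2$, and $|D^2w|=1$ at some point. Stability passes to the limit as in \Cref{lem:compactness}.

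It remains to normalize to exactly $|D^2u|\le1$ with equality at a free-boundary point. Set $M:=\sup|D^2w|\in[1,2]$ and repeat the construction with a maximizing sequence of points $p_j$ of $|D^2w|$. Their distance to $\FB(w)$ is bounded, since otherwise a limit of translates would be a positive harmonic function with bounded Hessian equal to $M$ at an interior maximum. Bochner's formula makes $|D^2w|^2$ subharmonic, so the maximum principle forces constancy, which is incompatible with Liouville, $|\nabla w|\le1$, and nonzero Hessian. We therefore translate to nearby free-boundary points and pass to the limit, obtaining a solution attaining $M$ at some point $q$ of $\overline{\{w>0\}}$. By the same subharmonicity, $q$ can be taken on the free boundary; if it were interior, strong maximum principle on its component would force $|D^2w|\equiv M$, which we exclude as above. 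Finally, rescaling by $M$ and translating $q$ to $0$ gives the desired $u$.

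The main obstacle is this last step: the maximum of the Hessian has to be attained, and attained on the free boundary rather than in the interior. This needs the subharmonicity of $|D^2w|$ and a careful exclusion of the case of constant $|D^2w|$ on a component.
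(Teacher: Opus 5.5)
Your overall route is the natural one: point-picking to get a solution with bounded Hessian, then a maximizing sequence with translation and compactness to attain the supremum, then the maximum principle for the subharmonic function $|D^2w|^2$ to move the maximum onto the free boundary. However, the compactness step has a genuine gap, and it is the delicate point of the argument.

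A uniform bound $|D^2v_R|\le2$ together with $|\nabla v_R|\le1$ does not prevent two free-boundary sheets, bounding a thin zero phase, from approaching each other and touching in the limit. Think of $(x_n-h)_++(-x_n-h)_+\to|x_n|$ as $h\downarrow0$, whose Hessian vanishes identically. At a contact point the limit $w$ is positive on both sides of a single $C^{1,1}$ hypersurface. Then $\{w>0\}$ is not locally a smooth domain, and $w$ is not a classical solution. Such a contact may occur anywhere, not only near the point where you normalized the Hessian. So your sentence ``the limit is a global classical stable solution'' needs an argument, both for the point-picking limit and for the translation limit in your last step.

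The paper flags exactly this issue in the proof of \Cref{lem:neck-scale-compactness}, quoting Lemma~5.1 of \cite{CFFS25}: under a curvature bound and the $1$-Lipschitz bound, the limit is either classical or a vee. One way to see it:
\begin{itemize}
\item At a contact point $z$, the blow-up of $w$ is a unit vee, so ${\bf W}_z(w,0^+)=|B_1|=2\alpha_n$.
\item Every blow-down is a $1$-homogeneous stationary limit, whose energy is $|\{\cdot>0\}\cap B_1|\le|B_1|$ by \Cref{lem:homogeneous-weiss-upper-bound}.
\item Monotonicity then forces ${\bf W}_z(w,\cdot)$ to be constant, so $w$ is $1$-homogeneous about $z$.
\item A $1$-homogeneous function with bounded Hessian is piecewise linear, hence a vee.
\end{itemize}
The vee is then excluded because $|D^2w|=1$ at some point. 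This holds only if $C^2$ convergence holds up to the free boundary at that point. That requires applying \Cref{lem:epsilon-regularity} to each local sheet separately, extended by zero, since a nearby second sheet violates its hypothesis on the full positivity set.

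Conversely, the step you call the main obstacle is not one. Suppose the supremum $M$ is attained at an interior point $q$. The strong maximum principle gives $|D^2w|\equiv M$ on the component $\Omega$ of $\{w>0\}$ containing $q$. By continuity of $D^2w$ up to the smooth free boundary, $|D^2w|=M$ at every point of $\partial\Omega\subset\FB(w)$. This set is nonempty, since otherwise $\Omega=\mathbb R^n$ and Liouville applies. So nothing needs to be excluded.

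Your appeal to ``as above'' would not work anyway. The Liouville argument only covers $\Omega=\mathbb R^n$, not a proper component on which $w$ could a priori be a quadratic polynomial. Likewise, the bound on $\dist(p_j,\FB(w))$ follows at once from the interior estimate $|D^2w(p)|\le C/\dist(p,\FB(w))$, which uses only $|\nabla w|\le1$. Your translation argument for that step would also need to subtract $w(p_j)$, since the values need not stay bounded.
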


\subsection{Standing global assumptions}
\label{ssec:fixinghyp}

We now fix the normalized global solution and the Hessian-energy threshold used throughout the contradiction argument.

\begin{assumption}[Standing global assumptions]
\label{ass:standing-global-assumptions}
We argue by contradiction and assume that a non-flat global classical stable solution exists in some dimension $n<\nAC$. From this point on, unless stated otherwise, we fix the normalized global solution $u\in {\rm Lip}(\R^n)$ provided by \Cref{lem:reduction} (see also \Cref{lem:lipschitz-bound}), so that
\begin{equation}
\label{eq:assump_glob_u} \text{$0\in \FB(u)$, \quad $|\nabla u|\le 1$, \quad $|D^2 u(0)| = 1$, \qquad and \qquad $|D^2 u|\le 1\quad\text{in}\quad \{u > 0\}$.}
\end{equation}
We reserve the letter $u$ for this standing global solution unless stated otherwise.

We also fix, for $\eta_*(n)$ the constant from \Cref{lem:epsilon-regularity-for-the-hessian} in dimension $n$,
\begin{equation}
\label{eq:eta0} \eta_0 : = \min\big\{\eta_*(n),|B_1|^{1/n}\big\}.
\end{equation}
\end{assumption}

\subsection{Definition of neck centers}
\label{ssec:neck-set-def}

We construct the neck centers by selecting a separated family of free-boundary points at their threshold Hessian-energy scales.

\begin{defn}[Neck centers]
\label{defn:neck-centers}
Given $u$ and $\eta_0$ as above, we define the set of \emph{neck centers} $\cZ$ as follows.
\begin{itemize}
\item First, for any $y\in \FB(u)$, we define its \emph{threshold radius} as
\begin{equation}
\label{eq:def_rB} \rb(y) := \inf\bigg\{r > 0 : \int_{B_r(y)\cap \{u > 0\}}|D^2 u|^n \,dx \ge \eta_0^n \bigg\}.
\end{equation}
Observe that, since we are assuming $|D^2 u|$ to be globally and universally bounded, we know that
\begin{equation}\label{eq:rmin_def}
\rb(x) \geq \rmin = \rmin ( \eta_0):= c \, \eta_0 > 0\qquad\text{for all}\quad x\in \FB(u).
\end{equation}

\item Then, for any $k\in \N_0$, we define
\begin{equation}\label{eq:Xktilde}
\tilde \cZ_k := \left\{ x \in \FB(u) : \rb(x) \in [\rmin 2^k, \rmin 2^{k+1})\right\}.
\end{equation}

\item Given $\lambda > 0$ and $\mathcal Y \subset \FB(u)$, we denote
\begin{equation}
\label{eq:cBlambdadef} \cB_\lambda(\mathcal Y) := \bigcup_{y\in \mathcal Y} B_{\lambda \rb(y)}(y)
\end{equation}
(not to be confused with the notation $B_r(A) = A+B_r$ in \Cref{ssec:notation}). Thanks to Vitali's covering lemma, we can consider a countable subset of centers $\cZ_0\subset \tilde \cZ_0$ such that
\[
B_{\rb(z_1)}(z_1)\cap B_{\rb(z_2)}(z_2) = \varnothing\qquad\text{for all}\quad z_1, z_2\in \cZ_0,\quad z_1\neq z_2,
\]
and
\[
\tilde \cZ_0\subset \cB_1(\tilde \cZ_0)\subset \cB_4(\cZ_0).
\]

\item Then, for $k\ge 1$, we recursively define
\begin{equation}
\label{eq:Xkprime} \cZ_k' := \{x \in \tilde \cZ_k : B_{4\rb(x)}(x) \cap \cZ_{<k} = \varnothing\},
\end{equation}
where we have denoted $\cZ_{<k} := \bigcup_{i = 0}^{k-1} \cZ_i$. We take $\cZ_k$ to be the centers of a Vitali subcovering of $\cB_1(\cZ_k')$, namely, $\cZ_k\subset \cZ_k'$ is a countable subset such that
\[
B_{\rb(z_1)}(z_1)\cap B_{\rb(z_2)}(z_2) = \varnothing\qquad\text{for all}\quad z_1, z_2\in \cZ_k,\quad z_1\neq z_2,
\]
and
\[
\cZ_k'\subset \cB_1(\cZ_k')\subset \cB_4(\cZ_k).
\]

\item Finally, we define
\[
\cZ := \bigcup_{k\ge 0} \cZ_k.
\]
We call the points in $\cZ$ {\em neck centers} and denote the points in $\cZ$ by $\zz$, $\zz_k$, etc. The threshold radii of neck centers are simply called \emph{neck radii}.
\end{itemize}
\end{defn}

The standing normalization, the choice of $\eta_0$, and \Cref{lem:epsilon-regularity-for-the-hessian} give $\rb(0)<\infty$, as in \cite[Lemma~5.2]{CFFS25}. Thus the first nonempty dyadic class in the construction of \Cref{defn:neck-centers} produces a neck center, and $\cZ\ne\varnothing$. From now on, this set is fixed. For $R>0$, we write
\begin{equation}\label{eq:ZR}
\cZ_R := \{\zz\in \cZ : \rb(\zz) \le R\}.
\end{equation}

\subsection{Basic properties of the neck centers and neck radii}

We record the covering, Hessian-decay, blow-down, and scale-comparison properties of the neck centers needed below, coming from \cite{CFFS25}, stated here in dimension $n <\nAC$.

\begin{lem}[Global Hessian decay, {\cite{CFFS25}*{Lemma~5.6}, \Cref{rmk:dim3-dimn}}]
\label{lem:global-hessian-decay}
We have:
\[
|D^2 u(x)| \leq C \min\biggl\{\frac{1}{\dist(x,\cZ)},1\biggr\},\qquad\text{for all}\quad x\in \{u > 0\},
\]
for some $C$ universal.
\end{lem}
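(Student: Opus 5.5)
We must prove $|D^2u(x)|\le C\min\{1/\dist(x,\cZ),1\}$ for $x\in\{u>0\}$. The bound by $1$ is the standing normalization \eqref{eq:assump_glob_u}, so we only need $|D^2u(x)|\le C/d$ with $d:=\dist(x,\cZ)$, and we may assume $d\ge C_0$ for a large universal $C_0$. The plan has two steps: first show that balls of radius comparable to $d$ around $x$ carry small Hessian energy, then apply the Hessian $\eps$-regularity \Cref{lem:epsilon-regularity-for-the-hessian} at that scale. Scaling gives $|D^2u|\le C\eta_0/d$.

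\textbf{Step 1 (free-boundary points near $x$ have threshold radius at least $cd$).} Let $y\in\FB(u)$ with $|y-x|\le d/2$; we want $\rb(y)\ge c\,d$. The point $y$ lies in some $\tilde\cZ_k$. By construction, either $y\in\cZ_k'$, and then $y\in\cB_4(\cZ_k)$, or $y\notin\cZ_k'$. In the second case, for $k=0$ we use $y\in\cB_4(\cZ_0)$, and for $k\ge1$ the ball $B_{4\rb(y)}(y)$ meets $\cZ_{<k}$. Either way there is a neck center $\zz$ with $|y-\zz|\le 4\max\{\rb(y),\rb(\zz)\}$. When the maximum is $\rb(\zz)$, we are in the first case, and there $\rb(\zz)\le 2\rb(y)$ because both lie in the same dyadic class. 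So in all cases
\[
\dist(y,\cZ)\le 8\,\rb(y).
\]
Since $\dist(y,\cZ)\ge d-d/2=d/2$, we get $\rb(y)\ge d/16$.

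\textbf{Step 2 (small energy near $x$ and the main obstacle).} Put $\rho=d/100$. If $\dist(x,\FB(u))\ge\rho$, then $u$ is harmonic in $B_\rho(x)$ with $|\nabla u|\le1$, and interior estimates for the harmonic function $\nabla u$ give $|D^2u(x)|\le C/\rho$. Otherwise, pick $y\in\FB(u)$ with $|x-y|<\rho$. By Step 1, $\int_{B_{d/16}(y)\cap\{u>0\}}|D^2u|^n\le\eta_0^n$. The ball $B_{2\rho}(x)$ sits inside $B_{d/16}(y)$, so \Cref{lem:epsilon-regularity-for-the-hessian}, applied to the rescaling $u(x+2\rho\,\cdot)/(2\rho)$, yields $|D^2u(x)|\le C_n\eta_0/\rho\le C/d$.

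The delicate point is the covering bookkeeping in Step 1, because the recursive Vitali selection can leave unselected points far from $\cZ$ relative to their own radius. The way to handle it is to verify, for each generation $k$, the alternatives $y\in\cZ_k'$ and $y\notin\cZ_k'$ as above. It is the dyadic comparability of radii within a class that makes this work.
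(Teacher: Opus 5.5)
Your proof is correct and is essentially the standard argument behind the cited \cite{CFFS25} Lemma~5.6, which the paper uses without reproducing. The recursive construction of $\cZ$ gives $\dist(y,\cZ)\le 8\rb(y)$ for every $y\in\FB(u)$. Hence the Hessian energy on $B_{d/16}(y)$ is at most $\eta_0^n$ for free-boundary points $y$ near $x$, and \Cref{lem:epsilon-regularity-for-the-hessian} at scale $\sim d$ (or interior harmonic estimates when $\dist(x,\FB(u))\ge d/100$) yields $|D^2u(x)|\le C/d$. You implicitly assume $\rb(y)<\infty$, so that $y$ lies in some $\tilde\cZ_k$; this is harmless, because if $\rb(y)=\infty$ the energy bound you need on $B_{d/16}(y)$ holds trivially.
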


\begin{lem}[Blow-down around a neck center, {\cite{CFFS25}*{Lemma~5.7}, \Cref{rmk:dim3-dimn}}]
\label{lem:blow-down-around-a-neck-center}
For any $\eps>0$ there exists $M=M(\eps)\ge 1$ such that the following holds.

For every $\zz \in \cZ$ and every $R\geq M \rb(\zz)$, we have
\begin{equation}\label{epscloseness11}
\min_{e\in \mathbb S^{n-1}}\big\| u - V_{\zz, e} \big\|_{L^\infty(B_{R}(\zz))} \leq \eps R.
\end{equation}

More precisely, choosing $M_\star:=|B_1|^{\frac{1}{n}}\eta_0^{-1}\ge 1$ and $\omega$ as in \eqref{eq:omega_mod}, it is enough to choose $M$ so that
\[
\omega\big(M_\star/M\big)\le \eps.
\]
\end{lem}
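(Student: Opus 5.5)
The idea is to rescale around the neck center $\zz$ at scale comparable to $\rb(\zz)$, and then apply the blow-down \Cref{prop:blow-down-of-non-flat-solutions} to the rescaled solution. Set $r:=\rb(\zz)$ and $v(x):=u(\zz+\lambda x)/\lambda$, with $\lambda>0$ to be chosen. Then $v$ is a global classical stable solution, $0\in\FB(v)$, and $D^2v(x)=\lambda D^2u(\zz+\lambda x)$. To use \Cref{prop:blow-down-of-non-flat-solutions} we need the lower bound $\|D^2v\|_{L^\infty(B_1\cap\{v>0\})}\ge1$, that is, $\sup_{B_\lambda(\zz)\cap\{u>0\}}|D^2u|\ge\lambda^{-1}$.

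The Hessian lower bound comes from the definition \eqref{eq:def_rB} of $\rb$. Since $r<\infty$ and the integrand is continuous in the radius, we have $\int_{B_r(\zz)\cap\{u>0\}}|D^2u|^n=\eta_0^n$. Consequently
\[
\sup_{B_r(\zz)\cap\{u>0\}}|D^2u|^n\,|B_1|r^n\ge\eta_0^n,
\]
so $\sup_{B_r(\zz)}|D^2u|\ge \eta_0|B_1|^{-1/n}r^{-1}=(M_\star r)^{-1}$. We therefore take $\lambda:=M_\star r$. Because $M_\star\ge1$, the ball $B_\lambda(\zz)$ contains $B_r(\zz)$, and so
\[
\sup_{B_\lambda(\zz)\cap\{u>0\}}|D^2u|\ge(M_\star r)^{-1}=\lambda^{-1},
\]
which is exactly the required condition on $v$. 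The choice of $\eta_0\le|B_1|^{1/n}$ in \eqref{eq:eta0} is what guarantees $M_\star\ge1$.

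Now apply \Cref{prop:blow-down-of-non-flat-solutions} to $v$ at radius $\rho:=R/\lambda$. It gives $e$ with $\rho^{-1}\|v-V_{0,e}\|_{L^\infty(B_\rho)}\le\omega(\rho^{-1})$. Scaling back, $V_{0,e}$ becomes $V_{\zz,e}$, since $V$ is $1$-homogeneous, and we obtain
\[
R^{-1}\|u-V_{\zz,e}\|_{L^\infty(B_R(\zz))}\le\omega(M_\star r/R).
\]
For $R\ge Mr$, monotonicity of $\omega$ bounds this by $\omega(M_\star/M)$, which is at most $\eps$ by the choice of $M$. Such an $M$ exists because $\omega(t)\to0$ as $t\to0$; here I assume the modulus is continuous at $0$, as intended by \eqref{eq:w}. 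This gives \eqref{epscloseness11}. The minimum over $e$ is attained by compactness of $\mathbb S^{n-1}$.

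The only delicate step is the Hessian lower bound: we must make sure the threshold radius is attained with equality, or use a radius slightly larger than $r$, in which case a limiting argument handles it. The dimension restriction $n<\nAC$ is needed to invoke the blow-down proposition.
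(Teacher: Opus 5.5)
Your proposal is correct and is essentially the paper's argument: rescale at $M_\star\rb(\zz)$, use the threshold identity $\int_{B_{\rb(\zz)}(\zz)\cap\{u>0\}}|D^2u|^n=\eta_0^n$ to get the unit Hessian lower bound on $B_1$ for the rescaled solution, then apply \Cref{prop:blow-down-of-non-flat-solutions} and scale back using monotonicity of $\omega$. The paper repeats exactly this rescaling in the proof of \Cref{lem:decay-of-monneau-excess}.
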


\begin{lem}[Local comparability of neck radii, {\cite{CFFS25}*{Corollary~5.4 and Lemma~5.9}, \Cref{rmk:dim3-dimn}}]
\label{lem:local-comparability-of-neck-radii}
For any $M\ge 1$ there exists $C_M = C_M(M)$ such that the following holds. Given $\zz\in \cZ$ and $\varrho = M \rb(\zz)$, for all $\zz' \in \cZ \cap B_{\varrho} (\zz)$ we have
\[
\rb(\zz')\ge \frac{\rb(\zz)}{C_M}\qquad\text{and}\qquad \|D^2 u \|_{L^\infty(\{u>0\}\cap B_\varrho(\zz))} \le \frac{C_M}{\rb(\zz)}.
\]
\end{lem}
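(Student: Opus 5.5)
The plan follows \cite{CFFS25}: the lower bound on $\rb(\zz')$ comes from the construction of $\cZ$, and the Hessian bound from \Cref{lem:global-hessian-decay} together with $\varepsilon$-regularity at scales comparable to the neck radii.

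\textbf{Step 1: scale lower bound.} Fix $\zz\in\cZ$, $\varrho=M\rb(\zz)$ and $\zz'\in\cZ\cap B_\varrho(\zz)$. Let $k,k'$ be the dyadic classes of $\zz,\zz'$. If $k'\ge k$ there is nothing to prove, since then $\rb(\zz')\ge\rb(\zz)/2$. So assume $k'<k$, i.e.\ $\rb(\zz')<\rb(\zz)/2$, and aim to show $k-k'\le C(M)$.

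\textbf{Step 2: comparing thresholds along a chain.} Since $\rb$ is defined via the Hessian energy in balls, the sets $B_{\rb(y)}(y)$ carry exactly energy $\eta_0^n$, and $\rb$ is $1$-Lipschitz-like in the sense
\[
|x-y|\le r \quad\Longrightarrow\quad \rb(x)\le \rb(y)+r .
\]
The issue is a much smaller threshold ball $B_{\rb(\zz')}(\zz')$ at distance up to $M\rb(\zz)$ from $\zz$. I would argue by compactness and contradiction. Take sequences with $\rb(\zz'_j)/\rb(\zz_j)\to0$, and rescale by $\rb(\zz_j)$. The rescaled solutions are classical stable solutions with $|D^2|\le 1/\rb(\zz_j)\to0$ globally in the unscaled normalization. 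Around $\zz_j$ there is still a unit energy concentration, and \Cref{lem:compactness} applies.

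At $\zz'_j$, energy $\eta_0^n$ concentrates in balls of vanishing radius. Since $\eta_0\le\eta_*$, the limit cannot be classical near the limit point $y'$: otherwise \Cref{lem:epsilon-regularity} would give uniformly small Hessian energy there. Then \Cref{lem:half-space-regularity-classical-stable-limits} says the blow-up of the limit at $y'$ is not a half-space. By \Cref{prop:classification-homogeneous-classical-stable-limits} (valid since $n<\nAC$), that blow-up is a vee. I then need to rule out this vee-type singular point. Here the construction rule \eqref{eq:Xkprime} should enter: a center of class $k$ may not be chosen within $4\rb$ of earlier (smaller) centers. Hence the smaller neck at $\zz'_j$ sits at normalized distance at least $4$ from $\zz_j$, though possibly within $M$.

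\textbf{Main obstacle.} The difficulty is excluding this vee singularity in the limit, which is where the cited Corollary~5.4 of \cite{CFFS25} does the work. I would try the following route. \Cref{lem:global-hessian-decay} bounds $|D^2u|\lesssim1/\dist(\cdot,\cZ)$. Combined with the nondegeneracy of \Cref{lem:nondegeneracy-of-stable-solutions}, a vee-limit point would force the two sheets to touch. By Weiss monotonicity centered at $y'$, its density would be $2\alpha_n$, so the density of the limit at $\zz$ is also $2\alpha_n$, since it cannot exceed the vee value by \eqref{eq:weiss-energy-range}. Then rigidity in the equality case makes the limit a vee centered at both points. This would give a contradiction with the unit Hessian energy at scale $1$ around $\zz$, because an exact vee has zero Hessian energy.

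\textbf{Step 3: Hessian bound.} Once $\rb(\zz')\ge\rb(\zz)/C_M$ for all necks in $B_{\varrho}(\zz)$, take $x\in\{u>0\}\cap B_\varrho(\zz)$. Its distance to necks inside $B_{2\varrho}(\zz)$ (Step 1 applied with $2M$) is handled by applying \Cref{lem:epsilon-regularity-for-the-hessian} at scale $\rb(\zz)/C$ around free boundary points near $x$. Since the threshold radius of points near $\zz'$ is comparable to $\rb(\zz')$ by the Lipschitz-type property, each ball of radius $c\rb(\zz)/C_M$ has Hessian energy at most $\eta_0^n$. Interior harmonic estimates handle points far from the free boundary. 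For necks outside $B_{2\varrho}(\zz)$, \Cref{lem:global-hessian-decay} gives $|D^2u(x)|\le C/\varrho$ directly. Combining the two cases yields $|D^2u|\le C_M/\rb(\zz)$.
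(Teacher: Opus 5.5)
Your ingredients are the right ones: the vee structure at scales much larger than $\rb(\zz')$, the exclusion rule \eqref{eq:Xkprime}, and a contradiction with the threshold energy at $\zz$. However, the decisive last step does not close as written. Knowing that the limit $v_\infty$ is an exact vee gives no contradiction by itself. Hessian energy is not continuous under this convergence; it can disappear precisely where two sheets come together, which is exactly what happens at $y'$. To contradict $\int_{B_1\cap\{v_j>0\}}|D^2v_j|^n=\eta_0^n$, you need a uniform bound on $|D^2v_j|$ on a neighbourhood of $\overline{B_1}$. Placing the single neck $\zz_j'$ outside $B_4$ does not give this. You must exclude every free-boundary point of small threshold radius near $\overline{B_1}$, and these need not be neck centers. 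Moreover, \Cref{lem:global-hessian-decay} by itself degenerates near $\zz$ and near the other centers.

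The missing bound comes from the construction in \Cref{defn:neck-centers}. Every $y\in\FB(u)$ with $\rb(y)<\infty$ admits some $z\in\cZ$ with $\rb(z)\le2\rb(y)$ and $|z-y|\le8\rb(y)$: if $k'$ is the class of $y$, either $y\in\cB_4(\cZ_{k'})$ or $B_{4\rb(y)}(y)$ meets $\cZ_{<k'}$. Normalize $\zz=0$ and $\rb(\zz)=1$. Suppose $y\in\FB(v_j)\cap B_{7/2}$ had $\rb(y)<1/16$. Then the corresponding $z$ has $\rb(z)<1/8$, hence strictly smaller class than $\zz$, so $|z|\ge4$ by \eqref{eq:Xkprime}; but $|z|<7/2+1/2$, a contradiction. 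Hence $\rb\ge1/16$ on $\FB(v_j)\cap B_{7/2}$. \Cref{lem:epsilon-regularity-for-the-hessian}, together with interior harmonic estimates, then gives $|D^2v_j|\le C$ in $B_3$. Now \Cref{cor:vee-plus-hessian-implies-regularity} yields $\int_{B_1}|D^2v_j|^n\to0$, the desired contradiction. (Alternatively, use $C^2$ convergence off a thin slab around the ridge plus the $L^\infty$ bound inside it.)

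With this bound in hand you can also drop the compactness and Weiss-rigidity argument altogether. Apply \Cref{lem:blow-down-around-a-neck-center} at $\zz'$ with $R=2M\rb(\zz)$, and recenter at $\zz$ via \Cref{lem:consequences-of-closeness-to-a-vee}(a). Then \Cref{cor:vee-plus-hessian-implies-regularity} gives $|D^2u|\le C\eps M/\rb(\zz)$ in $B_{\rb(\zz)}(\zz)$, which contradicts the threshold identity once $\eps\ll1/M$. This route also produces an explicit $C_M$.

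Two smaller corrections:
\begin{itemize}
\item After rescaling by $\rb(\zz_j)$ one has $|D^2v_j|\le\rb(\zz_j)$, not $1/\rb(\zz_j)$. There is no global Hessian control; otherwise no concentration could occur at all.
\item The Weiss rigidity should be run at the center $y'$ only. The blow-up density $2\alpha_n$ at $y'$ and the blow-down density at most $2\alpha_n$ (blow-downs lie in $\mathscr C_n$) make ${\bf W}_{y'}(v_\infty,\cdot)$ constant. You cannot assert density $2\alpha_n$ at $0$ before knowing that $v_\infty$ is a vee.
\end{itemize}
Your Step~3 is fine.
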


\begin{lem}[Upper bound for nearby neck radii, {\cite{CFFS25}*{Lemma~5.10}, \Cref{rmk:dim3-dimn}}]
\label{lem:upper-bound-for-nearby-neck-radii}
There exists $M_\circ>0$ universal such that if $\zz\in \cZ$ and $R\ge M_\circ\rb(\zz)$, then $\rb(\zz')\le \frac{R}{8}$ for all $\zz'\in \cZ\cap B_{3R/4}(\zz)$.
\end{lem}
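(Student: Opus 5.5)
The claim is that for $\zz\in\cZ$ and $R\ge M_\circ\rb(\zz)$, every $\zz'\in\cZ\cap B_{3R/4}(\zz)$ has $\rb(\zz')\le R/8$. I would argue by compactness and contradiction, combining the blow-down closeness to a vee around $\zz$ (\Cref{lem:blow-down-around-a-neck-center}) with the Hessian-regularity consequences of vee closeness (\Cref{cor:vee-plus-hessian-implies-regularity}, \Cref{lem:global-hessian-decay}). The key observation is that a neck center is a free-boundary point, so near $\zz'$ the picture is again close to a vee. Then $B_{R/8}(\zz')$ is a large ball on which $u$ is close to a vee and the Hessian is controlled by $1/\dist(\cdot,\cZ)$. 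We must show the Hessian energy $\int_{B_{R/8}(\zz')\cap\{u>0\}}|D^2u|^n$ reaches $\eta_0^n$.

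First I fix a small $\eps$ and take $M_\circ\ge M(\eps)$, so that $u$ is $\eps R$-close in $L^\infty(B_R(\zz))$ to $V_{\zz,e}$. Since $\zz'\in\FB(u)\cap B_{3R/4}(\zz)$, \Cref{lem:consequences-of-closeness-to-a-vee}(a) gives $\|u-V_{\zz',e}\|_{L^\infty(B_{R/8}(\zz'))}\le 2\eps R$. Now suppose, for contradiction, that $\rb(\zz')>R/8$. Then $\int_{B_{R/8}(\zz')\cap\{u>0\}}|D^2u|^n<\eta_0^n\le\eta_*^n$, and \Cref{lem:epsilon-regularity-for-the-hessian}, rescaled, yields $|D^2u|\le C\eta_0/R$ on $B_{R/16}(\zz')$. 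In particular the hypotheses of \Cref{cor:vee-plus-hessian-implies-regularity} hold at scale $\varrho\sim R/32$, with $\eps$ small. We conclude that in $B_{R/32}(\zz')$ the positivity set splits into two disjoint graphs $x\cdot e>g^+$ and $x\cdot e<g^-$, and $\zz'$ lies on one of the sheets.

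Next I would derive the contradiction from the fact that $\zz'$ is a neck center with $\rb(\zz')>R/8$ while near $\zz$ there is genuine curvature: $\zz$ has $\rb(\zz)\le R/M_\circ$. If $\zz\in B_{R/32}(\zz')$, the Hessian bound $C\eta_0/R$ near $\zz$ with the two-sheet flatness would force $\int_{B_{\rb(\zz)}(\zz)}|D^2u|^n\le C(\rb(\zz)/R)^n\eta_0^n\ll\eta_0^n$, contradicting the definition of $\rb(\zz)$. In general $\zz$ is not that close, so I would instead apply a chaining argument. Cover the segment from $\zz'$ to $\zz$ by balls of radius comparable to $R/16$ centered at free-boundary points; such centers exist by \Cref{lem:consequences-of-closeness-to-a-vee}(b). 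On each ball, the vee-closeness plus either small Hessian energy gives flatness. If every ball along the chain had small energy, the Hessian would be $\lesssim 1/R$ all the way to $\zz$, contradicting $\rb(\zz)\le R/M_\circ$ as above. So some intermediate free-boundary point $y$ at distance $<3R/4+R/16$ from $\zz$ has $\rb(y)\le R/16$. The remaining task is to show that this forces $\rb(\zz')\le R/8$. For that I would use \Cref{lem:local-comparability-of-neck-radii} and the construction of $\cZ$ (the exclusion $B_{4\rb(x)}(x)\cap\cZ_{<k}=\varnothing$). Together these say that neck centers with large radius cannot have smaller-radius neck centers within $4\rb$.

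The main obstacle is exactly this last step: converting "somewhere between them the Hessian energy concentrates" into a bound on $\rb(\zz')$ itself, rather than just on nearby points. The cleanest route I would try first is to avoid chaining entirely. Take $\varrho=3R/4+R/8$ and use \Cref{lem:local-comparability-of-neck-radii} with the ball around $\zz'$: if $\rb(\zz')>R/8$ then $\zz\in B_{8\rb(\zz')}(\zz')$, so comparability with $M=8$ gives $\rb(\zz)\ge\rb(\zz')/C_8>R/(8C_8)$. Choosing $M_\circ>8C_8$ then contradicts $R\ge M_\circ\rb(\zz)$. This direct argument needs only \Cref{lem:local-comparability-of-neck-radii}, and I expect it to suffice. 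The flatness analysis above is a fallback if the comparability constant's quantifiers do not line up.
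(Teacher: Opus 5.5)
Your direct argument in the last paragraph is correct and is essentially the intended deduction behind the cited result. If $\rb(\zz')>R/8$, then $|\zz-\zz'|<3R/4<6\rb(\zz')$, so \Cref{lem:local-comparability-of-neck-radii} centered at $\zz'$ with $M=8$ gives $\rb(\zz)\ge\rb(\zz')/C_8>R/(8C_8)$, which contradicts $R\ge M_\circ\rb(\zz)$ once $M_\circ\ge 8C_8$; there is no circularity, since the comparability lemma precedes this one both here and in \cite{CFFS25}. The compactness, flatness and chaining material before it is unnecessary and, as you note, incomplete on its own, so the written proof should consist of the direct argument alone.
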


We finish the preliminary toolkit with a one-sided estimate that converts averaged vee closeness into pointwise trapping.
\begin{lem}[{\cite{CFFS25}*{Lemma~6.5}, \Cref{rmk:dim3-dimn}}]
\label{lem:one-sided-lower-bound}
Let $y\in \R^n$, $e\in \mathbb{S}^{n-1}$, and $R> 0$. Then, the following implication holds for all $\eps\in (0,1)$:
\[
\frac{1}{R}\fint_{\{x\in B_R(y)\ :\ e\cdot(x-y)>R/8\}} (u(x)- e\cdot(x-y))_-\,dx\le \eps\qquad\implies\qquad u(x) + C\eps R \ge e\cdot(x-y) \quad \forall \,x\in B_{3R/4}(y),
\]
where $(t)_-= \max(-t,0)$ denotes the negative part of $t\in \R$ and $C$ depends only on $n$.

In particular:
\[
\frac{1}{R}\fint_{B_R(y)} |u- V_{y, e}|\,dx \le \eps\qquad\implies\qquad u+C\eps R \ge V_{y, e}\qquad\text{in}\quad B_{3R/4}(y).
\]
\end{lem}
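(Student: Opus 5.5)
The plan is to reduce to a one-dimensional statement about the positive part of $e\cdot(x-y)-u$ on the half-ball where the vee is large. We can work one line at a time in the direction $e$ because of two facts. First, $|\nabla u|\le 1$ by \Cref{lem:lipschitz-bound}. Second, $u$ is nonnegative and superharmonic as a distribution ($\Delta u$ is a nonnegative measure on $\FB(u)$), or at least subharmonic-type comparisons are available. I will lean mainly on the Lipschitz bound together with a harmonic-type mean value estimate.

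Set $w:=(e\cdot(x-y)-u)_+$ and normalize $y=0$, $R=1$. The Lipschitz bound alone shows that $w$ is $2$-Lipschitz. Hence, if $w(x_0)=t$ at some $x_0$ with $e\cdot x_0>1/8$, then $w\ge t/2$ on $B_{t/4}(x_0)$. This gives $\fint w\ge c\,t^{n+1}$ over the region, which only yields $t\le C\eps^{1/(n+1)}$, not linear in $\eps$. To obtain the linear bound I would use the structure of $u$.

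The key structural fact is that on $\{u>0\}$ the function $\ell-u$, with $\ell(x)=e\cdot x$, is harmonic. It is also $\le\ell$ where $u=0$, and $\ell-u$ is subharmonic across the free boundary, since $\Delta(\ell-u)=-\mathcal H^{n-1}\lfloor\FB(u)\le0$. This sign is wrong: $\ell-u$ is superharmonic. So the positive part $w$ is not obviously subharmonic. Instead I would use $\min(u,\ell)$-type comparison. The function $u$ is superharmonic, hence $u(x_0)\ge\fint_{B_r(x_0)}u$. Since $\ell$ is harmonic,
\[
w(x_0)\le \ell(x_0)-u(x_0)\le \fint_{B_r(x_0)}(\ell-u)\le\fint_{B_r(x_0)}w,
\]
valid whenever $w(x_0)>0$. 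With $r\simeq 1/8$, chosen so that $B_r(x_0)$ lies in the region $\{e\cdot x>1/8\}\cap B_1$, this gives $w(x_0)\le C\eps$ directly. The catch is that such balls exist only for $x_0$ with $e\cdot x_0>1/4$, roughly.

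It remains to handle points in $B_{3/4}$ where $e\cdot x\le 1/4$, including $e\cdot x\le0$. For these I would extend by the superharmonic mean value inequality over balls that are centered at $x_0$ but only partially inside the good region. The better route is to apply the inequality $u(x_0)\ge\fint_{B_r(x_0)}u$ with $r\approx1/4$. We split the average as follows. On the part of the ball in $\{e\cdot x>1/8\}$ we use $u\ge\ell-w$. Elsewhere we use only $u\ge0$, which is weaker than $\ell$ there. The loss is then $\fint_{B_r(x_0)}(\ell-u)_+$ over the bad strip, which is not small. So instead I would exploit monotonicity along $e$: for $x_0$ with $e\cdot x_0\le1/4$, consider $x_1=x_0+s e$ with $e\cdot x_1\in(1/4,3/8)$. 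By the $1$-Lipschitz bound, $u(x_0)\ge u(x_1)-s$. From the first step, $u(x_1)\ge e\cdot x_1-C\eps$. Hence $u(x_0)\ge e\cdot x_0-C\eps$, where $x_1$ must stay in the region covered by the first step (a slight enlargement of the radius bookkeeping).

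The "in particular" statement then follows. The average of $|u-V_{y,e}|$ over $B_R(y)$ dominates a constant multiple of the half-region average of $(u-\ell)_-$, since $V=\ell$ there and the region occupies a fixed fraction of the ball. This gives $u+C\eps R\ge e\cdot(x-y)$. Applying the same argument with $-e$ gives $u+C\eps R\ge -e\cdot(x-y)$, and together these yield $u+C\eps R\ge V_{y,e}$.

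The main obstacle is making sure the geometric constants close. Specifically, the balls used in the mean value step must stay inside $\{e\cdot x>1/8\}\cap B_1$ for all the shifted points $x_1$ needed to reach $B_{3/4}$. One must also confirm the superharmonicity of $u$ across $\FB(u)$ in the distributional sense. That holds here because $\Delta u=\mathcal H^{n-1}\lfloor\FB(u)\ge0$ is in fact subharmonic, not superharmonic. I expect this sign issue to require switching to a local comparison with the harmonic replacement on $\{u>0\}$.
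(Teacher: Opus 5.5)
The paper does not reprove this lemma (it is quoted from \cite{CFFS25}), so I judge your argument on its own. Your outer steps are sound. The upward sliding $u(x_0)\ge u(x_0+se)-s$, which follows from the $1$-Lipschitz bound, correctly transports a lower bound for $u-\ell$ from higher points to all of $B_{3R/4}(y)$. Sliding $x_0\in B_{3R/4}(y)$ up to height $e\cdot(x_1-y)=5R/16$ keeps $x_1\in B_{13R/16}(y)$. The ``in particular'' part also follows as you say, by applying the first part with $\pm e$.

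The gap is the central mean-value step. A classical one-phase solution is \emph{subharmonic}, $\Delta u=\mathcal H^{n-1}\lfloor\FB(u)\ge0$, contrary to your opening sentence. Hence $u(x_0)\le\fint_{B_r(x_0)}u$, and the chain $\ell(x_0)-u(x_0)\le\fint_{B_r(x_0)}(\ell-u)$ is unjustified as soon as $B_r(x_0)$ meets $\{u=0\}$. You notice the sign problem at the end, but the remedy you suggest does not fix it. A harmonic replacement on a ball lies above the subharmonic $u$ and bounds it from above. On $\{u>0\}$ the function $u$ is already harmonic, so any comparison there must control $\ell-u=\ell$ on $\FB(u)$, which is exactly the difficulty.

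The missing idea is that you need no sub/superharmonic inequality at all. You only need the exact mean-value identity for $u$ on balls contained in $\{u>0\}$. That containment is forced by the hypothesis through the same Lipschitz--volume argument you discarded in your second paragraph, applied at zero points instead of at the point of interest.

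Normalize $y=0$, $R=1$, $\ell=e\cdot x$. Suppose $z\in\{u=0\}$ with $\ell(z)\ge1/4$ and $|z|\le7/8$. Then $(\ell-u)(z)=\ell(z)\ge1/4$, and $\ell-u$ is $2$-Lipschitz, so $(u-\ell)_-\ge1/8$ on $B_{1/16}(z)\subset B_1\cap\{\ell>1/8\}$. Consequently the hypothesis average is at least some $c_1(n)>0$. This gives a dichotomy:
\begin{itemize}
\item[$\bullet$] Either $\eps\ge c_1$, and the conclusion is trivial because $u\ge0$ and $\ell<1$ in $B_1$ (take $C\ge1/c_1$);
\item[$\bullet$] or $\{\ell\ge1/4\}\cap B_{7/8}\subset\{u>0\}$.
\end{itemize}
In the second case, take any $x_1$ with $\ell(x_1)\ge5/16$ and $|x_1|\le13/16$. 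The ball $B_{1/16}(x_1)$ lies in $\{u>0\}$, so $u$ is harmonic there, and it also lies in the averaging region. Therefore $u(x_1)-\ell(x_1)=\fint_{B_{1/16}(x_1)}(u-\ell)\ge-C\eps$, and your sliding step finishes the proof.

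The Lipschitz argument loses a power only when the deficit it detects is small. At a zero point the deficit is of order $R$, and that is what makes the dichotomy work and yields the linear bound.
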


\section{Stability inequality}
\label{sec:stability-inequality}

\subsection{Jacobi fields and stability inequalities}

We formulate the stability inequality in terms of Jacobi fields and derive the Sternberg--Zumbrun estimates used later; \cite{SZ98}.

\begin{defn}[Stability inequality]
\label{defn:stability-inequality}
For a classical stable solution $v$ to the Bernoulli problem in $B_1$, the stability inequality reads
\begin{equation}
\label{eq:stab_ineq_H} \int_{\partial\{v > 0\}} H \xi^2\, d\mathcal{H}^{n-1} \le \int_{\{v > 0\}} |\nabla\xi|^2\, dx,\qquad\text{for all}\quad \xi\in C^\infty_c(B_1), \ \xi\ge 0,
\end{equation}
(see \cite[Lemma 1]{CJK04}), where $H$ denotes the mean curvature of the free boundary, $H=-\partial^2_{\nu\nu}v$, and $\nu$ is its inward unit normal.

Putting $\xi=\bc \eta$ in \eqref{eq:stab_ineq_H} and integrating by parts,
\begin{equation}
\label{eq:stab_ineq_H_2} \int_{\set{v>0}} \bc\Delta\bc\, \eta^2 \,dx +\int_{\partial\{v > 0\}} \bc(\bc_\nu+H\bc) \eta^2 \, d\mathcal{H}^{n-1} \le \int_{\{v > 0\}} \bc^2 |\nabla\eta|^2 \, dx, \qquad\text{for all}\quad \bc,\eta\in C^\infty_c(B_1), \ \bc,\eta\ge 0.
\end{equation}
\end{defn}

To exploit this quadratic form, we next identify the Jacobi fields and the finite-family inequality they satisfy.
\begin{defn}[Jacobi fields for the Bernoulli problem]
\label{defn:bernoulli-jacobi-field}
Let $v$ be a classical solution to the Bernoulli problem in an open set $U\subset\R^n$. A function $\phi\in C^2(\{v > 0\}\cap U)\cap C^1(\overline{\{v > 0 \}}\cap U)$ is a \emph{Jacobi field} for $v$ in $U$ if
\[
\Delta\phi=0\quad\text{in }\{v > 0\}\cap U, \qquad \phi_\nu+H\phi=0\quad\text{on }\FB(v) \cap U,
\]
where $\nu=\nabla v$ is the inward unit normal and $H$ has the sign convention fixed in \eqref{eq:stab_ineq_H}.
\end{defn}

\begin{lem}[Sternberg--Zumbrun inequality for finite Jacobi families]
\label{lem:finite-jacobi-sternberg-zumbrun}
Let $n\ge2$, let $v$ be a classical stable solution to the Bernoulli problem in $B_1\subset\R^n$. Let $N\ge1$, and let $\phi_1,\ldots,\phi_N$ be $N$ Jacobi fields for $v$ in $B_1$, in the sense of \Cref{defn:bernoulli-jacobi-field}. Define

\[
\Phi:=(\phi_1,\ldots,\phi_N), \qquad \bc_\Phi^2:=|\Phi|^2 =\sum_{\alpha=1}^N\phi_\alpha^2, \qquad \cK_\Phi^2:= \sum_{\alpha=1}^N|\nabla\phi_\alpha|^2 -|\nabla\bc_\Phi|^2 =\bc_\Phi^2\left|\nabla\left(\frac{\Phi}{\bc_\Phi}\right)\right|^2,
\]
and set $\cK_\Phi^2 = 0$ when $\Phi = 0$. Then, for every $\eta\in C_c^\infty(B_1)$,
\begin{equation}\label{eq:finite-jacobi-sternberg-zumbrun}
\int_{\{v > 0\}\cap B_1}\cK_\Phi^2\eta^2\,dx \le \int_{\{v > 0\}\cap B_1}\bc_\Phi^2|\nabla\eta|^2\,dx .
\end{equation}
\end{lem}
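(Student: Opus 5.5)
The plan is the standard Sternberg--Zumbrun computation: plug the test function $\xi=\bc_\Phi\eta$ into the stability inequality \eqref{eq:stab_ineq_H} and exploit the Jacobi field equations. Since $\bc_\Phi$ is only Lipschitz where $\Phi$ vanishes, I would work with the regularization $\bc_\delta:=(\delta^2+|\Phi|^2)^{1/2}$, which is smooth up to $\FB(v)$ in $B_1$. Then $\xi=\bc_\delta\eta\ge0$ is admissible in \eqref{eq:stab_ineq_H}, possibly after approximating $\eta\in C^\infty_c$ with $\eta$ of either sign by $|\eta|$, or by simply using $\eta^2$ directly.

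Expanding $|\nabla(\bc_\delta\eta)|^2=\bc_\delta^2|\nabla\eta|^2+\eta^2|\nabla\bc_\delta|^2+\tfrac12\nabla\bc_\delta^2\cdot\nabla\eta^2$ and integrating the cross term by parts in $\{v>0\}$ gives
\[
\int_{\{v>0\}}|\nabla(\bc_\delta\eta)|^2=\int_{\{v>0\}}\bc_\delta^2|\nabla\eta|^2+\int_{\{v>0\}}\eta^2\Bigl(|\nabla\bc_\delta|^2-\tfrac12\Delta\bc_\delta^2\Bigr)-\int_{\FB(v)}\tfrac12\eta^2\,\partial_\nu\bc_\delta^2 .
\]
The boundary sign comes from $\nu$ being the inward normal. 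Now I would use the Jacobi equations. Since $\bc_\delta^2=\delta^2+\sum_\alpha\phi_\alpha^2$, harmonicity gives $\tfrac12\Delta\bc_\delta^2=\sum_\alpha|\nabla\phi_\alpha|^2$. On the free boundary, $\tfrac12\partial_\nu\bc_\delta^2=\sum_\alpha\phi_\alpha\partial_\nu\phi_\alpha=-H\sum_\alpha\phi_\alpha^2=-H(\bc_\delta^2-\delta^2)$. Inserting this into the stability inequality, the boundary terms $\int H\bc_\delta^2\eta^2$ cancel up to an error $\delta^2\int_{\FB(v)}H\eta^2$. What remains is
\[
\int_{\{v>0\}}\eta^2\Bigl(\sum_\alpha|\nabla\phi_\alpha|^2-|\nabla\bc_\delta|^2\Bigr)\le\int_{\{v>0\}}\bc_\delta^2|\nabla\eta|^2+\delta^2\int_{\FB(v)}|H|\eta^2 .
\]

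Finally I would let $\delta\downarrow0$. Here $|\nabla\bc_\delta|^2=|\sum\phi_\alpha\nabla\phi_\alpha|^2/(\delta^2+|\Phi|^2)$ increases to $|\nabla\bc_\Phi|^2$ on $\{\Phi\ne0\}$ and vanishes where $\Phi=0$. The left integrand is bounded by $\sum|\nabla\phi_\alpha|^2$, which is locally integrable on $\supp\eta$ up to the boundary since $\phi_\alpha\in C^1(\overline{\{v>0\}})$. Dominated convergence therefore gives $\cK_\Phi^2$ in the limit, with $\cK_\Phi^2=\sum|\nabla\phi_\alpha|^2$ on $\{\Phi=0\}$. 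This is consistent with the convention $|\nabla\bc_\Phi|=0$ there, and it is nonnegative anyway; if the convention $\cK_\Phi^2=0$ is meant, the inequality only becomes weaker. On the right side, $\bc_\delta^2\to\bc_\Phi^2$ and the $\delta^2$ boundary error vanishes, since $H$ is bounded on the compact set $\supp\eta\cap\FB(v)$.

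The identity $\cK_\Phi^2=\bc_\Phi^2|\nabla(\Phi/\bc_\Phi)|^2$ is then a pointwise algebraic check on $\{\Phi\ne0\}$. The only delicate step is justifying the integration by parts near $\partial B_1$ and the regularity at $\FB(v)$. Compact support of $\eta$ together with the $C^2/C^1$ regularity in the definition of a Jacobi field handles both.
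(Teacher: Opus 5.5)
Your proposal is correct and follows the paper's proof: regularize with $\sqrt{\bc_\Phi^2+\delta^2}$, use harmonicity and the Jacobi boundary condition so that the free-boundary terms cancel up to $\delta^2\int_{\FB(v)}H\eta^2$, and pass to the limit by dominated convergence. The only difference is on $\{\Phi=0\}$. There you note that the limiting integrand $\sum_\alpha|\nabla\phi_\alpha|^2\ge0$ dominates the convention $\cK_\Phi^2=0$, whereas the paper observes that $\nabla\Phi=0$ a.e. on that set.
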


\begin{proof}
Set
\[
\bc_\eps:=\sqrt{\bc_\Phi^2+\eps^2}.
\]
Since all $\phi_\alpha$ are harmonic in $\{v > 0\}\cap B_1$,
\begin{equation}\label{h1uo2g4oug1}
\bc_\eps\Delta\bc_\eps =\frac12\Delta(\bc_\eps^2)-|\nabla\bc_\eps|^2 = \sum_{\alpha=1}^N|\nabla\phi_\alpha|^2-|\nabla\bc_\eps|^2 \qquad\text{in }\{v > 0\}\cap B_1 .
\end{equation}
On the free boundary, the Jacobi boundary conditions give
\[
2\bc_\eps(\bc_\eps)_\nu =\partial_\nu\left(\bc_\Phi^2+\eps^2\right) =2\sum_{\alpha=1}^N\phi_\alpha(\phi_\alpha)_\nu =-2H\bc_\Phi^2.
\]
Therefore
\[
\bc_\eps\bigl((\bc_\eps)_\nu+H\bc_\eps\bigr) =H\eps^2 \qquad\text{on }\FB(v).
\]

Multiplying \eqref{h1uo2g4oug1} by a cutoff $\eta^2$ and recalling \eqref{eq:stab_ineq_H_2} we obtain
\[
\int_{\{v > 0\}\cap B_1} \left( \sum_{\alpha=1}^N|\nabla\phi_\alpha|^2-|\nabla\bc_\eps|^2 \right)\eta^2\,dx \le \int_{\{v > 0\}\cap B_1}\bc_\eps^2|\nabla\eta|^2\,dx -\eps^2\int_{\FB(v)}H\eta^2\,d\cH^{n-1}.
\]
Since $v$ is classical and $\eta$ is compactly supported, the last term vanishes as $\eps\downarrow0$.

By Cauchy-Schwarz in $\R^N$, $|\nabla\bc_\eps|^2\le\sum_{\alpha=1}^N|\nabla\phi_\alpha|^2,$ and $|\nabla\bc_\eps|^2\to|\nabla\bc_\Phi|^2$ a.e. on $\{\bc_\Phi>0\}$. On the zero set $\{\bc_\Phi=0\}$, one has $\nabla\bc_\eps=0$, and $\nabla\Phi=0$ at a.e. point. Hence the integrand converges a.e. to $\sum_{\alpha=1}^N|\nabla\phi_\alpha|^2-|\nabla\bc_\Phi|^2$. Dominated convergence gives
\[
\int_{\{v > 0\}\cap B_1} \left( \sum_{\alpha=1}^N|\nabla\phi_\alpha|^2-|\nabla\bc_\Phi|^2 \right)\eta^2\,dx \le \int_{\{v > 0\}\cap B_1}\bc_\Phi^2|\nabla\eta|^2\,dx .
\]
On $\{\bc_\Phi>0\}$, writing $\Phi=\bc_\Phi (\Phi/\bc_\Phi)$ and using $|\Phi/\bc_\Phi|=1$, we have
\[
\sum_{\alpha=1}^N|\nabla\phi_\alpha|^2 =|\nabla\Phi|^2 =|\nabla\bc_\Phi|^2+\bc_\Phi^2|\nabla (\Phi/\bc_\Phi)|^2 .
\]
This proves \eqref{eq:finite-jacobi-sternberg-zumbrun}.
\end{proof}

We will use two symmetry-generated specializations of \Cref{lem:finite-jacobi-sternberg-zumbrun}, corresponding to translations and dilations.
\begin{lem}[Tangential Sternberg-Zumbrun inequality]
\label{lem:tangential-sternberg-zumbrun-inequality}
Let $n \ge 2$, and let $v$ be a classical stable solution to the Bernoulli problem in $B_1\subset \R^n$. Let $e\in \mathbb{S}^{n-1}$, and define
\[
\nabla^e v := \nabla v - (\nabla v\cdot e)\,e.
\]
Then
\[
\int_{B_1\cap \{v > 0\} }\cA_e^2(v)\,\eta^2\,dx \le \int_{B_1}|\nabla^e v|^2|\nabla \eta|^2 \,dx,\qquad\text{for any}\quad \eta\in C^\infty_c(B_1),
\]
where
\[
\cA_e^2(v) := \sum_{i = 1}^n |\partial_i \nabla^ev|^2 -\big|\nabla|\nabla^e v|\big|^2 = |D\nabla^ev|^2 -\big|\nabla|\nabla^e v|\big|^2,
\]
if $|\nabla^e v|(x) \neq 0$, and $\cA_e^2(v) = 0$ otherwise.
\end{lem}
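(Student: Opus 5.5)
The plan is to apply \Cref{lem:finite-jacobi-sternberg-zumbrun} to the Jacobi fields generated by translations in directions orthogonal to $e$. Complete $e$ to an orthonormal basis $\{e,\tau_1,\dots,\tau_{n-1}\}$ of $\R^n$ and set $\phi_\alpha:=\partial_{\tau_\alpha}v=\nabla v\cdot\tau_\alpha$ for $\alpha=1,\dots,n-1$. Then $\Phi=(\phi_1,\dots,\phi_{n-1})$ is exactly the coordinate vector of $\nabla^e v$ in the basis $\{\tau_\alpha\}$ of $e^\perp$. Consequently
\[
\bc_\Phi^2=|\nabla^e v|^2
\qquad\text{and}\qquad
\sum_\alpha|\nabla\phi_\alpha|^2=|D\nabla^e v|^2 .
\]
For the last identity, note that $D\nabla^e v=(I-e\otimes e)D^2v$ has rows $\nabla\phi_\alpha$ (in the rotated frame), and the Frobenius norm is rotation invariant. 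It follows that $\cK_\Phi^2=\cA_e^2(v)$ wherever $\nabla^e v\neq0$, and where $\nabla^ev=0$ both sides are set to zero by convention. With these identifications, \eqref{eq:finite-jacobi-sternberg-zumbrun} is precisely the claimed inequality. On the right-hand side, the integral over $B_1$ coincides with the integral over $\{v>0\}\cap B_1$, since $\nabla v=0$ a.e. on $\{v=0\}$.

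The only real step is to check that each $\phi_\alpha$ is a Jacobi field in the sense of \Cref{defn:bernoulli-jacobi-field}. Harmonicity in $\{v>0\}$ is immediate by differentiating $\Delta v=0$. For regularity up to the boundary, classical solutions are smooth up to the free boundary (by \Cref{lem:epsilon-regularity} after rescaling), so $\phi_\alpha\in C^2(\{v>0\})\cap C^1(\overline{\{v>0\}})$.

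For the boundary condition, on $\FB(v)$ we have $|\nabla v|^2=1$ and $\nu=\nabla v$. Differentiating $|\nabla v|^2\equiv1$ tangentially along $\FB(v)$ gives $D^2v\,\nu\cdot t=0$ for every tangent vector $t$, so $D^2v\,\nu=(\partial^2_{\nu\nu}v)\nu=-H\nu$. Then for any constant vector $\tau$,
\[
\partial_\nu(\partial_\tau v)=\tau\cdot D^2v\,\nu=-H(\tau\cdot\nu)=-H\,\partial_\tau v,
\]
which is the Jacobi condition $\phi_\nu+H\phi=0$. This is just the statement that translations are symmetries of the problem.

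The main obstacle is mild: matching the sign convention for $H$ used in \eqref{eq:stab_ineq_H}, and handling the degenerate set $\{\nabla^e v=0\}$. The latter is already absorbed by the $\eps$-regularization inside \Cref{lem:finite-jacobi-sternberg-zumbrun}, so nothing further is needed.
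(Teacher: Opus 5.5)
Your proposal is correct and follows the paper's proof, which also notes that translations make each $\partial_j v$ a Jacobi field and then applies \Cref{lem:finite-jacobi-sternberg-zumbrun} to $\Phi=\nabla^e v$. The only difference is that the paper uses the $n$ Cartesian components of $\nabla^e v$ (linear combinations of the $\partial_j v$) instead of your $n-1$ components in an orthonormal frame of $e^\perp$, which gives the same $\bc_\Phi$ and $\cK_\Phi$; your explicit check of $\phi_\nu+H\phi=0$ with $H=-\partial^2_{\nu\nu}v$ spells out what the paper attributes to translation invariance.
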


\begin{proof}
Let
\[
\Phi:=\nabla^e v =\nabla v-(\nabla v\cdot e)e.
\]
Translation invariance shows that each $\partial_jv$ is a Jacobi field for $v$. Hence every component of $\Phi$ is a Jacobi field, and the result follows from \Cref{lem:finite-jacobi-sternberg-zumbrun}.
\end{proof}

\begin{lem}[Dilation Sternberg--Zumbrun inequality]
\label{lem:dilation-sternberg-zumbrun-inequality}
Let $n\ge2$, and let $v$ be a classical stable solution to the Bernoulli problem in $B_1\subset\R^n$. Fix $p\in\R^n\setminus\{0\},$ and let
\[
Z_p(x):=\frac{x-p}{|p|}, \qquad A_p:=Z_p\cdot\nabla v-\frac{v}{|p|},\qquad J_p:=\bigl(A_p,\partial_2v,\ldots,\partial_{n-1}v\bigr)\in\R^{n-1}.
\]
Define
\[
\bc_p^2:= \left|J_p\right|^2 =A_p^2+\sum_{i=2}^{n-1}(\partial_iv)^2 \quad\text{in }B_1\cap\{v>0\},\qquad \cK_p^2(v):= \bc_p^2|\nabla (J_p/\bc_p)|^2 =|\nabla A_p|^2+\sum_{i=2}^{n-1}|\nabla\partial_i v|^2 -|\nabla\bc_p|^2,
\]
and set $\cK_p^2(v)=0$ on $\{\bc_p=0\}$. Then $J_p$ is a finite family of Jacobi fields, so \eqref{eq:finite-jacobi-sternberg-zumbrun} applies with $\Phi=J_p$.
\end{lem}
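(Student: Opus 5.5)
The claim to prove is that each component of $J_p$ is a Jacobi field in the sense of \Cref{defn:bernoulli-jacobi-field}. Once that holds, \Cref{lem:finite-jacobi-sternberg-zumbrun} applies verbatim with $\Phi=J_p$. The identity $\cK_p^2=|\nabla A_p|^2+\sum_{i=2}^{n-1}|\nabla\partial_iv|^2-|\nabla\bc_p|^2$ is just the definition of $\cK_\Phi^2$ for this family.

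For the components $\partial_iv$, I would reuse the argument of \Cref{lem:tangential-sternberg-zumbrun-inequality}. Translation invariance makes $\partial_iv$ a Jacobi field: $\Delta\partial_iv=0$ in the positive phase, and differentiating $|\nabla v|^2=1$ along $\FB(v)$ gives the boundary condition. Concretely, on $\FB(v)$ we have $\nabla v=\nu$, and for any tangent vector $\tau$, $D^2v\,\nu\cdot\tau=0$. Hence $D^2v\,\nu=(\partial_{\nu\nu}v)\nu=-H\nu$. It follows that $\partial_\nu(\partial_iv)=(D^2v\,\nu)_i=-H\nu_i=-H\partial_iv$, which is exactly $\phi_\nu+H\phi=0$.

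For $A_p$, I would write it as a combination of the dilation field and a translation field. We have $A_p=|p|^{-1}\bigl(x\cdot\nabla v-v\bigr)-|p|^{-1}p\cdot\nabla v$. The second term is a combination of the translation fields just treated, and the boundary condition is linear, so it suffices to check that $D:=x\cdot\nabla v-v$ is a Jacobi field. Harmonicity is immediate, since $\Delta(x\cdot\nabla v)=2\Delta v+x\cdot\nabla\Delta v=0$ in $\{v>0\}$. On $\FB(v)$, $v=0$ and $\nabla v=\nu$, so $D=x\cdot\nu$. Moreover
\[
\partial_\nu D=\nu\cdot\nabla v+x\cdot D^2v\,\nu-\partial_\nu v=x\cdot D^2v\,\nu=-H\,x\cdot\nu=-HD,
\]
using $D^2v\,\nu=-H\nu$ again. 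So $D_\nu+HD=0$, and $A_p$ is a Jacobi field.

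Regularity ($C^2$ inside, $C^1$ up to the free boundary) follows from $v$ being classical, hence smooth up to $\FB(v)$ by \Cref{lem:epsilon-regularity}. The only point that requires care is the identity $D^2v\,\nu=-H\nu$ on $\FB(v)$. Its tangential part comes from differentiating $|\nabla v|^2=1$ tangentially along the free boundary, and its normal part is the definition $H=-\partial^2_{\nu\nu}v$. This is the step I would check first; the rest is linear bookkeeping. The restriction to the indices $2,\ldots,n-1$ plays no role in this lemma, since any subfamily of Jacobi fields works.
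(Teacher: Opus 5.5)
Your proof is correct, and it differs from the paper's only in how the Jacobi property is checked. The paper argues by symmetry: $t^{-1}v\bigl(p+t(x-p)\bigr)$ is again a solution, and differentiating at $t=1$ gives $(x-p)\cdot\nabla v-v=|p|A_p$, which it declares a Jacobi field, just as $\partial_iv$ comes from translations. This tacitly uses the principle that the $t$-derivative of a smooth family of classical solutions solves the linearized problem $\Delta\phi=0$, $\phi_\nu+H\phi=0$.

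You instead verify the linearized equations directly, from the single identity $D^2v\,\nu=-H\nu$ on $\FB(v)$. Its tangential part comes from differentiating $|\nabla v|^2=1$ along the free boundary, and its normal part is the definition $H=-\partial^2_{\nu\nu}v$. You also split $A_p$, by linearity, into $|p|^{-1}(x\cdot\nabla v-v)$ (dilation about the origin) plus the translation field $-|p|^{-1}p\cdot\nabla v$.

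Your route is more self-contained, and it confirms that the sign convention for $H$ in the definition of a Jacobi field is exactly the one these symmetries produce. The paper's route is shorter and shows why such fields exist (they are infinitesimal symmetries), but leaves the linearization principle implicit.
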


\begin{proof}
We verify that $J_p$ is a finite family of Jacobi fields for $v$ in $B_1$. As in \Cref{lem:tangential-sternberg-zumbrun-inequality}, each $\partial_i v$ is a Jacobi field. Dilation invariance shows that $t^{-1}v\bigl(p+t(x-p)\bigr)$ is again a solution in the corresponding rescaled domain. Differentiating at $t=1$ gives
\[
\left.\frac d{dt}\right|_{t=1}t^{-1}v\bigl(p+t(x-p)\bigr) =(x-p)\cdot\nabla v-v=|p|A_p,
\]
so $A_p$ is a Jacobi field as well. Thus \Cref{lem:finite-jacobi-sternberg-zumbrun} applies to $J_p$.
\end{proof}

\subsection{Transverse excess and neck-scale consequences}

We introduce the transverse excesses and derive their neck-scale rigidity, positivity, and packing consequences.

\begin{defn}[Transverse energy and symmetric excess]
\label{defn:transverse-energy-and-symmetric-excess}
Returning to the standing global solution $u$, for $x\in \R^n$ and $R>0$ define the transverse energy as
\begin{equation}
\label{eq:tilde_varrho_def_intro} \widetilde\varrho_x(R) :=\min_{e\in \mathbb{S}^{n-1}} \left(R^{2-n}\int_{B_R(x)\cap\{u>0\}}\cA_e^2(u)\,dx\right)^{1/2}.
\end{equation}
We also define the symmetric excess as
\begin{equation}
\label{eq:Ez_def_intro} \bE_x(R) := \min_{e\in \mathbb{S}^{n-1}}\left( \frac{1}{R^n}\int_{B_R(x)} |\nabla^e u|^2\, dx\right)^{1/2}.
\end{equation}
\end{defn}

The tangential stability inequality gives the following comparison.

\begin{lem}[Transverse energy controls the symmetric excess]
\label{lem:transverse-energy-controls-the-symmetric-excess}
For any $x\in \R^n$ and $R >0$, we have
\[
\widetilde\varrho_x(R) \le C \bE_x(2R),
\]
for some $C$ universal.
\end{lem}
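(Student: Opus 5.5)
The plan is to apply \Cref{lem:tangential-sternberg-zumbrun-inequality} directly, with a direction chosen at the larger scale and a standard cutoff. Fix $x\in\R^n$ and $R>0$, and let $e\in\mathbb S^{n-1}$ attain the minimum in the definition of $\bE_x(2R)$ in \eqref{eq:Ez_def_intro}. By compactness of $\mathbb S^{n-1}$ and continuity of $e\mapsto\int_{B_{2R}(x)}|\nabla^e u|^2$ (recall $|\nabla u|\le1$), such an $e$ exists. Choose $\eta\in C_c^\infty(B_{2R}(x))$ with $0\le\eta\le1$, $\eta\equiv1$ on $B_R(x)$, and $|\nabla\eta|\le C/R$.

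Next, rescale to put the stability inequality in the stated form on $B_1$. Set $v(y):=u(x+2Ry)/(2R)$. This is a classical stable solution in $B_1$, and $\nabla^e v(y)=\nabla^e u(x+2Ry)$. Since $\cA_e^2$ is quadratic in second derivatives, $\cA_e^2(v)(y)=(2R)^2\cA_e^2(u)(x+2Ry)$. Apply \Cref{lem:tangential-sternberg-zumbrun-inequality} to $v$ with the cutoff $\eta(x+2R\,\cdot)$ and change variables back. This gives
\[
\int_{B_R(x)\cap\{u>0\}}\cA_e^2(u)\,dx\le\int_{B_{2R}(x)\cap\{u>0\}}\cA_e^2(u)\eta^2\,dx\le\frac{C}{R^2}\int_{B_{2R}(x)}|\nabla^e u|^2\,dx .
\]
Alternatively, one can observe that the proof of \Cref{lem:finite-jacobi-sternberg-zumbrun} works verbatim in any ball, because the stability inequality \eqref{eq:stab_ineq_H} is translation and scale covariant.

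Now multiply by $R^{2-n}$. The right-hand side becomes $C(2R)^{-n}\int_{B_{2R}(x)}|\nabla^e u|^2=C\,\bE_x(2R)^2$ by the choice of $e$. The left-hand side dominates $\widetilde\varrho_x(R)^2$, since $\widetilde\varrho_x(R)$ is a minimum over directions and we have bounded it at one particular $e$. Taking square roots gives the claim with a universal constant.

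There is no real obstacle here. The only points requiring care are the scaling bookkeeping and the observation that the free-boundary term is already absorbed into \Cref{lem:tangential-sternberg-zumbrun-inequality}. The integral on the right is over all of $B_{2R}(x)$, but $\nabla u=0$ a.e. on $\{u=0\}$, so it coincides with the integral over the positivity set.
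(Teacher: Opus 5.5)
Your proposal is correct and is the same argument as the paper's: rescale, apply \Cref{lem:tangential-sternberg-zumbrun-inequality} with a cutoff equal to $1$ on $B_R(x)$ and supported in $B_{2R}(x)$, and use a direction minimizing $\bE_x(2R)$. Your scaling bookkeeping, $\cA_e^2(v)=(2R)^2\cA_e^2(u)$ and the factor $(2R)^{2-n}$ on both sides, is right. Bounding the minimum defining $\widetilde\varrho_x(R)$ by its value at that particular direction is exactly what the paper does.
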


\begin{proof}
This follows by applying \Cref{lem:tangential-sternberg-zumbrun-inequality} after rescaling, with a localized test function $\eta$ and a direction $e\in \bS^{n-1}$ attaining the minimum in \eqref{eq:Ez_def_intro} at scale $2R$.
\end{proof}

To turn small transverse defect into rigidity, we first record the compactness available at neck scale.
\begin{lem}[Neck-scale compactness]
\label{lem:neck-scale-compactness}
Let $v_j$ be classical stable Bernoulli solutions in $B_4\subset\R^n$, with $0\in\FB(v_j)$. Assume that, for fixed constants $C_0<\infty$ and $\eta_0>0$,
\[
\|D^2v_j\|_{L^\infty(B_4\cap\{v_j>0\})} \le C_0\qquad\text{and}\qquad \int_{B_1\cap\{v_j>0\}} |D^2v_j|^n\,dx=\eta_0^n,
\]
and that $|\nabla v_j|\le 1$ uniformly. Then, up to a subsequence, $v_j\to v_\infty$ locally uniformly in $B_4$, where $v_\infty$ is a classical stable solution. Moreover,
\[
\int_{B_1\cap\{v_\infty>0\}} |D^2v_\infty|^n\,dx=\eta_0^n.
\]
If, in addition, for some $e_j\to e_\infty\in\bS^{n-1}$, $\int_{B_2\cap\{v_j>0\}}\cA_{e_j}^2(v_j)\,dx\to 0,$ then
\[
\cA_{e_\infty}^2(v_\infty)\equiv 0 \quad\text{a.e. in }B_2\cap\{v_\infty>0\}.
\]
\end{lem}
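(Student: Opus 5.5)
We argue by compactness, using the uniform Hessian bound to upgrade convergence up to the free boundary. The hypotheses $|\nabla v_j|\le1$ and $0\in\FB(v_j)$ give equi-Lipschitz bounds. Arzel\`a--Ascoli then yields a subsequence with $v_j\to v_\infty$ locally uniformly in $B_4$. \Cref{lem:compactness}, applied after extending or localizing (its proof is local), adds strong $H^1_{\rm loc}$ convergence, Hausdorff convergence of the free boundaries, and stability of $v_\infty$.

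\textbf{Classical limit.} Since $|D^2v_j|\le C_0$ in $B_4\cap\{v_j>0\}$, at every $y\in\FB(v_j)\cap B_3$ there is a radius $r_0=r_0(C_0)$ such that $v_j$ is $\eps_\circ r_0$-close in $L^\infty$ to a rotated half-plane $(\nu_j(y)\cdot(x-y))_+$ on $B_{r_0}(y)\cap\{v_j>0\}$. Indeed, $\nabla v_j$ varies by at most $C_0r$ on the positive side, and $|\nabla v_j|=1$ on the free boundary. \Cref{lem:epsilon-regularity} then gives, in $B_{r_0/2}(y)$, that $\FB(v_j)$ is a $C^k$ graph and $v_j$ is $C^k$ up to it, with bounds independent of $j$.

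These uniform local graph representations let us pass to the limit in $C^k$. We work in coordinates flattening the graphs, or via uniform extensions across the boundary. The result is that $\FB(v_\infty)\cap B_3$ is smooth and $v_\infty$ is harmonic in its positivity set with $|\nabla v_\infty|=1$ on it, so $v_\infty$ is classical. Away from the free boundary, $C^k$ convergence holds by interior harmonic estimates. Combined with \Cref{lem:compactness}(3), $v_\infty$ is a classical stable solution.

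\textbf{Preservation of the Hessian energy.} $D^2v_j\to D^2v_\infty$ locally uniformly in the sense of the graph convergence. Moreover, $|D^2v_j|^n\le C_0^n$ and $\mathbbm 1_{\{v_j>0\}}\to\mathbbm 1_{\{v_\infty>0\}}$ in $L^1$. The last fact also follows from smooth graph convergence, which implies that the boundary layers have vanishing measure. Dominated convergence then gives $\int_{B_1\cap\{v_\infty>0\}}|D^2v_\infty|^n=\eta_0^n$. The boundary $\partial B_1$ causes no issue, since the integrands are bounded and $\partial B_1$ is null.

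\textbf{Passing the defect to the limit.} Here $\cA_e^2$ is not continuous at $\{\nabla^e v=0\}$, so we use lower semicontinuity. On the open set $\{|\nabla^{e_\infty}v_\infty|>0\}\cap\{v_\infty>0\}\cap B_2$ we have $C^2$ convergence and $|\nabla^{e_j}v_j|$ bounded away from zero locally. Hence $\cA_{e_j}^2(v_j)\to\cA_{e_\infty}^2(v_\infty)$ locally uniformly there. Fatou's lemma gives $\int_K\cA_{e_\infty}^2(v_\infty)\le\liminf_j\int_K\cA_{e_j}^2(v_j)=0$ for every compact $K$ of this set. On the complementary set $\cA^2_{e_\infty}(v_\infty)=0$ by definition, which concludes the argument.

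\textbf{Main obstacle.} The delicate step is the uniform up-to-the-boundary $C^2$ convergence. The naive route needs care near points where the two free-boundary sheets are close, but the $\eps$-regularity radius depends only on $C_0$, so this is handled uniformly.
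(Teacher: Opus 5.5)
Your \textbf{Classical limit} step has a genuine gap. You claim that at every $y\in\FB(v_j)\cap B_3$ the function $v_j$ is $\eps_\circ r_0$-close to $(\nu_j(y)\cdot(x-y))_+$ on all of $B_{r_0}(y)\cap\{v_j>0\}$. This is false. The Hessian bound controls $\nabla v_j$ only along paths inside one component of $\{v_j>0\}$. Meanwhile $B_{r_0}(y)$ may contain a second free-boundary sheet, across a thin zero phase, bounding a different component on which $\nabla v_j\approx-\nu_j(y)$.

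For instance, $v_j=(x_n)_++(-x_n-2/j)_+$ is classical and stable in $B_4$, with $D^2v_j\equiv0$, $|\nabla v_j|\le1$ and $0\in\FB(v_j)$. Yet $v_j\to|x_n|$, which is not classical: $\{v_\infty>0\}$ lies on both sides of $\{x_n=0\}$. This free boundary is smooth and $|\nabla v_\infty|=1$ on it, so your criterion for classicality (smooth free boundary, harmonic, unit gradient) is also too weak.

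Applying \Cref{lem:epsilon-regularity} componentwise does give uniformly smooth sheets. However, oppositely oriented sheets can touch in the limit, and at such points $v_\infty$ is not classical. So classicality cannot follow from the Hessian and Lipschitz bounds alone. Your remark that nearby sheets are ``handled uniformly'' because the radius depends only on $C_0$ is exactly where the argument breaks. Note that you never use the hypothesis $\int_{B_1\cap\{v_j>0\}}|D^2v_j|^n=\eta_0^n$ at this point.

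The paper closes this in two steps, and you need both.

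First, the limit is either classical or a vee; the paper attributes this to the curvature bound and the $1$-Lipschitz bound, arguing as in \cite[Lemma 5.1]{CFFS25}. Concretely, suppose limiting sheets $\{x_n=g^\pm(x')\}$ with $g^-\le g^+$ touch at $0$, with positive phases on opposite sides. Then $1\mp\partial_nv_\infty$ are nonnegative (since $|\nabla v_\infty|\le1$) and harmonic in the upper and lower components, and they vanish at $0$. Hopf's lemma gives $\Delta_{x'}g^+(0)<0<\Delta_{x'}g^-(0)$ unless $v_\infty=\pm x_n$ there. This contradicts $\Delta_{x'}(g^+-g^-)(0)\ge0$, and unique continuation then makes $v_\infty$ a vee in $B_4$.

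Second, pass the Hessian energy to the limit \emph{before} claiming classicality. Your dominated-convergence argument works regardless of touching, since the limit free boundary is a null union of $C^{1,1}$ sheets. It gives $\int_{B_1\cap\{v_\infty>0\}}|D^2v_\infty|^n=\eta_0^n>0$, which excludes the vee.

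With this reordering your proof goes through. Your treatment of the defect matches the paper's: continuity where $\nabla^{e_\infty}v_\infty\neq0$, zero by definition elsewhere, then Fatou.
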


\begin{proof}
By \Cref{lem:compactness}, $v_\infty$ is a stable solution, and it is either classical stable or a vee thanks to the curvature bound on the free boundary and the 1-Lipschitz bound, arguing as in \cite[Lemma 5.1]{CFFS25}. Moreover, thanks to the higher regularity provided by the curvature bounds, we also have
\[
D^2v_j\,\mathbf 1_{\{v_j>0\}} \to D^2v_\infty\,\mathbf 1_{\{v_\infty>0\}} \qquad\text{strongly in }L^n(B_1),
\]
so that in particular $\int_{B_1\cap\{v_\infty>0\}} |D^2v_\infty|^n\,dx=\eta_0^n$, and $v_\infty$ cannot be a vee.

For the final assertion, set
\[
F_j:=\nabla^{e_j}v_j, \qquad F_\infty:=\nabla^{e_\infty}v_\infty,
\]
and extend $\cA_{e_j}^2(v_j)$ and $\cA_{e_\infty}^2(v_\infty)$ by zero outside their positive phases. If $x\in B_2\cap\{v_\infty>0\}$, local uniform convergence and interior estimates give $v_j\to v_\infty$ in $C^2$, and hence $F_j\to F_\infty$ in $C^1$, around $x$. On $\{F\ne0\}$,
\[
\cA_e^2(v) =\left|\left(I-\frac{F\otimes F}{|F|^2}\right)DF\right|^2, \qquad F=\nabla^e v,
\]
whereas $\cA_e^2(v)=0$ by definition on $\{F=0\}$. Thus, at every point of $B_2\cap\{v_\infty>0\}$ where $F_\infty\ne0$, $\cA_{e_j}^2(v_j)\to\cA_{e_\infty}^2(v_\infty).$ At points where $F_\infty=0$, and outside the positive phase of $v_\infty$, the left-hand side below is zero. Thus,
\[
0\le \cA_{e_\infty}^2(v_\infty)\mathbf 1_{\{v_\infty>0\}} \le \liminf_{j\to\infty} \cA_{e_j}^2(v_j)\mathbf 1_{\{v_j>0\}} \qquad\text{a.e. in }B_2.
\]
Fatou's lemma therefore gives
\[
\int_{B_2\cap\{v_\infty>0\}}\cA_{e_\infty}^2(v_\infty)\,dx \le \liminf_{j\to\infty} \int_{B_2\cap\{v_j>0\}}\cA_{e_j}^2(v_j)\,dx =0,
\]
which proves the final claim.
\end{proof}

The limiting zero-defect condition forces each positive component to depend on at most two variables.
\begin{lem}
\label{lem:zero-defect-two-dimensional-reduction}
Let $n\ge 2$, let $U\subset\R^n$ be open, let $v$ be a classical solution to the Bernoulli problem in $U$, and let $e\in\bS^{n-1}$. Assume that
\[
\cA_e^2(v)\equiv 0 \qquad\text{in }U\cap\{v>0\}.
\]
Then, for every connected component $\Omega$ of $U\cap\{v>0\}$, either $\nabla^e v\equiv0$ in $\Omega$, or there is $a_\Omega\in e^\perp\cap\bS^{n-1}$ such that
\[
\partial_\tau v\equiv0\quad\text{in }\Omega \qquad\text{for every }\tau\in(\operatorname{span}\{e,a_\Omega\})^\perp.
\]
In particular, on each such component $v$ depends on at most two Euclidean variables.
\end{lem}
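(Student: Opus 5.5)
The plan is to run the classical Sternberg--Zumbrun rigidity argument on the open set where $\nabla^e v\neq 0$, and then use analyticity (unique continuation) to spread the conclusion over the whole component $\Omega$.

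Write $F:=\nabla^e v$. It takes values in $e^\perp$, and each of its components is a linear combination of the $\partial_j v$, so each component is harmonic and hence real analytic in $\Omega$. Two cases arise.

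\emph{Case $F\equiv 0$ in $\Omega$.} This is the first alternative, and there is nothing to prove.

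\emph{Case $F\not\equiv 0$ in $\Omega$.} Then $|F|^2$ is a nontrivial real-analytic function on the connected open set $\Omega$. Hence the set $\{F\neq0\}\cap\Omega$ is open and nonempty. Fix one of its connected components $W$.

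On $W$ I will use the identity already recorded in \Cref{lem:finite-jacobi-sternberg-zumbrun}, namely
\[
|DF|^2-\bigl|\nabla|F|\bigr|^2=|F|^2\,\bigl|\nabla (F/|F|)\bigr|^2 .
\]
The hypothesis $\cA_e^2(v)\equiv0$ says the left-hand side vanishes on $W$. Since $|F|>0$ there, this gives $\nabla(F/|F|)\equiv 0$ on $W$. Because $W$ is connected, $F/|F|$ equals a constant unit vector $a_\Omega$. This vector lies in $e^\perp$, because $F$ does.

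Consequently $F=|F|\,a_\Omega$ on $W$. For any $\tau\in(\operatorname{span}\{e,a_\Omega\})^\perp$, we have $\tau\perp e$, so $\partial_\tau v=\nabla v\cdot\tau=F\cdot\tau$. Since also $\tau\perp a_\Omega$, this gives $\partial_\tau v=|F|\,a_\Omega\cdot\tau=0$ on $W$.

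Now $\partial_\tau v$ is harmonic in the connected set $\Omega$ and vanishes on the nonempty open subset $W$. By unique continuation for harmonic (analytic) functions, $\partial_\tau v\equiv 0$ in all of $\Omega$. This holds for every such $\tau$, which proves the second alternative. In particular, on $\Omega$ the function $v$ is constant along the $(n-2)$-dimensional subspace $(\operatorname{span}\{e,a_\Omega\})^\perp$, so it depends on at most two Euclidean variables.

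The only delicate points are the following, and neither is a serious obstacle.

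First, $\cA_e^2$ is defined to be zero on $\{F=0\}$. This is why the argument is run only on a component of $\{F\neq0\}$ and then transferred to $\Omega$ by analyticity, rather than asserting $F/|F|$ is constant on all of $\Omega$.

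Second, the direction obtained on different components of $\{F\neq 0\}$ is automatically consistent. Once $\partial_\tau v\equiv0$ on all of $\Omega$, $F$ takes values in $\operatorname{span}\{a_\Omega\}$ throughout $\Omega$. So a single $a_\Omega$ serves for the whole component, even though $F/|F|$ may flip sign to $-a_\Omega$ on other components of $\{F\neq0\}$.
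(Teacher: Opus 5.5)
Your proof is correct and is essentially the paper's argument: restrict to a connected component of $\{\nabla^e v\neq0\}\cap\Omega$, use the identity $|DF|^2-\bigl|\nabla|F|\bigr|^2=|F|^2\bigl|\nabla(F/|F|)\bigr|^2$ to obtain a constant direction $a_\Omega\in e^\perp$, and then extend $\partial_\tau v=0$ to all of $\Omega$ by unique continuation for harmonic functions. Your added remark, that one $a_\Omega$ serves the whole component because $F$ then takes values in $\operatorname{span}\{a_\Omega\}$, is correct; the paper leaves this point implicit.
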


\begin{proof}
Fix a connected component $\Omega$ of $U\cap\{v>0\}$, and set
\[
F:=\nabla^e v=\nabla v-(\nabla v\cdot e)e .
\]
The vector field $F$ is smooth and takes values in $e^\perp$. Let us assume that $F\not\equiv0$, and let $O$ be a connected component of the nonempty open set $\{F\neq0\}\cap\Omega$. In $O$, we have
\[
|DF|^2-\big|\nabla |F|\big|^2 = |F|^2\left|D\left(\frac{F}{|F|}\right)\right|^2.
\]
Since $\cA_e^2(v)\equiv0$, it follows that $D\left({F}/{|F|}\right)\equiv0$ in $O$. Thus there is a fixed vector $a\in e^\perp\cap\bS^{n-1}$ such that $F=|F|a$ in $O$. For every $\tau\in(\operatorname{span}\{e,a\})^\perp$, we have
\[
\partial_\tau v=\tau\cdot\nabla v=\tau\cdot F=0 \qquad\text{in }O .
\]
By unique continuation, $\partial_\tau v\equiv0$ in $\Omega$, for every such $\tau$.
\end{proof}

To rule out the resulting two-dimensional limits, we use the following rigidity theorem for global classical stable solutions in $\mathbb R^2$.
\begin{prop}[{\cite{FR19}*{Theorem~1.9} or \cite{kamburov2022nondegeneracy}*{Corollary~1.3}}]
\label{prop:low-dimensional-stable-solutions-flat}
Let $w$ be a global classical stable Bernoulli solution in $\mathbb R^2$. Then
\[
D^2w\equiv0\qquad\text{in }\{w>0\}.
\]
\end{prop}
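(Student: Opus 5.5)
The plan is the classical logarithmic cut-off argument, run through the Sternberg--Zumbrun inequality of \Cref{lem:finite-jacobi-sternberg-zumbrun} with the full gradient as Jacobi family. Since $w$ is classical and stable in every ball $B_\rho$, and each $\partial_j w$ is a Jacobi field (translation invariance, as in the proof of \Cref{lem:tangential-sternberg-zumbrun-inequality}), we apply \eqref{eq:finite-jacobi-sternberg-zumbrun}, after rescaling, with $\Phi:=\nabla w=(\partial_1w,\partial_2w)$. For every compactly supported Lipschitz $\eta$ this gives
\[
\int_{\{w>0\}}|\nabla w|^2\Big|\nabla\Big(\frac{\nabla w}{|\nabla w|}\Big)\Big|^2\eta^2\,dx \le \int_{\{w>0\}}|\nabla w|^2|\nabla\eta|^2\,dx \le \int_{\R^2}|\nabla\eta|^2\,dx,
\]
where the last step uses the Lipschitz bound $|\nabla w|\le1$ of \Cref{lem:lipschitz-bound}.

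Next we choose the log cut-off. For $R\ge2$ let $\eta_R=1$ in $B_R$ and $\eta_R=0$ outside $B_{R^2}$, with
\[
\eta_R(x)=2-\frac{\log|x|}{\log R}\quad\text{in } B_{R^2}\setminus B_R.
\]
Then $|\nabla\eta_R|\le (|x|\log R)^{-1}$ on the annulus, so in two dimensions
\[
\int|\nabla\eta_R|^2\le 2\pi\int_R^{R^2}\frac{dr}{r(\log R)^2}=\frac{2\pi}{\log R}\to0 .
\]
This is exactly where $n=2$ enters. Letting $R\to\infty$ yields $D(\nabla w/|\nabla w|)\equiv0$ on $\{\nabla w\ne0\}\cap\{w>0\}$.

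It remains to turn this rigidity into flatness, arguing as in \Cref{lem:zero-defect-two-dimensional-reduction}. Fix a connected component $\Omega$ of $\{w>0\}$.

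If $\nabla w\equiv0$ on $\Omega$, then $w$ is a positive constant there. In that case $\Omega$ has no free boundary, since $|\nabla w|=1$ on $\FB(w)$, so $\Omega=\R^2$ and $D^2w\equiv0$.

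Otherwise, let $O$ be a component of $\{\nabla w\ne0\}\cap\Omega$. On $O$ we have $\nabla w=|\nabla w|\,a$ for a fixed unit vector $a$. Hence $\partial_\tau w=0$ on $O$ for $\tau\perp a$, so $w=g(a\cdot x)$ there. Harmonicity then forces $g''=0$, and so $D^2w\equiv0$ on $O$. Since $w$ is real-analytic in $\Omega$, unique continuation gives $D^2w\equiv0$ on all of $\Omega$.

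The only delicate points are justifying the stability inequality for the non-smooth cut-off $\eta_R$ and passing to the limit $\eps\downarrow0$ in the proof of \Cref{lem:finite-jacobi-sternberg-zumbrun}. Both are handled by approximating $\eta_R$ with smooth functions: $\eta_R$ has compact support, the free boundary is smooth, and $|\nabla w|$ is bounded. I therefore expect no genuine obstacle; the argument is the standard one of \cite{FR19}.
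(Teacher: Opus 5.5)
Your proposal is correct and is essentially the argument the paper relies on. The paper gives no proof of its own: it cites \cite{FR19} and \cite{kamburov2022nondegeneracy} and describes the planar case as the standard log cut-off argument, which is what you carry out via \Cref{lem:finite-jacobi-sternberg-zumbrun} with $\Phi=\nabla w$ (equivalently, the Sternberg--Zumbrun test function $|\nabla w|\eta$) together with \Cref{lem:lipschitz-bound}; the case $\nabla w\equiv0$ on a component is even simpler than you make it, since then $D^2w\equiv0$ there trivially.
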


As a consequence of \Cref{lem:neck-scale-compactness,lem:zero-defect-two-dimensional-reduction,prop:low-dimensional-stable-solutions-flat}, we obtain:

\begin{lem}[Positive excess at the neck scale]
\label{lem:positive-excess-at-the-neck-scale}
There exists $c > 0$ universal such that
\[
\widetilde\varrho_\zz(2\rb(\zz))\ge c > 0,\qquad\text{for all}\quad \zz\in \cZ.
\]
\end{lem}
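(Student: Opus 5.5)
We argue by contradiction and compactness. Suppose there are neck centers $\zz_j\in\cZ$ and directions $e_j\in\bS^{n-1}$ with
\[
(2\rb(\zz_j))^{2-n}\int_{B_{2\rb(\zz_j)}(\zz_j)\cap\{u>0\}}\cA_{e_j}^2(u)\,dx\to0.
\]
Set $r_j:=\rb(\zz_j)\ge\rmin$ and rescale:
\[
v_j(x):=r_j^{-1}u(\zz_j+r_jx).
\]
The quantity $R^{2-n}\int_{B_R}\cA_e^2$ is scale invariant, since $\cA_e^2$ scales like $|D^2u|^2$. Hence $\int_{B_2\cap\{v_j>0\}}\cA_{e_j}^2(v_j)\to0$. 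After passing to a subsequence, we may take $e_j\to e_\infty$.

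The rescaled functions satisfy the following properties.
\begin{itemize}
\item Each $v_j$ is a global classical stable solution with $0\in\FB(v_j)$ and $|\nabla v_j|\le1$.
\item By the definition of the threshold radius and continuity of $r\mapsto\int_{B_r}|D^2u|^n$, we have $\int_{B_1\cap\{v_j>0\}}|D^2v_j|^n=\eta_0^n$.
\item By \Cref{lem:local-comparability-of-neck-radii} (with, say, $M=4$), $\|D^2v_j\|_{L^\infty(B_4\cap\{v_j>0\})}\le C_4$.
\end{itemize}
Thus \Cref{lem:neck-scale-compactness} applies. It gives a classical stable limit $v_\infty$ in $B_4$ with $\int_{B_1\cap\{v_\infty>0\}}|D^2v_\infty|^n=\eta_0^n>0$ and $\cA_{e_\infty}^2(v_\infty)\equiv0$ in $B_2\cap\{v_\infty>0\}$.

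Next we apply \Cref{lem:zero-defect-two-dimensional-reduction} in $U=B_2$. On each component of $B_2\cap\{v_\infty>0\}$, either $\nabla^{e_\infty}v_\infty\equiv0$, so $v_\infty$ is one-dimensional in that component, or $v_\infty$ depends only on two variables $(e_\infty\cdot x,a_\Omega\cdot x)$. This local information is not enough on its own: a two-variable classical stable solution in a ball need not be flat. We need a global object to which \Cref{prop:low-dimensional-stable-solutions-flat} applies.

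\textbf{Main obstacle: globalizing the limit.} The smallness of the defect is only known in $B_2$ at the fixed scale, so a direct blow-up gives no global two-dimensional solution. The fix is to strengthen the contradiction hypothesis. Minimizing over $e$ at scale $2\rb$ is what the statement asks, but the compactness can be run at scales $\lambda\rb$ for any $\lambda\ge 2$. I would therefore prove first the analogous statement in which the defect is allowed on $B_{\lambda r_\star}$ for $\lambda$ large, and then deduce the scale-$2$ version.

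Concretely, suppose the statement fails along a sequence with defect $\to0$ on $B_2$. Then I take $\lambda_j\to\infty$ and need to show the defect stays small on $B_{\lambda}$ for every fixed $\lambda$. Stability does not give this directly. I would instead try the cleaner route: argue that the limit $v_\infty$ is analytic, so each component of $\{v_\infty>0\}\cap B_4$ (indeed of its analytic continuation) inherits the two-variable structure. The free boundary is then a cylinder over a curve in the $(e_\infty,a_\Omega)$-plane, with curvature bounded by $C_4$.

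To produce a global object, I would take the full-space limit of $v_j$ on expanding balls. The bound $|D^2u|\le 1$ rescales to $|D^2v_j|\le r_j\le C$ only locally, so I would use \Cref{lem:global-hessian-decay} together with compactness on each $B_R$. By unique continuation, the zero-defect property propagates from $B_2$ to the whole connected positive components of the global limit that meet $B_2$. Each such component is then a two-dimensional classical stable solution (stability of the product reduces to the slice, using cutoffs that are long in the trivial directions), hence flat by \Cref{prop:low-dimensional-stable-solutions-flat}. This contradicts $\int_{B_1}|D^2v_\infty|^n=\eta_0^n>0$, since a component meeting $B_1$ with nonzero Hessian must exist.

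The delicate point is justifying unique continuation across the free boundary, where components may reconnect. I would handle it by using that the free boundary is analytic (\Cref{lem:epsilon-regularity}) and the Jacobi boundary condition. These together propagate $\partial_\tau v\equiv0$ via Cauchy–Kovalevskaya-type uniqueness for the overdetermined boundary problem.
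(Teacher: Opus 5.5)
Your final route is the paper's: rescale at the neck radius, pass to a global classical stable limit $v_\infty$ with retained Hessian mass $\eta_0^n$ and $\cA_{e_\infty}^2(v_\infty)\equiv0$ in $B_2\cap\{v_\infty>0\}$, reduce each positivity component meeting $B_1$ to a stable planar solution, and contradict \Cref{prop:low-dimensional-stable-solutions-flat}. The detour through a defect on $B_{\lambda\rb}$ is unnecessary, since unique continuation already does the propagation. However, two steps would not work as you wrote them.

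First, \Cref{lem:global-hessian-decay} does not give the locally uniform Hessian bounds needed to make the global limit classical. After rescaling it reads $|D^2v_j(y)|\le C\min\{\dist(y,\cZ_j)^{-1},r_j\}$ with $\cZ_j:=(\cZ-\zz_j)/r_j$. Once $r_j\to\infty$ (and $r_j$ need not stay bounded), this is useless near $0$ and near every other rescaled neck center in $B_R$. The correct input is the lemma you already used with $M=4$. \Cref{lem:local-comparability-of-neck-radii} with $M=R$ arbitrary gives $\|D^2v_j\|_{L^\infty(B_R\cap\{v_j>0\})}\le C_R$ for every fixed $R$. A diagonal argument with \Cref{lem:compactness,lem:neck-scale-compactness} then yields the global classical stable limit, exactly as in the paper.

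Second, the ``delicate point'' is misidentified. The proposed Cauchy--Kovalevskaya continuation across the free boundary is neither needed nor meaningful: distinct positivity components are separated by the zero phase and may carry different ridge directions $a_\Omega$, so nothing propagates from one to another. You only need to work inside each component. Let $\Omega$ be a component of $\{v_\infty>0\}$ meeting $B_1$. The a.e. vanishing of the defect is pointwise, by continuity on $\{\nabla^{e_\infty}v_\infty\ne0\}$. Then \Cref{lem:zero-defect-two-dimensional-reduction} makes the harmonic functions $\partial_\tau v_\infty$ vanish on a component of $\Omega\cap B_2$, hence on all of $\Omega$ because $\Omega$ is connected. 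Several local components with different directions only reduce the number of variables further.

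The step you did not justify is that $v_\infty\mathbf 1_{\Omega}$ is itself a global classical stable solution. Since the limit is classical, every free-boundary point of $\Omega$ has a neighborhood in which $\{v_\infty>0\}$ coincides with $\Omega$. So $\overline\Omega$ does not meet the closure of the other components, and multiplying test functions in \eqref{eq:stab_ineq_H} by a cutoff equal to $1$ near $\overline\Omega$ and $0$ near the other components isolates $\Omega$. This is the paper's function $\bar v_\infty$. After that, your long-cutoff slicing argument and \Cref{prop:low-dimensional-stable-solutions-flat} finish the proof.

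Finally, fixing $u$ gives a constant $c(u)$. To get a universal $c$, let the normalized solution vary along the sequence as the paper does; nothing else changes.
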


\begin{proof}
Suppose that no universal lower bound exists. Then there are global classical stable Bernoulli solutions $u^{(j)}$ satisfying the standing normalization \eqref{eq:assump_glob_u} and neck centers $\zz_j\in\cZ(u^{(j)})$ with
\[
\widetilde\varrho^{\,u^{(j)}}_{\zz_j}\bigl(2\rb^{\,u^{(j)}}(\zz_j)\bigr)\to 0.
\]
Let $r_j:=\rb^{\,u^{(j)}}(\zz_j)$ be the corresponding threshold radius, and rescale
\[
v_j(x):=\frac{u^{(j)}(\zz_j+r_jx)}{r_j}.
\]
By \eqref{eq:def_rB},
\[
\int_{B_1\cap\{v_j>0\}}|D^2v_j|^n\,dx=\eta_0^n,
\]
and \Cref{lem:local-comparability-of-neck-radii}, with $M=4$, gives
\[
\|D^2v_j\|_{L^\infty(B_4\cap\{v_j>0\})}\le C.
\]

Choose $e_j\in\bS^{n-1}$ realizing the minimum in \eqref{eq:tilde_varrho_def_intro} for $\widetilde\varrho^{\,u^{(j)}}_{\zz_j}(2r_j)$. The scaling of $\cA_e$ gives
\[
\int_{B_2\cap\{v_j>0\}}\cA_{e_j}^2(v_j)\,dx =2^{n-2}\widetilde\varrho^{\,u^{(j)}}_{\zz_j}(2r_j)^2\to 0.
\]
Passing to a subsequence, $e_j\to e_\infty$. Moreover, \Cref{lem:local-comparability-of-neck-radii} gives a uniform Hessian bound for the rescaled functions on every fixed ball. A diagonal application of \Cref{lem:compactness} and \Cref{lem:neck-scale-compactness} therefore gives a global classical stable limit $v_\infty: \R^n \to [0,\infty)$, with $v_j\to v_\infty$ locally uniformly in $\R^n$ and
\[
\int_{B_1\cap\{v_\infty>0\}}|D^2v_\infty|^n\,dx=\eta_0^n \qquad\text{and}\qquad \cA_{e_\infty}^2(v_\infty)\equiv 0 \qquad\text{a.e. in }B_{2}\cap\{v_\infty>0\}.
\]

On the set where $|\nabla^{e_\infty}v_\infty|>0$, the defect is continuous; on its complement it vanishes by definition. Thus its a.e. vanishing in $B_2$ is pointwise. Let $\bar v_\infty$ agree with $v_\infty$ on the connected components of its positivity set that meet $B_1$, and vanish elsewhere. Using a local cutoff to isolate each of these components in the stability inequality shows that $\bar v_\infty$ is still a global stable solution. By \Cref{lem:zero-defect-two-dimensional-reduction} and unique continuation, each such component is cylindrical over a two-dimensional Bernoulli solution. The corresponding planar solution is stable. Hence \Cref{prop:low-dimensional-stable-solutions-flat} gives
\[
\int_{B_1\cap\{v_\infty>0\}}|D^2v_\infty|^n\,dx=0,
\]
contradicting the retained Hessian mass $\eta_0^n$.
\end{proof}

Since every neck carries transverse defect, it is natural to aggregate the neck radii (raised to a suitable power) into a scale-normalized neck charge.
\begin{defn}[Neck charge]
\label{defn:neck-radius-symmetric-excess}
For $x\in\R^n$ and $R>0$, define
\begin{equation}\label{eq:varrho_def_intro}
\varrho_x(R)^2 := \sum_{\zz'\in\cZ_{R/2}\cap B_{R/2}(x)} \left(\frac{\rb(\zz')}{R}\right)^{n-2}.
\end{equation}
Equivalently, $\varrho_x(R)$ is the square root of the additive neck-radius charge in $B_{R/2}(x)$ from neck centers whose threshold radii are at most $R/2$.
\end{defn}

\Cref{lem:positive-excess-at-the-neck-scale} allows this discrete charge to be estimated through the continuous symmetric excess.
\begin{prop}
\label{prop:symmetric-excess-controls-neck-radii}
There exists $C\ge 1$ universal such that, for every $x\in\R^n$ and $R>0$,
\begin{equation}\label{boundneckra1}
\varrho_x(R)\le C\widetilde\varrho_x(2R)\le C\bE_x(4R).
\end{equation}
\end{prop}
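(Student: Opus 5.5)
The second inequality in \eqref{boundneckra1} is exactly \Cref{lem:transverse-energy-controls-the-symmetric-excess} applied at radius $2R$. So the real content is the first inequality, $\varrho_x(R)\le C\widetilde\varrho_x(2R)$. The plan is to sum the neck-scale gap of \Cref{lem:positive-excess-at-the-neck-scale} over the relevant neck centers, using bounded overlap of the neck balls.

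First fix $e\in\bS^{n-1}$ realizing the minimum in $\widetilde\varrho_x(2R)$. For each $\zz'\in\cZ_{R/2}\cap B_{R/2}(x)$, the ball $B_{2\rb(\zz')}(\zz')$ lies in $B_{3R/2}(x)\subset B_{2R}(x)$. \Cref{lem:positive-excess-at-the-neck-scale} is stated with a minimum over directions, so it holds in particular for this fixed $e$. It gives
\[
c^2\,(2\rb(\zz'))^{n-2}\le \int_{B_{2\rb(\zz')}(\zz')\cap\{u>0\}}\cA_e^2(u)\,dx .
\]
Summing over $\zz'$ then gives
\[
\sum_{\zz'}\rb(\zz')^{n-2}\le C\,N\int_{B_{2R}(x)\cap\{u>0\}}\cA_e^2(u)\,dx,
\]
where $N$ bounds the overlap multiplicity of the family $\{B_{2\rb(\zz')}(\zz')\}_{\zz'\in\cZ}$. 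Dividing by $R^{n-2}$ yields $\varrho_x(R)^2\le C N\,\widetilde\varrho_x(2R)^2$.

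The main obstacle is proving that $N$ is universal. I would argue via the construction in \Cref{defn:neck-centers}.

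\emph{Same dyadic class.} Fix a point $y$ and consider centers in one class $\cZ_k$ whose doubled balls contain $y$. Their radii are comparable, all in $[\rmin2^k,\rmin2^{k+1})$, and the balls $B_{\rb}$ are disjoint. A volume count then bounds their number by $C(n)$.

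\emph{Different classes.} Suppose $y\in B_{2\rb(\zz_1)}(\zz_1)\cap B_{2\rb(\zz_2)}(\zz_2)$ with $\zz_1\in\cZ_i$, $\zz_2\in\cZ_k$ and $i<k$. Then $|\zz_1-\zz_2|<2\rb(\zz_1)+2\rb(\zz_2)$. If $\rb(\zz_1)\le\rb(\zz_2)$, this gives $\zz_1\in B_{4\rb(\zz_2)}(\zz_2)\cap\cZ_{<k}$, contradicting \eqref{eq:Xkprime}. So this case can happen only if $\rb(\zz_1)>\rb(\zz_2)$, which is impossible since $i<k$ forces $\rb(\zz_1)<\rmin2^{i+1}\le\rmin2^k\le\rb(\zz_2)$. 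Hence the doubled neck balls through $y$ all come from a single dyadic class, and $N\le C(n)$.

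If the factor $4$ in \eqref{eq:Xkprime} turns out to be too tight, I would fall back on \Cref{lem:local-comparability-of-neck-radii} with $M=4$. It gives that neck centers within $4\rb(\zz)$ of $\zz$ have radius at least $\rb(\zz)/C$, so the number of dyadic classes that can meet $y$ is universally bounded, and combined with the same-class count this again gives a universal $N$.
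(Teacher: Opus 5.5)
Your proposal is correct and follows the paper's proof. You fix the direction minimizing $\widetilde\varrho_x(2R)$, apply the neck-scale gap of \Cref{lem:positive-excess-at-the-neck-scale} on each $B_{2\rb(\zz')}(\zz')\subset B_{2R}(x)$, sum using bounded overlap, and obtain the second inequality from \Cref{lem:transverse-energy-controls-the-symmetric-excess}. Your explicit overlap count (disjointness within a dyadic class, plus the exclusion \eqref{eq:Xkprime} showing that doubled balls from different classes are disjoint) spells out what the paper asserts in one line here, and it matches the justification the paper gives later in the proof of \Cref{prop:flat-scale-asymmetric-planes-control-neck-radii}; your fallback via local comparability is not needed.
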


\begin{proof}
Fix $z\in\R^n$, $R>0$, and let
\[
\tilde \cZ_{R/2}(z):=\cZ_{R/2}\cap B_{R/2}(z).
\]
The balls $\{B_{2\rb(y)}(y):y\in\tilde \cZ_{R/2}(z)\}$ have bounded overlap by \Cref{defn:neck-centers}.

Choose $e_R\in\bS^{n-1}$ minimizing the integral in \eqref{eq:tilde_varrho_def_intro} for $\widetilde\varrho_z(2R)$. If $y\in\tilde \cZ_{R/2}(z)$, then $B_{2\rb(y)}(y)\subset B_{2R}(z)$, and \Cref{lem:positive-excess-at-the-neck-scale} gives
\[
c^2\rb(y)^{n-2} \le C\min_{e\in\bS^{n-1}} \int_{B_{2\rb(y)}(y)\cap\{u>0\}}\cA_e^2(u)\,dx \le C\int_{B_{2\rb(y)}(y)\cap\{u>0\}}\cA_{e_R}^2(u)\,dx.
\]
Summing over the bounded-overlap family gives
\[
\sum_{y\in\tilde \cZ_{R/2}(z)}\rb(y)^{n-2} \le C\int_{B_{2R}(z)\cap\{u>0\}}\cA_{e_R}^2(u)\,dx =C(2R)^{n-2}\widetilde\varrho_z(2R)^2,
\]
which yields the first inequality. The second follows from \Cref{lem:transverse-energy-controls-the-symmetric-excess}.
\end{proof}

\section{A Weiss-type excess}
\label{sec:weiss-type-excess}

Recall that $u$ is the standing global classical stable Bernoulli solution fixed in \Cref{ass:standing-global-assumptions}. All constants are universal unless another dependence is displayed.

\subsection{Monotonicity formulas}

We introduce the Gaussian Weiss quantities and establish the fixed- and moving-center monotonicity estimates needed below.

\begin{defn}[Gaussian Weiss quantities]
\label{defn:gaussian-weiss-quantities}
We use the Bernoulli half-energy density
\[
P(x):=\frac{|\nabla u|^2}{2}+\frac12\mathbf 1_{\{u>0\}},
\]
and the Gaussian kernel
\[
G_r(x):=\frac{1}{r^n}e^{-\frac{|x|^2}{r^2}}.
\]
For $r>0$ and $x_\circ\in\R^n$, define
\[
H_{r,x_\circ}:=\int P(x)G_r(x-x_\circ)\,dx,\qquad D_{r,x_\circ}:=\int u^2(x)G_r(x-x_\circ)\,dx,
\]
and
\[
W_{r,x_\circ}:=H_{r,x_\circ}-r^{-2}D_{r,x_\circ}.
\]
This Gaussian Weiss quantity is globally defined and is a weighted version of the boundary-adjusted Weiss energy ${\bf W}(u,r)$ in \eqref{eq:W-def}.

For any function $w$ for which the following Gaussian integrals are finite (in particular, for any globally Lipschitz $w$), we write
\[
P_w(x):=\frac{|\nabla w|^2}{2}+\frac12\mathbf 1_{\{w>0\}},
\]
and $H_{r,x_\circ}(w)$, $D_{r,x_\circ}(w)$, $W_{r,x_\circ}(w)$ for the same quantities computed with $w$ in place of $u$. Thus $H_{r,x_\circ}=H_{r,x_\circ}(u)$, $D_{r,x_\circ}=D_{r,x_\circ}(u)$, and $W_{r,x_\circ}=W_{r,x_\circ}(u)$. When $x_\circ=0$, we write $H_r,D_r,W_r$.
\end{defn}

\begin{lem}[Gaussian Weiss monotonicity]
\label{lem:fixed-center-gaussian-weiss-monotonicity}
For every $x_\circ\in\R^n$ and $r > 0$, we have
\begin{equation}\label{eq:Weiss_Gaussian}
\partial_r W_{r,x_\circ} = \frac{2}{r^3} \int \bigl(u-(x-x_\circ)\cdot \nabla u\bigr)^2 G_r(x-x_\circ)\, dx.
\end{equation}
In particular, $r\mapsto W_{r,x_\circ}$ is nondecreasing.
\end{lem}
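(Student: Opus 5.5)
The plan is to reduce, by translation and scaling, to a computation at the fixed scale $1$. Then I would differentiate under the integral and use the free-boundary condition to cancel all boundary contributions.

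\emph{Reduction.} After translating we may take $x_\circ=0$. With $u_r(y):=u(ry)/r$, the change of variables $x=ry$ gives
\[
H_r=\int P_{u_r}(y)\,e^{-|y|^2}\,dy,\qquad r^{-2}D_r=\int u_r^2(y)\,e^{-|y|^2}\,dy .
\]
Hence, writing $G:=G_1$ and $P_{u_r}=\tfrac12|\nabla u_r|^2+\tfrac12\mathbf 1_{\{u_r>0\}}$, we have
\[
W_r=\int \bigl(P_{u_r}-u_r^2\bigr)G .
\]
Since $\partial_r u_r=v_r/r$ with $v_r:=y\cdot\nabla u_r-u_r$, the claimed identity is equivalent, after undoing the scaling, to
\[
\partial_rW_r=\frac2r\int v_r^2G .
\]
All integrals converge absolutely, because $u$ is globally Lipschitz by \Cref{lem:lipschitz-bound} (so $|u|\le C(1+|x|)$) and $G$ decays like a Gaussian. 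The same bounds justify differentiating under the integral sign, and they make the integrations by parts below legitimate, since the Gaussian kills all terms at infinity.

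\emph{Bulk and boundary terms.} Write $\Omega_r:=\{u_r>0\}=r^{-1}\{u>0\}$. Differentiating the gradient term and the $u_r^2$ term gives the bulk contribution
\[
\frac1r\int_{\Omega_r}\bigl(\nabla u_r\cdot\nabla v_r-2u_rv_r\bigr)G .
\]
Integrating by parts in $\Omega_r$, using $\Delta u_r=0$ and $\nabla G=-2yG$, this becomes
\[
\frac1r\int_{\Omega_r}\bigl(2v_r\,(y\cdot\nabla u_r)-2u_rv_r\bigr)G+\frac1r\int_{\partial\Omega_r}(\text{flux term})
=\frac2r\int v_r^2G+\frac1r\int_{\partial\Omega_r}(\text{flux term}).
\]
The flux term is $v_r\,\partial_{n}u_r\,G$, where $n$ is the outer normal. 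On $\FB(u_r)$ we have $u_r=0$ and $\nabla u_r=\nu$, so $v_r=y\cdot\nu$ and $\partial_{n}u_r=-|\nabla u_r|$. The phase term $\tfrac12\int\mathbf 1_{\Omega_r}G$ contributes the other boundary term. By the transport (Reynolds) formula for the moving domain $\Omega_r=r^{-1}\{u>0\}$, whose boundary moves with normal velocity proportional to $y\cdot\nu/r$, this term is a free-boundary integral of $(y\cdot\nu)\,G$.

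\emph{Main obstacle.} The key step is to verify that the two free-boundary integrals combine into
\[
c\int_{\FB(u_r)}\bigl(|\nabla u_r|^2-1\bigr)(y\cdot\nu)\,G\,d\cH^{n-1},
\]
which vanishes by the Bernoulli condition. This is exactly where stationarity enters, and the normalizations (the factor $\tfrac12$ in $P$, the orientation of $\nu$, and the sign of the boundary velocity) must be tracked carefully; they are where one is most likely to err. I would first check them on the translated half-space $(x_n-a)_+$. There $v\equiv a$, and the formula must give $\partial_rW_r=\frac{2a^2}{r^3}\int G_r$, which calibrates all signs.

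If the direct transport computation is delicate near $\FB(u)$, I would instead use the domain-variation form of \eqref{eq:stationary} with the vector field $\xi=xG$ (cut off at infinity). This yields the Pohozaev-type identity that trades the free-boundary terms for bulk terms. Alternatively, one can integrate on $\{u>\varepsilon\}$ and let $\varepsilon\downarrow0$, as in \Cref{lem:homogeneous-weiss-upper-bound}. Once the boundary terms cancel, the remaining bulk term is $\frac2r\int v_r^2G\ge0$. Scaling back gives \eqref{eq:Weiss_Gaussian}, and nonnegativity gives that $r\mapsto W_{r,x_\circ}$ is nondecreasing.
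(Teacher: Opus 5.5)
Your overall mechanism is right (the free-boundary contributions cancel by $|\nabla u|=1$, leaving the bulk square), and your fallback via the domain variation with $\xi=xG$ is exactly the paper's proof. The primary transport route, however, fails as written at the step you singled out, because the list of boundary terms is incomplete. After rescaling, all $r$-dependence sits in $u_r$. Not only $\mathbf 1_{\Omega_r}$ but also $|\nabla u_r|^2$ jumps (from $1$ to $0$) across the moving boundary $\partial\Omega_r=r^{-1}\FB(u)$. So the Lipschitz bound does \emph{not} justify differentiating $\tfrac12\int|\nabla u_r|^2G$ under the integral sign, and this term is not purely bulk. By the same Reynolds formula you use for the phase term (outer normal velocity $(y\cdot\nu)/r$), it contributes an extra $\tfrac1{2r}\int_{\FB(u_r)}|\nabla u_r|^2(y\cdot\nu)G\,d\cH^{n-1}$.

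Using the Bernoulli condition, your two terms are the flux $-\tfrac1r\int_{\FB(u_r)}(y\cdot\nu)G$ and the phase transport $+\tfrac1{2r}\int_{\FB(u_r)}(y\cdot\nu)G$. Their sum, $-\tfrac1{2r}\int_{\FB(u_r)}(y\cdot\nu)G$, does not vanish. For $u=(x_n-a)_+$ with $a\ne0$ it equals $-\tfrac{a}{2r^2}\int_{\{y_n=a/r\}}G\neq0$, so your calibration would indeed flag it. Only with the third term do the three contributions combine into $-\tfrac1{2r}\int_{\FB(u_r)}(|\nabla u_r|^2-1)(y\cdot\nu)G$, which is your predicted form and vanishes. (To see this form before imposing the Bernoulli condition, write $v_r=|\nabla u_r|\,y\cdot\nu$ on $\FB(u_r)$.) Incidentally, in the calibration $u-x\cdot\nabla u=-a$ only on $\{x_n>a\}$. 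The target is therefore $\tfrac{2a^2}{r^3}\int_{\{x_n>a\}}G_r$, not $\tfrac{2a^2}{r^3}\int G_r$.

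The paper avoids all moving-boundary bookkeeping by \emph{not} rescaling. In the original variables only the smooth kernel depends on $r$, so differentiating under the integral is legitimate and gives
$H_r'=-\frac1r\int P\,(nG+x\cdot\nabla G)$ and $D_r'=\frac2r\int u\,(x\cdot\nabla u)\,G$.
Stationarity, $\int P\,\mathrm{div}\,\xi=\int\nabla u\,D\xi\,\nabla u$ with $\xi=xG$ (cut off at infinity), then replaces the $P$-integral by $\int|\nabla u|^2G+\int(x\cdot\nabla u)\,\nabla u\cdot\nabla G$. This is where $|\nabla u|=1$ enters, without any free-boundary integral ever being written. Harmonicity together with $u=0$ on $\FB(u)$ then gives $\int|\nabla u|^2G=\frac2{r^2}\int u\,(x\cdot\nabla u)\,G$, again with no boundary term. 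If you keep the transport route, you also need to justify Reynolds' formula on the unbounded moving domain (local smoothness of $\FB(u)$, the perimeter bound, Gaussian decay). The paper's route does not need this.
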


\begin{proof}
Let $x_\circ=0$ and $G=G_r$. We first differentiate the Gaussian $L^2$-term. Since
\[
\partial_rG_r=-\frac1r\bigl(nG_r+x\cdot\nabla G_r\bigr),
\]
we have, integrating by parts, and denoting $Y:=x\cdot\nabla u$,
\begin{equation}
\label{eq:fixed-center-D-prime} D_r'=-\frac1r\int u^2\bigl(nG+x\cdot\nabla G\bigr)\,dx = \frac2r\int uYG\,dx.
\end{equation}

We next differentiate $H_r$:
\[
H_r'=-\frac1r\int P\bigl(nG+x\cdot\nabla G\bigr)\,dx.
\]
The stationarity identity \eqref{eq:stationary} can be written in terms of $P$ as
\[
\int_{\R^n} P {\rm div} \xi \, dx = \int_{\R^n} \nabla u\, D\xi\, \nabla u \, dx,
\]
for $\xi \in C^\infty_c(\R^n)$. Taking $\xi(x)=xG_r(x)$ (up to a cutoff localization argument) we obtain
\[
\int P\bigl(nG+x\cdot\nabla G\bigr)\,dx =\int |\nabla u|^2G\,dx+\int Y\,\nabla u\cdot\nabla G\,dx.
\]
Because $\nabla G=-2xG/r^2$,
\[
H_r'=-\frac1r\int |\nabla u|^2G\,dx +\frac2{r^3}\int Y^2G\,dx.
\]
Since $u$ is harmonic in $\{u > 0\}$,
\[
\int |\nabla u|^2G\,dx =-\int u\,\nabla u\cdot\nabla G\,dx =\frac2{r^2}\int uYG\,dx,
\]
and hence
\begin{equation}
\label{eq:fixed-center-H-prime} H_r'=\frac2{r^3}\int (Y^2-uY)G\,dx.
\end{equation}

Combining \eqref{eq:fixed-center-D-prime} and \eqref{eq:fixed-center-H-prime},
\[
\begin{aligned}
\partial_rW_r &=H_r'+2r^{-3}D_r-r^{-2}D_r' =\frac2{r^3}\int (Y^2-uY)G\,dx +\frac2{r^3}\int u^2G\,dx -\frac2{r^3}\int uYG\,dx =\frac2{r^3}\int (u-Y)^2G\,dx.
\end{aligned}
\]
This concludes the proof.
\end{proof}

\begin{lem}
\label{lem:universal-gaussian-weiss-limit}
For every $x_\circ\in\R^n$, $r > 0$, and $e\in \mathbb{S}^{n-1}$,
\[
\lim_{R\to\infty}W_{R,x_\circ}=W_\infty, \qquad W_\infty:=\frac{\pi^{n/2}}2 = W_{r,0}(V_{0,e}).
\]
\end{lem}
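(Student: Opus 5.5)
There are two claims to verify: that the vee has Gaussian Weiss value $\pi^{n/2}/2$ at every scale and center $0$, and that $W_{R,x_\circ}(u)$ converges to this value as $R\to\infty$ for every $x_\circ$.

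\emph{The vee value.} Let $V=V_{0,e}$. We have $|\nabla V|=1$ a.e. and $\mathbf 1_{\{V>0\}}=1$ a.e., so $P_V\equiv 1$ a.e. Therefore
\[
H_{r,0}(V)=\int r^{-n}e^{-|x|^2/r^2}\,dx=\pi^{n/2}.
\]
For the mass term, rotate so that $e=e_1$. Then
\[
D_{r,0}(V)=\int x_1^2 G_r\,dx=\pi^{n/2}\,\frac{r^2}{2},
\]
since the normalized one-dimensional Gaussian $\pi^{-1/2}r^{-1}e^{-t^2/r^2}$ has variance $r^2/2$. Hence $W_{r,0}(V)=\pi^{n/2}-\pi^{n/2}/2=\pi^{n/2}/2$, independently of $r$ and $e$.

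\emph{The limit.} By \Cref{lem:fixed-center-gaussian-weiss-monotonicity}, $R\mapsto W_{R,x_\circ}$ is nondecreasing, so it suffices to identify the limit along one sequence $R\to\infty$.

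Rescale by $u_R(y):=u(x_\circ+Ry)/R$. The scale invariance of $W$ gives $W_{R,x_\circ}(u)=W_{1,0}(u_R)$.

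Since $u$ is $1$-Lipschitz, $\FB(u)$ meets $B_{|x_\circ|}(x_\circ)$ (as $0\in\FB(u)$), and $u(0)=0$, we get $|u_R(y)|\le |y|+2|x_\circ|/R$ and $|\nabla u_R|\le1$. The Gaussian weight is integrable against these bounds, so the tails of all three integrands are uniformly small outside a large ball $B_M$.

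It remains to control the integrals inside $B_M$. Write $u_R(y)=\bar u_{R}(y+x_\circ/R)$, where $\bar u_R$ is the blow-down of $u$ centered at $0$. Apply \Cref{prop:blow-down-of-non-flat-solutions}, which is valid since $n=4<\nAC$ by \Cref{prop:critical-dimension-bigger-than-four} and since \eqref{eq:assump_glob_u} gives $|D^2u(0)|=1$. It yields directions $e_R$ with the $L^\infty$, $L^2$-gradient and phase-indicator errors of $\bar u_R-V_{0,e_R}$ on $B_{2M}$ bounded by $C_M\omega(1/(MR))\to0$.

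The shift by $x_\circ/R\to0$ costs only $O(|x_\circ|/R)$ in $L^\infty$. For the gradient and indicator terms it costs $o(1)$ after comparing against $V_{0,e_R}$ and the shifted vee $V_{-x_\circ/R,e_R}$, whose gradients and indicators differ only on a slab of width $O(1/R)$.

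Consequently, $\int_{B_M}P_{u_R}G_1$ and $\int_{B_M}u_R^2G_1$ agree up to $o(1)$ with the corresponding integrals for $V_{0,e_R}$. By rotation invariance of $G_1$, those vee integrals do not depend on $e_R$. Letting $R\to\infty$ and then $M\to\infty$ gives $W_{R,x_\circ}\to W_{1,0}(V_{0,e})=W_\infty$.

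The main care point is this uniform tail-plus-local comparison, namely that the $H^1$ and indicator closeness of \eqref{eq:omega_mod} passes to the Gaussian-weighted quantities. This is routine once the linear-growth bound controls the tails. No classification beyond the blow-down proposition is needed.
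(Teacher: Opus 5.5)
Your proof is correct and follows essentially the same route as the paper: you compute the vee value from the Gaussian moments, and you obtain the limit by applying \Cref{prop:blow-down-of-non-flat-solutions} at a scale proportional to $R$, paying $O(|x_\circ|/R)$ for the shift of center, controlling the tails by the Lipschitz bound and Gaussian decay, and letting $R\to\infty$ before the cutoff radius tends to infinity. Two small remarks: the appeal to monotonicity is unnecessary, since your estimate already holds for every $R$; and the blow-down proposition only needs the standing hypothesis $n<\nAC$, not specifically $n=4$.
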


\begin{proof}
By Assumption \ref{ass:standing-global-assumptions}, we have $|D^2u(0)|=1$. We apply Proposition \ref{prop:blow-down-of-non-flat-solutions} using that $|\nabla u|\le 1$ and the exponential decay of the Gaussian to control the tails: For fixed $L>1$, choose $e_{R,L}$ from \Cref{prop:blow-down-of-non-flat-solutions} at scale $LR$. Then
\[
\bigl|W_{R,x_\circ}(u)-W_{1,0}(V_{0,e_{R,L}})\bigr| \le C_L\omega((LR)^{-1})+C\frac{|x_\circ|}{R} +C\!\int_{\R^n\setminus B_L}(1+|x|^2)e^{-c|x|^2}\,dx .
\]
Letting first $R\to\infty$ and then $L\to\infty$ proves the limit, while
\[
W_{r,0}(V_{0,e}) =\int_{\R^n}\left(1-\frac{(e\cdot x)^2}{r^2}\right)G_r(x)\,dx =\frac{\pi^{n/2}}2,
\]
where the last equality follows from $\int_{\mathbb R^n}G_r=\pi^{n/2}$ and $\int_{\mathbb R^n}(e\cdot x)^2G_r(x)\,dx =\frac12r^2\pi^{n/2}$.
\end{proof}

Subtracting from the universal limit in \Cref{lem:universal-gaussian-weiss-limit} turns monotonicity into a quantitative measure of the failure of homogeneity.
\begin{lem}[Gaussian Weiss deficit]
\label{lem:gaussian-weiss-deficit-bound}
Let $W_\infty$ from \Cref{lem:universal-gaussian-weiss-limit} and define the fixed-center Gaussian Weiss deficit:
\[
\wW_{r,x_\circ}:=W_\infty-W_{r,x_\circ}.
\]
Then $\wW_{r,x_\circ}$ is nonnegative and nonincreasing in $r$. Moreover, for every $r>0$,
\begin{equation}\label{eq:gaussian-weiss-deficit-weighted-comparison}
c\int_{\R^n} \frac{\bigl(u(x)-(x-x_\circ)\cdot\nabla u(x)\bigr)^2} {|x-x_\circ|^{n+2}+r^{n+2}}\,dx \le \wW_{r,x_\circ} \le C\int_{\R^n} \frac{\bigl(u(x)-(x-x_\circ)\cdot\nabla u(x)\bigr)^2} {|x-x_\circ|^{n+2}+r^{n+2}}\,dx,
\end{equation}
and
\begin{equation}\label{eq:gaussian-weiss-deficit-gaussian-window}
\frac1{r^2}\int_{\R^n} \bigl(u(x)-(x-x_\circ)\cdot\nabla u(x)\bigr)^2G_r(x-x_\circ)\,dx \le C\wW_{r,x_\circ},
\end{equation}
where $c,C>0$ are universal.
\end{lem}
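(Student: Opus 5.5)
The plan is to integrate the monotonicity formula of \Cref{lem:fixed-center-gaussian-weiss-monotonicity} from $r$ to $\infty$. By \Cref{lem:universal-gaussian-weiss-limit}, $W_{R,x_\circ}\to W_\infty$ as $R\to\infty$, so
\[
\wW_{r,x_\circ}=\int_r^\infty \partial_s W_{s,x_\circ}\,ds=\int_r^\infty\frac{2}{s^3}\int \bigl(u-(x-x_\circ)\cdot\nabla u\bigr)^2G_s(x-x_\circ)\,dx\,ds .
\]
The integrand is nonnegative, so $\wW_{r,x_\circ}\ge0$. Since $W_{r,x_\circ}$ is nondecreasing in $r$, the deficit is nonincreasing in $r$.

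For the two-sided bound \eqref{eq:gaussian-weiss-deficit-weighted-comparison}, apply Tonelli and write $f(x):=(u-(x-x_\circ)\cdot\nabla u)^2$. This gives $\wW_{r,x_\circ}=\int f(x)K_r(|x-x_\circ|)\,dx$ with
\[
K_r(\rho):=2\int_r^\infty s^{-n-3}e^{-\rho^2/s^2}\,ds .
\]
Substituting $t=\rho/s$ gives $K_r(\rho)=2\rho^{-n-2}\int_0^{\rho/r}t^{n+1}e^{-t^2}\,dt$. We then check $K_r(\rho)\simeq(\rho^{n+2}+r^{n+2})^{-1}$ in two regimes.
\begin{itemize}
\item If $\rho\le r$, the $t$-integral is comparable to $(\rho/r)^{n+2}$, since $e^{-t^2}\in[e^{-1},1]$ on $[0,1]$. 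Hence $K_r\simeq r^{-n-2}$.
\item If $\rho\ge r$, the $t$-integral lies between $\int_0^1$ and $\int_0^\infty$, both positive dimensional constants. Hence $K_r\simeq\rho^{-n-2}$.
\end{itemize}
This proves both inequalities in \eqref{eq:gaussian-weiss-deficit-weighted-comparison}, with dimensional constants. Finiteness of the right-hand side is automatic from the identity, because $f\le C(1+|x|^2)$ by the Lipschitz bound.

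For \eqref{eq:gaussian-weiss-deficit-gaussian-window}, compare $r^{-2}G_r(y)=r^{-n-2}e^{-|y|^2/r^2}$ with $K_r(|y|)$. Restricting the $s$-integral to $[r,2r]$ gives
\[
K_r(\rho)\ge 2\int_r^{2r}s^{-n-3}e^{-\rho^2/s^2}\,ds\ge c\,r^{-n-2}e^{-\rho^2/r^2}.
\]
Here we use $e^{-\rho^2/s^2}\ge e^{-\rho^2/r^2}$ for $s\ge r$. Integrating against $f$ gives \eqref{eq:gaussian-weiss-deficit-gaussian-window}.

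The only step needing care is the first one: justifying $W_{R,x_\circ}\to W_\infty$ and exchanging the limit with the integral. This is supplied by \Cref{lem:universal-gaussian-weiss-limit} together with monotone convergence, since the integrand in $s$ is nonnegative. Everything else is an explicit one-variable kernel computation.
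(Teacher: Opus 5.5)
Your proposal is correct and is essentially the paper's proof. You integrate the Gaussian monotonicity formula from $r$ to $\infty$ using the universal limit, apply Tonelli to obtain the kernel $K_r(\rho)=2\rho^{-n-2}\int_0^{\rho/r}t^{n+1}e^{-t^2}\,dt$ (the paper writes the same kernel in the variable $t^2$), and compare it with $(\rho^{n+2}+r^{n+2})^{-1}$ by splitting at $\rho=r$. The only cosmetic difference is in \eqref{eq:gaussian-weiss-deficit-gaussian-window}: you bound $K_r$ from below by restricting to $s\in[r,2r]$, whereas the paper combines $e^{-t^2}\le C(1+t^{n+2})^{-1}$ with the two-sided kernel bound. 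One small point: $f\le C(1+|x|^2)$ alone gives only a logarithmically divergent majorant against $(\rho^{n+2}+r^{n+2})^{-1}$, so finiteness of the weighted integral should be read off from the identity and the finiteness of $W_{r,x_\circ}$, as your first clause already says.
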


\begin{proof}
Fix $x_\circ=0$ after translation. By \Cref{lem:fixed-center-gaussian-weiss-monotonicity} and Tonelli's theorem,
\begin{equation}\label{eq:gaussian-weiss-deficit-kernel-representation}
\wW_r =\int_r^\infty W_\rho'\,d\rho =\int_{\R^n}(u-x\cdot\nabla u)^2K_r(x)\,dx, \qquad K_r(x):=2\int_r^\infty\rho^{-n-3}e^{-|x|^2/\rho^2}\,d\rho = |x|^{-n-2} \int_0^{|x|^2/r^2}t^{n/2}e^{-t}\,dt.
\end{equation}
By splitting the last integral below and above one, it is comparable to $\min\{(|x|/r)^{n+2},1\}\asymp \frac{1}{1+(r/|x|)^{n+2}}$. Hence
\[
K_r(x)\simeq\frac1{|x|^{n+2}+r^{n+2}},
\]
which proves \eqref{eq:gaussian-weiss-deficit-weighted-comparison}. Finally, with $t=|x|/r$,
\[
r^{-2}G_r(x)=r^{-n-2}e^{-t^2} \le \frac{Cr^{-n-2}}{1+t^{n+2}} \le CK_r(x).
\]
Inserting this in \eqref{eq:gaussian-weiss-deficit-kernel-representation} proves \eqref{eq:gaussian-weiss-deficit-gaussian-window}.
\end{proof}

We next compare, at different scales, this defect when the center moves transversely to a candidate vee ridge.
\begin{lem}
\label{lem:shifted-center-gaussian-weiss-almost-monotonicity}
There exists a universal $C$ such that for every $y\in\R^n$, $R>0$, $e\in\mathbb S^{n-1}$, and $\eta\in(0,1)$,
\[
\sup_{x_\circ\in (y+e^\perp)\cap B_R(y)} \wW_{R,x_\circ} \le 2\sup_{x_\circ\in (y+e^\perp)\cap B_{\eta R}(y)} \wW_{\eta R,x_\circ} + C |\log \eta|\, \wW_{\sqrt{2}\eta R,y}.
\]
\end{lem}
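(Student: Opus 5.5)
The plan is to move the center of the Gaussian Weiss energy along the straight segment from a point of $B_{\eta R}(y)$ out to the given center $x_\circ$, at a speed proportional to the scale. Along this path the center and the scale change together, so the moving-center error can be expressed through the fixed-center homogeneity defect at $y$. That defect is integrable in $\log r$, and this produces the factor $|\log\eta|$.

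\textbf{Path and affine identity.} Fix $x_\circ\in(y+e^\perp)\cap B_R(y)$ and set
\[
x(r):=y+\tfrac{r}{R}(x_\circ-y),\qquad r\in[\eta R,R].
\]
Then $x(\eta R)\in(y+e^\perp)\cap B_{\eta R}(y)$ and $x(R)=x_\circ$. The velocity $\dot x=(x_\circ-y)/R$ satisfies $|\dot x|\le1$ and $\dot x\in e^\perp$, although tangentiality is not really needed. Write
\[
f_z(x):=u(x)-(x-z)\cdot\nabla u(x).
\]
This is affine in $z$, and the key identity is
\[
f_{x(r)}-f_y=(x(r)-y)\cdot\nabla u=r\,\dot x\cdot\nabla u .
\]
Hence the tangential gradient in the direction of motion is a difference of two homogeneity defects.

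\textbf{Center derivative and total derivative.} First I will compute the center derivative of $W_{r,z}$. I differentiate $H_{r,z}$ and $D_{r,z}$ in $z$, use the stationarity identity with $\xi=e_jG_r(\cdot-z)$ (cut off as in \Cref{lem:fixed-center-gaussian-weiss-monotonicity}), and use harmonicity of $u$ in $\{u>0\}$. This should give, up to the sign that the computation fixes,
\[
\nabla_z W_{r,z}=-\frac2{r^2}\int \nabla u\, f_z\,G_r(x-z)\,dx .
\]
Combining this with \eqref{eq:Weiss_Gaussian} and the identity above, the total derivative along the path becomes
\[
\frac{d}{dr}W_{r,x(r)}=\frac2{r^3}\int\bigl(f_{x(r)}^2-f_{x(r)}(f_{x(r)}-f_y)\bigr)G_r(x-x(r))\,dx=\frac2{r^3}\int f_{x(r)}f_y\,G_r(x-x(r))\,dx .
\]
By Cauchy--Schwarz, this is bounded below by
\[
-\frac1{r^3}\int f_{x(r)}^2G_r(x-x(r))\,dx-\frac1{r^3}\int f_y^2G_r(x-x(r))\,dx .
\]

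\textbf{Bounding the two error terms.} For the $f_y$ term I use $|x(r)-y|\le r$ and $|x-x(r)|^2\ge\frac12|x-y|^2-|x(r)-y|^2$. These give $G_r(x-x(r))\le C\,G_{\sqrt2 r}(x-y)$, so by \eqref{eq:gaussian-weiss-deficit-gaussian-window} the term is at most $\frac Cr\wW_{\sqrt2r,y}$. Since the deficit is nonincreasing in the scale, this is at most $\frac Cr\wW_{\sqrt2\eta R,y}$. Integrating over $r\in[\eta R,R]$ then yields the $C|\log\eta|\,\wW_{\sqrt2\eta R,y}$ term in the statement.

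The $f_{x(r)}$ term is exactly $\frac12\partial_rW_{r,z}|_{z=x(r)}$, half the fixed-center derivative. So along the path, $\frac{d}{dr}\wW_{r,x(r)}\le\frac12\partial_rW_{r,z}|_{z=x(r)}+\frac Cr\wW_{\sqrt2\eta R,y}$. This gives $\wW_{R,x_\circ}\le\wW_{\eta R,x(\eta R)}+\frac12\int_{\eta R}^R\partial_rW_{r,z}|_{z=x(r)}\,dr+C|\log\eta|\,\wW_{\sqrt2\eta R,y}$.

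\textbf{Main obstacle.} I expect the absorption of this half fixed-center derivative to be the main difficulty, and I expect it to be where the factor $2$ in the statement comes from. My first attempt is to rerun the same path computation for the deficit itself. I would write $\partial_r W_{r,z}|_{z=x(r)}=\frac{d}{dr}W_{r,x(r)}-\dot x\cdot\nabla_zW$. The center-derivative term $\dot x\cdot\nabla_z W$ is controlled by the same product $\frac{2}{r^3}\int f_{x(r)}(f_{x(r)}-f_y)G_r(x-x(r))\,dx$, which is again bounded by the two Gaussian defects. This turns the integrated inequality into one of the form $\wW_{R,x_\circ}\le\wW_{\eta R,x(\eta R)}+\frac12\bigl(\wW_{\eta R,x(\eta R)}-\wW_{R,x_\circ}\bigr)+C|\log\eta|\,\wW_{\sqrt2\eta R,y}$ and its variants. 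Rearranging should give $\wW_{R,x_\circ}\le 2\,\wW_{\eta R,x(\eta R)}+C|\log\eta|\,\wW_{\sqrt2\eta R,y}$, with the endpoint term using $W_{r,z}\le W_\infty$ from \Cref{lem:gaussian-weiss-deficit-bound}. Taking the supremum over $x_\circ$ then concludes.

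If this bookkeeping does not close, the fallback is to replace the straight path by its dyadic version, moving at scales $2^{-k}R$ from $\eta R$ to $R$ and applying the estimate above on each piece. The constant $2$ would then come from summing a geometric series of the absorbed halves.
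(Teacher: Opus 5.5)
Your path $x(r)=y+\frac rR(x_\circ-y)$ and your identity $\frac{d}{dr}W_{r,x(r)}=\frac2{r^3}\int f_{x(r)}f_y\,G_r(x-x(r))\,dx$ are exactly the paper's; its path is $\rho\mapsto y+\lambda\rho\theta$. However, the step you flag as the main obstacle is a genuine gap, and your proposed absorption does not close it.

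After Young's inequality with weight one you are left with $\frac12\int_{\eta R}^R\partial_rW_{r,z}|_{z=x(r)}\,dr$. This is a fixed-center derivative evaluated along a moving center, so it does not telescope. Rewriting it as $\frac{d}{dr}W_{r,x(r)}-\dot x\cdot\nabla_zW_{r,z}|_{z=x(r)}$ is circular. Indeed $\dot x\cdot\nabla_zW_{r,z}|_{z=x(r)}=-\frac2{r^3}\int f_{x(r)}(f_{x(r)}-f_y)\,G_r(x-x(r))\,dx$, so that decomposition is an identity. Any bound on $\dot x\cdot\nabla_zW$ then reintroduces $\frac2{r^3}\int f_{x(r)}^2G_r(x-x(r))\,dx$, the very term to be absorbed, with coefficient one.

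Your inequality $\wW_{R,x_\circ}\le\wW_{\eta R,x(\eta R)}+\frac12(\wW_{\eta R,x(\eta R)}-\wW_{R,x_\circ})+\dots$ would require
$-\int_{\eta R}^R\dot x\cdot\nabla_zW\,dr=\int_{\eta R}^R\frac2{r^2}\int(\dot x\cdot\nabla u)f_{x(r)}G_r(x-x(r))\,dx\,dr$
to be an error term. It is not: it involves the tangential gradient, has no sign, and at best is bounded by $\frac Cr\wW_{r,x(r)}$, i.e.\ by the deficit itself. The dyadic fallback fails for the same reason. On each piece $[r,2r]$ the leftover term is only a constant multiple of the deficit, so the losses multiply over the $\sim|\log\eta|$ pieces. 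This gives a factor $\eta^{-C}$, not a geometric series with a fixed factor $2$.

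The missing idea is to bound the moving-center factor by the deficit at the moving center. Applying \eqref{eq:gaussian-weiss-deficit-gaussian-window} at the frozen center $x(r)$ gives $\frac1{r^2}\int f_{x(r)}^2G_r(x-x(r))\,dx\le C\,\Theta(r)$, with $\Theta(r):=\wW_{r,x(r)}$. This turns the problem into a differential inequality for $\Theta$ along the path.

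The paper keeps the product, using Cauchy--Schwarz rather than Young. It bounds the $y$-factor by $Cr\,\partial_{r'}W_{r',y}|_{r'=\sqrt2\,r}$, i.e.\ by the derivative, which telescopes, rather than by the deficit. This yields
\[
r\,\bigl|(\sqrt{\Theta})'(r)\bigr|^2\le C\,\partial_{r'}W_{r',y}\big|_{r'=\sqrt2\,r}.
\]
Integrating with Cauchy--Schwarz against $dr/r$ gives $(\sqrt{\Theta(R)}-\sqrt{\Theta(\eta R)})^2\le C|\log\eta|\,\wW_{\sqrt2\eta R,y}$. The factor $2$ is then just $(a+b)^2\le 2a^2+2b^2$.

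Your Young splitting can also be repaired, but only with an $\eta$-dependent weight. You get $\Theta'\le\frac{C\epsilon}{r}\Theta+\frac{C}{\epsilon}\partial_{r'}W_{r',y}|_{r'=\sqrt2\,r}$, then apply Gronwall with $C\epsilon|\log\eta|=\log2$. In either route the $y$-term must be bounded by the telescoping derivative. Bounding it by $\wW_{\sqrt2 r,y}/r$, as you do, would only give $|\log\eta|^2$.
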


\begin{proof}
By translation we may assume $y=0$. Fix $\lambda\in[0,1)$ and $\theta\in e^\perp\cap\mathbb S^{n-1}$, and write
\[
\Theta(\rho):=\wW_{\rho,\lambda \rho\theta} =W_\infty-W_{\rho,\lambda\rho\theta}.
\]
For $\rho>0$, define
\[
\bar G_\rho(x):=G_\rho(x-\lambda \rho \theta).
\]
Thus $\Theta'(\rho)=-\frac{d}{d\rho}W_{\rho,\lambda\rho\theta}$, where the derivative on the right is total along $\rho\mapsto(\rho,\lambda\rho\theta)$.

The moving-center computation is the fixed-center first variation with $a(\rho)=\lambda\rho\theta$ and
\[
\frac{d}{d\rho}\bar G_\rho =-\frac1\rho\bigl(n\bar G_\rho+x\cdot\nabla \bar G_\rho\bigr).
\]
Differentiating $D_{\rho,a(\rho)}$, applying the stationarity condition with $\xi(x)=x\bar G_\rho(x)$, and using the harmonicity of $u$, as in the proof of \Cref{lem:fixed-center-gaussian-weiss-monotonicity} gives
\begin{equation}
\label{eq:shifted-gaussian-total-derivative} \frac{d}{d\rho}W_{\rho,\lambda \rho\theta} =\frac{2}{\rho^3}\int \bigl(u-x\cdot \nabla u\bigr) \bigl(u-(x-\lambda \rho\theta)\cdot \nabla u\bigr) \bar G_\rho(x)\,dx.
\end{equation}
Set
\[
A(\rho):= \frac{2}{\rho^2}\int \bigl(u-x\cdot \nabla u\bigr)^2 \bar G_\rho(x)\,dx, \qquad B(\rho):= \frac{2}{\rho^2}\int \bigl(u-(x-\lambda \rho\theta)\cdot \nabla u\bigr)^2 \bar G_\rho(x)\,dx.
\]
Applying Cauchy--Schwarz to \eqref{eq:shifted-gaussian-total-derivative},
\[
\left|\rho\frac{d}{d\rho}W_{\rho,\lambda \rho\theta}\right| \le A(\rho)^{1/2}B(\rho)^{1/2}.
\]

Because $\lambda\le 1$,
\[
\bar G_\rho(x) =\rho^{-n}e^{-\frac{|x-\lambda \rho\theta|^2}{\rho^2}} \le e\,\rho^{-n}e^{-\frac{|x|^2}{2\rho^2}} = C\,G_{\sqrt 2\,\rho}(x)
\]
for a universal constant $C$. Therefore, from \Cref{lem:fixed-center-gaussian-weiss-monotonicity},
\[
A(\rho) \le C\rho\partial_r W_{r,0}\big|_{r=\sqrt2\rho}.
\]
For the second factor, \eqref{eq:gaussian-weiss-deficit-gaussian-window} at the frozen center $\lambda\rho\theta$ yields
\[
B(\rho)\le C\,\wW_{\rho,\lambda \rho\theta}= C\,\Theta(\rho).
\]
Hence
\[
\left|\rho \Theta'(\rho)\right| =\left|\rho\frac{d}{d\rho}W_{\rho,\lambda \rho\theta}\right| \le C\bigl(\rho\partial_r W_{r,0}\big|_{r=\sqrt2\rho}\bigr)^{1/2} \Theta(\rho)^{1/2}\quad\Rightarrow \quad \rho\left|\frac{d}{d\rho}\sqrt{\Theta(\rho)}\right|^2 \le C\,\partial_r W_{r,0}\big|_{r=\sqrt2\rho}.
\]
Integrating from $\eta R$ to $R$, using Cauchy's inequality with weight $d\rho/\rho$, gives
\[
\bigl(\sqrt{\Theta(R)} -\sqrt{\Theta(\eta R)}\bigr)^2 \le C|\log\eta| \bigl(W_{\sqrt2 R,0}-W_{\sqrt2\eta R,0}\bigr) \le C|\log\eta|\,\wW_{\sqrt2\eta R,0}.
\]

Finally, the triangle inequality yields
\[
\Theta(R)\le 2\Theta(\eta R)+C|\log\eta|\,\wW_{\sqrt2\eta R,0}.
\]
Taking the supremum over $\lambda\in[0,1)$ and $\theta\in e^\perp\cap\mathbb S^{n-1}$ proves the result.
\end{proof}

\subsection{The combined Weiss--vee excess}

We combine the ridgewise Weiss deficit with a Monneau-type vee error and prove decay, logarithmic propagation, and recentering estimates for the resulting excess.

\begin{defn}
\label{defn:monneau-excess}
Let $\wW$ be as in \Cref{lem:gaussian-weiss-deficit-bound}, and define, for $y\in\R^n$, $R>0$, and $e\in\mathbb S^{n-1}$,
\[
\Theta_{R,y}^e:= \sup_{x\in (y+e^\perp)\cap B_R(y)} \wW_{R,x}, \qquad M_{R,y}^e:= R^{-n-2}\int_{B_R(y)}(u-V_{y,e})^2\,dx,
\]
and
\begin{equation}\label{eq:def-fixed-direction-Monneau}
\bbE_y(R;e):=\sqrt{\Theta_{R,y}^e+M_{R,y}^e}, \qquad \bbE_y(R):=\min_{e\in \mathbb S^{n-1}}\bbE_y(R;e).
\end{equation}
We call $\Theta_{R,y}^e$ the \emph{ridgewise Weiss deficit} and $M_{R,y}^e$ the \emph{Monneau-type $L^2$ vee excess}. We call $\bbE_y(R;e)$, and its minimized version $\bbE_y(R)$, the \emph{combined Weiss--vee excess}, or simply the \emph{combined excess}.
\end{defn}

We next record the two consequences used later: control of the transverse energy and decay at scales large compared with a neck radius.
\begin{lem}
\label{lem:monneau-excess-controls-transverse-energy}
There exists a universal $C$ such that
\[
\bE_y(R) \le C \bbE_y(R)
\]
for all $y\in \R^n$ and $R > 0$.
\end{lem}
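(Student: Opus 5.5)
We prove the estimate with the ridgewise Weiss deficit $\Theta^e_{R,y}$ alone; the Monneau term $M^e_{R,y}$ is not needed. The idea is that the homogeneity residual $u-(x-x_\circ)\cdot\nabla u$ depends affinely on the center $x_\circ$. Hence the difference of the residuals at two centers on the ridge $y+e^\perp$ is exactly a tangential derivative of $u$.

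Fix $y$ and $R$, and let $e\in\mathbb S^{n-1}$ attain the minimum defining $\bbE_y(R)$. Let $\tau\in e^\perp\cap\mathbb S^{n-1}$, and consider the two centers
\[
x_1:=y,\qquad x_2:=y+\tfrac R2\tau .
\]
Both lie in $(y+e^\perp)\cap B_R(y)$, so $\wW_{R,x_i}\le\Theta^e_{R,y}$ for $i=1,2$. For every $x$ we have the pointwise identity
\[
\bigl(u-(x-x_1)\cdot\nabla u\bigr)-\bigl(u-(x-x_2)\cdot\nabla u\bigr)=(x_1-x_2)\cdot\nabla u=-\tfrac R2\,\tau\cdot\nabla u .
\]

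Next we use the lower bound in \eqref{eq:gaussian-weiss-deficit-weighted-comparison}. For $x\in B_R(y)$ we have $|x-x_i|\le 2R$, so the kernel satisfies
\[
\frac{1}{|x-x_i|^{n+2}+R^{n+2}}\ge c R^{-n-2}\qquad\text{for } i=1,2 .
\]
It follows that
\[
R^{-n-2}\int_{B_R(y)}\bigl(u-(x-x_i)\cdot\nabla u\bigr)^2\,dx\le C\wW_{R,x_i}\le C\Theta^e_{R,y}.
\]
Combining the identity above with $(a-b)^2\le 2a^2+2b^2$ gives
\[
\frac{R^2}{4}R^{-n-2}\int_{B_R(y)}|\tau\cdot\nabla u|^2\,dx\le C\Theta^e_{R,y}.
\]

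Now let $\tau$ run over an orthonormal basis $\tau_1,\dots,\tau_{n-1}$ of $e^\perp$. Since $|\nabla^e u|^2=\sum_{j=1}^{n-1}(\tau_j\cdot\nabla u)^2$, summing the previous bound over $j$ yields
\[
R^{-n}\int_{B_R(y)}|\nabla^e u|^2\,dx\le C\Theta^e_{R,y}\le C\,\bbE_y(R;e)^2=C\,\bbE_y(R)^2 .
\]
Because $\bE_y(R)$ is a minimum over directions, the direction $e$ is admissible in \eqref{eq:Ez_def_intro}. Taking square roots therefore gives $\bE_y(R)\le C\bbE_y(R)$.

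No step here is a real obstacle; the point to check is that the argument uses only the lower bound in \Cref{lem:gaussian-weiss-deficit-bound}. That lower bound holds for any center, without any closeness to a vee. The remaining requirement is that both centers lie in $B_R(y)$ on the ridge, which the choice of the shift $\tfrac R2\tau$ guarantees.
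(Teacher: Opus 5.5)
Your proof is correct and matches the paper's argument: compare the homogeneity residuals at the two ridge centers $y$ and $y+\frac R2\tau_i$, whose difference is the tangential derivative $\frac R2\,\tau_i\cdot\nabla u$, then sum over an orthonormal basis of $e^\perp$; like the paper, you use only the ridgewise Weiss part of $\bbE$. The only cosmetic difference is that you localize to $B_R(y)$ via the lower bound in \eqref{eq:gaussian-weiss-deficit-weighted-comparison}, whereas the paper uses the Gaussian-window bound \eqref{eq:gaussian-weiss-deficit-gaussian-window} together with the kernel comparison \eqref{eq:monneau-kernel-compare}; both give the same estimate.
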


\begin{proof}
Choose $e\in \mathbb S^{n-1}$ attaining the minimum in \eqref{eq:def-fixed-direction-Monneau}. In particular,
\[
\sup_{x_\circ\in (y+e^\perp)\cap B_R(y)} \wW_{R,x_\circ} \le \bbE_y(R)^2.
\]
and, by \eqref{eq:gaussian-weiss-deficit-gaussian-window}, for every $x_\circ\in (y+e^\perp)\cap B_R(y)$,
\[
\frac{1}{R^2}\int \bigl(u-(x-x_\circ)\cdot\nabla u\bigr)^2 G_R(x-x_\circ)\,dx \le C\wW_{R,x_\circ}\le C\bbE_y(R)^2.
\]

Fix such an $x_\circ$. For $x\in B_R(y)$ we have $|x-y|\le R$ and $|x-x_\circ|\le 2R$, so
\begin{equation}
\label{eq:monneau-kernel-compare} R^{-n}\chi_{B_R(y)} \le C\min\bigl\{G_R(\,\cdot\,-y),G_R(\,\cdot\,-x_\circ)\bigr\}
\end{equation}
for a universal constant $C$.

Since $x_\circ-y\in e^\perp$, we have $(x_\circ-y)\cdot \nabla u=(x_\circ-y)\cdot \nabla^e u.$ By the triangle inequality, applied once at $x_\circ$ and once at $y$, together with \eqref{eq:monneau-kernel-compare},
\begin{equation}\label{eq:monneau-directional-gradient-control}
\frac{1}{R^{n+2}}\int_{B_R(y)} \bigl((x_\circ-y)\cdot\nabla^e u\bigr)^2\,dx \le C\bbE_y(R)^2.
\end{equation}

Choose an orthonormal basis $\tau_1,\ldots,\tau_{n-1}$ of $e^\perp$ and apply \eqref{eq:monneau-directional-gradient-control} with $x_\circ-y=(R/2)\tau_i$. Summing over $i$, we get
\[
\frac1{R^n}\int_{B_R(y)} |\nabla^e u|^2\,dx\le C\bbE_y(R)^2,
\]
as we wanted.
\end{proof}

\begin{lem}
\label{lem:decay-of-monneau-excess}
We have
\[
\bbE_\zz(R) \le \omega_*(\rb(\zz)/R)\quad\text{for all}\quad \zz\in \cZ,\quad R > 0,
\]
for some universal modulus of continuity $\omega_*$.
\end{lem}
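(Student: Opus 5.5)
Fix $\zz\in\cZ$ and $R>0$, and set $t:=\rb(\zz)/R$. Since $\bbE$ is bounded universally, it suffices to show $\bbE_\zz(R)\to0$ as $t\to0$, uniformly in $\zz$ and in the solution, and then define $\omega_*$ as the supremum over all admissible configurations with ratio at most $t$. This supremum is automatically nondecreasing, and after enlarging it satisfies \eqref{eq:w}. The trivial bound
\[
\bbE\le C,
\]
with $C$ universal, follows from $|\nabla u|\le1$ (so $|u-V_{\zz,e}|\le 2R$ in $B_R(\zz)$) and from $0\le\wW\le W_\infty$. This handles large $t$.

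\emph{Step 1: the Monneau term.} By \Cref{lem:blow-down-around-a-neck-center}, for $R\ge M(\eps)\rb(\zz)$ we can pick $e$ with $\|u-V_{\zz,e}\|_{L^\infty(B_{R}(\zz))}\le\eps R$. Then $M^e_{R,\zz}\le C\eps^2$. I will use this same direction $e$ for the ridgewise Weiss deficit.

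\emph{Step 2: the ridgewise Weiss deficit.} We need $\wW_{R,x}\le\omega(\cdot)$ uniformly for $x\in(\zz+e^\perp)\cap B_R(\zz)$. Write $W_{R,x}(u)=W_{1,0}(u_{R,x})$, where $u_{R,x}(y):=u(x+Ry)/R$. Apply the blow-down closeness at the larger scale $LR$: by \Cref{lem:blow-down-around-a-neck-center} with $LR\ge M\rb(\zz)$, there is $e'$ with
\[
\|u-V_{\zz,e'}\|_{L^\infty(B_{LR}(\zz))}\le\eps LR.
\]
Comparing with Step 1 on $B_R(\zz)$ gives $|e-e'|\le C L\eps$ (up to sign). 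Hence $V_{\zz,e'}$ and $V_{x,e}$ differ by at most $CL^2\eps R$ on $B_{LR}(\zz)$; here we use $x-\zz\in e^\perp$, which gives $V_{\zz,e}=V_{x,e}$.

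To pass from uniform closeness to closeness of the energy, I would use the $H^1$ and phase-indicator part of the compactness. Argue by contradiction: take sequences with $t_j\to0$, centers $x_j$, and $\wW_{R_j,x_j}\ge c_0$. The rescalings $u_{R_j,x_j}$ are classical stable solutions, with $0$ not necessarily on the free boundary but within distance $\le C\eps_j$ of it. By \Cref{lem:compactness} they converge strongly in $H^1_{\rm loc}$, with $L^1_{\rm loc}$ convergence of phase indicators, to a limit that equals a unit vee $V_{0,e_\infty}$ on every ball $B_L$. The Gaussian tails are controlled by $|\nabla u|\le1$ and $|u(x+\cdot)|\le C(|x_j-\zz|+|\cdot|)$, exactly as in the proof of \Cref{lem:universal-gaussian-weiss-limit}. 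Therefore $W_{1,0}(u_{R_j,x_j})\to W_{1,0}(V_{0,e_\infty})=W_\infty$, which contradicts $\wW\ge c_0$.

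Alternatively, one can stay quantitative. \Cref{prop:blow-down-of-non-flat-solutions} gives gradient and indicator closeness with modulus $\omega$ centered at $0\in\FB(u)$, and the same proof, recentered at the free-boundary point $\zz$ using the Hessian bound from the threshold radius, should give the analogous estimate at $\zz$ with modulus $\omega(C\rb(\zz)/R)$. Integrating against the Gaussian with the tail cutoff $L$ then gives $|W_{R,x}-W_\infty|\le C_L\omega(C L\,t)+Ce^{-cL^2}$.

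\emph{Main obstacle.} The hardest step is Step 2. The deficit must be controlled by a modulus depending only on $\rb(\zz)/R$, uniformly over all centers $x$ along the ridge and over all solutions. This requires $H^1$ and indicator closeness, not only $L^\infty$ closeness, recentered at an arbitrary neck center. I would obtain it by the compactness-contradiction argument above: normalize the sequence at $\zz_j$, use \Cref{lem:blow-down-around-a-neck-center} to identify the blow-down limit as a vee, and then use the fact that $W_{1,0}$ is continuous under the convergence of \Cref{lem:compactness}. The weights $G_1(\cdot-x)$ with $|x|\le1$ are uniformly dominated, which gives the uniformity in the center.
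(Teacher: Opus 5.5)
Your proposal is correct. The ``alternative'' quantitative route you sketch is exactly the paper's proof, and it is even more direct than you suggest: nothing has to be re-proved at $\zz$.

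Set $M_\star=|B_1|^{1/n}\eta_0^{-1}$, $\rho:=M_\star\rb(\zz)$ and $\bar u(x):=u(\zz+\rho x)/\rho$. The threshold identity gives $\int_{B_{1/M_\star}\cap\{\bar u>0\}}|D^2\bar u|^n=\eta_0^n=|B_{1/M_\star}|$, hence $\|D^2\bar u\|_{L^\infty(B_1\cap\{\bar u>0\})}\ge1$. So \Cref{prop:blow-down-of-non-flat-solutions} applies verbatim to $\bar u$, which is a global classical stable solution with $0\in\FB(\bar u)$.

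Scaling back gives $L^\infty$, $L^2$-gradient and phase-indicator closeness to a vee centered at $\zz$ on $B_{LR}(\zz)$, with error governed by $\omega(\rho/(LR))$. Since $V_{\zz,e}=V_{x,e}$ for every ridge center $x$, the Gaussian-tail argument of \Cref{lem:universal-gaussian-weiss-limit} bounds $\wW_{R,x}$ uniformly along the ridge. The same direction also controls the Monneau term.

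Your main route differs in what it takes as input. It uses only the $L^\infty$ information (through \Cref{lem:blow-down-around-a-neck-center}) and gets the $H^1_{\rm loc}$ and indicator convergence for free from \Cref{lem:compactness} inside a contradiction argument. That works: recentering the sequence at $\zz_j$, with the ridge centers converging in $\overline{B_1}\cap e_\infty^\perp$, avoids the issue that $0\notin\FB$. The cost is that it yields a non-explicit modulus, and you must run the contradiction over all admissible solutions to make $\omega_*$ universal. The paper's route gives $\omega_*$ explicitly from the dimensional modulus $\omega$. The gradient and indicator parts of that modulus were already derived in \Cref{prop:blow-down-of-non-flat-solutions} from the $L^\infty$ bound, via the perimeter estimate and a Caccioppoli-type test.

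One small slip: $0\le\wW\le W_\infty$ is false in general, because $W_{R,x}$ can be negative (indeed $W_{R,x}\to-\infty$ as $R\downarrow0$ for $x\in\{u>0\}$). The universal bound $\bbE\le C$ is still correct. Since $u(\zz)=0$ and $|\nabla u|\le1$, we have $u(y)\le|y-x|+R$ whenever $|x-\zz|\le R$. Hence $R^{-2}D_{R,x}\le C$ and $\wW_{R,x}\le W_\infty+C$.
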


\begin{proof}

Set $M_\star:=|B_1|^{1/n}\eta_0^{-1}$, $\rho:=M_\star\rb(\zz)$, and $\bar u(x):=\frac{u(\zz+\rho x)}{\rho}$. By the definition of $\rb(\zz)$ and scale invariance,
\[
\int_{B_{1/M_\star}\cap\{\bar u>0\}}|D^2\bar u|^n =\eta_0^n=|B_{1/M_\star}|.
\]
Thus $\|D^2\bar u\|_{L^\infty(B_1\cap\{\bar u>0\})}\ge1$. The result now follows from \Cref{prop:blow-down-of-non-flat-solutions}, after scaling back, using its three estimates and the Gaussian-tail argument from the proof of \Cref{lem:universal-gaussian-weiss-limit}.
\end{proof}

We complement the asymptotic decay in \Cref{lem:decay-of-monneau-excess} with a quantitative comparison between two fixed scales.
\begin{prop}
\label{prop:fixed-direction-monneau-doubling}
There exists a universal constant $C$ such that for every $y\in\R^n$, $R>0$, $e\in\mathbb S^{n-1}$, and $0<\eta\le\frac12$,
\begin{equation}\label{eq:fixed-direction-monneau-doubling}
\bbE_y(R;e)\le C|\log\eta|\,\bbE_y(\eta R;e),\qquad \bbE_y(R)\le C|\log\eta|\,\bbE_y(\eta R).
\end{equation}
\end{prop}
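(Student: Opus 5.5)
The plan is to bound the two pieces of $\bbE_y(R;e)^2=\Theta^e_{R,y}+M^e_{R,y}$ separately in terms of $\bbE_y(\eta R;e)^2$. The second inequality in \eqref{eq:fixed-direction-monneau-doubling} then follows by taking $e$ to be a minimizer at scale $\eta R$. All estimates will lose a factor $|\log\eta|^2$ on the squared quantities, that is, $|\log\eta|$ on $\bbE$.

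\emph{Weiss part.} We apply \Cref{lem:shifted-center-gaussian-weiss-almost-monotonicity} directly:
\[
\Theta^e_{R,y}\le 2\Theta^e_{\eta R,y}+C|\log\eta|\,\wW_{\sqrt2\eta R,y}.
\]
Since $\wW_{r,y}$ is nonincreasing in $r$ (\Cref{lem:gaussian-weiss-deficit-bound}), we have $\wW_{\sqrt2\eta R,y}\le\wW_{\eta R,y}$. The center $y$ itself lies in $(y+e^\perp)\cap B_{\eta R}(y)$, so $\wW_{\eta R,y}\le\Theta^e_{\eta R,y}$. This gives $\Theta^e_{R,y}\le C|\log\eta|\,\bbE_y(\eta R;e)^2$.

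\emph{Monneau part.} Set $w:=u-V_{y,e}$. Both $u$ and $V_{y,e}$ are $1$-homogeneous with respect to $y$ only in the limit. The key identity is
\[
\frac{d}{dr}\Big(r^{-n-2}\int_{B_r(y)}w^2\Big)\ \text{is controlled by}\ r^{-n-3}\int_{B_r(y)}\bigl|w-(x-y)\cdot\nabla w\bigr|\,|w|,
\]
and $w-(x-y)\cdot\nabla w=u-(x-y)\cdot\nabla u$ because $V_{y,e}$ is exactly homogeneous about $y$. This is the radial trace estimate the overview refers to as \eqref{eq:boundary-monneau-trace}. Concretely, I would write $r^{-n-2}\int_{B_r}w^2=\int_{B_1}w_r^2$ with $w_r(x)=w(y+rx)/r$. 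Then
\[
\partial_r w_r=r^{-1}\bigl((x\cdot\nabla w)(y+rx)-w(y+rx)\bigr)/r.
\]
Cauchy--Schwarz then gives
\[
\Big|\partial_r\sqrt{M(r)}\Big|\le \Big(r^{-n-4}\int_{B_r(y)}(u-(x-y)\cdot\nabla u)^2\Big)^{1/2},
\qquad M(r):=r^{-n-2}\int_{B_r(y)}w^2 .
\]
Integrating from $\eta R$ to $R$ in $d\rho/\rho$ and applying Cauchy--Schwarz yields
\[
\big(\sqrt{M(R)}-\sqrt{M(\eta R)}\big)^2\le |\log\eta|\int_{\eta R}^R \rho^{-n-2}\int_{B_\rho(y)}(u-(x-y)\cdot\nabla u)^2\,\frac{d\rho}{\rho}.
\]
By Tonelli, the double integral is bounded by
\[
C\int_{\R^n}(u-(x-y)\cdot\nabla u)^2\big(|x-y|^{n+2}+(\eta R)^{n+2}\big)^{-1}dx,
\]
which by \eqref{eq:gaussian-weiss-deficit-weighted-comparison} is at most $C\wW_{\eta R,y}\le C\Theta^e_{\eta R,y}$. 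Hence
\[
M^e_{R,y}\le 2M^e_{\eta R,y}+C|\log\eta|\,\Theta^e_{\eta R,y}.
\]

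Adding the two parts gives $\bbE_y(R;e)^2\le C|\log\eta|\,\bbE_y(\eta R;e)^2$. Since $|\log\eta|\ge\log2$, this is stronger than the claimed bound $C|\log\eta|^2\,\bbE_y(\eta R;e)^2$ on squares. The only genuinely delicate step is the Monneau derivative computation. It requires $w$ to be Lipschitz, which holds because $|\nabla u|\le1$, and it requires that the kernel produced by integrating $\rho^{-n-3}\mathbf 1_{B_\rho}$ over $\rho\ge\eta R$ be dominated by $K_{\eta R}\simeq(|x|^{n+2}+(\eta R)^{n+2})^{-1}$. This domination is immediate: for $|x-y|\ge\eta R$ the integral is about $|x-y|^{-n-2}$, and otherwise it is about $(\eta R)^{-n-2}$.
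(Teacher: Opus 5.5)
Your proof is correct. The Weiss part is exactly the paper's: apply \Cref{lem:shifted-center-gaussian-weiss-almost-monotonicity}, then use the monotonicity of $\wW_{r,y}$ and the fact that $y\in(y+e^\perp)\cap B_{\eta R}(y)$.

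For the vee-excess part you take a shorter route than the paper. The paper propagates $F(s,\omega)=s^{-1}(u-V_{y,e})(y+s\omega)$ along each ray starting from the fixed inner radius $\eta R$. This produces a spherical trace $\mathfrak m^e_{\eta R,y}$, so the paper must first prove the auxiliary bound \eqref{eq:boundary-monneau-trace}, by one-dimensional averaging on $[\rho/2,\rho]$ and the Gaussian window estimate. It must also treat the inner ball and the annulus $B_R\setminus B_{\eta R}$ separately.

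You instead write $M(r)=\|w_r\|^2_{L^2(B_1)}$, so each point $x\in B_1$ is transported from $y+\eta Rx$ to $y+Rx$. The initial datum is then exactly $M^e_{\eta R,y}$ and no trace ever appears. The region $|x-y|<\eta R$ is absorbed automatically, because the kernel satisfies $\int_{\eta R}^{R}\rho^{-n-3}\mathbf 1_{\{|x-y|<\rho\}}\,d\rho\le C\bigl(|x-y|^{n+2}+(\eta R)^{n+2}\bigr)^{-1}$, and the lower bound in \eqref{eq:gaussian-weiss-deficit-weighted-comparison} then applies.

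Both routes give $\bbE_y(R;e)^2\le C|\log\eta|\,\bbE_y(\eta R;e)^2$. The trace estimate is used nowhere else in the paper, so your version loses nothing. Note, however, that contrary to your remark, your argument does not use \eqref{eq:boundary-monneau-trace} at all; your derivative identity is a different object.

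Some cosmetic fixes:
\begin{itemize}
\item The correct formula is $\partial_r w_r(x)=r^{-2}\bigl((z-y)\cdot\nabla w(z)-w(z)\bigr)$ at $z=y+rx$. Your display has $x\cdot\nabla w$ where $rx\cdot\nabla w$ is needed, although the bound you derive from it is right.
\item To avoid dividing by $\sqrt{M}$ where $M$ may vanish, write instead $\bigl|\|w_R\|_{L^2(B_1)}-\|w_{\eta R}\|_{L^2(B_1)}\bigr|\le\int_{\eta R}^{R}\|\partial_\rho w_\rho\|_{L^2(B_1)}\,d\rho$. This is legitimate because $w$ is Lipschitz.
\item $V_{y,e}$ is exactly $1$-homogeneous about $y$, not only in the limit. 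That exact homogeneity is precisely what gives $w-(x-y)\cdot\nabla w=u-(x-y)\cdot\nabla u$.
\end{itemize}
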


\begin{proof}
For the sake of readability, we write
\[
\Theta_{R}:=\Theta_{R,y}^e,\qquad M_R:=M_{R,y}^e
\]
throughout the proof. By \Cref{lem:shifted-center-gaussian-weiss-almost-monotonicity},
\[
\Theta_R\le 2\Theta_{\eta R} +C|\log\eta|\,\wW_{\sqrt2\eta R,y}.
\]
Since $y\in (y+e^\perp)\cap B_{\eta R}(y)$, the monotonicity conclusion in \Cref{lem:gaussian-weiss-deficit-bound} gives $\wW_{\sqrt2\eta R,y}\le \wW_{\eta R,y}\le \Theta_{\eta R},$ and
\begin{equation}\label{eq:theta-scale-propagation}
\Theta_R\le C|\log\eta|\,\Theta_{\eta R}.
\end{equation}

We now claim that
\begin{equation}\label{eq:boundary-monneau-trace}
\mathfrak m_{\rho,y}^e:= \rho^{-n-1}\int_{\partial B_\rho(y)}(u-V_{y,e})^2\,d\cH^{n-1}\le C\bigl(M_{\rho}+\Theta_{\rho}\bigr).
\end{equation}

By translation assume $y=0$, and put $f=u-V_{0,e}$. For $\omega\in\mathbb S^{n-1}$, define
\[
F(s,\omega):=s^{-1}f(s\omega).
\]
Since $V_{0,e}$ is $1$-homogeneous,
\[
\partial_sF(s,\omega) =s^{-2}\bigl(s\,\partial_su(s\omega)-u(s\omega)\bigr).
\]
A one-dimensional averaging estimate on $[\rho/2,\rho]$ gives
\[
F(\rho,\omega)^2 \le C\rho^{-1}\int_{\rho/2}^{\rho}F(s,\omega)^2\,ds +C\rho\int_{\rho/2}^{\rho} \frac{\bigl(s\,\partial_su(s\omega)-u(s\omega)\bigr)^2}{s^4}\,ds.
\]
After integrating in $\omega$, the first term is bounded by $CM_{\rho,0}^e$. The second term is bounded by
\[
C\rho^{-n-2}\int_{B_\rho} \bigl(u-x\cdot\nabla u\bigr)^2\,dx.
\]
On $B_\rho$, the Gaussian kernel $G_\rho$ is bounded below by $c\rho^{-n}$. Therefore \eqref{eq:gaussian-weiss-deficit-gaussian-window} gives \eqref{eq:boundary-monneau-trace}:
\[
\rho^{-n-2}\int_{B_\rho}\bigl(u-x\cdot\nabla u\bigr)^2\,dx \le C\wW_{\rho,0} \le C\Theta_{\rho,0}^e.
\]

We now propagate the vee-excess term directly. For $s\in[\eta R,R]$,
\[
F(s,\omega)^2 \le 2F(\eta R,\omega)^2 +2|\log\eta|\int_{\eta R}^R \left(\rho\partial_\rho F(\rho,\omega)\right)^2\frac{d\rho}{\rho}.
\]
Multiplying by $s^{n+1}$, integrating in $s$ and $\omega$, and adding the inner ball $B_{\eta R}$, gives
\[
M_R \le C\eta^{n+2}M_{\eta R} +C\mathfrak m_{\eta R,0}^e +C|\log\eta| \int_{B_R\setminus B_{\eta R}} \frac{(u-x\cdot\nabla u)^2}{|x|^{n+2}}\,dx .
\]
The last integral is controlled by the centered Gaussian deficit. Indeed, \eqref{eq:gaussian-weiss-deficit-weighted-comparison} and $|x|\ge\eta R$ on the annulus give
\[
\int_{B_R\setminus B_{\eta R}} \frac{(u-x\cdot\nabla u)^2}{|x|^{n+2}}\,dx \le C\int_{\R^n} \frac{(u-x\cdot\nabla u)^2}{|x|^{n+2}+(\eta R)^{n+2}}\,dx \le C\wW_{\eta R,0} \le C\Theta_{\eta R}.
\]
Using \eqref{eq:boundary-monneau-trace} at the scale $\eta R$, and $|\log\eta|\ge\log2$, we obtain
\begin{equation}\label{eq:bulk-scale-propagation}
M_R\le C|\log\eta|\bigl(\Theta_{\eta R}+M_{\eta R}\bigr).
\end{equation}

Combining \eqref{eq:theta-scale-propagation} and \eqref{eq:bulk-scale-propagation}, we get the desired bound.
\end{proof}

Because the preferred center may change with the scale, we also need a recentering estimate:
\begin{lem}
\label{lem:large-scale-monneau-recentering}
Let $x\in\FB(u)$, $y\in\R^n$, $e\in\mathbb S^{n-1}$, and $R>0$. Assume $B_{8R}(x)\subset B_{32R}(y).$ Then
\begin{equation}\label{eq:large-scale-monneau-recentering}
\bbE_x(8R)\le C \bbE_y(32R;e),
\end{equation}
for some $C$ universal.
\end{lem}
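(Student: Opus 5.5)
The plan is to bound $\bbE_x(8R)$ by $\bbE_x(8R;e)$, keeping the same direction $e$, and to control each of the two terms in $\bbE_x(8R;e)^2$ by $\bbE_y(32R;e)^2$. Write $\eps:=\bbE_y(32R;e)$ and $s:=e\cdot(x-y)$. The only new piece of information is that $u(x)=0$, because $x\in\FB(u)$. We use it to show that the ridge of $V_{y,e}$ passes within distance $C\eps R$ of $x$:
\[
|s|\le C\eps R.
\]
Everything else is a shift of centers by the vector $se$.

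\emph{Step 1 (locating the ridge).} By Cauchy--Schwarz,
\[
\frac1{32R}\fint_{B_{32R}(y)}|u-V_{y,e}|\le C\sqrt{M^e_{32R,y}}\le C\eps.
\]
The second form of \Cref{lem:one-sided-lower-bound} then gives
\[
u+C\eps R\ge V_{y,e}\qquad\text{in } B_{24R}(y).
\]
The hypothesis $B_{8R}(x)\subset B_{32R}(y)$ gives $|x-y|\le 24R$, which places $x$ on the closure of this ball. The inequality persists there by continuity, or one may apply the lemma at a slightly larger radius at the cost of a constant. Evaluating at $x$, where $u(x)=0$, yields $|s|=V_{y,e}(x)\le C\eps R$. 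We may also assume $\eps\le 1$, since otherwise the statement is trivial: $|\nabla u|\le1$ and $\wW\le W_\infty$ bound $\bbE_x(8R)$ universally.

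\emph{Step 2 (vee term).} On $B_{8R}(x)\subset B_{32R}(y)$ we have
\[
|u-V_{x,e}|\le|u-V_{y,e}|+|s|.
\]
Integrating gives
\[
M^e_{8R,x}\le C\,M^e_{32R,y}+C(s/R)^2\le C\eps^2.
\]

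\emph{Step 3 (ridgewise Weiss term).} This step is the main point. Take any $x'\in(x+e^\perp)\cap B_{8R}(x)$ and let $x'':=x'-se$ be its projection onto $y+e^\perp$. Then $x''\in y+e^\perp$ and $|x''-y|\le|x'-y|\le 32R$, so $x''$ is an admissible center for $\Theta^e_{32R,y}$, again up to the harmless boundary issue. Use the kernel representation \eqref{eq:gaussian-weiss-deficit-kernel-representation} together with the two-sided comparison \eqref{eq:gaussian-weiss-deficit-weighted-comparison}. Since $|x'-x''|\le CR$,
\[
K_{8R}(z-x')\simeq\frac{1}{|z-x'|^{n+2}+(8R)^{n+2}}\le \frac{C}{|z-x''|^{n+2}+(32R)^{n+2}}.
\]
Moreover
\[
u-(z-x')\cdot\nabla u=\bigl(u-(z-x'')\cdot\nabla u\bigr)+s\,\partial_e u.
\]
Hence
\[
\wW_{8R,x'}\le C\,\wW_{32R,x''}+C s^2\int\frac{|\nabla u|^2}{|z-x'|^{n+2}+R^{n+2}}\,dz\le C\Theta^e_{32R,y}+C\frac{s^2}{R^2}\le C\eps^2,
\]
using $|\nabla u|\le1$ and Step 1. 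Taking the supremum over $x'$ gives $\Theta^e_{8R,x}\le C\eps^2$.

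Combining Steps 2 and 3 gives $\bbE_x(8R)\le\bbE_x(8R;e)\le C\eps$, which is \eqref{eq:large-scale-monneau-recentering}. The step I expect to be most delicate is Step 1. A naive pointwise bound from the $L^2$ excess would only give $|s|\lesssim R\eps^{2/(n+2)}$, which is too weak. Linear dependence on $\eps$ requires the one-sided trapping of \Cref{lem:one-sided-lower-bound}, combined with $u(x)=0$. Once Step 1 holds, the shift error $s\,\partial_e u$ in Step 3 is of exactly the allowed size $\eps R$.
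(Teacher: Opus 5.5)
Your proposal is correct and follows essentially the same route as the paper's proof: the one-sided trapping of \Cref{lem:one-sided-lower-bound}, combined with $u(x)=0$, places the ridge within $C\eps R$ of $x$. The vee term is then handled by replacing $V_{y,e}$ with $V_{x,e}$, and the ridgewise Weiss term by projecting each center onto $y+e^\perp$ and absorbing the shift error $s\,\partial_e u$ through the two-sided kernel comparison \eqref{eq:gaussian-weiss-deficit-weighted-comparison}. One minor slip: the aside $\wW\le W_\infty$ is false in general, since the Gaussian Weiss energy can be negative at centers in the positive phase; the universal bound on $\bbE_x(8R)$ in the large-$\eps$ case instead follows, as in the paper, from $u(x)=0$ and $|\nabla u|\le1$.
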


\begin{proof}
We know $\bbE_x(8R)\le C$ because $|\nabla u|\le 1$ and $u(x) = 0$. Thus, we can always assume $\bbE_y(32R;e)\le\delta_0$ for some $\delta_0 > 0$ small universal. After translating and scaling, assume $R=1$, $y=0$, and let $\delta=\bbE_0(32;e).$ All constants below are universal. Since $M_{32,0}^e\le\delta^2$, H\"older's inequality gives the $L^1$ hypothesis in \Cref{lem:one-sided-lower-bound} at scale $32$, with right-hand side $C\delta$. Thus, after decreasing $\delta_0$,
\begin{equation}\label{eq:large-scale-recentering-slab}
u\ge V_{0,e}-C\delta \qquad\text{in }B_{24}.
\end{equation}
Because $x\in\FB(u)$ and $B_8(x)\subset B_{32}$, this implies $|e\cdot x|\le C\delta .$ Consequently
\begin{equation}\label{eq:large-scale-recentering-vee-shift}
|V_{x,e}-V_{0,e}|\le C\delta \qquad\text{in }\mathbb R^n .
\end{equation}
Using $B_8(x)\subset B_{32}$, \Cref{defn:monneau-excess}, and \eqref{eq:large-scale-recentering-vee-shift}, we obtain
\begin{equation}\label{eq:large-scale-recentering-bulk}
M_{8,x}^e \le C M_{32,0}^e+C\delta^2 \le C\delta^2 .
\end{equation}

We next compare the ridgewise Weiss deficits. Let
\[
s\in(x+e^\perp)\cap B_8(x),\qquad s':=s-(e\cdot s)e .
\]
Then $s'\in e^\perp$, $|s-s'|\le C\delta$, and $s'\in B_{32}\cap e^\perp$. Hence, by \Cref{defn:monneau-excess},
\[
\wW_{32,s'}\le \delta^2 .
\]
For every $z\in\R^n$, using $|\nabla u|\le1$ from \eqref{eq:assump_glob_u},
\[
|u(z)-(z-s)\cdot\nabla u(z)|^2 =|u(z)-(z-s')\cdot\nabla u(z)+(s-s')\cdot\nabla u(z)|^2 \le 2\bigl|u(z)-(z-s')\cdot\nabla u(z)\bigr|^2+C\delta^2 .
\]
The weighted comparison \eqref{eq:gaussian-weiss-deficit-weighted-comparison} gives
\[
\wW_{8,s} \le C\int_{\R^n} \frac{\bigl(u(z)-(z-s)\cdot\nabla u(z)\bigr)^2} {|z-s|^{n+2}+8^{n+2}}\,dz .
\]
Since $|s-s'|\le C\delta\le1$, the triangle inequality and the larger scale give
\[
|z-s|^{n+2}+8^{n+2} \ge c\bigl(|z-s'|^{n+2}+32^{n+2}\bigr).
\]
The extra $C\delta^2$ term contributes at most
\[
C\delta^2\int_{\R^n} \frac{dz}{|z-s|^{n+2}+8^{n+2}}\le C\delta^2 .
\]
Using \eqref{eq:gaussian-weiss-deficit-weighted-comparison} again at scale $32$ and center $s'$, we get
\[
\wW_{8,s}\le C\wW_{32,s'}+C\delta^2\le C\delta^2 .
\]
Taking the supremum over $s\in(x+e^\perp)\cap B_8(x)$ together with \eqref{eq:large-scale-recentering-bulk}, this yields the desired result.
\end{proof}

\section{Selection of power, center, and scale}
\label{sec:neck-radii-from-symmetric-excess}

\subsection{Initial center and scale}

We now set up the contradiction argument that will yield the desired classification result. From this point on, we work in dimensions $3\le n < \nAC$; the proof of the main theorem will eventually specialize to $n=4$ in the last section, \Cref{sec:final-contradiction-argument}. Throughout the rest of the paper, fix an exponent
\begin{equation}
\label{eq:beta_circ_def} \beta_\circ\in\left(0,1/2\right).
\end{equation}
Since $\beta_\circ$ remains fixed, dependence on it is suppressed in the notation and in subsequent dependency lists. The reader may fix $\beta_\circ = 1/4$, but we decide to track it to better identify the numerology in the exponents.

For $\zz,\zz'\in\cZ$ and $R\ge\max\{\rb(\zz),\rb(\zz')\}$, with $\zz'\in B_{3R/2}(\zz)$, we shall use the estimate
\begin{equation}\label{eq:rbeps}
\left(\frac{\rb(\zz')}{R}\right)^{\frac{n-2}{2}} \le C \varrho_\zz(4R)\le C \bE_\zz(16R)\le C\bbE_\zz(16R).
\end{equation}
The first inequality follows from \eqref{eq:varrho_def_intro}, the second from \Cref{prop:symmetric-excess-controls-neck-radii} at scale $4R$, and the third from \Cref{lem:monneau-excess-controls-transverse-energy}.

We shall also use the uniform bounds
\begin{equation}\label{eq:selection-uniform-bounds}
\bbE_\zz(8R)\le C \qquad\text{and}\qquad \varrho_\zz(2R)\le C \qquad\text{for}\quad R>0,\ \zz\in\cZ_R.
\end{equation}
The first follows from $|\nabla u|\le1$ and $u(\zz)=0$; the second follows from \Cref{prop:symmetric-excess-controls-neck-radii} at scale $2R$, \Cref{lem:monneau-excess-controls-transverse-energy}, and the first bound.

\begin{defn}[Selection functionals]
\label{defn:selection-functionals}
For every $\alpha>0$, define
\begin{equation}
\label{eq:selection-functional} F_u^\alpha(R) := \sup_{\zz\in \cZ_R} \frac{ \bbE_\zz(8R)}{ \varrho_\zz(2R)^{\alpha} }.
\end{equation}
Define also
\[
\alpha_\star:= \inf\left\{\alpha>0: \limsup_{R\to\infty}F_u^\alpha(R)=+\infty\right\}.
\]
\end{defn}
From
\[
\varrho_\zz(2R)\ge \left(\frac{\rb(\zz)}{2R}\right)^{\frac{n-2}{2}} \ge \left(\frac{\rmin}{2R}\right)^{\frac{n-2}{2}},
\]
and \eqref{eq:selection-uniform-bounds}, we have that $F_u^\alpha(R)$ is finite for every $R>0$.

\begin{remark}[The critical exponent is at most one]
\label{rem:critical-selection-exponent-at-most-one}
The exponent $\alpha_\star$ satisfies $\alpha_\star\le1$. Indeed, fix $\zz_0\in\cZ$. For every $\alpha>1$, \eqref{eq:rbeps} gives $\varrho_{\zz_0}(2R)\le C\bbE_{\zz_0}(8R)$. Hence, using \Cref{lem:decay-of-monneau-excess},
\[
F_u^\alpha(R)\ge c\,\bbE_{\zz_0}(8R)^{1-\alpha}\to+\infty \qquad\text{as }R\to\infty,
\]
along the large scales for which $\zz_0\in\cZ_R$.
\end{remark}

Let us define alpha $\alpha$ to be the following exponent:
\begin{equation}\label{eq:single-selection-alpha-choice}
\alpha:=
\begin{cases}
\dfrac{1+\max\{\alpha_\star,\frac12\}}2, &\text{if }\alpha_\star<1,\\[2mm]
1+\dfrac1{100}, &\text{if }\alpha_\star=1.
\end{cases}
\end{equation}
In either case,
\begin{equation}\label{eq:single-selection-alpha-range}
\frac34\le\alpha,\qquad \alpha_\star<\alpha\le1+\frac1{100}.
\end{equation}

The next lemma provides suitable centers and scales where we can start our argument:

\begin{lem}[Selection of initial center and scale]
\label{lem:selection-of-initial-center-and-scale}
There exist sequences $R_k>0$ and $\zz_k\in\cZ_{R_k}$, with $R_k\to\infty$, such that
\begin{equation}
\label{eq:eps_k} \left(\frac{\rb(\zz_k)}{R_k}\right)^{\frac{n-2}{2}} \le C\varrho_{\zz_k}(2R_k)\to0, \qquad \eps_k:=\bbE_{\zz_k}(8R_k)\to0,
\end{equation}
and
\begin{equation}\label{eq:selection-functional-maximality}
\bbE_\zz(8R) \le 2\frac{\varrho_{\zz}(2R)^{\alpha}} {\varrho_{\zz_k}(2R_k)^{\alpha}}\,\eps_k \qquad\text{for all } \zz\in\cZ_R,\ R\le R_k.
\end{equation}
\end{lem}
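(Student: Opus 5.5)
The plan is to realize $\alpha_\star<\alpha$ as the statement that $F_u^\alpha$ is unbounded at large scales. We then pick $R_k$ and $\zz_k$ to nearly realize the running supremum of $F_u^\alpha$. Finally, we read off the smallness of $\varrho$ and $\eps_k$ from the unboundedness of the ratio together with \Cref{lem:decay-of-monneau-excess}.

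\emph{Step 1: monotonicity in the exponent.} By \eqref{eq:selection-uniform-bounds}, $\varrho_\zz(2R)\le C$ for all $\zz\in\cZ_R$. Hence, for $\alpha'<\alpha$,
\[
\frac{\bbE_\zz(8R)}{\varrho_\zz(2R)^\alpha}=\frac{\bbE_\zz(8R)}{\varrho_\zz(2R)^{\alpha'}}\,\varrho_\zz(2R)^{-(\alpha-\alpha')}\ge C^{-(\alpha-\alpha')}\frac{\bbE_\zz(8R)}{\varrho_\zz(2R)^{\alpha'}} .
\]
It follows that $F_u^\alpha\ge c\,F_u^{\alpha'}$. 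So the set of exponents with $\limsup_{R\to\infty}F_u^{\alpha}(R)=\infty$ is upward closed. Since $\alpha>\alpha_\star$ by \eqref{eq:single-selection-alpha-range}, this gives $\limsup_{R\to\infty}F_u^\alpha(R)=+\infty$.

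\emph{Step 2: local boundedness.} Using $\varrho_\zz(2R)\ge(\rmin/2R)^{(n-2)/2}$ (the center $\zz$ itself contributes to the sum) and $\bbE\le C$, we get
\[
F_u^\alpha(R)\le C R^{\alpha(n-2)/2}.
\]
Hence $S(R):=\sup_{R'\le R}F_u^\alpha(R')$ is finite for every $R$, nondecreasing, and tends to $\infty$. Also $F_u^\alpha$ is bounded on every bounded range of $R$.

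\emph{Step 3: near-maximal selection.} Given $k$, take $\bar R$ with $S(\bar R)>2^{k+1}$. By the definition of $S$ there is $R_k\le\bar R$ with
\[
F_u^\alpha(R_k)\ge\tfrac34 S(\bar R)\ge\tfrac34 S(R_k), \qquad F_u^\alpha(R_k)>2^k .
\]
Step 2 then forces $R_k\to\infty$. Next choose $\zz_k\in\cZ_{R_k}$ with
\[
\frac{\bbE_{\zz_k}(8R_k)}{\varrho_{\zz_k}(2R_k)^\alpha}\ge\tfrac23F_u^\alpha(R_k)\ge\tfrac12S(R_k).
\]
For any $R\le R_k$ and $\zz\in\cZ_R$ we then have
\[
\frac{\bbE_\zz(8R)}{\varrho_\zz(2R)^\alpha}\le S(R_k)\le 2\frac{\eps_k}{\varrho_{\zz_k}(2R_k)^\alpha},
\]
which is \eqref{eq:selection-functional-maximality}.

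\emph{Step 4: smallness.} The selected ratio is at least $\tfrac12 S(R_k)\to\infty$, while $\eps_k\le C$. Hence $\varrho_{\zz_k}(2R_k)\to0$. The first inequality in \eqref{eq:eps_k} is immediate from \eqref{eq:varrho_def_intro}, since $\zz_k$ appears in its own sum: $\rb(\zz_k)\le R_k$ and $\zz_k\in B_{R_k}(\zz_k)$. Thus $\rb(\zz_k)/R_k\to0$. Then \Cref{lem:decay-of-monneau-excess} gives
\[
\eps_k=\bbE_{\zz_k}(8R_k)\le\omega_*\bigl(\rb(\zz_k)/(8R_k)\bigr)\to0 .
\]

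The only delicate point is Step 3: choosing $R_k$ so that the running supremum is attained up to the factor $2$ at the current scale, rather than at some smaller scale. Working with $S$ evaluated at the larger radius $\bar R$, as above, handles this.
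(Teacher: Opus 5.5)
Your proof is correct and follows the paper's argument. The steps match one by one:
\begin{itemize}
\item the set of divergent exponents is upward closed because $\varrho_\zz(2R)\le C$;
\item the running supremum $\sup_{R'\le R}F_u^\alpha(R')$ is finite, by the self-contribution bound $\varrho_\zz(2R)\ge(\rmin/2R)^{(n-2)/2}$;
\item $(R_k,\zz_k)$ are chosen to be near-maximal;
\item this forces $\varrho_{\zz_k}(2R_k)\to0$, and \Cref{lem:decay-of-monneau-excess} then gives $\eps_k\to0$.
\end{itemize}
Your Step~3 is a useful addition. You pick $R_k\le\bar R$ with $F_u^\alpha(R_k)\ge\frac34 S(\bar R)$ and use the growth bound $F_u^\alpha(R)\le CR^{\alpha(n-2)/2}$ to force $R_k\to\infty$. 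This proves the existence of the near-record scales, which the paper only asserts.
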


\begin{proof}
With the section exponent $\alpha>\alpha_\star$ fixed above, consider the nondecreasing envelope of $F_u^\alpha$, namely
\[
\widetilde F_u(R) : = \sup_{ R'\le R} F^\alpha_u(R'),
\]
which is finite for each $R$. Indeed, if $R'\le R$ and $\zz\in\cZ_{R'}$, then the summand corresponding to $\zz$ gives
\[
\varrho_\zz(2R') \ge \left(\frac{\rmin}{2R}\right)^{\frac{n-2}{2}},
\]
while the numerator is uniformly bounded by \eqref{eq:selection-uniform-bounds}.

Set
\[
S:=\left\{\beta>0:\limsup_{R\to\infty}F_u^\beta(R)=+\infty\right\}.
\]
This set is upward closed. Indeed, if $\alpha>\beta$, then \eqref{eq:selection-uniform-bounds} gives
\[
F_u^\alpha(R)\ge C^{\beta-\alpha}F_u^\beta(R).
\]
Since $\alpha>\alpha_\star=\inf S$, there is some $\beta\in S$ with $\beta<\alpha$, and hence $\alpha\in S$. Therefore $\widetilde F_u(R)\to\infty$ as $R\to\infty$. Choose a monotone increasing sequence $R_k\to\infty$ such that, for each $k$, the value $F_u^\alpha(R_k)$ almost realizes the running maximum $\widetilde F_u(R_k)$. Thus there exists $\zz_k\in \cZ_{R_k}$ satisfying
\begin{equation}
\label{eq:Ftildeineq} \tfrac 1 2 \widetilde F_u(R_k) \le \frac{\bbE_{\zz_k}(8R_k)}{\varrho_{\zz_k}(2R_k)^{ \alpha}}\le \widetilde F_u(R_k).
\end{equation}

By \eqref{eq:selection-uniform-bounds}, the numerator in \eqref{eq:Ftildeineq} is uniformly bounded. Thus the divergence of $\widetilde F_u(R_k)$ can only come from the denominator, so $\varrho_{\zz_k}(2R_k)\to0$. Then the numerator must also converge to zero, because by \Cref{lem:decay-of-monneau-excess} and \eqref{eq:varrho_def_intro},
\[
\bbE_{\zz_k}(8R_k) \le \omega_*\!\left(\frac{\rb(\zz_k)}{8R_k}\right) \le \omega_*\!\left(C\varrho_{\zz_k}(2R_k)^{\frac{2}{n-2}}\right)\to0.
\]
This proves \eqref{eq:eps_k}.

Finally, by the definition of $\widetilde F_u$,
\[
\frac{\bbE_\zz(8R)}{ \varrho_\zz(2R)^{ \alpha} } \le \widetilde F_u(R_k) \le 2 \frac{\bbE_{\zz_k}(8R_k)}{\varrho_{\zz_k}(2R_k)^{ \alpha} } = \frac{2\eps_k}{\varrho_{\zz_k}(2R_k)^{\alpha} } \qquad \mbox{for all $\zz\in \cZ_R$, $R\le R_k$,}
\]
which is exactly \eqref{eq:selection-functional-maximality}.
\end{proof}

\subsection{Refined selection and logarithmic lower bounds}

We refine the selected center and scale to control smaller necks while retaining a logarithmic lower bound for the combined excess.

Given $\zeta\in(0,1)$ and a ball $B_R(\zz)$, define
\begin{equation}
\label{eq:Ndef} \bN\big(\zeta,B_R(\zz)\big) :=(\zeta R)^{-n} \left|\bigcup_{\zz'\in\mathcal A^\zeta_{\zz,R}} B_{\zeta R}(\zz')\right|, \qquad \mathcal A^\zeta_{\zz,R}:=\cZ_{\zeta R}\cap B_R(\zz).
\end{equation}
This is the normalized covering size of $\cZ_{\zeta R}\cap B_R(\zz)$ at scale $\zeta R$.

\begin{lem}[Refined center and scale selection]
\label{lem:refined-center-and-scale-selection}
Let $R_k$ and $\zz_k$ be the sequences given by \Cref{lem:selection-of-initial-center-and-scale}. There exists $\tilde \zeta_k\in(0,1]$ such that, setting
\[
\Rk:=\tilde\zeta_k R_k, \qquad \epk:=\tilde\zeta_k^{\alpha\beta_\circ}\eps_k,
\]
there is a center $\zk\in\cZ_{\Rk}\cap B_{R_k}(\zz_k)$ such that $\Rk\to\infty$, $\epk\to0$, and the following properties hold:
\begin{align}\label{eq:selected-monneau-at-refined-scale}
\bbE_{\zk}(8\Rk)&\le C\epk, \\ \label{strongalternativeA}
\bN\big(\zeta, B_{\Rk}(\zk)\big) &\le C \zeta^{2-n-2\beta_\circ} \qquad\text{for all } \zeta\in(0,1), \\ \label{eq:rbepk}
\rb(\zz)&\le C\Rk\epk^{\frac{2}{n-2}} \qquad\text{for all } \zz\in\cZ\cap B_{3\Rk/2}(\zk), \\ \label{eq:countingeta}
\#\big((\cZ_{\eta\Rk}\setminus\cZ_{\eta\Rk/2})\cap B_{\Rk}(\zk)\big) \eta^{n-2} &\le C\epk^2 \qquad\qquad\text{for all } \eta>0,
\end{align}
and, for every $\zeta\in(0,\frac12)$ and every $\cF\subset\cZ_{\zeta\Rk}\cap B_{\Rk}(\zk)$ such that the balls $\{B_{\zeta\Rk}(\zz')\}_{\zz'\in\cF}$ are pairwise disjoint,
\begin{equation}\label{keyeqn-alpha}
\sum_{\zz'\in \cF} \bbE_{\zz'}(8\zeta\Rk)^{2/\alpha} \le C_\alpha \zeta^{2-n}\epk^{2/\alpha}.
\end{equation}
Here $C$ is universal, $C_\alpha$ depends only on $\alpha$, and $\bN$ is given by \eqref{eq:Ndef}.
\end{lem}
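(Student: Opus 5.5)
The plan is a stopping-time refinement of the pair $(R_k,\zz_k)$: shrink the scale until a covering condition like \eqref{strongalternativeA} holds, and pay for each shrinking step through the maximality property \eqref{eq:selection-functional-maximality}. First fix a large universal constant $C_1$. Start from $\zeta^{(0)}=1$ and $\zz^{(0)}=\zz_k$. At a step with current ball $B_{\zeta^{(j)}R_k}(\zz^{(j)})$, ask whether \eqref{strongalternativeA} holds with constant $C_1$ for every $\zeta\in(0,1)$. If it fails for some $\zeta$, then $\bN>C_1\zeta^{2-n-2\beta_\circ}$, so the set $\mathcal A^\zeta$ contains many separated necks. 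A Vitali selection then gives at least $cC_1\zeta^{-n+2-2\beta_\circ}$ disjoint balls $B_{\zeta\zeta^{(j)}R_k}(\zz')$.

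The key step is to compare the charges of these subballs with the charge of the parent. The charge \eqref{eq:varrho_def_intro} is additive: the parent charge $\varrho_{\zz^{(j)}}(2\zeta^{(j)}R_k)^2$ dominates $\zeta^{n-2}$ times the sum of the children's charges at scale $2\zeta\zeta^{(j)}R_k$, up to a constant. By pigeonhole, some child $\zz'$ therefore satisfies
\[
\varrho_{\zz'}\bigl(2\zeta\zeta^{(j)}R_k\bigr)^2\le C\,\zeta^{2\beta_\circ}C_1^{-1}\,\varrho_{\zz^{(j)}}\bigl(2\zeta^{(j)}R_k\bigr)^2 .
\]
Move to $\zz^{(j+1)}=\zz'$ and $\zeta^{(j+1)}=\zeta\zeta^{(j)}$. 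Iterating, the charge at step $j$ is at most $\tilde\zeta^{\beta_\circ}$ times $\varrho_{\zz_k}(2R_k)$, since $C_1^{-1}$ absorbs the constants. Then \eqref{eq:selection-functional-maximality} gives the excess bound
\[
\bbE\le2\,\tilde\zeta^{\alpha\beta_\circ}\eps_k=2\epk,
\]
which is \eqref{eq:selected-monneau-at-refined-scale}. The iteration must terminate, because the charge is bounded below by $(\rmin/R)^{(n-2)/2}$. I should also check that the centers stay in $B_{R_k}(\zz_k)$; they do, since the radii decrease geometrically. That $\Rk\to\infty$ and $\epk\to0$ should follow: the charge decreases at each step but stays at least $(\rb/\Rk)^{(n-2)/2}$, and the charge at the start tends to zero.

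The remaining bounds then come as follows. The bound \eqref{eq:rbepk} comes from \eqref{eq:rbeps} at the selected center, combined with \eqref{eq:selected-monneau-at-refined-scale} and a recentering via \Cref{lem:large-scale-monneau-recentering}. The counting bound \eqref{eq:countingeta} is the charge bound \eqref{eq:varrho_def_intro} restricted to one dyadic class, controlled by $\varrho\le C\bbE\le C\epk$ through \Cref{prop:symmetric-excess-controls-neck-radii} and \Cref{lem:monneau-excess-controls-transverse-energy}. For \eqref{keyeqn-alpha}, apply \eqref{eq:selection-functional-maximality} at each $\zz'\in\cF$ with $R=\zeta\Rk\le R_k$:
\[
\bbE_{\zz'}(8\zeta\Rk)^{2/\alpha}\le C\,\varrho_{\zz'}(2\zeta\Rk)^2\,\eps_k^{2/\alpha}\,\varrho_{\zz_k}(2R_k)^{-2}.
\]
Since the balls are disjoint, summing the children's charges gives at most $C\zeta^{2-n}\varrho_{\zk}(C\Rk)^2$ by additivity. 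The upper bound for $\varrho_{\zk}$ from the stopping construction, which is at most $\tilde\zeta^{\beta_\circ}\varrho_{\zz_k}(2R_k)$ up to a constant, then converts $\eps_k^{2/\alpha}\tilde\zeta^{2\beta_\circ}$ into $\epk^{2/\alpha}$, as required.

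I expect the main obstacle to be the bookkeeping of the enlarged scales and centers: charges at $2R$ against $C\Rk$, and the children's balls sitting in a slightly larger ball than the parent. The charge is only monotone under inclusion, so I would build in fixed universal enlargement factors and absorb them into $C_1$. A second delicate point is making the pigeonhole gain $\zeta^{2\beta_\circ}$ match the exponent $\alpha\beta_\circ$ used in $\epk$; this works because the charge appears squared while $\bbE$ sees $\varrho^\alpha$.
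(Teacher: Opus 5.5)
Your stopping-time descent is a workable alternative to the paper's selection. However, its central step is false as written, and the remedy you propose (absorbing enlargement factors into $C_1$) does not repair it.

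Write $R_j:=\zeta^{(j)}R_k$. The child charge $\varrho_{\zz'}(2\zeta R_j)$ sums over $\cZ_{\zeta R_j}\cap B_{\zeta R_j}(\zz')$ with $\zz'\in B_{R_j}(\zz^{(j)})$. This set is not contained in the index set $\cZ_{R_j}\cap B_{R_j}(\zz^{(j)})$ of the parent charge $\varrho_{\zz^{(j)}}(2R_j)$: children near $\partial B_{R_j}(\zz^{(j)})$ count necks the parent does not see, so the claimed domination fails. Comparing with the parent at an enlarged scale, e.g.\ $\varrho_{\zz^{(j)}}(4R_j)$, does not help. Your induction only controls $\varrho_{\zz^{(j)}}(2R_j)$, and no inequality $\varrho_x(4R)\le C\varrho_x(2R)$ is available, since the annulus $B_{2R}(x)\setminus B_R(x)$ can carry much more charge than $B_R(x)$. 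So this is a mismatch of scales, not of constants. Relatedly, if you start at $\zeta^{(0)}=1$, the displacements $|\zz^{(j+1)}-\zz^{(j)}|<R_j$ may add up to almost $2R_k$, so $\zk\in B_{R_k}(\zz_k)$ is not automatic.

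The repair is to choose a normalization under which the index sets nest. Track $\varrho_{\zz^{(j)}}(4R_j)$, whose index ball is $B_{2R_j}(\zz^{(j)})$. Start at $R_0=R_k/2$, so that $\varrho_{\zz_k}(4R_0)=\varrho_{\zz_k}(2R_k)$. For $C_1\ge 4^n|B_1|$ the covering bound holds trivially when $\zeta\in[1/2,1)$, so failures occur only at $\zeta<1/2$. Then, for a disjoint Vitali family $\{B_{\zeta R_j}(\zz')\}$, the index sets of $\varrho_{\zz'}(4\zeta R_j)$ lie in $\cZ_{2R_j}\cap B_{2R_j}(\zz^{(j)})$ with overlap at most $3^n$. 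Hence $\zeta^{n-2}\sum_{\zz'}\varrho_{\zz'}(4\zeta R_j)^2\le 3^n\varrho_{\zz^{(j)}}(4R_j)^2$, at the same normalization. With this, your pigeonhole closes for $C_1$ large, the displacements sum to less than $2R_0=R_k$, and \eqref{eq:selected-monneau-at-refined-scale} and \eqref{keyeqn-alpha} follow as you describe, using $\varrho_{\zk}(2\Rk)\le 2^{(n-2)/2}\varrho_{\zk}(4\Rk)$.

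For \eqref{eq:rbepk} you also need to know that necks in $B_{3\Rk/2}(\zk)$ have radius at most $\Rk$ before \eqref{eq:rbeps} applies. The paper gets this from \Cref{lem:upper-bound-for-nearby-neck-radii}, since $\rb(\zk)\ll\Rk$. It then passes from $16\Rk$ to $8\Rk$ by \Cref{prop:fixed-direction-monneau-doubling}; no recentering is needed.

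The paper avoids iterating altogether. It calls $\zeta\le 1/4$ admissible if some $\zz\in\cZ_{\zeta R_k}$ with $B_{4\zeta R_k}(\zz)\subset B_{R_k}(\zz_k)$ has $\varrho_\zz(4\zeta R_k)\le 8\zeta^{\beta_\circ}\varrho_{\zz_k}(2R_k)$, and takes $\tilde\zeta_k$ within a factor two of the infimum. Minimality then gives the lower bound $\varrho_\zz(4t\Rk)>t^{\beta_\circ}\varrho_{\zk}(4\Rk)$ on all sub-balls. Combined with the same additivity, this bounds the number of disjoint sub-balls and yields \eqref{strongalternativeA}, with the top dyadic layer of radii handled by \eqref{eq:countingeta}. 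Because every comparison is against the fixed reference $\varrho_{\zz_k}(2R_k)$, no per-step constants arise. Your version instead gets \eqref{strongalternativeA} directly from the stopping rule, at the cost of the normalization bookkeeping above.
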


\begin{proof}
We divide the proof into four steps.

\medskip
\noindent\emph{Step 1.}
We call $\zeta\in(0,\frac14]$ admissible if there exists $\zz\in\cZ_{\zeta R_k}$ such that
\[
B_{4\zeta R_k}(\zz)\subset B_{R_k}(\zz_k), \qquad \frac{\varrho_{\zz}(4\zeta R_k)}{\varrho_{\zz_k}(2R_k)} \le 8\zeta^{\beta_\circ}.
\]
Define $\zeta_k$ as the infimum of the admissible numbers. The admissible class is nonempty by \eqref{eq:eps_k}: for large $k$, $\rb(\zz_k)\le R_k/4$, and then $\zeta=\frac14$, $\zz=\zz_k$, is admissible (using also that $\varrho_{\zz_k}(R_k)\le 2^{\frac{n-2}{2}}\varrho_{\zz_k}(2R_k)\le 8 \cdot 4^{-\beta_\circ}\varrho_{\zz_k}(2R_k)$ since $n \le 6$ and $\beta_\circ<1/2$). Since $\rb(\zz)\ge\rmin>0$ for all $\zz\in\cZ$, we also have $\zeta_k\ge c/R_k>0$.

Choose $\tilde\zeta_k\in[\zeta_k,\min\{2\zeta_k,\frac14\}]$ and $\zk\in\cZ_{\Rk}$, with $\Rk:=\tilde\zeta_k R_k$, such that the preceding admissibility conditions hold with $\zeta=\tilde\zeta_k$ and $\zz=\zk$. Then $B_{4\Rk}(\zk)\subset B_{R_k}(\zz_k)$, and in particular
\begin{equation}\label{key3}
B_{\Rk}(\zk)\subset B_{R_k}(\zz_k),\qquad \frac{\varrho_{\zk}(4\Rk)}{\varrho_{\zz_k}(2R_k)} \le 8\tilde\zeta_k^{\beta_\circ}.
\end{equation}
Moreover $\varrho_{\zk}(4\Rk)\le 8\varrho_{\zz_k}(2R_k)\to0$. Applying \eqref{eq:rbeps} with center $\zk$, scale $\Rk$, and $\zz'=\zk$ gives $\Rk\to\infty$. We also have $\epk=\tilde\zeta_k^{\alpha\beta_\circ}\eps_k\le \eps_k\to0.$

The minimality of $\zeta_k$ gives the following fact. If $t\in(0,\frac14)$, $\zz\in\cZ_{t\Rk}$, and $B_{4t\Rk}(\zz)\subset B_{4\Rk}(\zk)$, then
\begin{equation}
\label{eq:rhoI} \frac{\varrho_{\zz}(4t\Rk)}{\varrho_{\zk}(4\Rk)} >t^{\beta_\circ}.
\end{equation}
Indeed, if this failed then $\zeta=t\tilde\zeta_k<\zeta_k$ would be admissible in the definition of $\zeta_k$, since
\[
B_{4t\Rk}(\zz)\subset B_{4\Rk}(\zk)\subset B_{R_k}(\zz_k)
\]
and the admissibility of $\zk$, \eqref{key3}, would give
\[
\frac{\varrho_{\zz}(4t\Rk)}{\varrho_{\zz_k}(2R_k)} \le 8 (t\tilde\zeta_k)^{\beta_\circ},
\]
contradicting the definition of $\zeta_k$.

\medskip
\noindent\emph{Step 2.} The estimate \eqref{eq:selection-functional-maximality}, applied with $R=\Rk$ and center $\zk$, gives
$\bbE_{\zk}(8\Rk)\le C\epk$, because $\varrho_{\zk}(2\Rk)\le C\varrho_{\zk}(4\Rk)$. This proves \eqref{eq:selected-monneau-at-refined-scale}. Hence, by \eqref{eq:rbeps} with $\zz'=\zk$ and \eqref{eq:fixed-direction-monneau-doubling} (to pass from scale $16\Rk$ to $8\Rk$),
\[
\varrho_{\zk}(4\Rk)\le C\epk\qquad\text{and}\qquad \left(\frac{\rb(\zk)}{\Rk}\right)^{\frac{n-2}{2}} \le C\epk.
\]
Since $\epk\to0$, after increasing $k$ we have $2\Rk\ge M_\circ\rb(\zk)$. Applying \Cref{lem:upper-bound-for-nearby-neck-radii} with center $\zk$ and radius $2\Rk$, we get $\rb(\zz)\le \Rk/4$ for every $\zz\in\cZ\cap B_{3\Rk/2}(\zk)$. Thus \eqref{eq:rbeps}, applied with base center $\zk$, scale $R=\Rk$, and nearby center $\zz'=\zz$, is applicable for every such $\zz$, and yields
\[
\left(\frac{\rb(\zz)}{\Rk}\right)^{\frac{n-2}{2}} \le C\varrho_{\zk}(4\Rk) \le C\epk.
\]
This proves \eqref{eq:rbepk}.

For the dyadic scale-slice estimate, set
\[
S_\eta:=(\cZ_{\Rk\eta}\setminus\cZ_{\Rk\eta/2})\cap B_{\Rk}(\zk).
\]
By \eqref{eq:rbepk}, we may assume $\eta\le \frac12$. For such $\eta$, $S_\eta$ is contained in the index set defining $\varrho_{\zk}(4\Rk)^2$, and
\[
\begin{aligned}
c\,\#S_\eta\,\eta^{n-2} &\le C\sum_{\zz\in S_\eta} \left(\frac{\rb(\zz)}{\Rk}\right)^{n-2}\le C\varrho_{\zk}^2(4\Rk).
\end{aligned}
\]
Together with $\varrho_{\zk}(4\Rk)\le C\epk$ this gives \eqref{eq:countingeta}.

\medskip
\noindent\emph{Step 3.} If
$\cF\subset\cZ_{\zeta\Rk}\cap B_{\Rk}(\zk)$, $\zeta\in(0,\frac12)$, and the balls $B_{\zeta\Rk}(\zz')$, $\zz'\in\cF$, are pairwise disjoint, then
\begin{equation}
\label{eq:rho-family-sum-refined} \zeta^{n-2}\sum_{\zz'\in\cF}\varrho_{\zz'}^2(2\zeta\Rk) \le C\varrho_{\zk}^2(4\Rk).
\end{equation}
Indeed, the index sets in the left-hand sum are disjoint and their union is contained in $\cZ_{2\Rk}\cap B_{2\Rk}(\zk)$. Expanding \eqref{eq:varrho_def_intro}, the change from the denominator $2\Rk$ to $4\Rk$ costs exactly $2^{n-2}$, proving \eqref{eq:rho-family-sum-refined}.

If $\zeta\in[\frac12,1)$, then $\bigcup_{\zz'\in\cA^\zeta_{\zk,\Rk}}B_{\zeta\Rk}(\zz')\subset B_{2\Rk}(\zk)$. Hence, by \eqref{eq:Ndef},
\[
\bN(\zeta,B_{\Rk}(\zk)) \le \frac{|B_{2\Rk}|}{(\zeta\Rk)^n} \le C \le C\zeta^{2-n-2\beta_\circ},
\]
which proves \eqref{strongalternativeA} in this range. Assume from now on that $\zeta\in(0,\frac12)$. Split
\[
\cA^\zeta_{\zk,\Rk} = \big(\cZ_{\zeta\Rk/2}\cap B_{\Rk}(\zk)\big) \cup \big((\cZ_{\zeta\Rk}\setminus\cZ_{\zeta\Rk/2})\cap B_{\Rk}(\zk)\big) =:\cA_-^\zeta\cup\cA_+^\zeta .
\]

For the upper annulus, \eqref{eq:countingeta} with $\eta=\zeta$ gives
\[
\#\cA_+^\zeta \le C\zeta^{2-n}\epk^2 \le C\zeta^{2-n-2\beta_\circ}.
\]
Thus the part of $\bN(\zeta,B_{\Rk}(\zk))$ generated by $\cA_+^\zeta$ is acceptable.

For $\cA_-^\zeta$, choose a Vitali subfamily $\{B_{\zeta\Rk}(\zz')\}_{\zz'\in\mathcal F}$ of disjoint balls such that
\[
\bigcup_{\zz'\in\cA_-^\zeta}B_{\zeta\Rk}(\zz') \subset \bigcup_{\zz'\in\mathcal F}B_{3\zeta\Rk}(\zz').
\]
For every $\zz'\in\mathcal F$, we have $\zz'\in\cZ_{\zeta\Rk/2}$, and
\[
B_{2\zeta\Rk}(\zz')\subset B_{2\Rk}(\zk)\subset B_{4\Rk}(\zk) \subset B_{R_k}(\zz_k).
\]
Taking $t=\zeta/2$ in \eqref{eq:rhoI} yields
\[
\varrho_{\zz'}(2\zeta\Rk) \ge c\,\zeta^{\beta_\circ}\varrho_{\zk}(4\Rk).
\]
On the other hand, applying \eqref{eq:rho-family-sum-refined} to the disjoint family $\mathcal F$ gives
\[
\zeta^{n-2} \sum_{\zz'\in\mathcal F}\varrho_{\zz'}^2(2\zeta\Rk) \le C\varrho_{\zk}^2(4\Rk).
\]
Therefore $\#\mathcal F\le C\zeta^{2-n-2\beta_\circ}$, and
\[
\left|\bigcup_{\zz'\in\cA_-^\zeta}B_{\zeta\Rk}(\zz')\right| \le C(\zeta\Rk)^n\zeta^{2-n-2\beta_\circ}.
\]
Combining the estimates for $\cA_-^\zeta$ and $\cA_+^\zeta$, and dividing by $(\zeta\Rk)^n$, proves \eqref{strongalternativeA}.

\medskip
\noindent\emph{Step 4.} Let $\cF\subset\cZ_{\zeta\Rk}\cap B_{\Rk}(\zk)$ be as in the statement, with
$\zeta\in(0,\frac12)$. Applying \eqref{eq:selection-functional-maximality} with $R=\zeta\Rk$, and using \eqref{key3} and $\epk=\tilde\zeta_k^{\alpha\beta_\circ}\eps_k$, gives
\[
\bbE_{\zz'}(8\zeta\Rk) \le C\epk \left( \frac{\varrho_{\zz'}(2\zeta\Rk)} {\varrho_{\zk}(4\Rk)} \right)^\alpha \qquad\text{for every }\zz'\in\cF .
\]
Taking the power $2/\alpha$, summing, and using \eqref{eq:rho-family-sum-refined}, we obtain
\[
\sum_{\zz'\in\cF}\bbE_{\zz'}(8\zeta\Rk)^{2/\alpha} \le C_\alpha \frac{\epk^{2/\alpha}}{\varrho_{\zk}^2(4\Rk)} \sum_{\zz'\in\cF}\varrho_{\zz'}^2(2\zeta\Rk) \le C_\alpha\zeta^{2-n}\epk^{2/\alpha}.
\]
This proves \eqref{keyeqn-alpha}.
\end{proof}

The optimized denominator from \Cref{lem:selection-of-initial-center-and-scale} and the scale propagation in \Cref{prop:fixed-direction-monneau-doubling} now provide the lower bound needed to start the tree argument below.
\begin{prop}
\label{prop:optimized-denominator-logarithmic-lower-bounds}
Let $(R_k,\zz_k)$ and $(\Rk,\zk)$, with $\eps_k$ and $\epk$, be given by \Cref{lem:selection-of-initial-center-and-scale} and \Cref{lem:refined-center-and-scale-selection}. Then, for all $k$ sufficiently large,
\begin{equation}\label{eq:optimized-denominator-monneau-lower}
\bbE_{\zk}(R)\ge \frac{\eps_k}{C\bigl(1+\log(8R_k/R)\bigr)} \qquad\text{for all }0<R\le8R_k,
\end{equation}
and
\begin{equation}\label{eq:optimized-denominator-selected-scale-comparable}
\Rk\ge cR_k,
\end{equation}
for some $C = C(\alpha)$ and $c = c(\alpha)>0$.
\end{prop}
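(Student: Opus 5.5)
The plan is to transfer the size $\eps_k=\bbE_{\zz_k}(8R_k)$ of the excess at the initially selected center to the refined center $\zk$ by recentering at a slightly larger scale. Then I propagate it down to all smaller scales using the logarithmic doubling in \Cref{prop:fixed-direction-monneau-doubling}. The comparability $\Rk\ge cR_k$ should then follow by comparing this lower bound at $R=8\Rk$ with the upper bound \eqref{eq:selected-monneau-at-refined-scale}.

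\emph{Step 1 (recentering).} I apply \Cref{lem:large-scale-monneau-recentering} with $x=\zz_k\in\cZ\subset\FB(u)$, $y=\zk$, $R=R_k$, and an arbitrary $e\in\mathbb S^{n-1}$. The hypothesis $B_{8R_k}(\zz_k)\subset B_{32R_k}(\zk)$ holds because $\zk\in B_{R_k}(\zz_k)$ by \Cref{lem:refined-center-and-scale-selection}. This gives $\eps_k=\bbE_{\zz_k}(8R_k)\le C\,\bbE_{\zk}(32R_k;e)$ for every $e$. Minimizing in $e$ then yields $\eps_k\le C\,\bbE_{\zk}(32R_k)$.

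\emph{Step 2 (downward propagation).} Fix $0<R\le 8R_k$ and set $\eta:=R/(32R_k)\le\tfrac14$. The minimized form of \eqref{eq:fixed-direction-monneau-doubling}, with center $\zk$ and outer scale $32R_k$, gives
\[
\eps_k\le C\,\bbE_{\zk}(32R_k)\le C\log\bigl(32R_k/R\bigr)\,\bbE_{\zk}(R).
\]
Since $\log(32R_k/R)=\log 4+\log(8R_k/R)\le C\bigl(1+\log(8R_k/R)\bigr)$ in this range, we obtain \eqref{eq:optimized-denominator-monneau-lower}. The only point requiring care here is checking the containment hypothesis of the recentering lemma and the admissible range of $\eta$. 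No largeness of $k$ is needed beyond the validity of \Cref{lem:refined-center-and-scale-selection}.

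\emph{Step 3 (comparability of scales).} Recall $\Rk=\tilde\zeta_kR_k$ with $\tilde\zeta_k\le\tfrac14$, so $R=8\Rk\le 8R_k$ is admissible in Step 2. Combining Step 2 with \eqref{eq:selected-monneau-at-refined-scale} and $\epk=\tilde\zeta_k^{\alpha\beta_\circ}\eps_k$ gives
\[
\frac{\eps_k}{C\bigl(1+\log(1/\tilde\zeta_k)\bigr)}\le \bbE_{\zk}(8\Rk)\le C\tilde\zeta_k^{\alpha\beta_\circ}\eps_k .
\]
The constant $C$ on the right comes from \Cref{lem:refined-center-and-scale-selection}, where it enters through \eqref{eq:selection-functional-maximality}; I track its possible dependence on $\alpha$ and $\beta_\circ$ there. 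Since $\eps_k>0$ (otherwise the selection functional would vanish at the near-maximizer), we may divide by $\eps_k$ to get
\[
\tilde\zeta_k^{\alpha\beta_\circ}\bigl(1+\log(1/\tilde\zeta_k)\bigr)\ge c .
\]
The map $t\mapsto t^{\alpha\beta_\circ}(1+\log(1/t))$ tends to $0$ as $t\downarrow0$, and $\alpha\beta_\circ\ge\tfrac34\beta_\circ>0$ is fixed. Hence this forces $\tilde\zeta_k\ge c(\alpha)>0$, which is \eqref{eq:optimized-denominator-selected-scale-comparable}.

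The main obstacle I expect is Step 1. One must make sure the recentering lemma is applied with the refined center playing the role of $y$, not of $x$, so that the inequality runs in the right direction: the excess at $\zz_k$ is bounded by the excess at $\zk$. One must also check that the scale factor $32$ is compatible with $\zk\in B_{R_k}(\zz_k)$. Everything else is bookkeeping of logarithms and constants.
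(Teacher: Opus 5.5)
Your proposal is correct and follows essentially the same route as the paper. You recenter via \Cref{lem:large-scale-monneau-recentering} with $y=\zk$, $x=\zz_k$ at scale $R_k$, propagate logarithmically from $32R_k$ down to $R$, and then compare with \eqref{eq:selected-monneau-at-refined-scale} at $R=8\Rk$ to force $\tilde\zeta_k\ge c$. The only cosmetic difference is that the paper fixes a near-minimizing direction $e_R$ at the small scale and uses the fixed-direction doubling, whereas you minimize over $e$ after recentering and use the minimized doubling; the two are equivalent.
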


\begin{proof}
Set $t_k:=\Rk/R_k=\tilde\zeta_k$. We first prove the combined-excess lower bound. Fix $0<R\le 8R_k$, and choose $e_R\in\mathbb S^{n-1}$ with
\[
\bbE_{\zk}(R;e_R)\le 2\bbE_{\zk}(R).
\]
Since $R/(32R_k)\le1/4$, \Cref{prop:fixed-direction-monneau-doubling} gives
\begin{equation}\label{eq:optimized-denominator-proof-upward}
\bbE_{\zk}(32R_k;e_R) \le C\bigl(1+\log(8R_k/R)\bigr)\bbE_{\zk}(R).
\end{equation}
Apply \Cref{lem:large-scale-monneau-recentering} with $y=\zk$, $x=\zz_k$, $e=e_R$, and scale parameter $R_k$. The geometric hypothesis holds because $\zk\in B_{R_k}(\zz_k)$, hence
\[
B_{8R_k}(\zz_k)\subset B_{9R_k}(\zk)\subset B_{32R_k}(\zk).
\]
Thus
\[
\eps_k=\bbE_{\zz_k}(8R_k) \le C\bbE_{\zk}(32R_k;e_R).
\]
Combining this with \eqref{eq:optimized-denominator-proof-upward} yields
\[
\bbE_{\zk}(R)\ge \frac{\eps_k}{C\bigl(1+\log(8R_k/R)\bigr)},
\]
which proves \eqref{eq:optimized-denominator-monneau-lower}.

It remains to compare the two selected scales. By \eqref{eq:selected-monneau-at-refined-scale},
\[
\bbE_{\zk}(8\Rk) \le C t_k^{\alpha\beta_\circ}\eps_k =C\epk .
\]
On the other hand, \eqref{eq:optimized-denominator-monneau-lower} at $R=8\Rk$ gives
\[
\bbE_{\zk}(8\Rk) \ge \frac{\eps_k}{C\bigl(1+\log(R_k/\Rk)\bigr)}.
\]
Therefore
\[
\epk\ge \frac{\eps_k}{C\bigl(1+\log(R_k/\Rk)\bigr)}.
\]
Since $\epk=t_k^{\alpha\beta_\circ}\eps_k$, this also gives
\[
t_k^{\alpha\beta_\circ} \ge \frac{1}{C\bigl(1+\log(1/t_k)\bigr)}.
\]
Since $(1+\log(1/t))t^{\alpha\beta_\circ}\to0$ as $t\downarrow0$, the preceding inequality forces $t_k\ge c>0$ after increasing $k$. This is \eqref{eq:optimized-denominator-selected-scale-comparable}.
\end{proof}

\section{Free-boundary ball tree}
\label{sec:fb-tree}

\subsection{Construction and affine propagation}

We start with the definition.

\begin{defn}[Free-boundary tree dyadic balls and centered vee excess]
\label{defn:fb-tree-dyadic-balls-centered-vee-excess}
Let
\[
B_{\rm root}:=B_{\Rk}(\zk),\qquad r_l:=2^{-l}\Rk\quad\text{for}\quad l\ge0.
\]
For a ball $B=B_{r_B}(x_B)$ and $\lambda>0$, we write $r_B$ for its radius and $x_B$ for its center, and set
\[
\lsup{\lambda}{B}:=B_{\lambda r_B}(x_B).
\]
For every $l\ge1$, choose a maximal $r_l/4$-separated family of points $x_{l,j}\in\FB(u)\cap B_{\rm root}$ indexed by $j\in J_l$, and let
\[
\cQ^l:=\{B_{r_l}(x_{l,j})\}_{j\in J_l}.
\]
In particular, we have $\FB(u)\cap B_{\rm root} \subset \bigcup_{B\in\cQ^l}\lsup{1/4}{B}$, the balls $\{\lsup{1/16}{B}:B\in\cQ^l\}$ are pairwise disjoint, and
\begin{equation}\label{eq:fb-tree-surface-cardinality}
\#\cQ^l\le C_n 2^{l(n-1)},
\end{equation}
which follows from the disjointness, the lower density estimate \eqref{eq:density-B}, and the perimeter bound \Cref{lem:perimeter-bound}. Set $\cQ^0:=\{B_{\rm root}\}$.

For $B\in\cQ^l$, $l\ge0$, define
\begin{equation}\label{eq:fb-tree-centered-vee-excess}
h_B^2:= \inf_{e\in\mathbb S^{n-1}} \frac{1}{r_B^{n+2}} \int_{\lsup{16}{B}} \big|u-V_{x_B,e}\big|^2\,dx.
\end{equation}
Fix one minimizing vee $V_B$. It has exactly two unit-slope affine representatives, which differ by sign. Their polarity will be fixed simultaneously along the tree in \Cref{lem:choice}.
\end{defn}

We first record the pointwise consequence of centered vee closeness that will be used throughout the tree construction.
\begin{cor}
\label{cor:centered-vee-lower-slab}
Let $x_0\in\FB(u)$, $r>0$, $e\in\mathbb S^{n-1}$, and $\eta\in(0,1)$ satisfy
\[
\frac1r\fint_{B_{16r}(x_0)} \bigl|u(x)-V_{x_0,e}(x)\bigr|\,dx\le \eta.
\]
Then, in $B_{12r}(x_0)$,
\[
u+C\eta r\ge V_{x_0,e}, \qquad \FB(u)\subset\{V_{x_0,e}\le C\eta r\}, \qquad B_{12r}(x_0)\cap \{\pm e\cdot(x-x_0)>C\eta r\} \subset\{u>0\},
\]
for some $C$ dimensional.
\end{cor}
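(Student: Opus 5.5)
The plan is to apply \Cref{lem:one-sided-lower-bound} at scale $R=16r$ and then read off the remaining two inclusions by elementary arguments.

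\emph{Lower bound.} The hypothesis is exactly the averaged vee hypothesis of the second implication in \Cref{lem:one-sided-lower-bound}, with $y=x_0$ and $R=16r$. Indeed,
\[
\frac1{16r}\fint_{B_{16r}(x_0)}|u-V_{x_0,e}|\,dx\le \frac{\eta}{16}\le\eta.
\]
Hence $u+C\eta\cdot16r\ge V_{x_0,e}$ in $B_{12r}(x_0)$, since $\tfrac34\cdot16r=12r$. This is the first claim after enlarging $C$. If one prefers to apply the first implication of the lemma, one only has to check that averaging over the subregion $\{e\cdot(x-x_0)>2r\}$ costs a dimensional factor, because that region has volume comparable to $|B_{16r}|$. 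The same holds for $-e$.

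\emph{Free-boundary inclusion.} At a point $x\in\FB(u)\cap B_{12r}(x_0)$ we have $u(x)=0$, so the lower bound gives $V_{x_0,e}(x)\le C\eta r$. This is the second claim.

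\emph{Positivity on both sides.} Let $x\in B_{12r}(x_0)$ satisfy $\pm e\cdot(x-x_0)>C\eta r$, with the same $C$ as above. Then the lower bound gives
\[
u(x)\ge V_{x_0,e}(x)-C\eta r>0,
\]
so $x\in\{u>0\}$. The only point needing care is to use one consistent constant $C$ throughout, chosen as the constant coming from \Cref{lem:one-sided-lower-bound} times $16$ (and the averaging factor if needed). No real obstacle arises. The lemma as stated assumes $\eta<1$, which is given.
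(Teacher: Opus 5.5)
Your proposal is correct and follows the paper's proof: you apply \Cref{lem:one-sided-lower-bound} at radius $16r$ (so that $B_{3R/4}=B_{12r}$), then read off the slab containment from $u=0$ on $\FB(u)$ and the positivity of $u$ from the lower bound. Your remark that the strict inequality $V_{x_0,e}(x)>C\eta r$ already gives $u(x)>0$ with the same constant is also correct, so the paper's second enlargement of $C$ is not actually needed.
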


\begin{proof}
After enlarging the dimensional constant in \Cref{lem:one-sided-lower-bound}, we obtain
\[
u+C\eta r\ge V_{x_0,e} \qquad\text{in }B_{12r}(x_0),
\]
which is the lower bound. If $p\in\FB(u)\cap B_{12r}(x_0)$, then $u(p)=0$, so the same inequality gives
\[
V_{x_0,e}(p)\le C\eta r .
\]
This proves the slab containment. Finally, if $\pm e\cdot(x-x_0)>C\eta r$, then $V_{x_0,e}(x)>C\eta r$; after enlarging $C$ once more in the lower bound, we obtain $u(x)>0$.
\end{proof}

With \Cref{cor:centered-vee-lower-slab} in hand, we can define the branching and stopping rules of the dyadic tree.
\begin{defn}[Free-boundary tree branching and terminal balls]
\label{defn:fb-tree-branching-and-terminal-balls}
Fix a tree parameter $0<\delta\ll1$, to be chosen below the dimensional thresholds in the statements that follow (with $k$ increased accordingly). We define \emph{active balls} recursively by the following construction. First, denote
\[
\cD^0:=\{B_{\rm root}\}.
\]
Once $\cD^l$ is defined, $B\in\cD^l$ is called {\em branching} if
\begin{equation}\label{eq:fb-tree-branching-condition}
\lsup{4}{B}\cap\cZ\ne\varnothing, \qquad h_B<\delta.
\end{equation}
Otherwise $B$ is {\em terminal}. Write
\[
\cB^l:=\{B\in\cD^l:B\text{ is branching}\}.
\]
For $l\ge0$, put
\begin{equation}\label{eq:fb-tree-active-next-generation}
\cD^{l+1}:= \{B'\in\cQ^{l+1}:x_{B'}\in\lsup{5/4}{B} \text{ for some }B\in\cB^l\}.
\end{equation}
For $B\in\cB^l$, define its local children by
\begin{equation}\label{eq:fb-tree-local-children}
\operatorname{Ch}(B):= \{B'\in\cD^{l+1}:x_{B'}\in\lsup{5/4}{B}\}.
\end{equation}
For every $B'\in\cD^{l+1}$, choose one parent $PB'\in\cB^l$ with $x_{B'}\in\lsup{5/4}{PB'}$. This predecessor will be used only to propagate the polarity from the root and to record ancestry; write $P^0B=B$ and $P^{j+1}B=P(P^jB)$ whenever these iterates are defined. The local child family $\operatorname{Ch}(B)$ is used for covers. In particular,
\begin{equation}\label{eq:fb-tree-active-cover-near-branching}
\FB(u)\cap B_{\rm root}\cap\lsup{9/8}{B} \subset \bigcup_{B'\in\operatorname{Ch}(B)}\lsup{1/4}{B'}.
\end{equation}
Finally put $\cB:=\bigcup_l\cB^l$.

A terminal ball is {\em regular} if
\[
\lsup{4}{B}\cap\cZ=\varnothing,
\]
and {\em neck-like} otherwise. We denote the two classes by $\cR$ and $\cN$, respectively, and write
\[
\cR^l:=\cR\cap\cD^l,\qquad \cN^l:=\cN\cap\cD^l .
\]
For $k$ sufficiently large, the root $B_{\rm root}=B_{\Rk}(\zk)$ is branching: $\zk\in\cZ\cap\lsup{4}{B_{\rm root}}$, and \Cref{lem:blow-down-around-a-neck-center} gives $h_{B_{\rm root}}<\delta$ since $\rb(\zk)/\Rk\to0$.
\end{defn}

\begin{lem}
\label{lem:fb-tree-properties}
There is a dimensional constant $C_{\rm inh}$ such that every active ball satisfies
\begin{equation}\label{eq:fb-tree-active-child-flatness}
h_B\le C_{\rm inh}\delta .
\end{equation}
\end{lem}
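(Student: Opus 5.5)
The proof goes by induction on the generation. The root satisfies $h_{B_{\rm root}}<\delta$, and every branching ball satisfies $h_B<\delta$ by \eqref{eq:fb-tree-branching-condition}. So it is enough to show this: if $B'\in\cD^{l+1}$ and $B:=PB'\in\cB^l$, then $h_{B'}\le C_{\rm inh}h_B$ for a dimensional $C_{\rm inh}$. Once this is proved, every active ball (the root, or a child of a branching ball) satisfies $h\le C_{\rm inh}\delta$.

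\textbf{Geometry.} Write $r:=r_B$ and $r'=r_{B'}=r/2$. Since $x_{B'}\in\lsup{5/4}{B}$, we have $\lsup{16}{B'}=B_{8r}(x_{B'})\subset B_{(5/4+8)r}(x_B)\subset \lsup{16}{B}$. Hence the integral over $\lsup{16}{B'}$ of $|u-V_{x_B,e_B}|^2$ is at most $r^{n+2}h_B^2$, where $V_B=V_{x_B,e_B}$ is the minimizing vee. After normalizing by $r'^{n+2}$, this is at most $2^{n+2}h_B^2$.

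\textbf{Recentering.} We must compare $V_{x_B,e_B}$ with the vee $V_{x_{B'},e_B}$ centered at $x_{B'}$. By H\"older, the $L^2$ bound gives $\frac1r\fint_{\lsup{16}{B}}|u-V_B|\le Ch_B$. Since $h_B<\delta$ is small, \Cref{cor:centered-vee-lower-slab} applies with $\eta=Ch_B$ and gives $\FB(u)\cap B_{12r}(x_B)\subset\{V_{x_B,e_B}\le Ch_Br\}$. The point $x_{B'}$ lies on $\FB(u)$ and in $B_{5r/4}(x_B)$, so $|e_B\cdot(x_{B'}-x_B)|\le Ch_Br$. Therefore $|V_{x_{B'},e_B}-V_{x_B,e_B}|\le Ch_Br$ pointwise on $\R^n$. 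Taking $e=e_B$ as a competitor in \eqref{eq:fb-tree-centered-vee-excess} for $B'$, the triangle inequality gives
\[
h_{B'}^2\le \frac{2}{r'^{n+2}}\int_{\lsup{16}{B'}}\bigl(|u-V_{x_B,e_B}|^2+C h_B^2r^2\bigr)\le C h_B^2,
\]
because $|\lsup{16}{B'}|\le Cr^n$.

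\textbf{Main obstacle.} The only delicate step is justifying the slab containment used in the recentering. This requires the smallness hypothesis of \Cref{cor:centered-vee-lower-slab}, which forces $\delta$ below a dimensional threshold, and it requires the constant to be independent of the generation so that it does not compound. It does not compound: the estimate is always taken from the parent's $h_B<\delta$, never iterated along a branch, so $C_{\rm inh}$ stays dimensional.
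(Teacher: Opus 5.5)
Your proposal is correct and follows essentially the same route as the paper: the chosen parent is branching, so $h_{PB'}<\delta$; Cauchy--Schwarz together with the slab conclusion of \Cref{cor:centered-vee-lower-slab} recenters the parent's vee at $x_{B'}\in\FB(u)$ with an $O(h_{PB'}r_{PB'})$ error; and this recentered vee is used as a competitor on $\lsup{16}{B'}\subset\lsup{16}{PB'}$. As you observe, no genuine induction is needed, since the bound comes directly from the parent and never compounds along a branch.
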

\begin{proof}
The root already satisfies $h_{B_{\rm root}}<\delta$. Let $B\in\cD^{l+1}$, and set $A=PB$. The parent $A$ is branching, hence $h_A<\delta$, and $x_B\in\FB(u)\cap\lsup{5/4}{A}$. Write $V_A=V_{x_A,e_A}$. By Cauchy--Schwarz, the hypothesis of \Cref{cor:centered-vee-lower-slab} holds on $A$ with $\eta=Ch_A$. Its slab conclusion gives
\[
V_A(x_B)\le C h_A r_A.
\]
Therefore the centered vee
\[
\widetilde V_B:=V_{x_B,e_A}
\]
is $Ch_A r_A$-close in $L^\infty$ to $V_A$. Since $\lsup{16}{B}\subset\lsup{16}{A}$ and $r_A=2r_B$, \eqref{eq:fb-tree-centered-vee-excess}, with $\widetilde V_B$ as competitor, yields
\[
h_B^2 \le \frac{C}{r_B^{n+2}} \left( \int_{\lsup{16}{B}}|u-V_A|^2 +r_B^n h_A^2r_A^2 \right) \le C h_A^2 \le C\delta^2.
\]
This proves \eqref{eq:fb-tree-active-child-flatness}.
\end{proof}

To propagate the affine representatives coherently through the tree, we need the following comparison of unit-slope vees.
\begin{lem}
\label{lem:unit-slope-vee-comparison}
Let $\ell_1,\ell_2$ be affine functions in $\R^n$ with $|\nabla\ell_1|=|\nabla\ell_2|=1$. Then, for every ball $B_r(x_0)$,
\[
\min_{\sigma=\pm1} \Bigl( r|\nabla\ell_1-\sigma\nabla\ell_2| +\|\ell_1-\sigma\ell_2\|_{L^\infty(B_r(x_0))} \Bigr) \le C r\left( r^{-n-2}\int_{B_r(x_0)} \bigl||\ell_1|-|\ell_2|\bigr|^2\,dx \right)^{1/2},
\]
where $C$ is a dimensional constant.
\end{lem}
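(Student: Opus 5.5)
By scaling and translation invariance it suffices to treat $x_0=0$, $r=1$. Write $\ell_i(x)=a_i\cdot x+b_i$ with $|a_i|=1$, and set
\[
\mathcal I:=\int_{B_1}\bigl||\ell_1|-|\ell_2|\bigr|^2\,dx .
\]
The map $(a_1,b_1,a_2,b_2)\mapsto\mathcal I$ is continuous, and so is the left-hand side. My plan is a local analysis near the zero set of $\mathcal I$ combined with compactness away from it.

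\emph{Step 1: large $|b_i|$.} If $|b_1|\ge 2$, then $\ell_1$ has a fixed sign on $B_1$ with $|\ell_1|\ge1$ there. If moreover $|b_2|\le 3$, then $\mathcal I\ge c$, because $|\ell_2|$ vanishes on a hyperplane meeting $B_{4}$ in a controlled way; more precisely, near $B_1$ the function $|\ell_2|$ has a kink or is small on a set of definite measure when $|b_2|\le 1$. In that case the claim follows since the left-hand side is at most $C(1+|b_1|+|b_2|)$ and $\mathcal I^{1/2}\gtrsim |b_1|-C$.

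When both $|\ell_i|$ keep a sign on $B_1$, we have $|\ell_i|=\sigma_i\ell_i$ there. Then $|\ell_1|-|\ell_2|=\ell_1-\sigma\ell_2$ on $B_1$ with $\sigma=\sigma_1\sigma_2$, and $\ell_1-\sigma\ell_2$ is an affine function. Equivalence of norms on the finite-dimensional space of affine functions gives
\[
|a_1-\sigma a_2|+\|\ell_1-\sigma\ell_2\|_{L^\infty(B_1)}\le C\,\|\ell_1-\sigma\ell_2\|_{L^2(B_1)} .
\]
So in all cases we may reduce to $|b_1|,|b_2|\le C$, a compact parameter set.

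\emph{Step 2: compactness away from the zero set.} On this compact set, $\mathcal I=0$ iff $|\ell_1|=|\ell_2|$ on $B_1$. This forces $\ell_1=\pm\ell_2$: the kink sets, if they meet $B_1$, must coincide, and then analyticity on each side gives equality up to sign. If neither kink set meets $B_1$, the affine argument of Step 1 applies. Hence, by continuity and compactness, it only remains to prove the inequality in a neighbourhood of each zero configuration $\ell_2=\sigma\ell_1$.

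\emph{Step 3: the local estimate (main obstacle).} Fix $\sigma=1$ after relabeling and write $\ell_2=\ell_1+m$ with $m$ small affine, $|m|\le\mu$ on $B_1$. On the set $\{|\ell_1|>\mu\}\cap B_1$ we have
\[
|\ell_1|-|\ell_2|=\pm m .
\]
If the zero set of $\ell_1$ meets $B_{1}$, one side, say $\{\ell_1>\mu\}\cap B_1$, contains a ball of radius comparable to $1$ (after using the bound $|b_1|\le C$ and treating separately the case where the hyperplane is near $\partial B_1$). Equivalence of norms for affine functions on that ball gives
\[
\|m\|_{L^\infty(B_1)}+|\nabla m|\le C\|m\|_{L^2(\text{ball})}\le C\mathcal I^{1/2}.
\]
The delicate point is the uniformity of these constants as the ball shrinks, which happens when the hyperplane approaches $\partial B_1$. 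I would handle this by choosing the side of larger measure; that side always contains a ball of radius $\ge c$ inside $B_1\cap\{\ell_1>\mu\}$ once $\mu\le c$. If the hyperplane misses $B_{1}$, we are back in the affine case of Step 1.

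The three steps give the lemma with a dimensional constant $C$, after restoring the scale $r$.
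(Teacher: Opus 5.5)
Your argument is correct in substance, but it takes a much longer route than the paper. The paper does no case analysis on the intercepts and uses no compactness. It covers $B_r(x_0)$ by the two sign-agreement sets $E_\sigma:=\{x\in B_r(x_0):\sigma\ell_1(x)\ell_2(x)\ge0\}$, $\sigma=\pm1$, and picks one with $|E_\sigma|\ge|B_r|/2$. On that set one has the exact identity $\bigl||\ell_1|-|\ell_2|\bigr|=|\ell_1-\sigma\ell_2|$, so the affine comparison on a set of proportional volume (\Cref{lem:affine-comparison-proportional}) applied to the single affine function $\ell_1-\sigma\ell_2$ finishes the proof. Your Step~3 runs on the same mechanism: on a set of definite measure the difference of absolute values is $\pm$ an affine function, and norm equivalence for affine functions concludes. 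However, you work on the complement of a thin slab around $\{\ell_1=0\}$ instead of on $E_\sigma$, so it only applies once $\ell_2$ is already close to $\pm\ell_1$. That is why you need Steps~1--2 (bounded intercepts, identification of $\{\mathcal I=0\}$, compactness), and the resulting dimensional constant is not explicit. Replacing the slab complement by $E_\sigma$ would make Steps~1--2 unnecessary.

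One sentence in Step~1 is false as written: $|b_1|\ge2$ and $|b_2|\le3$ do not force $\mathcal I\ge c$ (take $\ell_1=\ell_2=x_n+2$). The correct split is whether some $\ell_i$ vanishes in $\overline{B_1}$. If neither does, your constant-sign affine argument applies. If, say, $\ell_2(p)=0$ for some $p\in\overline{B_1}$ and $|b_1|\ge2$, then $|\ell_1|\ge|b_1|/2$ on $B_1$ while $|\ell_2|\le\frac14$ on $B_{1/4}(p)\cap B_1$. Hence $\mathcal I^{1/2}\ge c|b_1|$, which dominates the left-hand side, bounded by $C(1+|b_1|)$. Also, Step~3 needs no separate treatment near $\partial B_1$, nor when the hyperplane misses $B_1$: if $b_1\ge0$ and $\mu<1/4$, then $B_{1/4}(a_1/2)\subset\{\ell_1>\mu\}\cap B_1$, and symmetrically if $b_1<0$. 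With these repairs your proof is complete.
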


To prove this estimate, one fixes the sign on a subset of $B_r(x_0)$ of measure at least $|B_r|/2$ and applies \Cref{lem:affine-comparison-proportional}. The details are included in \Cref{app:proofs-standard-auxiliary-lemmas}.

At this point, we have all the ingredients to propagate an orientation/polarity for the dyadic tree through child-parent links.

\begin{lem}[Choice of representatives]
\label{lem:choice}
We choose the sign for an affine representative of $V_{B_{\rm root}}$ and denote it by $\ell_{B_{\rm root}}$, $V_{B_{\rm root}} = |\ell_{B_{\rm root}}|$. After decreasing $\delta$, this choice propagates uniquely along the predecessor map. More precisely, for every active ball $B$, we define the unit-slope affine function $\ell_B$ and its oriented direction $a_B$ by
\begin{equation}\label{eq:fb-tree-oriented-affine-representatives}
V_B=|\ell_B|,\qquad \ell_B(x_B)=0,\qquad a_B:=\nabla\ell_B\in\mathbb S^{n-1},
\end{equation}
and, for every $B\in\cD^l$, $l\ge1$,
\begin{equation}\label{eq:fb-tree-predecessor-affine-comparison}
\|\ell_B-\ell_{PB}\|_{L^\infty(\lsup{4}{B})} \le Cr_B(h_B+h_{PB}), \qquad |a_B-a_{PB}|\le C(h_B+h_{PB}).
\end{equation}
\end{lem}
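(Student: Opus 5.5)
The argument is an induction on the generation $l$ along the predecessor map, with the single comparison estimate proved once for an arbitrary parent--child pair $(A,B)=(PB,B)$. For the root, we fix either sign and set $\ell_{B_{\rm root}}$ accordingly; it vanishes at $x_{B_{\rm root}}=\zk$ because $V_{B_{\rm root}}$ is centered there. Suppose $\ell_A$ has been fixed for the parent $A=PB\in\cB^l$, with $|\nabla\ell_A|=1$ and $\ell_A(x_A)=0$. We choose $\ell_B$ among the two representatives $\pm\ell$ of $V_B=V_{x_B,e_B}$, namely $\ell(x)=e_B\cdot(x-x_B)$. Both vanish at $x_B$, so the normalization in \eqref{eq:fb-tree-oriented-affine-representatives} is automatic. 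The only issue is the sign, which we fix by the comparison below.

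\emph{Comparison step.} We apply \Cref{lem:unit-slope-vee-comparison} on the ball $\lsup{4}{B}$, with $\ell_1=\ell$ and $\ell_2=\ell_A$. This requires bounding
\[
r_B^{-n-2}\int_{\lsup{4}{B}}\bigl|V_B-V_A\bigr|^2\,dx .
\]
We use the triangle inequality through $u$. First, $\lsup{4}{B}\subset\lsup{16}{B}$, and by definition \eqref{eq:fb-tree-centered-vee-excess}
\[
\int_{\lsup{16}{B}}|u-V_B|^2\le r_B^{n+2}h_B^2 .
\]
Second, $r_A=2r_B$ and $x_B\in\lsup{5/4}{A}$ give $\lsup{4}{B}\subset B_{(5/4)r_A+2r_A}(x_A)\subset\lsup{16}{A}$. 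Hence
\[
\int_{\lsup{4}{B}}|u-V_A|^2\le r_A^{n+2}h_A^2=2^{n+2}r_B^{n+2}h_A^2 .
\]
Combining the two bounds, the normalized $L^2$ difference is at most $C(h_B+h_{PB})$. \Cref{lem:unit-slope-vee-comparison} then yields $\sigma\in\{\pm1\}$ such that
\[
r_B|\nabla\ell-\sigma\nabla\ell_A|+\|\ell-\sigma\ell_A\|_{L^\infty(\lsup{4}{B})}\le Cr_B(h_B+h_{PB}).
\]
We set $\ell_B:=\sigma\ell$. Since $\ell_A=\sigma\cdot(\sigma\ell_A)$, this is exactly \eqref{eq:fb-tree-predecessor-affine-comparison}.

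\emph{Uniqueness.} This is the step requiring $\delta$ small, and I expect it to be the only delicate point. We must show that the sign is forced, i.e.\ the opposite choice violates the estimate. The two candidates $\pm\ell$ satisfy $|\nabla\ell-(-\nabla\ell)|=2$. So if both $\ell$ and $-\ell$ were $C(h_B+h_{PB})$-close in gradient to $\sigma\nabla\ell_A$, we would get $2\le 2C(h_B+h_{PB})$. By \Cref{lem:fb-tree-properties} we have $h_B,h_{PB}\le C_{\rm inh}\delta$, so this is impossible once $\delta<(2CC_{\rm inh})^{-1}$. Hence exactly one sign satisfies the gradient bound, and the recursion defines $\ell_B$ uniquely from $\ell_{PB}$.

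A subtlety to check is that the minimizing direction $e_B$ may be non-unique. This does not matter, because $V_B$ is fixed once and for all in \Cref{defn:fb-tree-dyadic-balls-centered-vee-excess}. Likewise, the constant from \Cref{lem:unit-slope-vee-comparison} is used on a ball of radius $4r_B$ rather than $r_B$, which only changes dimensional constants. The same smallness ensures the choice is consistent with the fixed predecessor, since each $B$ has a single chosen parent $PB$. Induction on $l$ completes the construction.
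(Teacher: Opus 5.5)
Your proposal is correct and follows the paper's argument essentially verbatim. You compare $V_B$ and $V_{PB}$ through $u$ on $\lsup{4}{B}\subset\lsup{16}{PB}$, apply \Cref{lem:unit-slope-vee-comparison} to select the sign, and use $h_B,h_{PB}\le C_{\rm inh}\delta$ from \Cref{lem:fb-tree-properties} to exclude the opposite representative; the paper phrases this last step as an angle condition, whereas you use $|\nabla\ell-(-\nabla\ell)|=2$, which is the same argument.
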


\begin{proof}
We now orient the vees. Fix the chosen representation at the root and suppose that $\ell_A$ has been chosen for a branching parent $A$. For a child $B$, choose either affine representative $\widetilde\ell_B$ of $V_B$. Since $\lsup4B\subset\lsup{16}{A}$, comparison through $u$ and \Cref{lem:unit-slope-vee-comparison} give
\[
\min_{\sigma=\pm1} \Bigl( \|\sigma\widetilde\ell_B-\ell_A\|_{L^\infty(\lsup4B)} +r_B|\sigma\nabla\widetilde\ell_B-a_A| \Bigr) \le Cr_B(h_B+h_A).
\]
Choose the minimizing sign and call the resulting affine function $\ell_B$. By \eqref{eq:fb-tree-active-child-flatness}, the directions of parent and child differ by at most $C\delta$. After decreasing $\delta$, this is less than $1/2$, and in particular their angle is less than $\pi/2$. The opposite representative makes an angle greater than $\pi/2$, so the choice is unambiguous. Iterating from the root defines every $\ell_B$ and proves \eqref{eq:fb-tree-predecessor-affine-comparison}.
\end{proof}

The initial root polarity is arbitrary. What matters is that it propagates consistently through the whole tree; this is what will make the later two signed domains $U_+$ and $U_-$ well defined.

For later use, set
\begin{equation}\label{eq:fb-tree-root-direction}
e_{\rm root}:=a_{B_{\rm root}} = \nabla \ell_{B_{\rm root}},
\end{equation}
which is the oriented root vector fixed by \Cref{lem:fb-tree-properties}.

\subsection{Splitting and signed decomposition}

We first turn accurate vee approximation into a two-sheet splitting and then assemble the coherently oriented sheets into a signed decomposition.

\begin{lem}[Vee splitting under a Hessian bound]
\label{lem:vee-splitting-under-hessian-bound}
For every $C_0\ge1$ there are constants $h_{\rm split}=h_{\rm split}(n,C_0)>0$ and $C_{\rm split}=C_{\rm split}(n,C_0)$ with the following property. Let $\bar u$ be a global classical solution to the Bernoulli problem in $\mathbb R^n$, with $0\in\FB(\bar u)$, such that
\[
\|D^2\bar u\|_{L^\infty(B_{3}\cap\{\bar u>0\})}\le C_0
\]
and, for some $0\le h\le h_{\rm split}$,
\[
\bar u\ge |x_n|-h\qquad\text{in }B_4 .
\]
Then $\{\bar u>0\}\cap B_{2}$ is the disjoint union of two regular components $\Omega^+_{2}$ and $\Omega^-_{2}$, where
\[
\Omega^+_{2}=\{x_n>g^+(x')\}\cap B_{2},\qquad \Omega^-_{2}=\{x_n<g^-(x')\}\cap B_{2},
\]
with $g^\pm\colon D_{2}\to\mathbb R$, $D_{2}:=\{x'\in\mathbb R^{n-1}:|x'|<2\}$, $g^-<g^+$, and
\[
\|g^+\|_{C^2(D_{2})}+\|g^-\|_{C^2(D_{2})}\le C_{\rm split}h .
\]
Moreover, for $0\le j\le2$,
\[
\|D^j(\bar u-x_n)\|_{L^\infty(\Omega^+_{2})} +\|D^j(\bar u+x_n)\|_{L^\infty(\Omega^-_{2})} \le C_{\rm split}h .
\]
\end{lem}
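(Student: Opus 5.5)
The plan is to first upgrade the one-sided lower bound $\bar u\ge|x_n|-h$ to two-sided $L^\infty$ closeness to the vee, and then invoke \Cref{cor:vee-plus-hessian-implies-regularity} at scale $\varrho=2$, after rescaling so that the balls fit.

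\emph{Step 1: upper bound.} Since $0\in\FB(\bar u)$ and $|\nabla\bar u|\le1$ by \Cref{lem:lipschitz-bound}, we have $\bar u(x)\le|x|$, but this is not enough. Instead we use the lower bound to locate the free boundary: every point of $\FB(\bar u)\cap B_4$ satisfies $|x_n|\le h$, so the slab $\{|x_n|>h\}\cap B_4$ lies in $\{\bar u>0\}$. Fix $x\in B_{3}$ with $x_n>h$. On the upper region $\{x_n>h\}\cap B_{3}$ the Hessian bound $|D^2\bar u|\le C_0$ holds. By \Cref{lem:nondegeneracy-of-stable-solutions}-type reasoning (or simply continuity), the free boundary point reached by moving downward from $x$ along $-e_n$ lies within distance $Ch$ of $\{x_n=0\}$, because the lower bound and $|\nabla\bar u|\le1$ force $\bar u$ to vanish somewhere there. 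Hence $\bar u(x)\le x_n+Ch$ by the $1$-Lipschitz bound along that vertical segment. The delicate case is that $\bar u$ might not vanish below $x$ in $B_4$ (the positive phase could connect across the slab). This is ruled out because $0\in\FB(\bar u)$ together with the curvature bound $C_0$ makes $\FB(\bar u)$ a $C^{1,1}$ hypersurface of controlled size near $0$. Combined with the slab confinement $|x_n|\le h$ and smallness of $h$, its normal must be $\pm e_n$ up to $O(\sqrt{h})$, so the sheet extends across $B_3$ inside the slab. Continuing this argument across $B_3$ gives one such sheet, and symmetrically on the other side a second sheet, since $\bar u\ge|x_n|-h$ on both sides.

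\emph{Step 2: apply the corollary.} With $\|\bar u-V_{0,e_n}\|_{L^\infty(B_{3})}\le C\sqrt h$ (or $Ch$) and $|D^2\bar u|\le C_0$ in $B_3$, we apply \Cref{cor:vee-plus-hessian-implies-regularity} with $\varrho=3/2$ (after a harmless rescaling) and $C_1=C C_0$. This yields the graph description $\{x_n>g^+\}\cup\{x_n<g^-\}$ in $B_2$, with $\|g^\pm\|_{L^\infty}+\|D^2g^\pm\|\le C\eps$.

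\emph{Step 3: linear-in-$h$ estimates.} To obtain the sharp $C_{\rm split}h$ bounds, we observe that on each sheet $\Omega^\pm$ the function $\bar u\mp x_n$ is harmonic, vanishes to first order in a controlled way, and is $O(h)$ in sup norm, since $|g^\pm|\le h$ by the slab and Lipschitz bounds. The boundary Schauder estimates for the flat one-phase problem (\Cref{lem:epsilon-regularity} applied on each component at unit scale, centered at free boundary points) then give $C^2$ bounds of order $h$ for both $\bar u\mp x_n$ and $g^\pm$. Finally, $g^-<g^+$ follows from disjointness of the two components.

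I expect the main obstacle to be Step 1: proving that the positive phase genuinely splits into two components, with no reconnection through the slab inside $B_2$. The Hessian bound is what rules this out, since any neck joining the sheets within a slab of width $2h$ would need curvature of order $1/h\gg C_0$.
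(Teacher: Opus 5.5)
Your architecture (two-sided closeness, then \Cref{cor:vee-plus-hessian-implies-regularity}, then linear bounds from \Cref{lem:epsilon-regularity}) matches the paper's. However, Step 2 fails as written. The corollary takes its hypotheses on $B_{2\varrho}$ and concludes only on $B_\varrho$. Since the Hessian bound is only available on $B_3$, you are forced to take $\varrho\le 3/2$, which gives the two-graph description in $B_{3/2}$, not $B_2$; no rescaling changes this ratio. The paper instead applies the corollary in $B_{2s}(z)$ for every $z\in\{x_n=0\}\cap B_{5/2}$, with a small fixed $s$, and patches the local upper and lower graphs using their vertical ordering. Because these balls cover the slab containing the free boundary in $B_{5/2}$, this also excludes any further sheet. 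For this it needs $\|\bar u-|x_n|\|_{L^\infty(B_3)}\ll s$, which it obtains by compactness. A limit satisfies $\bar u_\infty\ge|x_n|$, and it equals $|t|$ at $te_n$ because $0\in\FB(\bar u_i)$ and $|\nabla\bar u_i|\le1$. The strong maximum principle for the nonnegative harmonic functions $\bar u_\infty\mp x_n$ on the upper and lower half-balls then gives $\bar u_\infty=|x_n|$. The Hessian bound is not used in this step.

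Your Step 1 is a genuinely different route to that closeness and can be completed, but not as written. The lower bound and $|\nabla\bar u|\le1$ alone do not force $\bar u$ to vanish below $x$. The curvature repair is the right idea, but it must be run at every free-boundary point, not just at $0$. Since $|D^2\bar u|\le C_0$ and $|\nabla\bar u|=1$, the second fundamental form of $\FB(\bar u)\cap B_3$ is bounded by $C_0$. Confinement to $\{|x_n|\le h\}$ then tilts the normal by at most $O(\sqrt{C_0h})$ at every free-boundary point at distance $\gtrsim\sqrt{h/C_0}$ from $\partial B_3$. A continuation (covering) argument then makes the component through $0$ a single graph $g_0$ over a disk containing $D_{11/4}$. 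At that point you need neither the ``second sheet'' nor any phase information: the Lipschitz bound gives $\bar u(x)\le|x_n-g_0(x')|\le|x_n|+h$. This closeness is linear in $h$, which is what your route buys over compactness.

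What the curvature of $\FB(\bar u)$ cannot do is exclude extra sheets: two flat parallel sheets bounding a thin positive layer inside the slab have zero curvature. Your remark that necks need curvature $1/h$ addresses reconnection, which the normal estimate already rules out, not these layers. They are excluded by the Hessian bound on $\bar u$ itself: across such a layer $\partial_n\bar u$ goes from about $1$ to about $-1$ over height $\le2h$, forcing $|D^2\bar u|\gtrsim 1/h>C_0$. Alternatively, the small-scale corollary excludes them, as in the paper.

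Finally, in Step 3, apply \Cref{lem:epsilon-regularity} at a small radius $r_0$ inside the buffer between $B_2$ and $B_{5/2}$, to the solution obtained by extending one local component by zero. Unit-scale balls centered on $\FB(\bar u)\cap B_2$ leave the region where the two-sheet structure is known.
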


\begin{proof}
We first observe that there is a modulus $\omega$, depending only on $n$ and $C_0$, such that
\begin{equation}\label{eq:vee-splitting-rough-linfty}
\|\bar u-|x_n|\|_{L^\infty(B_{3})} \le \omega(h), \qquad \omega(h)\downarrow0\quad\text{as }h\downarrow0 .
\end{equation}
Indeed, otherwise take $h_i\downarrow0$ and solutions $\bar u_i$ violating \eqref{eq:vee-splitting-rough-linfty}. By Lipschitz compactness and standard Bernoulli compactness, a subsequence converges locally uniformly to a solution $\bar u_\infty$. The lower bound gives $\bar u_\infty\ge |x_n|$. Since $0\in\FB(\bar u_i)$ and $|\nabla\bar u_i|\le1$, we also have $\bar u_\infty(te_n)\le |t|$; hence equality holds on the vertical segment. In $B_{3}\cap\{x_n>0\}$, the function $\bar u_\infty-x_n$ is nonnegative and harmonic, and it vanishes on an interior segment. The strong maximum principle gives $\bar u_\infty=x_n$ in the upper half-ball. The same argument applied to $\bar u_\infty+x_n$ in the lower half-ball gives $\bar u_\infty=-x_n$ there. Therefore $\bar u_i\to |x_n|$ uniformly in $B_{3}$, a contradiction.

Fix a dimensional $s\in(0,1/8)$. If $z\in\{x_n=0\}\cap B_{5/2}$, then \eqref{eq:vee-splitting-rough-linfty} and the Hessian bound allow us to apply \Cref{cor:vee-plus-hessian-implies-regularity} in $B_{2s}(z)$, provided $\omega(h_{\rm split})\ll s$. Hence, in $B_s(z)$, the free boundary is the union of one upper and one lower regular graph. Their vertical ordering identifies the two heights on overlaps, so the local graphs patch into functions $g^-<g^+$. The same balls cover the slab containing the whole free boundary, and therefore exclude any additional sheet. Consequently,
\[
\{\bar u>0\}\cap B_{5/2} =\Omega^+_{5/2}\mathbin{\dot\cup}\Omega^-_{5/2}, \qquad \Omega^+_{5/2}=\{x_n>g^+(x')\}\cap B_{5/2},\qquad \Omega^-_{5/2}=\{x_n<g^-(x')\}\cap B_{5/2},
\]
and the small $C^1$ norm, together with the buffer between $B_2$ and $B_{5/2}$, makes both restrictions to $B_2$ connected. The local graph estimates and the interpolation bound
\[
|\nabla g(x'_0)| \le C\bigl(s^{-1}\|g\|_{L^\infty}+s\|D^2g\|_{L^\infty}\bigr)
\]
give
\begin{equation}\label{eq:vee-splitting-rough-graphs}
\|g^+\|_{C^2(D_{5/2})}+\|g^-\|_{C^2(D_{5/2})} \le C\omega(h).
\end{equation}

At every point $(x',g^\pm(x'))$ of either graph, the one-sided hypothesis gives $|g^\pm(x')|\le h$. If $x\in\Omega^+_{5/2}$, the same lower bound and the Lipschitz estimate from the vertical boundary point give
\[
x_n-h\le\bar u(x)\le x_n+h.
\]
Similarly, $|\bar u+x_n|\le h$ in $\Omega^-_{5/2}$. Fix $r_0>0$ smaller than the buffer between $B_2$ and $B_{5/2}$. In the balls $B_{2r_0}$ centered on the sheet portions meeting $B_2$, extend only the corresponding local component by zero across its graph. This is a classical one-sheet solution, and after rescaling it is $C h/r_0$-close to the appropriate half-space. Applying \Cref{lem:epsilon-regularity} in these balls, and ordinary interior harmonic estimates away from the sheets, gives
\begin{equation}\label{eq:vee-splitting-linear-slab}
\|g^+\|_{C^2(D_{2})}+\|g^-\|_{C^2(D_{2})} \le Ch,
\end{equation}
and the stated estimates for both $\bar u-x_n$ and $\bar u+x_n$.
\end{proof}

The splitting estimate in \Cref{lem:vee-splitting-under-hessian-bound} also converts one-sided vee trapping into quantitative control of the intrinsic neck radius.
\begin{lem}[One-sided vee trapping controls the neck radius]
\label{lem:one-sided-vee-trapping-controls-neck-radius}
There is a dimensional constant $C$ with the following property. Let $\zz\in\cZ$, and suppose that for some $e\in\mathbb S^{n-1}$ and $h>0$,
\[
u\ge V_{\zz,e}-h \qquad\text{in }B_{4\rb(\zz)}(\zz).
\]
Then $\rb(\zz)\le Ch$.
\end{lem}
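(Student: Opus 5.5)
We argue by rescaling to the neck scale and invoking \Cref{lem:vee-splitting-under-hessian-bound}. Set $r:=\rb(\zz)$ and $\bar u(x):=u(\zz+rx)/r$. Then $0\in\FB(\bar u)$, and after a rotation $e=e_n$, the hypothesis becomes
\[
\bar u\ge |x_n|-h/r\qquad\text{in }B_4 .
\]
\Cref{lem:local-comparability-of-neck-radii} with $M=4$ gives $\|D^2\bar u\|_{L^\infty(B_3\cap\{\bar u>0\})}\le C_0$ for a universal $C_0$. By the definition \eqref{eq:def_rB} of the threshold radius and continuity of the integral in $r$,
\[
\int_{B_1\cap\{\bar u>0\}}|D^2\bar u|^n\,dx=\eta_0^n .
\]
We then split into two cases according to whether $h/r$ is above or below $h_{\rm split}=h_{\rm split}(n,C_0)$.

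\emph{Large $h/r$.} If $h/r> h_{\rm split}$, then $r< h_{\rm split}^{-1}h$, which is the claim with $C=h_{\rm split}^{-1}$.

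\emph{Small $h/r$.} Suppose instead $h/r\le h_{\rm split}$. Then \Cref{lem:vee-splitting-under-hessian-bound} applies to $\bar u$ with $h$ replaced by $h/r$. It shows that on each of the two components $\Omega^\pm_2$ we have $\|D^2\bar u\|_{L^\infty}\le C_{\rm split}h/r$. Hence
\[
\eta_0^n=\int_{B_1\cap\{\bar u>0\}}|D^2\bar u|^n\le |B_1|\,(C_{\rm split}h/r)^n,
\]
which gives $r\le C_{\rm split}|B_1|^{1/n}\eta_0^{-1}h$. Combining both cases yields $\rb(\zz)\le Ch$, with $C=\max\{h_{\rm split}^{-1},C_{\rm split}|B_1|^{1/n}\eta_0^{-1}\}$. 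This constant is universal, since $\eta_0$ and $C_0$ are.

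The only delicate point is checking that the splitting lemma really applies. It requires $\bar u$ to be a global classical solution with $0\in\FB(\bar u)$, which holds because $\zz\in\FB(u)$. It also requires the Hessian bound on $B_3$; this follows from the comparability lemma at scale $M=4$ only if that lemma controls the Hessian on $B_{4r}(\zz)$, which it does since $\varrho=Mr$ there. We should also confirm that the Hessian mass equals exactly $\eta_0^n$ at radius $r$, rather than merely being at least $\eta_0^n$. This is true because $\int_{B_s\cap\{u>0\}}|D^2u|^n$ is continuous in $s$ under the global bound $|D^2u|\le1$. Even a lower bound would suffice for the argument, and that follows from the infimum definition together with continuity.
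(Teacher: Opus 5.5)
Your proposal is correct and is essentially the paper's proof: rescale at $\rb(\zz)$, obtain a uniform Hessian bound from \Cref{lem:local-comparability-of-neck-radii}, and apply \Cref{lem:vee-splitting-under-hessian-bound} to force the Hessian energy in $B_1$ below $\eta_0^n$, contradicting the threshold definition. The differences are cosmetic. You split directly into cases on $h/\rb(\zz)$ versus $h_{\rm split}$ and use $M=4$. The paper instead argues by contradiction with a large constant $A$ (feeding $A^{-1}$ into the splitting lemma) and uses $M=3$.
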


\begin{proof}
Set $\bar u(x):=\rb(\zz)^{-1}u(\zz+\rb(\zz)x)$. By \Cref{lem:local-comparability-of-neck-radii}, applied with $M=3$,
\[
\|D^2\bar u\|_{L^\infty(B_3\cap\{\bar u>0\})}\le C_0
\]
for a dimensional $C_0$. If $\rb(\zz)>Ah$, then, after a rotation,
\[
\bar u(x)\ge |x_n|-A^{-1}\qquad\text{in }B_4.
\]
Choose $A$ so large that (from \Cref{lem:vee-splitting-under-hessian-bound})
\[
A^{-1}\le h_{\rm split}(n,C_0), \qquad |B_1|^{1/n}C_{\rm split}(n,C_0)A^{-1}<\eta_0.
\]
Since $\bar u$ is classical and $0\in\FB(\bar u)$, \Cref{lem:vee-splitting-under-hessian-bound} gives
\[
\int_{B_1\cap\{\bar u>0\}}|D^2\bar u|^n \le |B_1|\left(\frac{C_{\rm split}}A\right)^n<\eta_0^n.
\]
This contradicts the definition of bad-radius, \Cref{defn:neck-centers}. Hence $\rb(\zz)\le Ah$.
\end{proof}

Applying \Cref{lem:one-sided-vee-trapping-controls-neck-radius} to the inherited vee approximation gives a uniform neck-radius bound on every active ball.
\begin{lem}
\label{lem:fb-tree-small-neck-radii-near-flat-active-balls}
There is a dimensional constant $C_{\rm nr}$ such that, after decreasing $\delta$ and increasing $k$, every active ball $B\in\cD^l$ and every $\zz\in\cZ\cap\lsup{8}{B}$ satisfy
\[
\rb(\zz)\le C_{\rm nr}h_B r_B .
\]
In particular, after increasing $k$,
\[
h_{B_{\rm root}}\le C\epk,\qquad \rb(\zz)\le C\epk\Rk \quad\text{for every }\zz\in\cZ\cap B_{8\Rk}(\zk).
\]
For each fixed $k$, the tree has only finitely many balls.
\end{lem}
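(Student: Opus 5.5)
The plan is to convert the $L^2$ vee closeness encoded in $h_B$ into a one-sided pointwise trapping around each nearby neck center, and then invoke \Cref{lem:one-sided-vee-trapping-controls-neck-radius}.

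\textbf{Step 1: one-sided trapping on an active ball.} Fix an active ball $B\in\cD^l$ with minimizing vee $V_B=V_{x_B,e_B}$. By Cauchy--Schwarz, $r_B^{-1}\fint_{\lsup{16}{B}}|u-V_B|\le Ch_B$. \Cref{cor:centered-vee-lower-slab}, with $\eta=Ch_B\le CC_{\rm inh}\delta$ (by \Cref{lem:fb-tree-properties}), then gives
\[
u\ge V_B-Ch_Br_B \qquad\text{in } \lsup{12}{B}.
\]
It also gives $V_B(\zz)\le Ch_Br_B$ for every $\zz\in\cZ\cap\lsup{8}{B}$, since $\zz\in\FB(u)$. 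Consequently $|V_{\zz,e_B}-V_B|\le Ch_Br_B$ everywhere, and so
\[
u\ge V_{\zz,e_B}-Ch_Br_B\qquad\text{in } \lsup{12}{B}.
\]

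\textbf{Step 2: applying the trapping lemma.} To apply \Cref{lem:one-sided-vee-trapping-controls-neck-radius} with $h=Ch_Br_B$, the ball $B_{4\rb(\zz)}(\zz)$ must lie in $\lsup{12}{B}$, which requires $\rb(\zz)\le r_B$. This is the main obstacle: a priori the neck radius might be large compared with the ball.

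At the root, the needed bound follows from \eqref{eq:rbepk}, which gives $\rb(\zz)\le C\Rk\epk^{2/(n-2)}\ll\Rk$ for $\zz\in B_{3\Rk/2}(\zk)$. For points of $\lsup{8}{B_{\rm root}}$ farther out, I would instead use \Cref{lem:upper-bound-for-nearby-neck-radii} around $\zk$ at scale $16\Rk$, which yields $\rb\le 2\Rk$. This is still too large, so I would shrink the trapping ball. It suffices to apply the argument of \Cref{lem:one-sided-vee-trapping-controls-neck-radius} to $B_{4\min\{\rb(\zz),r_B\}}(\zz)$. If $\rb(\zz)>r_B$, the rescaled function at scale $r_B$ has Hessian bounded by $C_0/r_B$; this follows from \Cref{lem:global-hessian-decay} together with \Cref{lem:local-comparability-of-neck-radii}, possibly after decreasing the ball by a fixed factor. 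It is also $Ch_B$-trapped by a vee, with $Ch_B\le C\delta\le h_{\rm split}$. \Cref{lem:vee-splitting-under-hessian-bound} then makes $\int_{B_{r_B}(\zz)}|D^2u|^n$ smaller than $\eta_0^n$. That is consistent with $\rb(\zz)>r_B$, so no contradiction arises directly.

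Instead I would argue by induction down the tree. The claim for the parent $PB$ gives $\rb(\zz)\le C_{\rm nr}h_{PB}r_{PB}\le 2C_{\rm nr}C_{\rm inh}\delta\,r_B\le r_B/4$ once $\delta$ is small, for every $\zz\in\cZ\cap\lsup{8}{B}\subset\lsup{8}{PB}$. This containment holds because $x_B\in\lsup{5/4}{PB}$ and $r_B=r_{PB}/2$, giving $\lsup{8}{B}\subset B_{(5/4+4)r_{PB}}(x_{PB})$. The base case is the root estimate above, valid after increasing $k$. With $\rb(\zz)\le r_B/4$ in hand, the lemma applies and yields $\rb(\zz)\le C_{\rm nr}h_Br_B$ with $C_{\rm nr}$ dimensional. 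This closes the induction, provided $\delta$ is chosen so that $2C_{\rm nr}C_{\rm inh}\delta\le 1/4$.

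\textbf{Step 3: the particular cases.} For the root, $h_{B_{\rm root}}^2\le C M^{e}_{16\Rk,\zk}$. Together with \eqref{eq:selected-monneau-at-refined-scale} and \Cref{prop:fixed-direction-monneau-doubling}, which passes from scale $8\Rk$ to $16\Rk$ at a constant cost, this gives $h_{B_{\rm root}}\le C\epk$. The radius bound for $\zz\in\cZ\cap B_{8\Rk}(\zk)$ then follows from the claim at the root.

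\textbf{Step 4: finiteness.} A branching ball must contain a neck center in $\lsup{4}{B}$, and that center satisfies $\rmin\le\rb(\zz)\le C_{\rm nr}h_Br_B\le C\delta r_B$. Hence branching forces $r_B\ge c\,\rmin/\delta$, so only finitely many generations occur. Each generation is finite by \eqref{eq:fb-tree-surface-cardinality}, so the tree has finitely many balls.
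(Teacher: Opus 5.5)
Your Step~1 and your inductive step (propagating $\rb(\zz)\le r_B/4$ from $PB$ to $B$ via $\lsup{8}{B}\subset\lsup{21/4}{PB}$) are exactly the paper's argument. The gap is that the induction never starts: the root case is not established. At the root, $\lsup{8}{B_{\rm root}}=B_{8\Rk}(\zk)$, but \eqref{eq:rbepk} only controls neck centers in $B_{3\Rk/2}(\zk)$. For the remaining centers you only have $\rb(\zz)\le 2\Rk$, which does not place $B_{4\rb(\zz)}(\zz)$ inside $B_{12\Rk}(\zk)$, and you note yourself that shrinking the trapping ball gives no contradiction. So ``the base case is the root estimate above'' covers only part of $\lsup{8}{B_{\rm root}}$.

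This matters for three reasons. The inductive step for a child of the root uses the root claim on $B_{21\Rk/4}(\zk)$. The balls of the first few generations (centers in $B_{\Rk}(\zk)$, radii at least $\Rk/8$) have $\lsup{8}{B}$ leaving $B_{3\Rk/2}(\zk)$. And the ``in particular'' bound on $B_{8\Rk}(\zk)$ concerns precisely these outer centers. Your Step~3 inherits the same gap.

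What is missing is a bound $\rb(\zz)\le\rho\Rk$, with $\rho$ small, on a ball around $\zk$ of radius comparable to $8\Rk$. The paper obtains it from \Cref{lem:local-comparability-of-neck-radii} with $M=32/\rho$. Suppose some $\zz\in\cZ\cap B_{32\Rk}(\zk)$ had $\rb(\zz)\ge\rho\Rk$. Then $|\zk-\zz|<32\Rk\le M\rb(\zz)$, so $\rb(\zk)\ge \rb(\zz)/C_M\ge \rho\Rk/C_M$. This contradicts $\rb(\zk)/\Rk\to0$ once $k$ is large.

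Alternatively, your own bound $\rb(\zz)\le 2\Rk$ on $B_{12\Rk}(\zk)$ is exactly what is needed to make \eqref{eq:rbeps} applicable with base center $\zk$ and $R=6\Rk$. Combined with \Cref{prop:fixed-direction-monneau-doubling} and \eqref{eq:selected-monneau-at-refined-scale}, this gives $\rb(\zz)\le C\Rk\epk^{2/(n-2)}$ on $B_{9\Rk}(\zk)$.

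With either fix, Step~1 applies at the root for every $\zz\in\cZ\cap B_{8\Rk}(\zk)$. Your induction, the root estimate $h_{B_{\rm root}}\le C\epk$, and your finiteness argument then go through as written.
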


\begin{proof}
\medskip
\noindent\emph{Step 1: The conditional estimate.}
Fix $\rho\in(0,1/16]$, let $B\in\cD^l$, and suppose that $\zz\in\cZ\cap\lsup{8}{B}$ satisfies $s:=\rb(\zz)\le\rho r_B$. By \Cref{cor:centered-vee-lower-slab},
\begin{equation}\label{eq:fb-tree-linear-one-sided-from-hB}
u\ge V_B-Ch_Br_B \qquad\text{in }\lsup{12}{B}.
\end{equation}
The slab conclusion of \Cref{cor:centered-vee-lower-slab} at $\zz$ gives
\[
|\ell_B(\zz)|\le Ch_Br_B .
\]
Since $B_{4s}(\zz)\subset\lsup{12}{B}$ and $\lvert V_B-V_{\zz,a_B}\rvert\le|\ell_B(\zz)|$, it follows that
\[
u\ge V_{\zz,a_B}-Ch_Br_B \qquad\text{in }B_{4s}(\zz).
\]
Thus \Cref{lem:one-sided-vee-trapping-controls-neck-radius} yields
\begin{equation}\label{eq:fb-tree-conditional-small-neck-radius}
\rb(\zz)\le C_{\rm nr}h_Br_B
\end{equation}
for a dimensional constant $C_{\rm nr}$.

\medskip
\noindent\emph{Step 2: Propagation down the tree.}
Decrease $\delta$ so that
\[
2C_{\rm nr}\delta\le\rho.
\]
We show that, after increasing $k$,
\[
\rb(\zz)\le C_{\rm nr}h_Br_B \quad\text{for every }B\in\cD^l \text{ and every }\zz\in\cZ\cap\lsup{8}{B}.
\]

First consider the root $B_{\rm root}=B_{\Rk}(\zk)$. By the root initialization in \Cref{defn:fb-tree-branching-and-terminal-balls}, after increasing $k$, $h_{B_{\rm root}}<\delta$.

Let $C_{32/\rho}$ be the constant in \Cref{lem:local-comparability-of-neck-radii} with $M=32/\rho$. Since $\rb(\zk)/\Rk\to0$, we may increase $k$ so that
\[
\rb(\zk)<\frac{\rho}{C_{32/\rho}}\Rk .
\]
If $\zz\in\cZ\cap B_{32\Rk}(\zk)$ and $\rb(\zz)\ge\rho\Rk$, then
\[
|\zz-\zk|\le32\Rk\le \frac{32}{\rho}\rb(\zz).
\]
Applying \Cref{lem:local-comparability-of-neck-radii} with center $\zz$, nearby center $\zk$, and $M=32/\rho$, gives
\[
\rb(\zk)\ge \frac{\rb(\zz)}{C_{32/\rho}} \ge\frac{\rho}{C_{32/\rho}}\Rk,
\]
a contradiction. Hence
\[
\rb(\zz)<\rho\Rk \qquad\text{for every }\zz\in\cZ\cap B_{32\Rk}(\zk).
\]
Step 1 applied to $B_{\rm root}$ gives the desired bound at generation $0$.

Assume now that the desired bound holds for every ball in generation $l$. Let $B'\in\cD^{l+1}$, and let $B:=PB'$. Since $x_{B'}\in\lsup{5/4}{B}$ and $r_{B'}=r_B/2$, we have
\[
\lsup{8}{B'}\subset\lsup{21/4}{B}\subset\lsup{8}{B}.
\]
Thus the induction hypothesis gives, for every $\zz\in\cZ\cap\lsup{8}{B'}$,
\[
\rb(\zz)\le C_{\rm nr}h_Br_B \le 2C_{\rm nr}\delta r_{B'} \le \rho r_{B'} .
\]
Here $B$ is branching, and hence $h_B<\delta$. Applying Step 1 to $B'$ proves the desired estimate at generation $l+1$.

Finally, the vee-excess term in $\bbE_{\zk}(16\Rk)$, the fixed-scale propagation \Cref{prop:fixed-direction-monneau-doubling}, and \eqref{eq:selected-monneau-at-refined-scale} give
\[
h_{B_{\rm root}}\le C\bbE_{\zk}(16\Rk) \le C\bbE_{\zk}(8\Rk) \le C\epk .
\]
Applying \eqref{eq:fb-tree-conditional-small-neck-radius} to $B_{\rm root}$ gives
\[
\rb(\zz)\le C\epk\Rk \qquad\text{for every }\zz\in\cZ\cap B_{8\Rk}(\zk).
\]
Finally, since $\rmin > 0$, the whole tree is finite.
\end{proof}

At a regular terminal ball, \Cref{defn:fb-tree-branching-and-terminal-balls,lem:global-hessian-decay} supply the Hessian control needed to resolve the vee into two signed sheets.
\begin{lem}[Regular terminal balls split into two signed sheets]
\label{lem:fb-tree-regular-terminal-balls-split-into-two-signed-sheets}
If $\delta$ is sufficiently small, there is a dimensional constant $C$ such that the following holds. Let $B\in\cR$. Then $\{u>0\}\cap\lsup{3/2}{B}$ is the disjoint union of two connected components $\lsup{3/2}{B^+}$ and $\lsup{3/2}{B^-}$, characterized by
\[
\{\ell_B>Ch_Br_B\}\cap\lsup{3/2}{B}\subset \lsup{3/2}{B^+}, \qquad \{\ell_B<-Ch_Br_B\}\cap\lsup{3/2}{B}\subset \lsup{3/2}{B^-}.
\]
Moreover the two sheets are $C h_B$-Lipschitz graphs over $\{\ell_B=0\}$ and
\[
|\nabla u-a_B|\le Ch_B\quad\text{in }\lsup{3/2}{B^+},\qquad |\nabla u+a_B|\le Ch_B\quad\text{in }\lsup{3/2}{B^-}.
\]
\end{lem}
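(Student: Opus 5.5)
The plan is to rescale $B$ to unit size and apply \Cref{lem:vee-splitting-under-hessian-bound}. Set $r=r_B$ and $x_0=x_B$, and rotate so that $a_B=e_n$. Define $\bar u(x):=r^{-1}u(x_0+rx/2)$, scaling by $r/2$ so that $B_3$ for $\bar u$ corresponds to $\lsup{3/2}{B}$ and $B_4$ to $\lsup{2}{B}$. Then $0\in\FB(\bar u)$ because $x_B\in\FB(u)$. Two hypotheses of the splitting lemma must be checked: a Hessian bound on $B_3$, and one-sided vee trapping on $B_4$.

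\emph{Hessian bound.} Since $B$ is regular, $\lsup{4}{B}\cap\cZ=\varnothing$, so $\dist(x,\cZ)\ge 4r-\frac32 r\ge 2r$ for $x\in\lsup{3/2}{B}$. \Cref{lem:global-hessian-decay} then gives $|D^2u|\le C/r$ on $\lsup{3/2}{B}\cap\{u>0\}$, and after rescaling $\|D^2\bar u\|_{L^\infty(B_3\cap\{\bar u>0\})}\le C_0$ with $C_0$ dimensional.

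\emph{Trapping.} We have $h_B\le C_{\rm inh}\delta$ by \Cref{lem:fb-tree-properties}, and the centered minimizer satisfies $V_B=|\ell_B|$ with $\ell_B(x_B)=0$. Cauchy--Schwarz turns $h_B$ into the $L^1$ hypothesis of \Cref{cor:centered-vee-lower-slab} with $\eta=Ch_B$, giving $u\ge V_B-Ch_Br$ in $\lsup{12}{B}$. Rescaled, this reads $\bar u\ge|x_n|-Ch_B$ in $B_4$. For $\delta$ small we have $Ch_B\le h_{\rm split}(n,C_0)$, so the lemma applies with $h=Ch_B$.

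\Cref{lem:vee-splitting-under-hessian-bound} then writes $\{\bar u>0\}\cap B_2$ as the disjoint union of two connected graph regions $\Omega^\pm_2$, bounded by graphs $g^\pm$ with $\|g^\pm\|_{C^2}\le Ch_B$, and with $|\nabla\bar u\mp e_n|\le Ch_B$ on $\Omega^\pm_2$. We need this on $B_3$, i.e.\ all of $\lsup{3/2}{B}$. The easiest fix is to change the scaling to $\bar u(x)=r^{-1}u(x_0+3rx/4)$, so that $B_2$ corresponds exactly to $\lsup{3/2}{B}$. Then $B_3$ corresponds to $\lsup{9/4}{B}$, where the distance to $\cZ$ is still at least $\frac74 r$, and $B_4$ corresponds to $\lsup{3}{B}\subset\lsup{12}{B}$. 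So both hypotheses persist with dimensional constants.

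Undoing the scaling, set $\lsup{3/2}{B^\pm}$ to be the images of $\Omega^\pm_2$. Gradients are invariant under this rescaling, so the gradient estimate transfers directly to $|\nabla u\mp a_B|\le Ch_B$. The graph bound $\|g^\pm\|_{C^1}\le Ch_B$ gives the $Ch_B$-Lipschitz graph property over $\{\ell_B=0\}$. It also gives $|g^\pm|\le Ch_B$ in rescaled units, i.e.\ $Ch_Br$ in original units, which yields the containments $\{\ell_B>Ch_Br\}\cap\lsup{3/2}{B}\subset\lsup{3/2}{B^+}$ and the analogous one for $B^-$. Connectedness of the two components comes from the splitting lemma.

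The main point requiring care is getting the scale bookkeeping right. The Hessian control from regularity must reach the ball on which splitting is needed, so the splitting has to be invoked on a ball contained in $\lsup{4}{B}$ at positive distance from $\cZ$. This is what forces the $3/4$ rescaling rather than $1/2$. No further obstacle is expected; everything else is a direct application of the earlier lemmas.
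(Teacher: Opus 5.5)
Your proposal is correct and is essentially the paper's proof. You rescale around $x_B$, get the Hessian bound from $\lsup{4}{B}\cap\cZ=\varnothing$ and \Cref{lem:global-hessian-decay}, get $u\ge V_B-Ch_Br_B$ from \Cref{cor:centered-vee-lower-slab}, and apply \Cref{lem:vee-splitting-under-hessian-bound}. The paper does the same but scales by $r_B$: it has the Hessian bound on $\lsup{7/2}{B}$, splits on $\lsup{2}{B}$, and then restricts to $\lsup{3/2}{B}$. So your factor $3/4$ is not forced; it is only convenient because it avoids the restriction step.

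One slip to fix: the rescaling must be $\bar u(x)=(3r/4)^{-1}u(x_0+3rx/4)$, not $r^{-1}u(x_0+3rx/4)$. With your formula $|\nabla\bar u|=3/4$ on $\FB(\bar u)$, so $\bar u$ is not a Bernoulli solution, and your claim that gradients are invariant under the rescaling fails.
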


\begin{proof}
Normalize so that $B=B_1$, $x_B=0$, and $\ell_B(x)=x_n$, and write $h=h_B$. Since $B$ is regular terminal, \Cref{lem:global-hessian-decay} gives a dimensional Hessian bound in $B_{7/2}\cap\{u>0\}$, while \Cref{cor:centered-vee-lower-slab} gives
\[
u\ge |x_n|-Ch\qquad\text{in }B_8.
\]
Moreover $h\le C_{\rm inh}\delta$ by \eqref{eq:fb-tree-active-child-flatness}. The conclusion is therefore exactly \Cref{lem:vee-splitting-under-hessian-bound}, scaled back to $B$.
\end{proof}

We next rule out holes along branching ridges and make the local orientations compatible across the tree.
\begin{lem}[No-hole and orientation compatibility]
\label{lem:fb-tree-orientation-compatibility}
After decreasing $\delta$ and increasing $k$, the following hold.
\begin{enumerate}[(i)]
\item For every branching ball $B\in\cB$,
\[
B\cap\{V_B=0\}\cap B_{3\Rk/4}(\zk) \subset \bigcup_{B'\in\operatorname{Ch}(B)}\lsup{1/2}{B'}.
\]
\item There is a dimensional constant $C$ such that, if $D,E\in\cD^l$ satisfy $\lsup{8}{D}\cap\lsup{8}{E}\ne\varnothing$, then
\begin{equation}\label{eq:signed-decomposition-same-generation-compatibility}
|a_D-a_E|\le C\delta, \qquad \|\ell_D-\ell_E\|_{L^\infty(\lsup{8}{D}\cap\lsup{8}{E})} \le C\delta r_l .
\end{equation}
\end{enumerate}
\end{lem}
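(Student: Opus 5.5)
Both parts rest on one observation: every active ball $B$ satisfies $h_B\le C_{\rm inh}\delta$ by \eqref{eq:fb-tree-active-child-flatness}. Hence \Cref{cor:centered-vee-lower-slab}, with $\eta=Ch_B$ via Cauchy--Schwarz, applies on $\lsup{16}{B}$. It gives $u\ge V_B-C\delta r_B$ in $\lsup{12}{B}$, traps the free boundary in the slab $\{V_B\le C\delta r_B\}$, and puts the two outer regions $\{\pm\ell_B>C\delta r_B\}$ inside $\{u>0\}$.

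For (ii), take $D,E\in\cD^l$ with $\lsup8D\cap\lsup8E\ne\varnothing$. Then $|x_D-x_E|\le16r_l$, so both $\lsup{8}{D}$ and $\lsup{8}{E}$ lie in $B_{24r_l}(x_D)\subset\lsup{16}{D}$, and likewise for $E$. I would compare the two vees through $u$ on the common region $\lsup8D\cap\lsup8E$, or better on a ball of radius $\simeq r_l$ inside $\lsup{16}{D}\cap\lsup{16}{E}$ containing it:
\[
\int|V_D-V_E|^2\le 2\int|u-V_D|^2+2\int|u-V_E|^2\le Cr_l^{n+2}\delta^2 .
\]
\Cref{lem:unit-slope-vee-comparison} then gives $\min_\sigma\bigl(|a_D-\sigma a_E|+r_l^{-1}\|\ell_D-\sigma\ell_E\|_{L^\infty}\bigr)\le C\delta$, and the sup bound transfers to $\lsup8D\cap\lsup8E$ by the affine structure. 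It remains to show $\sigma=+1$. I would trace the chains of predecessors $P^jD$ and $P^jE$ back until they meet; at worst they meet at the root. At each generation the two ancestors are again overlapping, since $x_{PB}$ lies within $\tfrac54 r_{PB}$ of $x_B$, so the overlap condition persists upward with a bounded enlargement. This is the delicate point: a naive induction accumulates error. Instead I would induct on generation with the following statement: overlapping balls in the same generation have a comparison sign, and that sign equals $+1$. The induction step uses \eqref{eq:fb-tree-predecessor-affine-comparison}: $a_D$ is $C\delta$-close to $a_{PD}$, $a_E$ is $C\delta$-close to $a_{PE}$, and $a_{PD}$ is $C\delta$-close to $a_{PE}$ by the inductive hypothesis (unsigned closeness holds with a fixed constant, so no accumulation occurs). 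This forces $|a_D-a_E|<1$, which excludes $\sigma=-1$ because $|a_D+a_E|$ would then be close to $2$. The constant stays uniform because at each generation it is recomputed directly from $h\le C_{\rm inh}\delta$; only the sign is inherited. If parents are at distance up to $\sim 19r_{l-1}$ rather than $16r_{l-1}$, I would state (ii) internally with $\lsup{16}{}$ or $\lsup{20}{}$ enlargements, which costs only constants.

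For (i), fix $p\in B\cap\{V_B=0\}\cap B_{3\Rk/4}(\zk)$. The plan is to find a free boundary point $q$ with $|q-p|\le C\delta r_B$. Then $q\in\FB(u)\cap B_{\rm root}\cap\lsup{9/8}{B}$ once $C\delta<1/8$, so \eqref{eq:fb-tree-active-cover-near-branching} gives a child $B'$ with $q\in\lsup{1/4}{B'}$, and $|p-x_{B'}|\le r_{B'}/4+C\delta r_B<r_{B'}/2$. Producing $q$ is the main obstacle, because a hole in the ridge is exactly the situation to exclude. I would use \Cref{lem:consequences-of-closeness-to-a-vee}(b): it requires $L^\infty$ closeness of $u$ to $V_B$ on $\lsup{12}{B}$. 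The lower bound is already available, and the upper bound $u\le V_B+C\delta r_B$ follows from $L^2$ closeness plus the Lipschitz bound, since a point with excess $t$ produces a ball of radius $t$ carrying $L^2$ mass $t^{n+2}$. With that closeness, (b) gives $\dist(p,\{u=0\})\le C\delta r_B$. The nearest zero point lies in $\FB(u)$ or in the interior of $\{u=0\}$, and in the latter case the free boundary is no farther away. Finally, $p\in B_{3\Rk/4}(\zk)$ together with $C\delta r_B\ll\Rk/4$ keeps $q$ inside $B_{\rm root}$.
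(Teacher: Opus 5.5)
Your proposal is correct up to a few small repairs. Part (ii) is essentially the paper's argument, while part (i) takes a different route.

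\textbf{Part (ii).} As in the paper, you recompute unsigned closeness at each generation by comparing both vees with $u$ and applying \Cref{lem:unit-slope-vee-comparison}. Only the sign is passed down, by induction through \eqref{eq:fb-tree-predecessor-affine-comparison}. Two small corrections:
\begin{itemize}
\item The inclusion $B_{24r_l}(x_D)\subset\lsup{16}{D}$ is false. Your ``better'' choice does work: the ball $B_{8r_l}\bigl(\tfrac12(x_D+x_E)\bigr)$ lies in $\lsup{16}{D}\cap\lsup{16}{E}$ and contains $\lsup{8}{D}\cap\lsup{8}{E}$.
\item No enlargement is needed in the induction. Since $|x_{PD}-x_{PE}|<16r_l+5r_l=\tfrac{21}{2}r_{l-1}$, the parents satisfy the same overlap hypothesis at generation $l-1$.
\end{itemize}

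\textbf{Part (i).} The paper argues by contradiction. If $B_{2^{-6}r_B}(y)\cap\FB(u)=\varnothing$, then $u$ is harmonic (possibly $\equiv0$) in that ball. This is incompatible with $h_B<\delta$ once $\delta$ is small relative to the fixed ratio $2^{-6}$. The cover \eqref{eq:fb-tree-active-cover-near-branching} then concludes exactly as in your last step.

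You instead locate a free-boundary point quantitatively, using \Cref{lem:consequences-of-closeness-to-a-vee}(b) after upgrading $L^2$ to $L^\infty$ closeness. Two points need fixing:
\begin{itemize}
\item The upgrade only gives $|u-V_B|\le C\delta^{2/(n+2)}r_B$ on $\lsup{12}{B}$, not $C\delta r_B$. An excess $t$ at a point persists on a ball of radius $\simeq t$, so you get $t^{n+2}\lesssim\delta^2r_B^{n+2}$. This is harmless, since only smallness is used.
\item The case where $p$ lies in the interior of $\{u=0\}$ is not handled by ``the free boundary is no farther away.'' This is in fact the typical situation on a ridge between two separated sheets. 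Use the slab inclusion \eqref{closenessepzero} from the same lemma, with $r_1=12r_B$ and $\eps\simeq\delta^{2/(n+2)}$. The point $p+2\eps r_1a_B$ lies in $B_{r_1}(x_B)$ and outside that slab, hence in $\{u>0\}$. So the segment joining it to $p$ meets $\FB(u)$ within distance $2\eps r_1$.
\end{itemize}

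\textbf{Comparison.} With these repairs, your route gives the explicit bound $\dist(p,\FB(u))\le C\delta^{2/(n+2)}r_B$, at the cost of importing the cited lemma and the $L^\infty$ upgrade. The paper's contradiction argument is self-contained but purely qualitative in $\delta$.
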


\begin{proof}
We first prove the no-hole projection. Fix $B\in\cB^l$, set $\rho=2^{-6}$, and let $y\in B\cap\{V_B=0\}\cap B_{3\Rk/4}(\zk).$ If $B_{\rho r_B}(y)\cap\FB(u)\ne\varnothing$, choose $p\in B_{\rho r_B}(y)\cap\FB(u)$. Then $p\in B_{\rm root}$ and $p\in\lsup{9/8}{B}$. By \eqref{eq:fb-tree-active-cover-near-branching}, there is $B'\in\operatorname{Ch}(B)$ such that $|p-x_{B'}|\le\frac14r_{l+1}=\frac18r_B,$ and hence $y\in \lsup{1/2}{B'}$. Thus, if $y$ is not contained in the union in (i), then
\[
B_{\rho r_B}(y)\cap\FB(u)=\varnothing.
\]

Rescaling by $r_{B}$, translating by $y$, and rotating so that the vee becomes $|x_n|$, we have that $u$ is harmonic in $B_\rho(0)$, while $h_{B}<\delta$. Together with the Lipschitz bounds for $u$ and the vee, this is a contradiction, for $\delta$ small (depending on $\rho$).

It remains to prove \eqref{eq:signed-decomposition-same-generation-compatibility}. We argue by induction on the generation $l$. At $l=0$, $\cD^0=\{B_{\rm root}\}$, so the claim holds with $D=E=B_{\rm root}$. Assume the claim at level $l-1$, and let $D,E\in\cD^l$ satisfy $\lsup{8}{D}\cap\lsup{8}{E}\ne\varnothing$.

All active balls have $h\le C\delta$ by \Cref{lem:fb-tree-properties}. The overlap of $\lsup{16}{D}$ and $\lsup{16}{E}$ contains a ball of radius $cr_l$. Comparing $|\ell_D|$ and $|\ell_E|$ to $u$ on that ball and using \Cref{lem:unit-slope-vee-comparison} gives
\[
\min_{\sigma=\pm1} \left( |a_D-\sigma a_E| +r_l^{-1} \|\ell_D-\sigma\ell_E\|_{L^\infty(\lsup{8}{D}\cap\lsup{8}{E})} \right) \le C\delta .
\]

To rule out the negative sign, we observe that \eqref{eq:fb-tree-predecessor-affine-comparison}, gives $|a_D-a_E|\le C\delta,$ so choosing $\delta$ small enough we necessarily have $\sigma = +1$.
\end{proof}

This compatibility allows the local sheets to be assembled into globally signed pieces.
\begin{defn}[Free-boundary tree signed pieces]
\label{defn:fb-tree-signed-pieces}
Fix
\[
c_\ast:=\frac1{16}.
\]
For a branching ball $B\in\cB$, define the fixed signed cores
\begin{equation}\label{eq:fb-tree-branching-signed-pieces}
B^\pm:=B\cap\{\pm\ell_B>c_\ast r_B\}.
\end{equation}
For $B\in\cR$, define
\[
B^\pm:=\lsup{3/2}{B^\pm}\cap B,
\]
where $\lsup{3/2}{B^\pm}$ are the two regular sheets from \Cref{lem:fb-tree-regular-terminal-balls-split-into-two-signed-sheets}. The two signed domains are
\[
U_\pm:= B_{\Rk/2}(\zk)\cap \bigcup_{B\in\cB\cup\cR}B^\pm .
\]
Finally, define the terminal neck region
\[
\mathcal X_{\rm neck}:= \overline{B_{\Rk/2}(\zk) \cap \bigcup_{B\in\cN}B}.
\]
\end{defn}

\begin{lem}
\label{lem:fb-tree-signed-decomposition}
After decreasing $\delta$ and increasing $k$, we have
\begin{equation}\label{eq:fb-tree-opposite-sign-separation}
B^+\cap C^-=\varnothing \qquad\text{for all}\quad B,C\in\cB\cup\cR.
\end{equation}
In particular, $U_+\cap U_-=\varnothing$. Moreover, there is $c>0$ such that, for every non-root $B\in\cB\cup\cR$ and each $\sigma\in\{+,-\}$, there is a ball satisfying
\begin{equation}\label{eq:fb-tree-parent-child-overlap}
B_r(x_B^\sigma)\subset (PB)^\sigma\cap B^\sigma, \qquad r\ge c r_B.
\end{equation}
Finally,
\begin{equation}\label{eq:fb-tree-signed-cover}
\bigl(\{u>0\}\cap B_{\Rk/2}(\zk)\bigr)\setminus\mathcal X_{\rm neck} \subset U_+\cup U_- \subset \{u>0\}\cap B_{\Rk/2}(\zk).
\end{equation}
\end{lem}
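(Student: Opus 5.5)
The plan is to prove the three assertions in the order \eqref{eq:fb-tree-parent-child-overlap}, \eqref{eq:fb-tree-opposite-sign-separation}, \eqref{eq:fb-tree-signed-cover}. Throughout, we use that every active ball has $h\le C_{\rm inh}\delta$ (\Cref{lem:fb-tree-properties}). We also use the affine comparisons \eqref{eq:fb-tree-predecessor-affine-comparison} and \eqref{eq:signed-decomposition-same-generation-compatibility}. On every signed core, $\pm\ell_B\gtrsim r_B$, and by \Cref{cor:centered-vee-lower-slab} the function $u$ there is $Ch_Br_B$-close to $\pm\ell_B$.

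\emph{Parent--child overlap.} Let $B$ be a non-root ball and $A=PB$, so $r_A=2r_B$ and $x_B\in\lsup{5/4}{A}$. The plan is to take $x_B^\sigma:=x_B+\sigma\frac{r_B}{2}a_B$. Then $\sigma\ell_B(x_B^\sigma)=r_B/2$. Since $|\ell_A(x_B)|\le Ch_Ar_A$ by the slab conclusion and $|a_A-a_B|\le C\delta$, we also get $\sigma\ell_A(x_B^\sigma)\ge r_B/2-C\delta r_B>c_\ast r_A+cr_B$, because $c_\ast r_A=r_B/8$. The point $x_B^\sigma$ lies in $A$ if $|x_B-x_A|<r_A-r_B/2$. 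This need not hold, since $x_B$ may only lie in $\lsup{5/4}{A}$. If it fails, I would instead take $x_B^\sigma$ to be the point $x_B+\sigma t a_B$ projected tangentially toward $x_A$ by an amount $\le r_B/4$; this keeps $\sigma\ell_B$ and $\sigma\ell_A$ of order $r_B$. A ball of radius $cr_B$ around this point then lies in both cores when $B$ is branching. When $B\in\cR$, the same ball lies in $\lsup{3/2}{B^\sigma}$, because $\sigma\ell_B>Ch_Br_B$ there and \Cref{lem:fb-tree-regular-terminal-balls-split-into-two-signed-sheets} applies. The choice of $c_\ast=1/16$ makes this geometry possible. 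The main point to verify is that $x_B\in\lsup{5/4}{A}$ still leaves room inside $A$; the free-boundary points are only $\delta r$-close to the ridge, so tangential room is available.

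\emph{Separation.} Suppose $x\in B^+\cap C^-$. For cores, $\ell_B(x)>c_\ast r_B$ and $\ell_C(x)<-c_\ast r_C$. For regular sheets, $\ell_B(x)\ge -Ch_Br_B$, and the component is pinned by the sign of $u-\ell_B\approx\pm\ell_B$. Since $u(x)>0$ and $u$ is $Ch r$-close to $|\ell|$, we have $u(x)\approx\ell_B(x)$ and $u(x)\approx-\ell_C(x)$ up to $C\delta\max(r_B,r_C)$. Hence $\ell_B(x)+\ell_C(x)\approx 0$ while $\ell_B(x)\gtrsim r_B$. If $B$ and $C$ have comparable generations, compare $\ell_B$ and $\ell_C$ through same-generation ancestors using \eqref{eq:signed-decomposition-same-generation-compatibility} and \eqref{eq:fb-tree-predecessor-affine-comparison}. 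This gives $|\ell_B-\ell_C|\le C\delta\max r$ near $x$, which contradicts $\ell_B\approx-\ell_C$ with $|\ell_B|\gtrsim r$. For very different generations, say $r_C\ll r_B$, iterate predecessors of $C$ up to generation $l_B$. The telescoped tilt errors sum geometrically to $C\delta r_B$ near $x$, which gives the same contradiction. This summation along ancestry, together with the geometric control $\lsup8{}$ overlaps needed for the same-generation lemma, is the step I expect to be the main obstacle. In the regular-sheet case, the sign identification also has to use the components rather than the affine functions.

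\emph{Cover.} The second inclusion in \eqref{eq:fb-tree-signed-cover} is immediate, since regular sheets are subsets of $\{u>0\}$ and cores satisfy $|\ell_B|>c_\ast r_B\gg h_Br_B$, hence $u>0$ there. For the first inclusion, take $x\in\{u>0\}\cap B_{\Rk/2}(\zk)\setminus\mathcal X_{\rm neck}$. Descend from the root, keeping a ball $B\ni x$ at each step. If $|\ell_B(x)|>c_\ast r_B$, then $x\in B^\pm$. Otherwise $x$ lies within $c_\ast r_B$ of the ridge of a branching $B$. The no-hole property, \Cref{lem:fb-tree-orientation-compatibility}(i), applied to the projection of $x$ onto $\{V_B=0\}$, gives a child $B'$ with $x\in B'$, since $c_\ast r_B+\frac12r_{B'}<r_{B'}$. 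Because the tree is finite (\Cref{lem:fb-tree-small-neck-radii-near-flat-active-balls}), the descent stops at a core, at a regular terminal ball, or at a neck terminal ball. The last case is excluded because $x\notin\mathcal X_{\rm neck}$. If it stops at a regular ball, then $x\in\{u>0\}\cap B=B^+\cup B^-$ by \Cref{lem:fb-tree-regular-terminal-balls-split-into-two-signed-sheets}. A terminal ball with $h_B\ge\delta$ cannot occur, because \Cref{lem:fb-tree-properties} gives the dichotomy $h\le C\delta$ only for active balls. I would handle this by noting that terminal balls with $\lsup4B\cap\cZ=\varnothing$ are regular regardless of $h_B$, and that balls seeing a neck with $h_B\ge\delta$ are classified as neck-like.
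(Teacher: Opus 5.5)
Your proof of the second inclusion in \eqref{eq:fb-tree-signed-cover}, and your cover argument, are correct and essentially the paper's Step~2. The cover argument uses the projection onto $\{V_B=0\}$, the no-hole property, the bound $c_\ast r_B+\frac12r_{B'}<r_{B'}$, and termination by finiteness of the tree. The separation step, however, has a genuine gap for regular terminal pieces.

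\textbf{Why the separation argument fails on regular sheets.} Your contradiction rests on ``$\ell_B(x)\gtrsim r_B$''. This holds on a branching core but not on a regular sheet. For $D\in\cR$, the only affine information on $D^+$ is $\ell_D>-Ch_Dr_D$, and points of $D^+$ next to the free boundary have $\ell_D(x)=O(h_Dr_D)$.
\begin{itemize}
\item For $x\in D^+\cap E^-$ with $D,E\in\cR$, the relations $u(x)\approx\ell_D(x)$, $u(x)\approx-\ell_E(x)$ and $\ell_D\approx\ell_E$ are all consistent with $\ell_D(x)\approx\ell_E(x)\approx0$, so there is no contradiction.
\item If $D\in\cR$ and $E\in\cB$ with $r_E\ll\delta r_D$, your telescoped bound $|\ell_D(x)-\ell_E(x)|\le C\delta r_D$ is compatible with both $\ell_E(x)<-c_\ast r_E$ and $\ell_D(x)>-Ch_Dr_D$.
\end{itemize}
Telescoping along predecessors does not rescue this. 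Only the values at a fixed point sum geometrically. The slopes satisfy only $|a_E-a_{P^mE}|\le Cm\delta$, so across many generations you cannot compare $\nabla u(x)$ with $\pm a_D$ and $\pm a_E$ either. Your remark that one ``has to use the components'' names the difficulty but does not resolve it.

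\textbf{The missing ingredient.} You need the comparable-scale property \eqref{eq:fb-tree-intersecting-pieces-comparable-scales}: if $D^\sigma\cap E^\tau\ne\varnothing$, then $\Lambda^{-1}r_D\le r_E\le\Lambda r_D$ with $\Lambda=2/c_\ast$.
\begin{itemize}
\item For $D$ branching, this follows from the slab bound $|\ell_D(x_E)|\le C\delta r_D$.
\item For $D$ regular, it uses the stopping rule. A much smaller active $E$ meeting $D$ satisfies $\lsup{4}{PE}\subset\lsup{4}{D}$ with $PE$ branching. Hence $\lsup{4}{D}\cap\cZ\ne\varnothing$, contradicting $D\in\cR$.
\end{itemize}
With a bounded generation gap, \eqref{eq:signed-decomposition-same-generation-compatibility} and finitely many uses of \eqref{eq:fb-tree-predecessor-affine-comparison} give $|a_D-a_E|\le\eta$ and $\|\ell_D-\ell_E\|\le\eta\min\{r_D,r_E\}$ on the overlap. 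The two remaining cases then close as follows.
\begin{itemize}
\item For $x\in D^\tau\cap E^\sigma$ with $D\in\cR$ and $E\in\cB$, one gets $\sigma\ell_D(x)\ge c_\ast r_E-\eta\min\{r_D,r_E\}\gg h_Dr_D$. The sheet characterization of \Cref{lem:fb-tree-regular-terminal-balls-split-into-two-signed-sheets} then forces $\tau=\sigma$.
\item For two regular sheets, $\nabla u(x)=\tau a_D+O(h_D)=\sigma a_E+O(h_E)$ together with $a_D\approx a_E$ forces $\tau=\sigma$.
\end{itemize}

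\textbf{A smaller, fixable slip in the parent--child step.} Suppose $|x_B-x_A|$ is close to $\frac52r_B$. Shifting $x_B+\sigma ta_B$ tangentially toward $x_A$ by at most $r_B/4$ leaves it at tangential distance at least $\frac94r_B>r_A$ from $x_A$, hence outside $A$. What is true is that $\frac52<2+1$. So the lens $B_{2r_B}(x_A)\cap B_{r_B}(x_B)$ contains a ball of radius $cr_B$ at height about $r_B/2$ above the ridge; a tangential shift of roughly $0.65\,r_B$ works. This holds uniformly by compactness of the flat configuration, which is how the paper argues.

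Also, a pointwise two-sided bound ``$|u-|\ell||\le Chr$'' is not available. Only the lower bound of \Cref{cor:centered-vee-lower-slab} holds pointwise; your core--core contradiction does not actually need the upper bound.
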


\begin{proof}
Increase $k$ so that the root ball is branching, and decrease $\delta$ so that \Cref{lem:fb-tree-regular-terminal-balls-split-into-two-signed-sheets} applies whenever $h_B\le C_{\rm inh}\delta$, and so that the tree consequence of \Cref{cor:centered-vee-lower-slab} gives
\[
u\ge V_B-\frac{c_\ast}{4}r_B \qquad\text{in }\lsup{3/2}{B}
\]
for every branching ball $B\in\cB$. Then every branching signed core lies in $\{u>0\}$, and every regular terminal ball satisfies the hypotheses of \Cref{lem:fb-tree-regular-terminal-balls-split-into-two-signed-sheets}.

\medskip
\noindent\emph{Step 1.} Choose $\Lambda=2/c_\ast=2^5$, then a small dimensional $\eta>0$
with $\eta\Lambda\ll c_\ast$.

Let us first show that intersections occur at comparable scales, for signs $\sigma,\tau\in\{+,-\}$:
\begin{equation}\label{eq:fb-tree-intersecting-pieces-comparable-scales}
D^\sigma\cap E^\tau\ne\varnothing \quad\Longrightarrow\quad \Lambda^{-1}r_D\le r_E\le\Lambda r_D.
\end{equation}
Indeed, suppose $r_E\le\Lambda^{-1}r_D$ and choose $x\in D^\sigma\cap E^\tau$. If $D$ is branching, then $\sigma\ell_D(x)>c_\ast r_D$, while $x_E\in\FB(u)$ and \Cref{cor:centered-vee-lower-slab} give
\[
|\ell_D(x_E)|\le C h_D r_D\le C\delta r_D.
\]
Notice also that $|x-x_E|\le r_D/\Lambda$, so $|\ell_D(x)-\ell_D(x_E)|\le r_D/\Lambda$. Since $1/\Lambda=c_\ast/2$ and \eqref{eq:fb-tree-branching-signed-pieces} is strict, this gives $\sigma\ell_D(x_E)>c_\ast r_D/2$, a contradiction for $\delta$ small. If $D$ is regular terminal, the scale separation and the actual intersection give $\lsup{4}{PE}\subset\lsup{4}{D}$. Since $PE$ is branching, this contradicts $\lsup{4}{D}\cap\cZ=\varnothing$. Interchanging $D$ and $E$ gives the opposite inequality and proves \eqref{eq:fb-tree-intersecting-pieces-comparable-scales}.

By the choice of $\Lambda$, $D$ and $E$ differ by at most five generations. If, say, $E$ is deeper than $D$, let $A$ be its ancestor in the generation of $D$, which satisfies
\[
\lsup{8}{A}\cap\lsup{8}{D}\ne\varnothing.
\]
Apply \eqref{eq:signed-decomposition-same-generation-compatibility} to $A,D$ and sum \eqref{eq:fb-tree-predecessor-affine-comparison} along the edges from $A$ to $E$:
\[
|a_D-a_E| + \frac{ \|\ell_D-\ell_E\|_{L^\infty( \lsup{3/2}{D}\cap\lsup{3/2}{E})} }{ \min\{r_D,r_E\} } \le C\delta.
\]
Decreasing $\delta$ so that $C\delta\le\eta$ shows
\begin{equation}\label{eq:fb-tree-comparable-scale-affine-control}
\|\ell_D-\ell_E\|_{L^\infty(\lsup{3/2}{D}\cap\lsup{3/2}{E})} \le \eta\min\{r_D,r_E\}, \qquad |a_D-a_E|\le\eta.
\end{equation}

We also need compatibility with a regular terminal sheet. Let $D\in\cR$, $E\in\cB\cup\cR$, and suppose
\[
x\in D^\tau\cap E^\sigma
\]
for signs $\sigma,\tau\in\{+,-\}$. Then $\tau=\sigma$. Indeed, \eqref{eq:fb-tree-intersecting-pieces-comparable-scales} applies. If $E\in\cB$, then $\sigma\ell_E(x)>c_\ast r_E$, and \eqref{eq:fb-tree-comparable-scale-affine-control} gives
\[
\sigma\ell_D(x) \ge c_\ast r_E-\eta\min\{r_D,r_E\}.
\]
Since the scales are $\Lambda$-comparable and $\eta\Lambda\ll c_\ast$, the right-hand side is much larger than $h_Dr_D$ after decreasing $\delta$. \Cref{lem:fb-tree-regular-terminal-balls-split-into-two-signed-sheets} then puts $x$ in the $\sigma$-sheet of $D$, so $\tau=\sigma$. If $E\in\cR$, then \Cref{lem:fb-tree-regular-terminal-balls-split-into-two-signed-sheets} gives
\[
\nabla u(x)=\tau a_D+O(h_D)=\sigma a_E+O(h_E).
\]
Equation \eqref{eq:fb-tree-comparable-scale-affine-control} gives $a_D=a_E+O(\eta)$, and therefore $\tau=\sigma$ once $\delta$ and $\eta$ are small.

We next prove the parent--child overlap \eqref{eq:fb-tree-parent-child-overlap}. Let $B\in\cB\cup\cR$ be non-root and set $A:=PB$. Since $x_B\in\lsup{5/4}{A}$ and $r_A=2r_B$, \eqref{eq:fb-tree-predecessor-affine-comparison} gives
\[
|a_B-a_A|\le C\delta,\qquad \|\ell_B-\ell_A\|_{L^\infty(\lsup4B)} \le C\delta r_B,
\]
after decreasing $\delta$. Also $x_B\in\FB(u)$, so the slab conclusion of \Cref{cor:centered-vee-lower-slab} for the branching ball $A$ gives
\[
|\ell_A(x_B)|\le C\delta r_A.
\]

It remains to prove the following geometric inclusion. Scale by $r_B$, rotate so that $a_A=e_n$, and first set $\eta=\delta=0$. The parent and child balls become $B_2(0)$ and $B_1(\xi)$, with $|\xi|\le5/2$ and $\xi_n=0$. Since $5/2<2+1$, the two balls have a uniform interior intersection: for a dimensional $c_g>0$, depending only on the fixed value $c_\ast=1/16$,
\[
B_{c_g}(q)\subset B_2(0)\cap B_1(\xi) \cap\{x_n>4c_\ast\} \cap\{x_n-\xi_n>4c_\ast\}.
\]
Compactness of the family $|\xi|\le5/2$, $\xi_n=0$, gives this uniformly. For $\eta$ and $\delta$ small, the same inclusion persists after returning to the actual affine functions. Thus, for each sign $\sigma$, there is a ball of radius $c_g r_B$ contained in
\[
\lsup{1}{A}\cap\lsup{1}{B} \cap\{\sigma\ell_A>2c_\ast r_A\} \cap\{\sigma\ell_B>4c_\ast r_B\}.
\]
If $B$ is branching, this ball is contained in $A^\sigma\cap B^\sigma$. If $B$ is regular terminal, the strict inequality $\sigma\ell_B>4c_\ast r_B$ and $h_B\le C_{\rm inh}\delta$ put the ball in the $\sigma$-regular sheet by \Cref{lem:fb-tree-regular-terminal-balls-split-into-two-signed-sheets}. Thus, taking $x_B^\sigma$ to be its center and $c=c_g$, we obtain
\[
B_{c_g r_B}(x_B^\sigma) \subset A^\sigma\cap B^\sigma =(PB)^\sigma\cap B^\sigma,
\]
which proves \eqref{eq:fb-tree-parent-child-overlap}.

We can now prove \eqref{eq:fb-tree-opposite-sign-separation}. Suppose that $x\in B^+\cap C^-$. If $B$ and $C$ are both branching, \eqref{eq:fb-tree-comparable-scale-affine-control} gives
\[
\ell_C(x) \ge \ell_B(x)-\eta\min\{r_B,r_C\}>0,
\]
contradicting $x\in C^-$. If $B$ or $C$ is regular, the actual-overlap regular compatibility proved above forces the two signs to agree, again a contradiction. This proves \eqref{eq:fb-tree-opposite-sign-separation}, and hence $U_+\cap U_-=\varnothing$.

\medskip
\noindent\emph{Step 2.}
It remains to prove \eqref{eq:fb-tree-signed-cover}. First observe that if
\[
x\in B\cap B_{\Rk/2}(\zk)\cap\{|\ell_B|\le c_\ast r_B\}
\]
for a branching ball $B\in\cB^m$, then the orthogonal projection $y=x-\ell_B(x)a_B$ lies in $B\cap\{V_B=0\}\cap B_{3\Rk/4}(\zk)$. By \Cref{lem:fb-tree-orientation-compatibility}(i), $y\in\lsup{1/2}{B'}$ for some $B'\in\operatorname{Ch}(B)$, and
\[
|x-x_{B'}| \le c_\ast r_B+\frac12r_{B'} =\frac1{16}r_B+\frac14r_B<r_{B'}.
\]
Thus every point in the fixed slab of a branching ball belongs to a local child.

An induction on $m$ gives
\begin{align}\label{eq:fb-tree-finite-level-cover}
\{u>0\}\cap B_{\Rk/2}(\zk) \subset B_{\Rk/2}(\zk)\cap\bigg[{} &\bigcup_{\substack{0\le j<m\\D\in\cB^j}}(D^+\cup D^-)
\cup\bigcup_{\substack{0\le j\le m\\D\in\cR^j}}(D^+\cup D^-)
\cup\bigcup_{\substack{0\le j\le m\\D\in\cN^j}}D
\cup\bigcup_{D\in\cB^m}D \bigg].
\end{align}
For $m=0$, the last union contains the branching root ball. Suppose the inclusion holds at level $m$ and consider a point in a ball $B\in\cB^m$. If $|\ell_B|>c_\ast r_B$, the point lies in $B^+\cup B^-$. Otherwise the projection argument above places it in a child of $B$, which is respectively recorded at level $m+1$ as a branching ball, a neck-like terminal ball, or one of the two regular pieces. This proves \eqref{eq:fb-tree-finite-level-cover}.

Now let
\[
x\in\bigl(\{u>0\}\cap B_{\Rk/2}(\zk)\bigr)\setminus\mathcal X_{\rm neck}.
\]
If $x\notin U_+\cup U_-$, then $x\notin\mathcal X_{\rm neck}$ removes the terminal-neck terms from \eqref{eq:fb-tree-finite-level-cover}; hence the same equation puts $x$ in a branching ball $B_m\in\cB^m$ for every $m$. But $x\in\{u>0\}$, so
\[
d:=\dist(x,\FB(u))>0.
\]
Every $B_m$ is centered on $\FB(u)$ and has radius $r_m\to0$, which is impossible once $r_m<d/2$. Therefore $x\in U_+\cup U_-$. The reverse inclusion in \eqref{eq:fb-tree-signed-cover} follows because every raw signed piece lies in $\{u>0\}$, while \Cref{defn:fb-tree-signed-pieces} supplies the cutoff $B_{\Rk/2}(\zk)$. This proves \eqref{eq:fb-tree-signed-cover}.
\end{proof}

\subsection{Neck scales and localized John geometry}

We next relate the terminal balls of the tree to the intrinsic neck scale and use this relation to obtain localized John geometry for the signed domains.

\begin{lem}[Recentering the vee excess]
\label{lem:fb-tree-recenter-neck-monneau}
There is a dimensional constant $C$ such that, for every $L\ge32$, $\zz\in\cZ$, $r>0$, $x\in\FB(u)$ with $|x-\zz|\le Lr/4$, and $e\in\mathbb S^{n-1}$,
\begin{equation}\label{eq:fb-tree-recenter-neck-monneau-fixed}
\frac1{r^{n+2}}\int_{B_{16r}(x)} \bigl|u(y)-|e\cdot(y-x)|\bigr|^2\,dy \le C L^{n+2}\bbE_{\zz}^2(Lr;e).
\end{equation}
\end{lem}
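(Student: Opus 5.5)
The plan is to mimic the recentering argument of \Cref{lem:large-scale-monneau-recentering}, keeping only the bulk vee term and tracking the dependence on $L$ explicitly. Set $\delta:=\bbE_\zz(Lr;e)$. Since $u(x)=0$ and $|\nabla u|\le1$, we have $|u(y)-|e\cdot(y-x)||\le 2|y-x|\le 32r$ on $B_{16r}(x)$. Hence the left side of \eqref{eq:fb-tree-recenter-neck-monneau-fixed} is always bounded by $Cr^{-n-2}r^{n+2}=C$. If $L^{n+2}\delta^2\ge c_0$ for a small dimensional $c_0$, the estimate is therefore trivial. We may thus assume $L^{(n+2)/2}\delta\le c_0$, which in particular makes $\delta$ small enough for \Cref{lem:one-sided-lower-bound} to apply.

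First I would check the inclusion $B_{16r}(x)\subset B_{Lr}(\zz)$. Since $L\ge32$, we have $|x-\zz|+16r\le Lr/4+Lr/2<Lr$, so this holds with room to spare; indeed $B_{16r}(x)\subset B_{3Lr/4}(\zz)$. The bulk comparison then follows from the triangle inequality:
\[
\int_{B_{16r}(x)}|u-V_{x,e}|^2\le 2\int_{B_{Lr}(\zz)}|u-V_{\zz,e}|^2+2|B_{16r}|\,|e\cdot(x-\zz)|^2 .
\]
The first term is at most $2(Lr)^{n+2}M^e_{Lr,\zz}\le 2L^{n+2}r^{n+2}\delta^2$, which is exactly the claimed size once divided by $r^{n+2}$.

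It remains to bound the shift $|e\cdot(x-\zz)|$. By H\"older, $(Lr)^{-1}\fint_{B_{Lr}(\zz)}|u-V_{\zz,e}|\le C\,\delta$. \Cref{lem:one-sided-lower-bound} at scale $Lr$ then gives $u\ge V_{\zz,e}-C\delta Lr$ in $B_{3Lr/4}(\zz)$. Evaluating at $x$, where $u(x)=0$ and $x\in B_{3Lr/4}(\zz)$, yields $|e\cdot(x-\zz)|\le C\delta Lr$. The second term is then at most $Cr^n\delta^2L^2r^2\le CL^{n+2}r^{n+2}\delta^2$.

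Adding the two pieces and dividing by $r^{n+2}$ gives the claim. The only delicate point is the smallness needed for \Cref{lem:one-sided-lower-bound}, which requires $\eps<1$ there. This is handled by the initial dichotomy using the trivial bound $C$. The deficit part $\Theta$ of $\bbE$ is not needed beyond the fact that it is nonnegative, so that $M\le\bbE^2$.
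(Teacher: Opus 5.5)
Your proposal is correct and follows essentially the paper's own argument: the trivial case from $u(x)=0$ and $|\nabla u|\le 1$, the bulk bound from the Monneau term $M^e_{Lr,\zz}\le \bbE_\zz^2(Lr;e)$, and control of the shift $|e\cdot(x-\zz)|\le C\,\bbE_\zz(Lr;e)\,Lr$ via a one-sided lower bound. The only cosmetic difference is that the paper gets this shift bound through \Cref{cor:centered-vee-lower-slab} at radius $Lr/16$, whereas you apply \Cref{lem:one-sided-lower-bound} directly at scale $Lr$, which works equally well.
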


\begin{proof}
Put $R=Lr$ and $E:=\bbE_\zz(R;e)$. The distance assumption gives $|x-\zz|\le R/4$, and $L\ge32$ gives
\[
B_{16r}(x)\subset B_{3R/4}(\zz).
\]
The vee-excess term in $E$ gives
\begin{equation}\label{eq:recenter-bulk}
R^{-n-2}\int_{B_R(\zz)} |u-V_{\zz,e}|^2\,dy\le C E^2.
\end{equation}

If $E\ge c$, it follows from $u(x)=0$ and $|\nabla u|\le1$, after increasing $C$. We may therefore assume $0<E<c$, with $c$ small. By Cauchy--Schwarz,
\[
\frac1{R/16}\fint_{B_R(\zz)} |u-V_{\zz,e}|\,dy \le C E<1.
\]
\Cref{cor:centered-vee-lower-slab}, applied at $\zz\in\cZ\subset\FB(u)$ with radius $R/16$, yields $|e\cdot(x-\zz)|=V_{\zz,e}(x)\le C E R.$ For $y\in B_{16r}(x)$,
\[
\bigl|V_{\zz,e}(y)-|e\cdot(y-x)|\bigr| \le |e\cdot(x-\zz)| \le C E R.
\]
Combining this with \eqref{eq:recenter-bulk}, yields the desired result.
\end{proof}

Neck-like balls have radii comparable to the threshold radii of bad centers around them:

\begin{lem}
\label{lem:fb-tree-nonfinal-neck-terminals-neck-scale}
Let $B\in\cN$ and $\zz\in\cZ\cap\lsup{4}{B}$. After decreasing $\delta$, we have
\[
C_\delta^{-1}\rb(\zz)\le r_B\le C_\delta\rb(\zz),
\]
for some constant depending on $\delta$. In particular, if $\zz\in B_{8\Rk}(\zk)$ as well, then
\[
r_B\le C_\delta\epk\Rk .
\]
\end{lem}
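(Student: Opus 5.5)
The plan is to prove the two inequalities separately and then combine the lower one with \Cref{lem:fb-tree-small-neck-radii-near-flat-active-balls}. Write $s:=\rb(\zz)$. A neck-like terminal ball $B$ has $\lsup{4}{B}\cap\cZ\neq\varnothing$, so by \eqref{eq:fb-tree-branching-condition} it is terminal only because $h_B\ge\delta$. Recall also that $h_B\le C_{\rm inh}\delta$ by \Cref{lem:fb-tree-properties}.

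\emph{Upper bound $s\le C_\delta r_B$.} Since $\zz\in\lsup{4}{B}\subset\lsup{8}{B}$ and $B$ is active, \Cref{lem:fb-tree-small-neck-radii-near-flat-active-balls} gives
\[
s\le C_{\rm nr}h_Br_B\le C_{\rm nr}C_{\rm inh}\delta\, r_B .
\]
This is the easy direction, and the constant is even independent of $\delta$.

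\emph{Lower bound $r_B\le C_\delta s$.} Suppose instead that $r_B\ge Ms$ for a large $M=M(\delta)$ to be chosen. The vee error $h_B$ is measured on $\lsup{16}{B}$, which is contained in $B_{20r_B}(\zz)$.
\begin{itemize}
\item By \Cref{lem:blow-down-around-a-neck-center}, if $20r_B\ge M(\eps)s$ then
\[
\min_e\|u-V_{\zz,e}\|_{L^\infty(B_{20r_B}(\zz))}\le 20\eps r_B .
\]
\item By \Cref{lem:consequences-of-closeness-to-a-vee}(a), applied with $y_2=x_B\in\FB(u)$, the vee may be recentred at $x_B$ at the cost of a factor $2$. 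This gives $h_B\le C\eps$.
\item Choosing $\eps$ with $C\eps<\delta$ contradicts $h_B\ge\delta$.
\end{itemize}
Hence $r_B\le C_\delta s$ with $C_\delta=M(\eps(\delta))/20$. The only subtlety is that the nonroot ball's centre is $x_B$ rather than $\zz$. Item (a) of \Cref{lem:consequences-of-closeness-to-a-vee} handles this because $x_B$ is a free-boundary point and $B_{16r_B}(x_B)\subset B_{20r_B}(\zz)$. For the root ball, which is branching, there is nothing to prove.

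\emph{The ``in particular'' clause.} If in addition $\zz\in B_{8\Rk}(\zk)$, then \Cref{lem:fb-tree-small-neck-radii-near-flat-active-balls} gives $s\le C\epk\Rk$. Combining this with $r_B\le C_\delta s$ yields $r_B\le C_\delta\epk\Rk$.

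The main, though mild, obstacle is the lower bound, since it requires converting closeness to a vee centred at a neck into control of the centred error at $x_B$ that is uniform in the generation. The blow-down modulus of \Cref{lem:blow-down-around-a-neck-center} is scale-invariant, so no dependence on $l$ appears.
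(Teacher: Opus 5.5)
Your proof is correct, and its skeleton matches the paper's. The bound $\rb(\zz)\le C r_B$ comes from \eqref{eq:fb-tree-active-child-flatness} and \Cref{lem:fb-tree-small-neck-radii-near-flat-active-balls}. The bound $r_B\le C_\delta\rb(\zz)$ comes from showing that $r_B\gg\rb(\zz)$ would force $h_B<\delta$, which would make $B$ branching. The final clause is the same combination.

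The difference lies in how you get vee closeness centered at $x_B$. The paper bounds $h_B$ by the combined excess $\bbE_\zz(32r_B)$ through the $L^2$ recentering \Cref{lem:fb-tree-recenter-neck-monneau}, and then invokes the decay \Cref{lem:decay-of-monneau-excess}. You instead apply the $L^\infty$ blow-down \Cref{lem:blow-down-around-a-neck-center} on $B_{20r_B}(\zz)\supset\lsup{16}{B}$. You then recenter at the free-boundary point $x_B$ with \Cref{lem:consequences-of-closeness-to-a-vee}(a), which gives $h_B\le C\eps$ directly.

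Your route is slightly more elementary: it bypasses the Gaussian Weiss quantities and gives an explicit $C_\delta=M(\eps(\delta))/20$. The paper's route stays inside the $\bbE$ framework and reuses a recentering lemma it needs anyway, for instance in \Cref{lem:fb-tree-inner-counting-package}. Both ultimately rest on \Cref{prop:blow-down-of-non-flat-solutions}.
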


\begin{proof}
Let $B\in\cN$ and $\zz\in\cZ\cap\lsup{4}{B}$. The lower bound follows from \eqref{eq:fb-tree-active-child-flatness} and \Cref{lem:fb-tree-small-neck-radii-near-flat-active-balls}.

For the upper bound, apply \Cref{lem:fb-tree-recenter-neck-monneau} with $L=32$; this is admissible because $|x_B-\zz|\le4r_B$. Choose $\varepsilon>0$ so small that this recentering estimate gives $h_B<\delta$ whenever $\bbE_\zz(32r_B)\le\varepsilon$. By \Cref{lem:decay-of-monneau-excess}, this happens whenever $r_B\ge M_\delta\rb(\zz)$, with $M_\delta$ large enough. Since $\lsup{4}{B}\cap\cZ\ne\varnothing$, \eqref{eq:fb-tree-branching-condition} would then hold, contradicting $B\in\cN$. Thus $r_B\le C_\delta\rb(\zz)$.

Finally, if $\zz\in B_{8\Rk}(\zk)$, \Cref{lem:fb-tree-small-neck-radii-near-flat-active-balls} gives $\rb(\zz)\le C\epk\Rk$, and the upper comparison gives $r_B\le C_\delta\epk\Rk$.
\end{proof}

To formulate the corresponding connectivity property of the signed domains, we recall the notion of a John domain.
\begin{defn}[$c$-John domains]
\label{defn:c-john-domains}
Let $0<c<1$. A bounded domain $\Omega\subset\mathbb R^n$ is a $c$-John domain with center $x_\Omega\in\Omega$ if, for every $x\in\Omega$, there is a rectifiable curve $\gamma\colon[0,L]\to\Omega$, parametrized by arclength, such that $\gamma(0)=x$, $\gamma(L)=x_\Omega$, and
\[
\dist(\gamma(t),\partial\Omega)\ge c\,t \qquad\text{for every }0\le t\le L .
\]
We say that $\Omega$ is a $c$-John domain if such a center exists.
\end{defn}

\begin{lem}
\label{lem:fb-tree-localized-raw-signed-shadows-john}
After decreasing $\delta$ and increasing $k$, there is a dimensional constant $c_J>0$ with the following property. Let
\[
\zz\in B_{\Rk/8}(\zk)\cap\cZ, \qquad \epk\Rk\le c_JR, \qquad R\le\Rk/16 .
\]
For each $\ast\in\{+,-\}$, there is a $c_J$-John domain $\Omega_\ast(\zz,R)$ such that
\[
U_\ast\cap B_{R/2}(\zz) \subset \Omega_\ast(\zz,R)\subset U_\ast\cap B_{3R/4}(\zz).
\]
\[
|\Omega_\ast(\zz,R)|\ge c_JR^n.
\]
More generally, the same conclusion holds for any $\zz\in B_{\Rk/8}(\zk)\cap\cZ$ and $R\le\Rk/16$ such that every terminal neck ball $N\in\cN$ meeting $B_{3R/4}(\zz)$ satisfies $r_N\le c_JR .$
\end{lem}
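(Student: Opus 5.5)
The plan is to build $\Omega_\ast(\zz,R)$ as a union of signed pieces $B^\ast$ from the tree, connected through chains of Whitney-type balls along predecessor paths, and to use the parent--child overlap \eqref{eq:fb-tree-parent-child-overlap} as the basic link. First we reduce the special case to the general one. If $\epk\Rk\le c_JR$, then by \Cref{lem:fb-tree-nonfinal-neck-terminals-neck-scale} every $N\in\cN$ meeting $B_{3R/4}(\zz)$ has $r_N\le C_\delta\epk\Rk\le C_\delta c_J R$; here the neck center in $\lsup4N$ lies in $B_{8\Rk}(\zk)$. After shrinking the constant, this is the general hypothesis. So we assume from now on that every terminal neck ball meeting $B_{3R/4}(\zz)$ has radius at most $c_JR$.

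Next we fix the center and the candidate set. Let $l_0$ be the generation with $r_{l_0}\simeq c_1 R$, for a small dimensional $c_1$ with $c_J\ll c_1$. The active balls of generation $l_0$ meeting $B_{3R/4}(\zz)$ are all branching. The reason is that a terminal ball there would be either neck-like, which is excluded by the radius bound, or regular. A regular ball is also excluded, since $\zz\in\lsup4B$ for those close to $\zz$, and in general we propagate through the predecessor chain using \eqref{eq:fb-tree-predecessor-affine-comparison} and \eqref{eq:signed-decomposition-same-generation-compatibility}. On these balls, $U_\ast$ coincides up to the slab of width $C\delta r_{l_0}$ with the half-ball $\{\ast\,\ell>0\}$ for a single affine $\ell$. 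The Lipschitz-compatible affine functions of \eqref{eq:signed-decomposition-same-generation-compatibility} and the slab of \Cref{cor:centered-vee-lower-slab} give this, and also show that $U_\ast\cap B_{3R/4}(\zz)$ sits on one side of a $C\delta R$-thin slab. We take as John center a point $x_\Omega$ at height $\ast\ell\simeq R/4$ above $\zz$. We define $\Omega_\ast(\zz,R)$ as the interior of the union of all pieces $B^\ast$ with $B\in\cB\cup\cR$ that meet $B_{R/2}(\zz)$ and have generation at least $l_0$, together with the coarse cores $D^\ast\cap B_{3R/4-\rho}(\zz)$ of generation $l_0$ (rounded to a smooth truncation). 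The inclusion in $U_\ast\cap B_{3R/4}(\zz)$ holds because pieces meeting $B_{R/2}(\zz)$ have radius at most $r_{l_0}\ll R$. The inclusion of $U_\ast\cap B_{R/2}(\zz)$ holds by definition of $U_\ast$ and \eqref{eq:fb-tree-intersecting-pieces-comparable-scales}. The volume bound follows from a single core of size $cR$.

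The John curves are built as follows. Given $x\in B^\ast$ with $B$ of generation $l$, we first move inside $B^\ast$ to its core point $x_B^\ast$. For a branching ball, $B^\ast$ is a half-ball cut at height $c_\ast r_B$, which is uniformly John. For a regular ball, $B^\ast$ is a Lipschitz-graph domain by \Cref{lem:fb-tree-regular-terminal-balls-split-into-two-signed-sheets}, which is again uniformly John at scale $r_B$. We then follow the predecessor chain $B,PB,P^2B,\dots$ up to generation $l_0$, joining consecutive centers $x^\ast_{P^jB}$ by segments inside the overlap balls $B_{cr}(x^\ast)\subset(P^{j+1}B)^\ast\cap (P^jB)^\ast$. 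Each such segment has length comparable to $r_{P^jB}$ and stays at distance at least $c\,r_{P^jB}$ from $\partial\Omega$. Here we use that the cores are separated from the opposite phase, by \eqref{eq:fb-tree-opposite-sign-separation}, and from $\FB(u)$, by the slab estimate. Since the radii grow geometrically along the chain, the arclength up to stage $j$ is at most $C r_{P^jB}$, which gives $\dist(\gamma(t),\partial\Omega)\ge ct$. Finally, at generation $l_0$ we connect to $x_\Omega$ inside the nearly flat coarse region, which is a slab-bounded half-ball.

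The main obstacle is that the curves may leave $B_{3R/4}(\zz)$, or that boundary points of $\Omega$ may arise from the truncation rather than from $\FB(u)$. We handle this in two ways. First, we note that predecessors move centers by at most $\tfrac54 r$ per step, so the total drift is at most $\tfrac52 r_B+\dots\le 3r_{l_0}\ll R$. Second, we choose the truncation radius between $R/2+Cr_{l_0}$ and $3R/4$, so that distances to the artificial spherical boundary are comparable to $R\ge t$. A second delicate point is justifying that generations between the piece and $l_0$ are all branching along the chain, which holds because predecessors are branching by construction.
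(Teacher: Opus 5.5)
\textbf{There is a genuine gap, in the global assembly.} Your local mechanism is the one the paper uses: move from $x\in B^\ast$ to a core point, climb the predecessor chain through the parent--child overlap balls, and use the geometric growth of the radii to get $\dist(\gamma(t),\partial\Omega)\ge ct$.

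\textbf{The false branching claim.} You claim all active balls of generation $l_0$ meeting $B_{3R/4}(\zz)$ are branching. This is false. A ball at distance $\gg r_{l_0}$ from every neck center has $\lsup{4}{B}\cap\cZ=\varnothing$, so it is regular terminal. A generation-$l_0$ ball is active only if its parent sees a neck within $8r_{l_0}$. So in neck-free parts of $B_{3R/4}(\zz)$ the tree stops at a coarser generation, and there may be no generation-$l_0$ balls at all. The fact that predecessors are branching says nothing about pieces whose own generation is $<l_0$.

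\textbf{The covering failure.} Every tree ball is centred on $\FB(u)$. Hence your set $\Omega_\ast$ (pieces of generation $\ge l_0$ plus generation-$l_0$ cores) only reaches heights $\lesssim r_{l_0}\simeq c_1R$ above the sheets. It therefore misses:
\begin{itemize}
\item your own John centre $x_\Omega$, at height $\simeq R/4$;
\item all points of $U_\ast\cap B_{R/2}(\zz)$ at height $\gg c_1R$;
\item points lying only in regular pieces that terminated before generation $l_0$.
\end{itemize}
These points belong only to coarser pieces. So the inclusion $U_\ast\cap B_{R/2}(\zz)\subset\Omega_\ast$ does not follow \emph{by definition of $U_\ast$ and comparable scales}. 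In addition, predecessor pieces along a chain need not meet $B_{R/2}(\zz)$, so your curves can leave the union as you defined it.

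\textbf{What is missing.} You need a single connected region of $U_\ast$ of size $\simeq R$ in which every chain can terminate. You only assert an outer slab inclusion, locally on generation-$l_0$ balls; you need the inner inclusion, globally. The paper gets it as follows:
\begin{itemize}
\item Following the local-child chain through $\zz$ to a neck-like terminal ball gives $\rb(\zz)\le C_\delta c_JR$.
\item Hence $\bbE_\zz(8R)$ is small by \Cref{lem:decay-of-monneau-excess}.
\item \Cref{lem:fb-tree-mesoscopic-signed-slab-flatness} then gives a direction $e$ with $H^\ast:=B_{3R/4}(\zz)\cap\{\ast e\cdot(x-\zz)>2\varepsilon R\}\subset U_\ast$.
\end{itemize}
The reason the cap lies in $U_\ast$: it is connected, lies in $\{u>0\}\setminus\mathcal X_{\rm neck}$, hence in the disjoint open sets $U_+\cup U_-$ by \eqref{eq:fb-tree-signed-cover}, and it meets a branching core of sign $\ast$.

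\textbf{The paper's construction.} It defines $\Omega_\ast:=H^\ast\cup\bigcup_x\mathcal O_x$, with John centre $\zz+\frac R2e$.
\begin{itemize}
\item For $x$ in a piece finer than generation $m$, $\mathcal O_x$ is the tube of gates from $x$ up to generation $m+1$, where $R/16<r_m\le R/8$.
\item For $x$ in a piece of generation $\le m$, it uses instead the cone toward $p_D=x_D+\frac12r_Da_D$, truncated at scale $r_{m+1}$.
\item In both cases the final ball sits inside $H^\ast$ with room.
\end{itemize}
If you add the cap and treat coarse pieces by the truncated cone, your argument goes through. With your small $l_0$ you must also take the slab width $\varepsilon\ll c_\ast c_1$, so that the top gates land in $H^\ast$.
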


\begin{proof}
We prove it for $U_+$; the proof for $U_-$ is identical. For $D\in\cB\cup\cR$, set
\[
p_D:=x_D+\frac12r_Da_D .
\]
After decreasing $\delta$, there is $c>0$ such that, for every $x\in D^+$,
\begin{equation}\label{eq:localized-john-piece-cone}
B_{ctr_D}\bigl((1-t)x+tp_D\bigr)\subset D^+, \qquad 0<t\le1.
\end{equation}
If $D$ is branching, this is immediate from \eqref{eq:fb-tree-branching-signed-pieces} and convexity. If $D$ is regular, it follows from \Cref{lem:fb-tree-regular-terminal-balls-split-into-two-signed-sheets}.

For every non-root $D\in\cB\cup\cR$, fix a positive gate
\begin{equation}\label{eq:localized-john-positive-gate}
G_D:=B_{c_gr_D}(g_D) \subset D^+\cap(PD)^+
\end{equation}
given by \eqref{eq:fb-tree-parent-child-overlap}. The same half-ball geometry joins $p_D$ to the center $g_D$ with a fixed tube inside $D^+$. Consequently, every $x\in D^+$ can be joined to $g_D$ by a curve of length at most $Cr_D$ such that
\begin{equation}\label{eq:localized-john-route-to-gate}
\dist(\gamma(t),\partial D^+) \ge c\min\{t,r_D\}.
\end{equation}
For $D\in \mathcal{B}$ this is simply the interpolation of balls inside the convex set $D^+$; for a regular piece it is \eqref{eq:localized-john-piece-cone}.

We next obtain a common region in which to join the gates. Under the first hypothesis, \Cref{lem:fb-tree-small-neck-radii-near-flat-active-balls} gives
\[
\rb(\zz)\le C\epk\Rk\le Cc_JR.
\]
Under the general hypothesis, iterate the local-child cover \eqref{eq:fb-tree-active-cover-near-branching} through $\zz$. It ends at a neck-like terminal ball $N$ containing $\zz$, and hence \Cref{lem:fb-tree-nonfinal-neck-terminals-neck-scale} gives
\[
\rb(\zz)\le C_\delta r_N\le C_\delta c_JR.
\]
Thus, after choosing $c_J$ small, \Cref{lem:decay-of-monneau-excess,lem:fb-tree-mesoscopic-signed-slab-flatness} give an orientation $e\in\mathbb S^{n-1}$ such that
\begin{equation}\label{eq:localized-john-common-positive-cap}
\begin{aligned}
H^+ &:=B_{3R/4}(\zz)\cap \{e\cdot(x-\zz)>2\varepsilon R\} \subset U_+\cap B_{3R/4}(\zz) \subset\{e\cdot(x-\zz)>-\varepsilon R\},
\end{aligned}
\end{equation}
where $\varepsilon>0$ is a sufficiently small dimensional constant.

Let $x\in U_+\cap B_{R/2}(\zz)$, and choose $D\in\cD^l\cap(\cB\cup\cR)$ with $x\in D^+$. Choose $m$ so that, recalling $r_m = 2^{-m}\Rk$,
\begin{equation}\label{eq:localized-john-stopping-generation}
\frac{R}{16}<r_m\le\frac{R}{8}.
\end{equation}
Suppose first that $l\ge m+1$, and write
\[
D_j:=P^{\,l-j}D,\qquad m\le j\le l.
\]
Starting with \eqref{eq:localized-john-route-to-gate}, pass successively through the gates $G_{D_l},G_{D_{l-1}},\ldots,G_{D_{m+1}},$ interpolating the incoming and outgoing gate balls inside each intermediate branching piece. At a connector of radius $r$, the length already accumulated is at most
\[
C\sum_{i=0}^{\infty}2^{-i}r\le Cr,
\]
whereas the interpolated balls have radius at least $cr$. Hence these balls, together with a sufficiently small ball about $x$, form a $c$-John domain $\mathcal O_x$, from $x$ to the top gate $G_{D_{m+1}}$.

The predecessor localization in \Cref{defn:fb-tree-branching-and-terminal-balls}, the geometric decay of the radii, and the stopping rule \eqref{eq:localized-john-stopping-generation} show that all the gates and interpolating balls in this construction lie in $B_{R/4}(x)$. Therefore
\begin{equation}\label{eq:localized-john-chain-domain}
x\in\mathcal O_x \subset U_+\cap B_{3R/4}(\zz), \qquad B_{\kappa R}(q_x)\subset\mathcal O_x,
\end{equation}
where $q_x=g_{D_{m+1}}$ and $\kappa>0$ is dimensional. The distinct top gates occurring here are uniformly bounded in number: their underlying balls have radius $r_{m+1}\simeq R$, lie in a fixed multiple of $B_R(\zz)$, and have the separation stated in \Cref{defn:fb-tree-dyadic-balls-centered-vee-excess}.

If $l\le m$, truncate \eqref{eq:localized-john-piece-cone} when $tr_D=r_{m+1}$. The interpolated balls, together with a sufficiently small ball about $x$, lie in $B_{R/4}(x)$ and form a $c$-John domain $\mathcal O_x$ satisfying \eqref{eq:localized-john-chain-domain}, with
\[
q_x=(1-r_{m+1}/r_D)x+(r_{m+1}/r_D)p_D .
\]
Thus \eqref{eq:localized-john-chain-domain} holds for every $x\in U_+\cap B_{R/2}(\zz)$.

The second inclusion in \eqref{eq:localized-john-common-positive-cap}, applied to the last ball in \eqref{eq:localized-john-chain-domain}, shows, after reducing $\kappa$, that
\begin{equation}\label{eq:localized-john-gates-in-common-cap}
B_{\kappa R}(q_x)\subset\mathcal O_x\cap H^+ \qquad\text{for every }x\in U_+\cap B_{R/2}(\zz).
\end{equation}
This is precisely the flat picture: in the limit $\delta=0$ all the endpoint balls lie with fixed room in the same positive half-ball, and the preceding estimates preserve that room.

The set $H^+$ is convex and contains a ball of radius $cR$ centered at $q_H:=\zz+(R/2)e$. In particular, the balls in \eqref{eq:localized-john-gates-in-common-cap} can be interpolated inside $H^+$ with the ball about $q_H$. Hence
\[
\Omega_+(\zz,R) := H^+\cup \bigcup_{x\in U_+\cap B_{R/2}(\zz)}\mathcal O_x
\]
is a uniform John domain with center $q_H$: first follow the John curve in $\mathcal O_x$, and then the uniformly thick segment in $H^+$. By construction,
\[
U_+\cap B_{R/2}(\zz) \subset\Omega_+(\zz,R) \subset U_+\cap B_{3R/4}(\zz).
\]
Finally, the ball about $q_H$ gives
\[
|\Omega_+(\zz,R)|\ge cR^n.
\]
Decreasing $c_J$ once more proves the assertion.
\end{proof}

\section{Quantitative consequences of the signed tree}
\label{sec:fb-tree-quantitative}

Throughout this section, the selected configuration from \Cref{sec:neck-radii-from-symmetric-excess} and the signed tree from \Cref{sec:fb-tree} remain fixed. In particular, the constant $\delta> 0$ is from now on fixed, according to the choices in \Cref{sec:fb-tree}.

We derive three estimates used in the linearization: mesoscopic signed geometry, counting and terminal-neck bounds, and a higher-power estimate for the boundary flux.

\subsection{Mesoscopic signed geometry}

We first show that, above the neck scale, the two signed domains are trapped in oppositely oriented slabs.

\begin{lem}
\label{lem:fb-tree-mesoscopic-signed-slab-flatness}
There exists $C\ge1$ such that, after increasing $k$, the following holds.

Let $\zz\in\cZ$, let $R>0$, and assume
\[
B_R(\zz)\subset B_{\Rk/2}(\zk), \qquad R\ge C\rb(\zz).
\]
Then, for every $e\in\mathbb S^{n-1}$, after possibly replacing $e$ by $-e$,
\begin{equation}\label{eq:fb-tree-mesoscopic-signed-slab-flatness}
\begin{aligned}
B_R(\zz)\cap \{\pm e\cdot(x-\zz)>C\bbE_\zz(8R;e)R\} \subset U_\pm\cap B_R(\zz) &\subset \{\pm e\cdot(x-\zz)>-C\bbE_\zz(8R;e)R\}.
\end{aligned}
\end{equation}
In particular, the scale-separation hypothesis holds whenever $R\ge C\epk\Rk$. At the root scale,
\begin{equation}\label{eq:direct-sided-root-slab}
\begin{aligned}
B_{\Rk/8}(\zk)\cap \{\pm e_{\rm root}\cdot(x-\zk)>C\epk\Rk\} &\subset U_\pm\cap B_{\Rk/8}(\zk) \subset \{\pm e_{\rm root}\cdot(x-\zk)>-C\epk\Rk\}.
\end{aligned}
\end{equation}
\end{lem}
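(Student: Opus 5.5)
I would work throughout with $E:=\bbE_\zz(8R;e)$. If $E\ge c_0$ for a small dimensional $c_0$, the statement is trivial: taking $C\ge 1/c_0$, the left set is empty because $|e\cdot(x-\zz)|<R\le CER$, and the right-hand set is all of $B_R(\zz)$. So assume $E<c_0$.

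\textbf{Step 1 (slab from the vee excess).} The Monneau term in $E$ gives $(8R)^{-n-2}\int_{B_{8R}(\zz)}|u-V_{\zz,e}|^2\le E^2$. By Cauchy--Schwarz this is the $L^1$ hypothesis of \Cref{cor:centered-vee-lower-slab} at center $\zz$ and radius $R/2$, with $\eta=CE$. Hence, in $B_{6R}(\zz)$,
\[
u\ge V_{\zz,e}-CER,\qquad \FB(u)\subset\{|e\cdot(x-\zz)|\le CER\},
\]
and both half-regions $\{\pm e\cdot(x-\zz)>CER\}$ lie in $\{u>0\}$.

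\textbf{Step 2 (assigning signs to the half-regions).} Let $S_\pm:=B_R(\zz)\cap\{\pm e\cdot(x-\zz)>CER\}$. Each $S_\pm$ is connected and lies in $\{u>0\}\cap B_{\Rk/2}(\zk)$. By \eqref{eq:fb-tree-signed-cover}, $S_\pm\setminus\mathcal X_{\rm neck}\subset U_+\cup U_-$.

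First I would show $S_\pm\cap\mathcal X_{\rm neck}=\varnothing$. A neck-like terminal ball $N$ meeting $B_R(\zz)$ has a neck center in $\lsup4N$, and by \Cref{lem:fb-tree-nonfinal-neck-terminals-neck-scale} its radius is comparable to that center's neck radius. Since $N$ is centered on $\FB(u)$ inside the slab, it can reach $S_\pm$ only if $r_N\gtrsim ER$. I would rule this out using \Cref{lem:one-sided-vee-trapping-controls-neck-radius}, applied with the one-sided bound of Step 1 after recentering as in \Cref{lem:fb-tree-recenter-neck-monneau}: it gives $\rb\le CER$ for neck centers in $B_{2R}(\zz)$, hence $r_N\le C_\delta ER$. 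If this constant is too weak, I would enlarge the slab constant and use a version of $S_\pm$ shrunk by $C_\delta ER$.

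Next, $U_+$ and $U_-$ are disjoint open sets (\Cref{lem:fb-tree-signed-decomposition}). They cover the connected open set $S_\pm$, so each $S_\pm$ lies entirely in one of them.

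\textbf{Step 3 (the two sides have opposite signs).} This is the main obstacle. I would pick a tree ball $D$ at generation with $r_D\simeq R/8$, centered near $\zz$; such a ball is active by the no-hole property and the cover \eqref{eq:fb-tree-active-cover-near-branching}, with $R\ge C\rb(\zz)$ ensuring it is still branching or regular. By \Cref{lem:unit-slope-vee-comparison} through $u$, $\ell_D$ is $C(E+\delta)$-close to $\pm e\cdot(x-\zz)$. The cores $D^\pm$, or the regular sheets, intersect $S_+$ and $S_-$ respectively, and carry opposite signs by construction. This forces $S_+$ and $S_-$ into different domains; after possibly replacing $e$ by $-e$, $S_\pm\subset U_\pm$. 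That gives the first inclusion.

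\textbf{Step 4 (second inclusion).} If $x\in U_+\cap B_R(\zz)$ had $e\cdot(x-\zz)\le -CER$, then $x\in S_-\subset U_-$, contradicting $U_+\cap U_-=\varnothing$.

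\textbf{Scale hypothesis and root case.} For the claim that $R\ge C\epk\Rk$ suffices, use $\rb(\zz)\le C\epk\Rk$ from \Cref{lem:fb-tree-small-neck-radii-near-flat-active-balls}. For \eqref{eq:direct-sided-root-slab}, take $\zz=\zk$, $R=\Rk/8$ and $e=e_{\rm root}$. Then $\bbE_{\zk}(\Rk;e_{\rm root})\le C\epk$ by \eqref{eq:selected-monneau-at-refined-scale} and \Cref{prop:fixed-direction-monneau-doubling}, since the root vee $V_{B_{\rm root}}$ is chosen via a near-minimizer. The sign is already fixed by the orientation of $\ell_{B_{\rm root}}$, so no flip of $e$ is needed.
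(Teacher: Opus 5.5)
Your argument is correct and follows essentially the paper's proof. The steps match: slab and caps from the vee term; terminal neck balls meeting $B_R(\zz)$ have radius $\lesssim ER$, so $\mathcal X_{\rm neck}$ stays in the slab; a branching ball of radius $\simeq R$ on the local-child chain through $\zz$ anchors the two signs; and connectivity of the caps plus $U_+\cap U_-=\varnothing$ finishes.

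Two justifications need tightening.

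\textbf{Nearby neck radii.} Applying \Cref{lem:one-sided-vee-trapping-controls-neck-radius} at neck centers inside $B_{6R}(\zz)$ needs the a priori bounds $r_N\lesssim R$ and $\rb(\zz')\lesssim R$. These come from $R\ge C\rb(\zz)$ via \Cref{lem:fb-tree-nonfinal-neck-terminals-neck-scale,lem:upper-bound-for-nearby-neck-radii}, which is where the paper uses this hypothesis.

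\textbf{Root case.} Your justification fails as written. First, \Cref{prop:fixed-direction-monneau-doubling} only bounds larger scales by smaller ones, so it cannot take you from $8\Rk$ down to $\Rk$. Second, $e_{\rm root}$ minimizes the $L^2$ vee error on $B_{16\Rk}(\zk)$, not $\bbE$, so it does not control the ridgewise Weiss part.

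Instead, obtain $\bbE_{\zk}(\Rk)\le C\epk$ in one of two ways:
\begin{enumerate}[(i)]
\item from \eqref{eq:selected-monneau-at-refined-scale} by the elementary downward comparison: the Monneau term grows by at most $8^{n+2}$, and $\wW_{\Rk,s}\le C\wW_{8\Rk,s}$ by \eqref{eq:gaussian-weiss-deficit-weighted-comparison};
\item or, as the paper does, from the singleton case of \eqref{keyeqn-alpha}.
\end{enumerate}
Then apply the general statement in a minimizing direction. Finally tilt to $e_{\rm root}$, which is $C\epk$-close to that direction by \Cref{lem:unit-slope-vee-comparison}; this costs $C\epk\Rk$ on $B_{\Rk/8}(\zk)$, and the sign is fixed by $B_{\rm root}^+\subset U_+$.
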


\begin{proof}
Put $E:=\bbE_\zz(8R;e)$. We may assume $E\le\tau$, with $\tau>0$ dimensional and small. Indeed, if $E>\tau$, choosing $C>\tau^{-1}$ makes the two inner caps empty and the two outer slabs contain $B_R(\zz)$, so the conclusion follows. By \Cref{prop:fixed-direction-monneau-doubling,cor:centered-vee-lower-slab}, with $C$ dimensional and allowed to change from line to line,
\begin{equation}\label{eq:fb-tree-mesoscopic-vee-slab-and-caps}
\begin{gathered}
u\ge V_{\zz,e}-CER\quad\text{in }B_{12R}(\zz),\qquad \FB(u)\cap B_{12R}(\zz) \subset\{|e\cdot(x-\zz)|\le CER\},\\
H^\pm:=B_R(\zz)\cap\{\pm e\cdot(x-\zz)>CER\} \subset\{u>0\}.
\end{gathered}
\end{equation}

By \eqref{eq:fb-tree-mesoscopic-vee-slab-and-caps} and \Cref{lem:upper-bound-for-nearby-neck-radii,lem:one-sided-vee-trapping-controls-neck-radius,lem:fb-tree-nonfinal-neck-terminals-neck-scale}, every terminal neck ball $N$ meeting $B_R(\zz)$ satisfies $r_N\le CER$. Moreover,
\begin{equation}\label{eq:fb-tree-mesoscopic-neck-slab}
\mathcal X_{\rm neck}\cap B_R(\zz) \subset\{|e\cdot(x-\zz)|\le CER\}.
\end{equation}

Iterating \eqref{eq:fb-tree-active-cover-near-branching} through $\zz$ gives a local-child chain, with the local children defined in \eqref{eq:fb-tree-local-children}, ending in a neck-like terminal ball containing $\zz$. By the preceding radius estimate this terminal ball has radius at most $CER\ll R$; hence the chain contains a branching ball $B$ such that
\[
\zz\in\lsup{1/4}{B},\qquad R/8<r_B\le R/4.
\]
Since $r_B\simeq R$, \eqref{eq:fb-tree-mesoscopic-vee-slab-and-caps}, \Cref{lem:fb-tree-recenter-neck-monneau,lem:unit-slope-vee-comparison}, \eqref{eq:fb-tree-centered-vee-excess}, and \eqref{eq:fb-tree-branching-signed-pieces} give, after possibly replacing $e$ by $-e$,
\begin{equation}\label{eq:fb-tree-mesoscopic-signed-cap-anchors}
B^\pm\cap H^\pm\ne\varnothing.
\end{equation}

The caps $H^\pm$ are connected and, by \eqref{eq:fb-tree-mesoscopic-vee-slab-and-caps} and \eqref{eq:fb-tree-mesoscopic-neck-slab}, lie in $\{u>0\}\setminus\mathcal X_{\rm neck}$. Thus \Cref{lem:fb-tree-signed-decomposition}, together with \eqref{eq:fb-tree-mesoscopic-signed-cap-anchors}, gives $H^\pm\subset U_\pm$. The reverse slab inclusion follows from the disjointness of $U_+$ and $U_-$: a point of $U_\pm\cap B_R(\zz)$ on the opposite side would belong to $H^\mp\subset U_\mp$. Enlarging $C$ proves \eqref{eq:fb-tree-mesoscopic-signed-slab-flatness}.

The scale-separation assertion follows directly from \Cref{lem:fb-tree-small-neck-radii-near-flat-active-balls}; the root assertion follows from the singleton case of \eqref{keyeqn-alpha} and the root orientation.
\end{proof}

For $j\ge0$, set
\begin{equation}\label{eq:fb-tree-inner-family}
\cD_{\rm in}^j:= \{B\in\cD^j:\lsup4B\cap B_{\Rk/8}(\zk)\ne\varnothing\}.
\end{equation}

\subsection{Counting and terminal-neck estimates}

We combine the tree geometry with \Cref{lem:fb-tree-mesoscopic-signed-slab-flatness} to derive the counting, descendant, and terminal-neck estimates needed for the flux argument below.

\begin{lem}
\label{lem:fb-tree-inner-counting-package}
After increasing $k$, for every $j\ge0$,
\begin{equation}\label{eq:fb-tree-higher-power-inputs}
\#\cD_{\rm in}^j\le C2^{j(n-2+2\beta_\circ)}, \qquad \sum_{A\in\cD_{\rm in}^j}h_A^{2/\alpha} \le C_\alpha2^{j(n-2)}\epk^{2/\alpha}.
\end{equation}
For the terminal subfamily one also has
\begin{equation}\label{eq:fb-tree-inner-terminal-count}
\#(\cN^j\cap\cD_{\rm in}^j) \le C_\alpha2^{j(n-2)} \min\{\epk^2,\epk^{2/\alpha}\}.
\end{equation}
Moreover, if $0\le m\le l$ and $A\in\cD_{\rm in}^m$, then
\begin{equation}\label{eq:fb-tree-surface-descendant-input}
\#\{B\in\cD_{\rm in}^l:P^{\,l-m}B=A\} \le C\min\{2^{(l-m)(n-1)},2^{l(n-2+2\beta_\circ)}\}.
\end{equation}
\end{lem}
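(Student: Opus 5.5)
The plan is to obtain all four estimates from three inputs: the refined selection bounds of \Cref{lem:refined-center-and-scale-selection}, the recentering estimate \Cref{lem:fb-tree-recenter-neck-monneau}, and the fact that every active ball at generation $j\ge1$ has a branching parent whose $\lsup{4}{}$-dilate meets $\cZ$. The first step is a neck attachment. For $B\in\cD^j$ with $j\ge1$, the parent $PB$ is branching, so there is $\zz_B\in\cZ\cap\lsup{4}{PB}$ with $|x_B-\zz_B|\le Cr_j$. By \Cref{lem:fb-tree-small-neck-radii-near-flat-active-balls}, $\rb(\zz_B)\le C\delta r_j$. Set $\zeta:=Cr_j/\Rk=C2^{-j}$. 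Then $\zz_B\in\cZ_{\zeta\Rk}\cap B_{\Rk}(\zk)$ when $B\in\cD^j_{\rm in}$, and each neck attachment can be shared by at most $C$ balls of $\cD^j$, because these balls are $r_j/4$-separated. The case $j=0$ is trivial, and so are bounded $j$.

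\emph{Cardinality.} Every $B\in\cD^j_{\rm in}$ has $B_{\zeta\Rk}(\zz_B)\supset B_{c r_j}(x_B)$, and the balls $B_{cr_j}(x_B)$ are disjoint. Hence $\#\cD^j_{\rm in}\le C\,\bN(\zeta,B_{\Rk}(\zk))\le C\zeta^{2-n-2\beta_\circ}$ by \eqref{strongalternativeA}, which gives the first bound in \eqref{eq:fb-tree-higher-power-inputs}.

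\emph{Sum of $h^{2/\alpha}$.} Apply \Cref{lem:fb-tree-recenter-neck-monneau} with $L$ a fixed multiple and center $\zz_B$. Minimizing in $e$ and using \Cref{prop:fixed-direction-monneau-doubling} to adjust the scale gives $h_B\le C\bbE_{\zz_B}(8\zeta\Rk)$. Next pass to a Vitali subfamily $\cF$ of attaching necks whose balls $B_{\zeta\Rk}(\zz')$ are pairwise disjoint, with every attachment within $3\zeta\Rk$ of some element of $\cF$. Each $\zz'\in\cF$ then serves at most $C$ balls of $\cD^j$, by separation. For a neck within $3\zeta\Rk$ of $\zz'$, recentering via \Cref{lem:large-scale-monneau-recentering} gives $\bbE_{\zz_B}(8\zeta\Rk)\le C\bbE_{\zz'}(C\zeta\Rk)$, and the scale can be brought back with the doubling proposition at a constant cost. \eqref{keyeqn-alpha} then yields $\sum h_A^{2/\alpha}\le C_\alpha\zeta^{2-n}\epk^{2/\alpha}$. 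The delicate point is the requirement $\zz'\in\cZ_{\zeta\Rk}$. It holds because $\rb\le C\delta r_j$, and the constants in $\zeta$ can be enlarged.

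\emph{Terminal necks.} For $N\in\cN^j\cap\cD^j_{\rm in}$, \Cref{lem:fb-tree-nonfinal-neck-terminals-neck-scale} gives a neck center $\zz\in\lsup4N$ with $\rb(\zz)\simeq_\delta r_j$. These necks lie in $B_{\Rk}(\zk)$ in a dyadic band $\cZ_{\eta\Rk}\setminus\cZ_{\eta\Rk/C}$ with $\eta\simeq 2^{-j}$, and each is shared by $O(1)$ balls. Then \eqref{eq:countingeta}, summed over $O(1)$ dyadic slices, gives $\#\le C2^{j(n-2)}\epk^2$. For the $\epk^{2/\alpha}$ bound I would use $h_N\ge\delta$ or, alternatively, the nondegenerate combined excess at the neck scale. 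By \Cref{lem:decay-of-monneau-excess} and \Cref{lem:positive-excess-at-the-neck-scale}, transferred through \eqref{eq:rbeps}, one has $\bbE_\zz(8\zeta\Rk)\ge c$ at comparable scales. Inserting this into \eqref{keyeqn-alpha} for a disjoint subfamily gives the count $\le C_\alpha2^{j(n-2)}\epk^{2/\alpha}$. Since $\alpha\le 1+1/100$, the minimum of the two bounds is what is claimed.

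\emph{Descendants.} Descendants of $A$ at generation $l$ have centers in $\FB(u)\cap\lsup{C}{A}$, since the geometric series of $\frac54 r$ is bounded. They are $r_l/4$-separated, so the perimeter bound together with the density estimates (\Cref{lem:perimeter-bound}, \Cref{lem:nondegeneracy-of-stable-solutions}) caps their number by $C2^{(l-m)(n-1)}$. The count is also trivially bounded by $\#\cD^l_{\rm in}$, which is handled by the first part.

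The main obstacle will be the bookkeeping in the sum of $h^{2/\alpha}$ and in the $\epk^{2/\alpha}$ terminal count. The attaching necks must satisfy both conditions of \eqref{keyeqn-alpha} simultaneously: membership in $\cZ_{\zeta\Rk}\cap B_{\Rk}(\zk)$ and disjointness. In addition, the recentering losses have to stay bounded by universal constants independent of $j$.
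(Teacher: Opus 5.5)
Your proposal is correct and follows the paper's proof in all essentials. You attach to each active ball a nearby neck center of radius $\lesssim r_j$, and bound the count via \eqref{strongalternativeA}. You bound the $h^{2/\alpha}$-sum by \Cref{lem:fb-tree-recenter-neck-monneau} and \eqref{keyeqn-alpha} on disjoint subfamilies. The terminal counts come from $h_N\ge\delta$ and \eqref{eq:countingeta}, and the descendant count from the perimeter and density bounds. The only difference is that the paper partitions the attachments into boundedly many disjoint subfamilies instead of taking a Vitali subfamily, which avoids your extra appeal to \Cref{lem:large-scale-monneau-recentering}.

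One correction: the coarse levels are not trivial for the $h^{2/\alpha}$-sum and the terminal count, because there the right-hand sides are $O(\epk^{2/\alpha})\to0$. You need $h_A\le C\epk$ for $A\in\cD^j_{\rm in}$ with $j$ bounded. This follows by recentering at $\zk$ via \Cref{lem:fb-tree-recenter-neck-monneau}, \Cref{prop:fixed-direction-monneau-doubling} and \eqref{eq:selected-monneau-at-refined-scale}, and it also shows that the coarse terminal families are empty.
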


\begin{proof}
After fixing $\delta$, choose $K$ sufficiently large. We begin with the construction needed when $K2^{-j}<1$. For $A\in\cD_{\rm in}^j$, choose
\[
\zz_A\in
\begin{cases}
\lsup4A\cap\cZ,&A\in\cB^j\cup\cN^j,\\
\lsup4{PA}\cap\cZ,&A\in\cR^j.
\end{cases}
\]
In the branching case, \Cref{lem:fb-tree-small-neck-radii-near-flat-active-balls} gives $\rb(\zz_A)\le C h_A r_j\le C\delta r_j$. In the neck-like case, \Cref{lem:fb-tree-nonfinal-neck-terminals-neck-scale} gives $\rb(\zz_A)\le C_\delta r_j$. In the regular case, $PA$ is branching and \Cref{lem:fb-tree-small-neck-radii-near-flat-active-balls}, applied at $PA$, gives the same bound. Since $\lsup4A\cap B_{\Rk/8}(\zk)\ne\varnothing$, and, when $A\in\cR^j$,
\[
r_{PA}=2r_j, \qquad |x_A-x_{PA}|\le\frac54r_{PA}=\frac52r_j,
\]
we have
\[
|\zz_A-\zk| \le
\begin{cases}
\Rk/8+8r_j,&A\in\cB^j\cup\cN^j,\\[2mm]
\Rk/8+\dfrac{29}{2}r_j,&A\in\cR^j.
\end{cases}
\]
In particular,
\[
|\zz_A-\zk|\le\Rk/8+15r_j \qquad\text{in all three cases}.
\]
Thus, after fixing $K$ large enough, in all three cases (using \Cref{lem:fb-tree-recenter-neck-monneau} in the last estimate):
\begin{equation}\label{eq:fb-tree-centered-neck-charge}
\zz_A\in\cZ_{Kr_j}\cap B_{\Rk}(\zk), \qquad |x_A-\zz_A|\le Kr_j, \qquad h_A^2\le C\,\bbE_{\zz_A}^2(8K r_j).
\end{equation}

From \Cref{defn:fb-tree-dyadic-balls-centered-vee-excess}, $\{x_A:A\in\cD_{\rm in}^j\}$ is a subfamily of the maximal $r_j/4$-separated free-boundary net. If
\[
B_{K r_j}(\zz_A)\cap B_{K r_j}(\zz_{A'})\ne\varnothing,
\]
then $|x_A-x_{A'}|\le C_K r_j$. The $r_j/4$-separation implies that each ball intersects at most $C(n,K)$ others. In particular, there are at most $N=N(n,K)$ subfamilies such that in each subfamily the balls $B_{K r_j}(\zz_A)$ are pairwise disjoint.

If $K2^{-j}\ge1$, there are only $O_K(1)$ such coarse levels and \eqref{eq:fb-tree-surface-cardinality} is absorbed into $C2^{j(n-2+2\beta_\circ)}$. Otherwise set $\zeta=K2^{-j}$. On each disjoint subfamily $\mathcal F$ of the centers assigned above,
\[
\#\mathcal F\,|B_{\zeta\Rk}| \le \left|\bigcup_{\zz\in\cZ_{\zeta\Rk}\cap B_{\Rk}(\zk)} B_{\zeta\Rk}(\zz)\right|.
\]
By \eqref{strongalternativeA},
\[
\#\mathcal F\le C\,\bN(\zeta,B_{\Rk}(\zk)) \le C\zeta^{2-n-2\beta_\circ} \le C2^{j(n-2+2\beta_\circ)}.
\]
Summing over the bounded number of subfamilies proves the first bound in \eqref{eq:fb-tree-higher-power-inputs}.

For the second bound, fix $j$. If $K2^{-j}<1/2$, set $\zeta=K2^{-j}$. On each disjoint subfamily, \eqref{keyeqn-alpha} gives
\[
\sum_A \bbE_{\zz_A}(8K r_j)^{2/\alpha} \le C_\alpha\zeta^{2-n}\epk^{2/\alpha} \le C_\alpha2^{j(n-2)}\epk^{2/\alpha}.
\]
Using \eqref{eq:fb-tree-centered-neck-charge} and summing over the bounded number of subfamilies yields the desired estimate.

If $K2^{-j}\ge1/2$ instead, there are only $O_K(1)$ balls in $\cD_{\rm in}^j$. Using the root center $\zk$, the inner condition gives
\[
|x_A-\zk|\le \Rk/8+4r_j\le (K/4+4)r_j .
\]
Since $\Rk/(2K)\le r_j\le\Rk$, \eqref{eq:selected-monneau-at-refined-scale} and \Cref{lem:fb-tree-recenter-neck-monneau} with $L=64K$, give $h_A\le C_K\epk$, from which \eqref{eq:fb-tree-higher-power-inputs} follows.

If $N\in\cN^j\cap\cD_{\rm in}^j$, then $N$ is neck-like but terminal, so \eqref{eq:fb-tree-branching-condition} gives $h_N\ge\delta$. The second estimate in \eqref{eq:fb-tree-higher-power-inputs} gives
\[
\#(\cN^j\cap\cD_{\rm in}^j) \le C_\alpha2^{j(n-2)}\epk^{2/\alpha}.
\]
If $K2^{-j}\ge1/2$, the preceding estimate $h_A\le C_K\epk<\delta$ shows that this terminal family is empty. Otherwise, for the centers $\zz_N$ chosen above, \Cref{lem:fb-tree-nonfinal-neck-terminals-neck-scale} and \eqref{eq:fb-tree-centered-neck-charge} give $\zz_N\in B_{\Rk}(\zk)$ and $\rb(\zz_N)\simeq_\delta r_j$. By \Cref{defn:fb-tree-dyadic-balls-centered-vee-excess}, the centers $x_N$ are $r_j/4$-separated; since $|x_N-\zz_N|\le4r_j$, the map $N\mapsto\zz_N$ has multiplicity at most $C$. Grouping the centers into the $O_\delta(1)$ dyadic neck-radius slices $\eta\simeq_\delta2^{-j}$, \eqref{eq:countingeta} gives
\[
\#(\cN^j\cap\cD_{\rm in}^j) \le C_\delta2^{j(n-2)}\epk^2.
\]
Taking the minimum proves \eqref{eq:fb-tree-inner-terminal-count}.

Finally we prove the descendant count. By \eqref{eq:fb-tree-inner-family}, the family is predecessor-closed: if $B\in\cD_{\rm in}^{j+1}$, then
\[
\dist\bigl(x_{PB},B_{\Rk/8}(\zk)\bigr) \le \frac54r_{PB}+4r_B =\frac{13}{4}r_{PB}<4r_{PB}.
\]
Hence $PB\in\cD_{\rm in}^j$. Iterating, if $0\le m\le l$ and $B\in\cD_{\rm in}^l$, then $P^{\,l-m}B\in\cD_{\rm in}^m$. The number of such balls is at most $\#\cD_{\rm in}^l$, since they all belong to that family. For the other bound, fix $A\in\cD_{\rm in}^m$ and let
\[
\mathcal F_{A,l}:=\{B\in\cD_{\rm in}^l:P^{\,l-m}B=A\}.
\]
If $l>m$, the predecessor geometry in \Cref{defn:fb-tree-branching-and-terminal-balls} gives $|x_B-x_A|\le \frac54\sum_{j=m}^{l-1}r_j\le \frac52 r_m.$ Thus all balls $\lsup{1/16}{B}$, $B\in\mathcal F_{A,l}$, are contained in a fixed dimensional dilation of $A$. They are pairwise disjoint and their centers lie on $\FB(u)$. The density estimate \eqref{eq:density-B} gives
\[
\cH^{n-1}\bigl(\FB(u)\cap\lsup{1/16}{B}\bigr)\ge c r_l^{n-1}.
\]
Summing over the disjoint family and applying \Cref{lem:perimeter-bound} in the fixed dilation of $A$, we get
\[
\#\mathcal F_{A,l}\,c r_l^{n-1} \le \cH^{n-1}\bigl(\FB(u)\cap B_{3r_m}(x_A)\bigr) \le C r_m^{n-1}.
\]
This proves \eqref{eq:fb-tree-surface-descendant-input}.
\end{proof}

We now define the set of neck balls $\mathcal{N}_{\rm in}$ as
\begin{equation}\label{eq:fb-tree-inner-neck-family}
\cN_{\rm in}:= \{B\in\cN:\lsup4B\cap B_{\Rk/8}(\zk)\ne\varnothing\}.
\end{equation}

Observe, also, that $U_\pm$ has finite perimeter in $B_{\Rk/8}(\zk)$. Indeed, since the tree is finite by \Cref{lem:fb-tree-small-neck-radii-near-flat-active-balls}, \Cref{lem:perimeter-bound} controls the free-boundary part, while the remaining boundary is contained in the finitely many spherical and affine cutoff faces from \Cref{defn:fb-tree-signed-pieces}; hence $\Per(U_\pm;B_{\Rk/8}(\zk))<\infty$. Write
\begin{equation}\label{eq:fb-tree-signed-boundary-pieces}
\Gamma_\pm:=\partial^*U_\pm\cap B_{\Rk/8}(\zk), \qquad \Gamma_\pm^{\rm reg}:=\Gamma_\pm\cap \bigl(\FB(u)\setminus\mathcal X_{\rm neck}\bigr), \qquad \Gamma_\pm^{\rm neck}:=\Gamma_\pm\setminus\Gamma_\pm^{\rm reg}.
\end{equation}

Set
\begin{equation}\label{eq:gamma-alpha}
\gamma_\alpha:=1+\frac2\alpha.
\end{equation}

We next prove the following useful bounds on the size of the neck set:
\begin{lem}
\label{lem:fb-tree-terminal-neck-bookkeeping}
After increasing $k$, we have
\begin{equation}\label{eq:fb-tree-terminal-neck-radius-bookkeeping}
r_B\le C\epk\Rk\qquad\text{for}\quad B \in \mathcal{N}_{\rm in},\qquad \text{and}\qquad \dist(x,\FB(u)) \le C\epk\Rk,\quad\text{for}\quad x\in \Gamma^{\rm neck}_\pm.
\end{equation}
Moreover, $\Gamma_\pm^{\rm neck} \subset \bigcup_{B\in\cN_{\rm in}}\overline B$ and
\begin{align}
\label{eq:fb-tree-terminal-neck-boundary-localization-bookkeeping} \qquad \cH^{n-1}(\Gamma_\pm^{\rm neck}) \le C\sum_{B\in\cN_{\rm in}}r_B^{n-1} \le C_\alpha\Rk^{\,n-1}\epk^{\gamma_\alpha},\qquad \left|\mathcal X_{\rm neck}\cap B_{\Rk/8}(\zk)\right| \le C_\alpha\Rk^n\epk^{1+\gamma_\alpha}.
\end{align}
\end{lem}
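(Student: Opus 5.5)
The plan is to combine the scale comparison of \Cref{lem:fb-tree-nonfinal-neck-terminals-neck-scale} with the terminal counting bound \eqref{eq:fb-tree-inner-terminal-count}, and then sum over generations.

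\emph{Radius bound.} Let $B\in\cN_{\rm in}$. By definition it is neck-like, so $\lsup4B$ contains some $\zz\in\cZ$. Since $\lsup4B$ meets $B_{\Rk/8}(\zk)$ and $r_B\le\Rk$, this center satisfies $\zz\in B_{8\Rk}(\zk)$. The final assertion of \Cref{lem:fb-tree-nonfinal-neck-terminals-neck-scale} then gives $r_B\le C_\delta\epk\Rk$. Because $\delta$ is now fixed, this is $C\epk\Rk$.

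\emph{Localization of $\Gamma^{\rm neck}_\pm$.} By \eqref{eq:fb-tree-signed-cover}, $U_\pm$ coincides with the two signed parts of $\{u>0\}$ away from $\mathcal X_{\rm neck}$. Hence a point of $\partial^*U_\pm\cap B_{\Rk/8}(\zk)$ outside $\mathcal X_{\rm neck}$ must lie on $\FB(u)$. There are two alternatives: it lies on the opposite domain's boundary, or it lies on $\partial\{u>0\}$. The first is excluded, because the signed cores are separated inside the positivity set, \eqref{eq:fb-tree-opposite-sign-separation}, and by the cover neither domain borders the other away from the necks. So it lies on $\partial\{u>0\}$, i.e.\ in $\Gamma^{\rm reg}_\pm$. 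Consequently $\Gamma^{\rm neck}_\pm\subset\mathcal X_{\rm neck}\cap B_{\Rk/8}(\zk)$. Each such point lies in the closure of a neck ball $B$ meeting $B_{\Rk/8}(\zk)$, so $B\in\cN_{\rm in}$. Since $x_B\in\FB(u)$, we get $\dist(x,\FB(u))\le r_B\le C\epk\Rk$.

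\emph{Area bound.} On $\overline B$ the set $\partial^*U_\pm$ has two parts. One consists of the cutoff faces of the finitely many pieces $D^\pm$ meeting $B$: spheres and affine caps at comparable scales, by \eqref{eq:fb-tree-intersecting-pieces-comparable-scales} applied with the neck ball's parent. The other is free boundary, which \Cref{lem:perimeter-bound} bounds by $Cr_B^{n-1}$. The pieces meeting $B$ are boundedly many by the separation in \Cref{defn:fb-tree-dyadic-balls-centered-vee-excess}. This gives $\cH^{n-1}(\Gamma^{\rm neck}_\pm\cap\overline B)\le Cr_B^{n-1}$.

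\emph{Summation.} Every $B\in\cN_{\rm in}$ belongs to some $\cN^j\cap\cD^j_{\rm in}$, and the radius bound forces $2^{-j}\le C\epk$. By \eqref{eq:fb-tree-inner-terminal-count}, level $j$ contributes at most
\[
C_\alpha 2^{j(n-2)}\epk^{2/\alpha}(2^{-j}\Rk)^{n-1}=C_\alpha\Rk^{n-1}\epk^{2/\alpha}2^{-j}.
\]
The geometric sum over $2^{-j}\le C\epk$ gives $C_\alpha\Rk^{n-1}\epk^{1+2/\alpha}=C_\alpha\Rk^{n-1}\epk^{\gamma_\alpha}$. The volume bound is identical with $r_B^n$ in place of $r_B^{n-1}$, which produces one extra factor $2^{-j}\lesssim\epk$. (When $\alpha>1$ one should take the $\epk^{2/\alpha}$ branch of the minimum in \eqref{eq:fb-tree-inner-terminal-count}; that is exactly what is used above.)

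\emph{Main obstacle.} The delicate step is the localization $\Gamma^{\rm neck}_\pm\subset\bigcup_{\cN_{\rm in}}\overline B$, together with the per-ball area bound. One has to check carefully that the artificial cut faces of the branching cores $B^\pm=B\cap\{\pm\ell_B>c_\ast r_B\}$ do not contribute reduced boundary away from neck balls. This should follow from \eqref{eq:fb-tree-signed-cover}: away from $\mathcal X_{\rm neck}$, $U_\pm$ equals the full signed positivity region, so the internal cut faces are interior to $U_\pm$ and are not part of $\partial^*U_\pm$.
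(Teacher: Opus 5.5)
There is a genuine gap in the per-ball area bound; the rest of your proposal matches the paper. Specifically, your radius bound, the localization $\Gamma^{\rm neck}_\pm\subset\mathcal X_{\rm neck}\cap B_{\Rk/8}(\zk)\subset\bigcup_{N\in\cN_{\rm in}}\overline N$, and the generation-by-generation summation with the $\epk^{2/\alpha}$ branch of \eqref{eq:fb-tree-inner-terminal-count} are what the paper does.

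\textbf{The gap.} The per-ball bound $\cH^{n-1}(\Gamma^{\rm neck}_\pm\cap\overline N)\le Cr_N^{n-1}$ rests on the claim that every piece $Q^\pm$, $Q\in\cB\cup\cR$, whose artificial face meets $\overline N$ has $r_Q\simeq r_N$. You derive this from \eqref{eq:fb-tree-intersecting-pieces-comparable-scales} applied with $PN$. That estimate, however, only compares two pieces that actually intersect, and the pieces that matter need not meet $(PN)^\pm$.

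\emph{Small scales.} Take a piece with $r_Q\ll r_N$ near the neck core. It lies in $B_{r_Q}(x_Q)$ with $x_Q\in\FB(u)$, hence in $\{|\ell_{PN}|\le C\delta r_{PN}+r_Q\}$ by \Cref{cor:centered-vee-lower-slab}. That set is disjoint from $(PN)^\pm\subset\{\pm\ell_{PN}>c_\ast r_{PN}\}$. Nothing in your argument excludes such pieces: children of neighbouring branching balls of the generation of $N$ can be centred inside $N$ and keep branching. Without confining the scales to $O(1)$ generations, the $r_Q/4$-separation only gives $O((r_N/r_l)^{n-1})$ pieces of face area $\sim r_l^{n-1}$ at each finer generation $l$. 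That is $O(r_N^{n-1})$ per generation, so you lose a factor proportional to the number of generations.

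\emph{Large scales.} Pieces at much coarser scales must also be excluded, or you pick up a logarithmic count.

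\textbf{What is missing.} You need the two-sided scale comparison proved in the paper. Its lower bound uses \Cref{lem:local-comparability-of-neck-radii}, not the intersecting-pieces estimate.

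\emph{Case $r_Q\le r_N$.} Let $A:=Q$ if $Q\in\cB$ and $A:=PQ$ if $Q\in\cR$, so $A$ is branching.
\begin{itemize}
\item \Cref{lem:fb-tree-small-neck-radii-near-flat-active-balls} gives $\zz_A\in\cZ\cap\lsup4A$ with $\rb(\zz_A)\le C\delta r_A$.
\item \Cref{lem:fb-tree-nonfinal-neck-terminals-neck-scale} gives $\zz_N\in\cZ\cap\lsup4N$ with $\rb(\zz_N)\simeq_\delta r_N$, and $|\zz_A-\zz_N|\le Cr_N$.
\item Local comparability of neck radii around $\zz_N$ then forces $\rb(\zz_A)\ge c_\delta r_N$, hence $r_Q\ge c_\delta r_N$.
\end{itemize}

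\emph{Case $r_Q$ large compared with $r_N$.}
\begin{itemize}
\item If $Q\in\cB$, a face point $y\in\overline N$ satisfies $c_\ast r_Q\le|\ell_Q(y)|\le|\ell_Q(x_N)|+r_N\le C\delta r_Q+r_N$, so $r_Q\le Cr_N$.
\item If $Q\in\cR$, one gets $\zz_N\in\lsup4Q$, which contradicts the regular stopping rule.
\end{itemize}

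Once the scales are confined to $O_\delta(1)$ generations, separation gives $O_\delta(1)$ pieces. The per-ball bound then follows from their piecewise Lipschitz faces together with \Cref{lem:perimeter-bound}, and your summation completes the proof.
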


\begin{proof}
By \eqref{eq:fb-tree-opposite-sign-separation}, \eqref{eq:fb-tree-signed-cover}, and \Cref{defn:fb-tree-signed-pieces},
\[
\Gamma_\pm^{\rm neck} \subset \mathcal X_{\rm neck}\cap B_{\Rk/8}(\zk) \subset \bigcup_{N\in\cN_{\rm in}}\overline N.
\]
Moreover, every point of $(\partial U_\pm\cap B_{\Rk/8}(\zk))\setminus\FB(u)$ lies in $\partial Q^\pm\setminus\FB(u)$ for some $Q\in\cB\cup\cR$.

For each $N\in\cN_{\rm in}$, choose $\zz_N\in\cZ\cap\lsup4N$. Since $x_N\in B_{\Rk}(\zk)$, $r_N\le\Rk$, and $\zz_N\in\lsup4N$, we have $\zz_N\in B_{5\Rk}(\zk)$. Hence \Cref{lem:fb-tree-nonfinal-neck-terminals-neck-scale} gives $r_N\simeq_\delta\rb(\zz_N)$ and $r_N\le C_\delta\epk\Rk$. Together with the preceding covering and $x_N\in\FB(u)$, this proves \eqref{eq:fb-tree-terminal-neck-radius-bookkeeping}.

We claim that, for $Q\in\cB\cup\cR$ and $N\in\cN_{\rm in}$,
\[
\bigl(\partial Q^\pm\setminus\FB(u)\bigr)\cap\overline N \ne\varnothing \quad\Longrightarrow\quad r_Q\simeq_\delta r_N.
\]
Choose $y$ in the intersection. Suppose first that $Q\in\cB$. If $r_N\ge r_Q/4$, then $r_Q\le4r_N$; otherwise \eqref{eq:fb-tree-branching-signed-pieces}, \eqref{eq:fb-tree-branching-condition}, and \Cref{cor:centered-vee-lower-slab} give
\[
c_\ast r_Q\le|\ell_Q(y)| \le|\ell_Q(x_N)|+r_N \le C\delta r_Q+r_N,
\]
and again $r_Q\le Cr_N$. If $Q\in\cR$, the inequality $r_N<3r_Q/5$ would imply
\[
|\zz_N-x_Q|\le5r_N+r_Q<4r_Q,
\]
contrary to the regular stopping rule in \Cref{defn:fb-tree-branching-and-terminal-balls}. Thus in either case $r_Q\le C_\delta r_N$.

For the reverse inequality, it is enough to consider $r_Q\le r_N$. Set $A:=Q$ if $Q\in\cB$, and $A:=PQ$ if $Q\in\cR$. Then $A\in\cB$ and $r_A\in\{r_Q,2r_Q\}$. Choose $\zz_A\in\cZ\cap\lsup4A$. Since $y\in\overline N\cap\partial Q^\pm$, the choices above and \Cref{lem:fb-tree-nonfinal-neck-terminals-neck-scale,lem:fb-tree-small-neck-radii-near-flat-active-balls} give
\[
\rb(\zz_N)\simeq_\delta r_N, \qquad \rb(\zz_A)\le C\delta r_A, \qquad |\zz_A-\zz_N|\le C(r_Q+r_N)\le Cr_N.
\]
Therefore \Cref{lem:local-comparability-of-neck-radii} yields $\rb(\zz_A)\ge c_\delta r_N$, and hence $r_Q\ge c_\delta r_N$. This proves the claim.

For a fixed $N$, the claim restricts the balls $Q$ whose artificial faces meet $\overline N$ to only $O_\delta(1)$ generations. At each such generation, \Cref{defn:fb-tree-dyadic-balls-centered-vee-excess} gives $r_Q/4$-separated centers in a fixed dilation of $N$, so there are only $O_\delta(1)$ of them. The piecewise Lipschitz geometry in \Cref{defn:fb-tree-signed-pieces,lem:fb-tree-regular-terminal-balls-split-into-two-signed-sheets}, together with \Cref{lem:perimeter-bound}, now gives
\[
\cH^{n-1}(\Gamma_\pm^{\rm neck}\cap\overline N) \le Cr_N^{n-1}.
\]

It remains to sum these local estimates. Put $\cN_{\rm in}^j:=\cN_{\rm in}\cap\cN^j$. By \eqref{eq:fb-tree-inner-family}, \eqref{eq:fb-tree-inner-terminal-count}, and \eqref{eq:gamma-alpha},
\[
\#\cN_{\rm in}^j \le C_\alpha2^{j(n-2)}\epk^{\gamma_\alpha-1}.
\]
The radius bound above shows that $\cN_{\rm in}^j=\varnothing$ unless $2^{-j}\le C\epk$. Since $r_N=2^{-j}\Rk$ on $\cN^j$,
\[
\sum_{N\in\cN_{\rm in}}r_N^{n-1} \le C_\alpha\Rk^{n-1}\epk^{\gamma_\alpha-1} \sum_{\{j:\,2^{-j}\le C\epk\}}2^{-j} \le C_\alpha\Rk^{n-1}\epk^{\gamma_\alpha}.
\]
The covering and the local boundary estimate therefore give
\[
\cH^{n-1}(\Gamma_\pm^{\rm neck}) \le C\sum_{N\in\cN_{\rm in}}r_N^{n-1} \le C_\alpha\Rk^{n-1}\epk^{\gamma_\alpha}.
\]
Finally,
\[
\begin{aligned}
|\mathcal X_{\rm neck}\cap B_{\Rk/8}(\zk)| &\le C\sum_{N\in\cN_{\rm in}}r_N^n \le C\left(\sup_{N\in\cN_{\rm in}}r_N\right) \sum_{N\in\cN_{\rm in}}r_N^{n-1} \le C_\alpha\Rk^n\epk^{1+\gamma_\alpha}.
\end{aligned}
\]
\end{proof}

For $p>1$, we say that $p$ is \emph{flux-admissible} if
\begin{equation}\label{eq:flux-admissible}
p\alpha<\frac{n-1}{n-2}, \qquad 2p<\gamma_\alpha,
\end{equation}
where $\gamma_\alpha$ is defined in \eqref{eq:gamma-alpha}. Equivalently, the flux-admissible range is
\[
1<p<\frac{1}{\alpha}\left(1+\min\left\{ \frac{1}{n-2},\frac{\alpha}{2} \right\}\right).
\]
When $p\alpha\le1$, both conditions are immediate.

\subsection{Higher-power boundary flux}

We use \Cref{lem:fb-tree-inner-counting-package,lem:fb-tree-terminal-neck-bookkeeping} to upgrade the transverse boundary flux to a higher-power bound on the two signed pieces.

\begin{prop}
\label{prop:fb-tree-sided-higher-power-transverse-estimate}
After increasing $k$, the following holds. Let $p>1$ be flux-admissible, \eqref{eq:flux-admissible}. Then
\begin{equation}\label{eq:fb-tree-sided-higher-power-transverse}
\int_{\Gamma_+}|\partial_\nu(u-e_{\rm root}\cdot x)|^p\,d\cH^{n-1} +\int_{\Gamma_-}|\partial_\nu(u+e_{\rm root}\cdot x)|^p\,d\cH^{n-1} \le C_{p,\alpha}\Rk^{\,n-1}\epk^{2p}.
\end{equation}
\end{prop}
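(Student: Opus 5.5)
The plan is to split $\Gamma_\pm=\Gamma_\pm^{\rm reg}\cup\Gamma_\pm^{\rm neck}$ as in \eqref{eq:fb-tree-signed-boundary-pieces} and to estimate the two parts separately. The neck part is treated as an error term. The regular part is the real content: we use the quadratic structure of the Bernoulli condition together with the tree estimates of \Cref{lem:fb-tree-inner-counting-package}.

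\emph{Neck part.} Since $|\nabla u|\le1$ by \Cref{lem:lipschitz-bound}, the integrand is bounded by $2^p$. \Cref{lem:fb-tree-terminal-neck-bookkeeping} then gives
\[
\int_{\Gamma_\pm^{\rm neck}}|\partial_\nu(u\mp e_{\rm root}\cdot x)|^p \le C\cH^{n-1}(\Gamma^{\rm neck}_\pm)\le C_\alpha\Rk^{n-1}\epk^{\gamma_\alpha}.
\]
The second admissibility condition $2p<\gamma_\alpha$ bounds this by $\Rk^{n-1}\epk^{2p}$.

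\emph{Regular part, local bound.} A point $x\in\Gamma_+^{\rm reg}$ lies on $\FB(u)\setminus\mathcal X_{\rm neck}$ and on the boundary of some signed piece. By \eqref{eq:fb-tree-signed-cover}, and because branching cores $B^\pm$ stay at distance $c_\ast r_B$ from their ridge, such an $x$ lies on a sheet of a regular terminal ball $B\in\cR$ with $\lsup4B\cap B_{\Rk/8}(\zk)\neq\varnothing$. There $\nu=\nabla u$ and $|\nabla u|=1$, so
\[
\partial_\nu(u-e_{\rm root}\cdot x)=1-\nabla u\cdot e_{\rm root}=\tfrac12|\nabla u-e_{\rm root}|^2\le C\bigl(h_B^2+|a_B-e_{\rm root}|^2\bigr),
\]
using \Cref{lem:fb-tree-regular-terminal-balls-split-into-two-signed-sheets}; the analogous identity holds on $\Gamma_-$. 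The sheet inside $B$ is a Lipschitz graph, so it has area at most $Cr_B^{n-1}$. Summing over $B$ gives
\[
\int_{\Gamma^{\rm reg}_\pm}\le C_p\sum_{B\in\cR_{\rm in}} r_B^{n-1}\bigl(h_B^{2p}+|a_B-e_{\rm root}|^{2p}\bigr).
\]
The proposition is thus reduced to the terminal-tree bound \eqref{eq:overview-terminal-tree-additive}.

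\emph{Regular part, summation over the tree (main obstacle).} Write $A_j:=P^{\,l-j}B$ for the ancestors of $B\in\cR^l$. By \eqref{eq:fb-tree-predecessor-affine-comparison}, $|a_B-e_{\rm root}|\le C\sum_{j\le l}h_{A_j}$, and all $A_j\in\cD^j_{\rm in}$ by predecessor-closedness. I would apply a weighted Hölder inequality along the branch,
\[
\Bigl(\sum_j h_{A_j}\Bigr)^{2p}\le C_\theta\sum_j 2^{\theta(l-j)}h_{A_j}^{2p},
\]
for a small $\theta>0$, and then exchange the order of summation. The descendant count \eqref{eq:fb-tree-surface-descendant-input} gives at most $C2^{(l-j)(n-1)}$ descendants of $A\in\cD^j_{\rm in}$ at level $l$, each with weight $r_l^{n-1}=2^{-(l-j)(n-1)}r_j^{n-1}$. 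Summing the geometric factor $2^{\theta(l-j)}$ against this is the delicate point. To absorb it, I plan to use the alternative count $2^{l(n-2+2\beta_\circ)}$ from \eqref{eq:fb-tree-surface-descendant-input}, together with the termination of levels once $2^{-l}\lesssim\epk$ (coming from \Cref{lem:fb-tree-small-neck-radii-near-flat-active-balls}). This should reduce everything to $\sum_j r_j^{n-1}\sum_{A\in\cD^j_{\rm in}}h_A^{2p}$, up to a harmless loss. For this sum, since $2p\ge2/\alpha$ when $p\alpha\ge1$, we have $h_A^{2p}\le h_A^{2/\alpha}\,h_A^{2p-2/\alpha}$. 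Bounding $\sum_A h_A^{2/\alpha}$ by \eqref{eq:fb-tree-higher-power-inputs}, and $\sup_A h_A$ by the Carleson singleton case $h_A\le C\epk(2^{j})^{\alpha(n-2)/2}$-type bound implied by \eqref{keyeqn-alpha}, each level contributes
\[
r_j^{n-1}2^{j(n-2)}\epk^{2p}\,2^{j(n-2)(p\alpha-1)}=\Rk^{n-1}\epk^{2p}\,2^{-j[(n-1)-(n-2)p\alpha]}.
\]
This is summable in $j$ exactly under the first admissibility condition $p\alpha<\frac{n-1}{n-2}$. When $p\alpha<1$, I would instead use Hölder against the count in \eqref{eq:fb-tree-higher-power-inputs}. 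I expect the bookkeeping of the branch-tilt sum and the choice of $\theta$ relative to the admissibility gap to be the hardest step.
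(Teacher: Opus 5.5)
Your proposal follows essentially the paper's proof. The neck part is handled identically. On $\Gamma_\pm^{\rm reg}$ the paper likewise uses the quadratic identity for $1\mp e_{\rm root}\cdot\nabla u$ and sums \eqref{eq:fb-tree-predecessor-affine-comparison} along the branch. It then exchanges the order of summation using the descendant bound $\min\{2^{(l-j)q},C2^{ld}\}$, where $q:=n-1$ and $d:=n-2+2\beta_\circ<q$. It closes with the per-level estimate $\sum_{A\in\cD^j_{\rm in}}h_A^{2p}\le C_{p,\alpha}\epk^{2p}2^{ja}$, with $a=(n-2)p\alpha$ if $p\alpha>1$ and $a=d-2\beta_\circ p\alpha$ if $p\alpha\le1$. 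Your sup-times-sum argument is the same as the embedding $\ell^1\subset\ell^{p\alpha}$, and admissibility gives $a<q$.

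Two corrections to the step you flag as hardest.

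\emph{The termination claim is false and should be dropped.} It is not true that levels terminate once $2^{-l}\lesssim\epk$. Only the neck-like terminal balls are confined to those deep levels. Near each neck center $\zz$, the chain of branching balls continues down to scale $\simeq_\delta\rb(\zz)$, which may be as small as $\rmin$. So the depth of the tree is not controlled by $|\log\epk|$.

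\emph{The termination claim is also not needed.} The second descendant count alone gives geometric decay in $l$. For a fixed ancestor level $j$, your weighted sum over $l\ge j$ is at most $C_\theta r_j^{q}2^{\theta jd/(q-d)}$, for $\theta<q-d$. This is absorbed by the per-level decay $2^{-j(q-a)}$ as soon as $\theta d/(q-d)<q-a$; this is the precise form of your "choice of $\theta$ relative to the admissibility gap."

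\emph{The weight itself is unnecessary.} The paper uses plain H\"older, $(\sum_{j\le l}h_{A_j})^{2p}\le(l+1)^{2p-1}\sum_{j\le l}h_{A_j}^{2p}$. It splits the inner sum at $(l-j)q=ld$. The polynomial factor is then absorbed by the same strict inequality $d-q+(1-d/q)a<0$, with no parameter to tune.
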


\begin{proof}
Set
\[
q:=n-1,\qquad d:=n-2+2\beta_\circ.
\]
Since $\beta_\circ<1/2$, we have $d<q$. Let $\Gamma_\pm^{\rm reg}$ be the regular boundary parts from \Cref{lem:fb-tree-terminal-neck-bookkeeping}, and let us define
\[
a_{\alpha,p}:=
\begin{cases}
d-2\beta_\circ p\alpha \quad &\mbox{if} \quad p\alpha\le1\\
(n-2)p\alpha & \mbox{if} \quad p\alpha>1.
\end{cases}
\]

We first observe that, by Lipschitz regularity, \Cref{lem:fb-tree-terminal-neck-bookkeeping}, and $2p<\gamma_\alpha$, the contribution on $\Gamma_\pm^{\rm neck}$ is bounded by $C\Rk^q\epk^{\gamma_\alpha}\le C_{p,\alpha}\Rk^q\epk^{2p}.$ We can therefore focus on the regular part $\Gamma_\pm^{\rm reg}$.

The counting and $2/\alpha$-power estimates in \Cref{lem:fb-tree-inner-counting-package}, with the family $\cD_{\rm in}^j$ defined in \eqref{eq:fb-tree-inner-family}, give (putting $t_A=h_A^{2/\alpha}$ and $s=p\alpha$, in both ranges of $s$),
\[
\sum_A t_A^s \le (\#\cD_{\rm in}^j)^{(1-s)_+} \left(\sum_A t_A\right)^s \le C_{p,\alpha}\epk^{2p}2^{j a_{\alpha,p}},
\]
so that, for every $j\ge0$,
\begin{equation}\label{eq:fb-tree-proof-h-2p-generation-bound}
\sum_{A\in\cD_{\rm in}^j}h_A^{2p} \le C_{p,\alpha}\epk^{2p}2^{j a_{\alpha,p}}.
\end{equation}

On the other hand, for $B\in\cR$, on $\lsup{3/2}{B^+}$, \Cref{lem:fb-tree-regular-terminal-balls-split-into-two-signed-sheets} gives $|\nabla u-a_B|\le Ch_B$, and on $\lsup{3/2}{B^-}$ it gives $|\nabla u+a_B|\le Ch_B$. Since $|\nabla u|=1$ on $\FB(u)$, we get
\[
\bigl|1\mp e_{\rm root}\cdot\nabla u\bigr| \le C\bigl(h_B^2+|a_B-e_{\rm root}|^2\bigr) \quad\text{on}\quad \lsup{3/2}{B^\pm}\cap \Gamma_\pm.
\]

Every point of $\Gamma_\pm^{\rm reg}$ lies in a regular terminal ball. Using \Cref{lem:perimeter-bound} on a fixed enlargement of each such ball,
\[
\begin{split}
&\int_{\Gamma_+^{\rm reg}} \bigl|1-e_{\rm root}\cdot\nabla u\bigr|^p\,d\cH^{q} + \int_{\Gamma_-^{\rm reg}} \bigl|1+e_{\rm root}\cdot\nabla u\bigr|^p\,d\cH^{q} \le C_p\sum_{l\ge0} r_l^q \sum_{B\in\cR^l\cap\cD_{\rm in}^l} \bigl(h_B^{2p}+|a_B-e_{\rm root}|^{2p}\bigr).
\end{split}
\]
For $B\in\cR^l\cap\cD_{\rm in}^l$, \eqref{eq:fb-tree-predecessor-affine-comparison} gives
\[
h_B^{2p}+|a_B-e_{\rm root}|^{2p} \le C_p (l+1)^{2p-1}\sum_{j=0}^{l}h_{P^{\,l-j}B}^{2p},
\]
where the last sum already contains $h_B^{2p}$. Grouping by the level $j$ ancestor and using \Cref{lem:fb-tree-inner-counting-package}, we get
\[
\begin{split}
\sum_{B\in\cR^l\cap\cD_{\rm in}^l} \bigl(h_B^{2p}+|a_B-e_{\rm root}|^{2p}\bigr) &\le C_p (l+1)^{2p-1} \sum_{j=0}^{l} \min\{2^{(l-j)q},C2^{ld}\} \sum_{A\in\cD_{\rm in}^j}h_A^{2p} \\
&\le C_{p,\alpha} (l+1)^{2p-1}\epk^{2p} \sum_{j=0}^{l} \min\{2^{(l-j)q},C2^{ld}\}2^{j a_{\alpha,p}}.
\end{split}
\]
Writing $s=l-j$, the regular-boundary integral is at most
\[
C_{p,\alpha}\Rk^q\epk^{2p} \sum_l (l+1)^{2p-1}2^{-lq} \sum_{s=0}^{l} \min\{2^{sq},C2^{ld}\}2^{(l-s)a_{\alpha,p}}.
\]
By \eqref{eq:flux-admissible}, $a_{\alpha,p} <q$. Splitting the inner sum where $sq=ld$, its two parts are bounded by $C_{p,\alpha}2^{l\{d+(1-d/q)a_{\alpha,p}\}}.$ Therefore, the regular-boundary integral is bounded by
\[
C_{p,\alpha}\Rk^q\epk^{2p} \sum_l (l+1)^{2p-1} 2^{l\{d-q+(1-d/q)a_{\alpha,p}\}}.
\]
Again, since $a_{\alpha,p} <q$ the exponent is negative $d-q+\Bigl(1-\frac dq\Bigr)a_{\alpha,p} <0.$ Thus, the series converges. This gives the desired bound on $\Gamma_+^{\rm reg}\cup\Gamma_-^{\rm reg}$, and hence proves \eqref{eq:fb-tree-sided-higher-power-transverse}.
\end{proof}

\subsection{Local weak Neumann and energy estimates}

We now turn the higher-power boundary estimate in \Cref{prop:fb-tree-sided-higher-power-transverse-estimate} into the local weak Neumann and energy controls needed for the linearization argument.

\begin{lem}
\label{lem:direct-sided-local-flux-slab}
Let $p_{\rm fl}>1$ be flux-admissible, \eqref{eq:flux-admissible}, and write $p_{\rm fl}'=p_{\rm fl}/(p_{\rm fl}-1)$. Fix $C_0\ge1$. After increasing $k$, the following holds. Let
\[
\zz\in B_{\Rk/16}(\zk)\cap\cZ, \qquad 0<R\le\Rk/16.
\]
Let $a\in\mathbb S^{n-1}$ satisfy $|a-e_{\rm root}|\le C_0\epk$ and set $w:=u-a\cdot x$. Then
\begin{equation}\label{eq:direct-sided-local-l1-flux}
\int_{\Gamma_+\cap B_R(\zz)}|w_{\nu}|\,d\cH^{n-1} \le C R^{(n-1)/p_{\rm fl}'}\Rk^{(n-1)/p_{\rm fl}}\epk^2 +C\Rk^{n-1}\epk^{2p_{\rm fl}} .
\end{equation}
Here $C=C(n,\alpha,C_0,p_{\rm fl})$. The analogous flux estimate holds on $U_-$, assuming $|a+e_{\rm root}|\le C_0\epk$.
\end{lem}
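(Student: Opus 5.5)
We split $\Gamma_+\cap B_R(\zz)$ into its regular part $\Gamma_+^{\rm reg}$ and its neck part $\Gamma_+^{\rm neck}$, as in \eqref{eq:fb-tree-signed-boundary-pieces}. Each part is estimated separately. Throughout, $w_\nu=\partial_\nu u-a\cdot\nu$.

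\emph{Neck part.} Since $|\nabla u|\le1$ and $|a|=1$, we have $|w_\nu|\le 2$ there. Then \Cref{lem:fb-tree-terminal-neck-bookkeeping} gives
\[
\int_{\Gamma_+^{\rm neck}}|w_\nu|\le C\Rk^{n-1}\epk^{\gamma_\alpha}\le C\Rk^{n-1}\epk^{2p_{\rm fl}},
\]
because $2p_{\rm fl}<\gamma_\alpha$ by flux-admissibility and $\epk<1$. This produces the second term of \eqref{eq:direct-sided-local-l1-flux}.

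\emph{Regular part.} Here $\nu=\nabla u$ and $|\nabla u|=1$, so $w_\nu=1-a\cdot\nabla u=\tfrac12|\nabla u-a|^2$. From the inequality $|\nabla u-a|^2\le 2|\nabla u-e_{\rm root}|^2+2|a-e_{\rm root}|^2$ we get
\[
|w_\nu|\le 2\bigl(1-e_{\rm root}\cdot\nabla u\bigr)+C_0^2\epk^2 .
\]
For the constant term, \Cref{lem:perimeter-bound} bounds $\cH^{n-1}(\FB(u)\cap B_R(\zz))\le CR^{n-1}$. This is valid because $\zz$ is a free-boundary point and all free-boundary points are in the zero set, so the lemma applies with a larger radius. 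The constant term therefore contributes $CR^{n-1}\epk^2$. Since $R\le\Rk$, this is at most $CR^{(n-1)/p'_{\rm fl}}\Rk^{(n-1)/p_{\rm fl}}\epk^2$.

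For the main term we apply Hölder with exponents $p_{\rm fl},p_{\rm fl}'$ on $\Gamma_+^{\rm reg}\cap B_R(\zz)$:
\[
\int |1-e_{\rm root}\cdot\nabla u|\le \cH^{n-1}(\FB(u)\cap B_R(\zz))^{1/p'_{\rm fl}}\Bigl(\int_{\Gamma_+}|\partial_\nu(u-e_{\rm root}\cdot x)|^{p_{\rm fl}}\Bigr)^{1/p_{\rm fl}} .
\]
The first factor is $\le CR^{(n-1)/p'_{\rm fl}}$ by the perimeter bound. The second is $\le C\Rk^{(n-1)/p_{\rm fl}}\epk^{2}$ by \Cref{prop:fb-tree-sided-higher-power-transverse-estimate}. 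The proposition applies because $B_R(\zz)\subset B_{\Rk/8}(\zk)$, which holds since $|\zz-\zk|<\Rk/16$ and $R\le\Rk/16$. Its integrand coincides with $1-e_{\rm root}\cdot\nabla u$ on the regular part, since $\nu=\nabla u$ there.

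Adding the three contributions proves \eqref{eq:direct-sided-local-l1-flux}. The case of $U_-$ is symmetric, with $1+a\cdot\nabla u=\tfrac12|\nabla u+a|^2$.

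The main point needing care is the identification of the normal on $\Gamma_+^{\rm reg}$. We need $\Gamma_+^{\rm reg}\subset\FB(u)$, with $\nu$ being the outward normal of $U_+$, equal to $-\nabla u$. This sign must be consistent with the quantity controlled in the proposition. Since both the proposition and the lemma bound $|\cdot|$ of the same expression, the sign convention does not matter: the quadratic identity gives the same absolute value either way.
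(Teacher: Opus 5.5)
Your proposal is correct and follows the paper's proof essentially step for step. It splits $\Gamma_+\cap B_R(\zz)$ into regular and neck parts, uses the quadratic unit-vector identity plus H\"older with \Cref{lem:perimeter-bound} and \Cref{prop:fb-tree-sided-higher-power-transverse-estimate} on the regular part, and uses $|w_\nu|\le2$ with \Cref{lem:fb-tree-terminal-neck-bookkeeping} and $2p_{\rm fl}<\gamma_\alpha$ on the neck part. The only cosmetic difference is bookkeeping: you apply H\"older to the $1-e_{\rm root}\cdot\nabla u$ term alone and bound the $C_0^2\epk^2$ term directly via $R\le\Rk$. The paper instead first bounds $\int|w_\nu|^{p_{\rm fl}}$ on the regular part and then applies H\"older to $w_\nu$ itself.
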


\begin{proof}
We prove the $\Gamma_+$ estimate; the $\Gamma_-$ estimate follows by replacing $\Gamma_+$ by $\Gamma_-$ and $e_{\rm root}$ by $-e_{\rm root}$. Put
\[
\Gamma_R^{\rm reg}:=\Gamma_+^{\rm reg}\cap B_R(\zz), \qquad \Gamma_R^{\rm neck}:=(\Gamma_+\cap B_R(\zz))\setminus\Gamma_R^{\rm reg}.
\]

On $\Gamma_R^{\rm reg}$, $|w_{\nu}|=|1-a\cdot\nabla u|.$ If $g,a,e\in\mathbb S^{n-1}$, then
\[
1-a\cdot g=\frac12|a-g|^2 \le |a-e|^2+|e-g|^2 =|a-e|^2+2(1-e\cdot g).
\]
Taking $e=e_{\rm root}$, $g=\nabla u$, and using $|a-e_{\rm root}|\le C_0\epk$, we get
\[
|w_{\nu}|^{p_{\rm fl}} \le C\bigl|1-e_{\rm root}\cdot\nabla u\bigr|^{p_{\rm fl}} +C\epk^{2p_{\rm fl}} \quad\text{on }\Gamma_R^{\rm reg}.
\]
\Cref{lem:perimeter-bound,prop:fb-tree-sided-higher-power-transverse-estimate} yield
\[
\int_{\Gamma_R^{\rm reg}}|w_{\nu}|^{p_{\rm fl}}\,d\cH^{n-1} \le C\Rk^{n-1}\epk^{2p_{\rm fl}}.
\]
H\"older's inequality and \Cref{lem:perimeter-bound} give the sharper regular $L^1$ estimate
\[
\int_{\Gamma_R^{\rm reg}}|w_{\nu}|\,d\cH^{n-1} \le C R^{(n-1)/p_{\rm fl}'}\Rk^{(n-1)/p_{\rm fl}}\epk^2.
\]
On the artificial (neck) part, \Cref{lem:lipschitz-bound} gives $|w_\nu|\le |\nabla u|+|a|\le2$. Hence \eqref{eq:fb-tree-terminal-neck-boundary-localization-bookkeeping} gives
\[
\int_{\Gamma_R^{\rm neck}}|w_{\nu}|\,d\cH^{n-1} \le C_\alpha\Rk^{n-1}\epk^{\gamma_\alpha} \le C\Rk^{n-1}\epk^{2p_{\rm fl}}.
\]
Combining the regular and artificial contributions yields the desired result.
\end{proof}

We shall use the standard Sobolev--Poincar\'e inequality on John domains in the form stated in \Cref{lem:john-domain-direct-sided-lq-upgrade}; its short proof is included in \Cref{app:proofs-standard-auxiliary-lemmas}.

\begin{prop}[Signed $W^{1,2}$ estimate from $L^2$ excess and flux]
\label{prop:direct-sided-w12-from-flux}
In the setting of \Cref{lem:direct-sided-local-flux-slab}, assume in addition that
\[
r_N\le c_JR \quad\text{for every }N\in\cN\text{ meeting }B_{3R/4}(\zz),
\]
where $c_J$ is the constant in \Cref{lem:fb-tree-localized-raw-signed-shadows-john}, decreased if necessary. Let $b\in\mathbb R$ satisfy
\[
|b+a\cdot\zk|\le C_0\epk\Rk,
\]
and set $w:=u-a\cdot x-b$. Define
\[
\Phi_R(w):= R^{2-n}\epk\Rk \int_{\Gamma_+\cap B_R(\zz)} |w_{\nu}|\,d\cH^{n-1}.
\]
Then, for every $2<q\le\frac{2n}{n-2}$,
\begin{equation}
\label{eq:direct-sided-w12-from-flux}
\begin{aligned}
R^2\fint_{U_+\cap B_{3R/4}(\zz)} |\nabla w|^2\,dx &+ \left( \fint_{U_+\cap B_{R/2}(\zz)} |w|^q\,dx \right)^{2/q} \le C_q\fint_{U_+\cap B_R(\zz)} |w|^2\,dx +C\Phi_R(w),
\end{aligned}
\end{equation}
where, by \eqref{eq:direct-sided-local-l1-flux} we know
\begin{equation}\label{eq:direct-sided-phi-flux-bound}
\Phi_R(w) \le C\left[ \Rk^2\epk^3 \left(\frac{R}{\Rk}\right)^{1-(n-1)/p_{\rm fl}} +\Rk^2\epk^{2p_{\rm fl}+1} \left(\frac{\Rk}{R}\right)^{n-2} \right].
\end{equation}
Here $C_q=C_q(n,\alpha,p_{\rm fl},q)$, whereas $C=C(n,\alpha,C_0,p_{\rm fl},q)$. The same conclusions hold analogously on $U_-$.
\end{prop}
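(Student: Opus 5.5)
The plan is to prove a Caccioppoli inequality with a Neumann boundary term on the signed domain, and then upgrade to $L^q$ using the John-domain Sobolev--Poincar\'e inequality of \Cref{lem:john-domain-direct-sided-lq-upgrade} on $\Omega_+(\zz,R)$ from \Cref{lem:fb-tree-localized-raw-signed-shadows-john}. First note that $w=u-a\cdot x-b$ is harmonic in $U_+\subset\{u>0\}$. Take a cutoff $\eta\in C_c^\infty(B_R(\zz))$ with $\eta=1$ on $B_{3R/4}(\zz)$ and $|\nabla\eta|\le C/R$. Test $\Delta w=0$ against $\eta^2 w$ on $U_+$, which is legitimate since $U_+$ has finite perimeter in $B_{\Rk/8}(\zk)$ and $w$ is Lipschitz up to $\Gamma_+$ away from a null set. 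This gives
\[
\int_{U_+}\eta^2|\nabla w|^2\le \frac12\int_{U_+}\eta^2|\nabla w|^2+C\int_{U_+}|\nabla\eta|^2w^2+\int_{\Gamma_+\cap B_R(\zz)}\eta^2|w|\,|w_\nu|\,d\cH^{n-1}.
\]
The key point is to bound $|w|$ pointwise on $\Gamma_+\cap B_R(\zz)$ by $C\epk\Rk$.

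For this pointwise bound, on the regular part $u=0$, so $|w|=|a\cdot x+b|$. By \eqref{eq:direct-sided-root-slab} and \Cref{lem:fb-tree-mesoscopic-signed-slab-flatness}, $\Gamma_+\subset\{|e_{\rm root}\cdot(x-\zk)|\le C\epk\Rk\}$. Combining this with $|a-e_{\rm root}|\le C_0\epk$, $|x-\zk|\le\Rk/8$ and $|b+a\cdot\zk|\le C_0\epk\Rk$ gives $|a\cdot x+b|\le C\epk\Rk$. On the artificial part $\Gamma_+^{\rm neck}$, \eqref{eq:fb-tree-terminal-neck-radius-bookkeeping} gives $\dist(x,\FB(u))\le C\epk\Rk$. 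The $1$-Lipschitz bound then gives $u\le C\epk\Rk$, and $x$ again lies in the slab, so $|w|\le C\epk\Rk$. Hence the boundary term is at most $C\epk\Rk\int_{\Gamma_+\cap B_R}|w_\nu|=CR^{n-2}\Phi_R(w)$. Dividing by $|U_+\cap B_{3R/4}(\zz)|\gtrsim R^n$ yields the gradient bound. The lower volume bound comes from the John domain $\Omega_+$ having volume $\ge c_JR^n$, or directly from the cap in \eqref{eq:direct-sided-root-slab} when $R\ge C\epk\Rk$. Small $R$ needs a separate argument, and I expect to handle it through the John lemma's volume bound under the neck-radius hypothesis.

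For the $L^q$ term, apply Sobolev--Poincar\'e on the $c_J$-John domain $\Omega:=\Omega_+(\zz,R)$, which satisfies $U_+\cap B_{R/2}\subset\Omega\subset U_+\cap B_{3R/4}$. This gives
\[
\left(\fint_\Omega|w-\bar w|^q\right)^{2/q}\le C_qR^2\fint_\Omega|\nabla w|^2,
\]
valid for $q\le 2n/(n-2)$. The mean is controlled by $|\bar w|^2\le\fint_\Omega w^2\le C\fint_{U_+\cap B_R}w^2$, using volume comparability. Passing from $\Omega$ to $U_+\cap B_{R/2}$ costs only constants. Finally, \eqref{eq:direct-sided-phi-flux-bound} follows by inserting \eqref{eq:direct-sided-local-l1-flux} into the definition of $\Phi_R$:
\[
R^{2-n}\epk\Rk\cdot R^{(n-1)(1-1/p_{\rm fl})}\Rk^{(n-1)/p_{\rm fl}}\epk^2=\Rk^2\epk^3(R/\Rk)^{1-(n-1)/p_{\rm fl}},
\]
and similarly for the second term.

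I expect the main obstacle to be justifying the integration by parts across the artificial boundary pieces and the neck region. The clean way is to approximate $U_+$ by its finitely many Lipschitz pieces, which is possible since the tree is finite by \Cref{lem:fb-tree-small-neck-radii-near-flat-active-balls}. A second obstacle is ensuring the volume comparability $|U_+\cap B_{3R/4}|\simeq|U_+\cap B_R|\simeq R^n$ uniformly.
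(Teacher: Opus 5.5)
Your proposal follows the paper's proof essentially step by step. Both use Caccioppoli with the test function $\eta^2 w$ on the finite-perimeter set $U_+$, and both prove the pointwise bound $|w|\le C\epk\Rk$ on $\Gamma_+\cap B_R(\zz)$: from $u=0$ and the root slab on the regular part, and from the neck bookkeeping plus the Lipschitz bound on the artificial part. Both then apply Sobolev--Poincar\'e on the John domain, and obtain the bound on $\Phi_R$ by direct substitution of \eqref{eq:direct-sided-local-l1-flux}.

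The one step you leave unjustified is the lower bound $|U_+\cap B_{R/2}(\zz)|\ge cR^n$. Your claim that ``passing from $\Omega$ to $U_+\cap B_{R/2}$ costs only constants'' needs exactly this, because $\fint_{U_+\cap B_{R/2}}|w|^q\le \frac{|\Omega|}{|U_+\cap B_{R/2}|}\fint_\Omega|w|^q$. The John lemma at radius $R$ does not supply it: it gives only $|U_+\cap B_{3R/4}(\zz)|\ge|\Omega|\ge c_JR^n$. That settles the gradient term, but $\Omega$ contains $U_+\cap B_{R/2}(\zz)$ rather than being contained in it. The root-slab cap handles only $R\ge C\epk\Rk$.

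The paper closes this for all admissible $R$ as follows. It follows the local-child chain through $\zz$ to a terminal neck ball $N_\zz$ with $\zz\in N_\zz$. This ball meets $B_{3R/4}(\zz)$, so $\rb(\zz)\le C_\delta r_{N_\zz}\le C_\delta c_JR$. For $c_J$ small, \Cref{lem:decay-of-monneau-excess} and \Cref{lem:fb-tree-mesoscopic-signed-slab-flatness} then place a cap of volume comparable to $R^n$ inside $U_+\cap B_{R/2}(\zz)$.

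Your intended John-lemma route also works if you apply \Cref{lem:fb-tree-localized-raw-signed-shadows-john} at radius $2R/3$, after shrinking $c_J$ by a factor $2/3$, which the statement permits. Then $B_{3(2R/3)/4}(\zz)=B_{R/2}(\zz)$, and the lemma's inclusion $\Omega_+(\zz,2R/3)\subset U_+\cap B_{R/2}(\zz)$ gives the missing volume bound.
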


\begin{proof}
We prove the $U_+$ estimate. Put
\[
\Omega_s:=U_+\cap B_s(\zz),\qquad \Gamma_s:=\Gamma_+\cap B_s(\zz),\qquad \Gamma_R^{\rm reg}:=\Gamma_R\cap\Gamma_+^{\rm reg}, \qquad \Gamma_R^{\rm neck}:=\Gamma_R\setminus\Gamma_R^{\rm reg}.
\]
Following the local-child chain through $\zz$ to a terminal ball gives $N_\zz\in\cN$ with $\zz\in\lsup{1/4}{N_\zz}$. Thus $N_\zz$ meets $B_{3R/4}(\zz)$, and
\[
\rb(\zz)\le C_\delta r_{N_\zz}\le C_\delta c_JR
\]
by \Cref{lem:fb-tree-nonfinal-neck-terminals-neck-scale}. For $c_J$ sufficiently small, \Cref{lem:decay-of-monneau-excess,lem:fb-tree-mesoscopic-signed-slab-flatness}, applied in a minimizing direction oriented toward $U_+$, gives the fixed volume comparability
\[
cR^n\le |\Omega_{R/2}|\le |\Omega_{3R/4}|\le CR^n .
\]

We also need the boundary size of $w$. On $\Gamma_R^{\rm reg}$, one has $u=0$, while \eqref{eq:direct-sided-root-slab} gives $|e_{\rm root}\cdot(x-\zk)|\le C\epk\Rk$. On $\Gamma_R^{\rm neck}$, \eqref{eq:fb-tree-terminal-neck-radius-bookkeeping} and the two root-cap inclusions in \eqref{eq:direct-sided-root-slab} give directly
\[
u(x)+|e_{\rm root}\cdot(x-\zk)|\le C\epk\Rk \quad\text{for a.e. }x.
\]
Since $|a-e_{\rm root}|\le C_0\epk$, $|b+a\cdot\zk|\le C_0\epk\Rk$, and $|x-\zk|\le\Rk/8$, it follows that
\[
|w|\le C\epk\Rk\qquad\text{on}\quad \Gamma_R.
\]

Let $\eta\in C_c^\infty(B_R(\zz))$ satisfy $\eta\equiv1$ on $B_{3R/4}(\zz)$ and $|\nabla\eta|\le C/R$. Integrate by parts on the finite-perimeter set $U_+$ with the compactly supported test function $w\eta^2$. Taking absolute values gives
\[
\int_{\Omega_R}|\nabla w|^2\eta^2 \le 2\left|\int_{\Omega_R}w\eta\,\nabla w\cdot\nabla\eta\right| +\int_{\Gamma_R}|w|\,|w_{\nu}|\,\eta^2\,d\cH^{n-1} .
\]
There is no spherical boundary term because $\eta$ is compactly supported in $B_R(\zz)$. Using $|w|\le C\epk\Rk$ on $\Gamma_R$, Cauchy's inequality, and \eqref{eq:direct-sided-local-l1-flux}, we obtain the gradient term in \eqref{eq:direct-sided-w12-from-flux}:
\[
R^2\fint_{\Omega_{3R/4}}|\nabla w|^2 \le C_q\fint_{\Omega_R}w^2 +C R^{2-n}\epk\Rk \int_{\Gamma_R}|w_{\nu}|\,d\cH^{n-1}.
\]

It remains to prove the $L^q$ non-concentration estimate. \Cref{lem:fb-tree-localized-raw-signed-shadows-john} gives a $c_J$-John domain $\mathcal O$ satisfying
\[
\Omega_{R/2}\subset\mathcal O \subset \Omega_{3R/4}, \qquad |\mathcal O|\ge c_JR^n .
\]
Since $\Omega_{R/2}\subset\mathcal O$ and both sets have volume comparable to $R^n$, \Cref{lem:john-domain-direct-sided-lq-upgrade} gives
\[
\left(\fint_{\Omega_{R/2}}|w|^q\,dx\right)^{2/q} \le C_q\left[ \fint_{\mathcal O}|w|^2\,dx +R^2\fint_{\mathcal O}|\nabla w|^2\,dx \right].
\]
Since $\mathcal O\subset\Omega_{3R/4}\subset\Omega_R$ and $|\mathcal O|\simeq R^n$,
\[
\fint_{\mathcal O}|w|^2\le C\fint_{\Omega_R}|w|^2.
\]
Moreover, the gradient estimate above gives
\[
R^2\fint_{\mathcal O}|\nabla w|^2 \le C_q\fint_{\Omega_R}|w|^2+C\Phi_R(w).
\]
Hence
\[
\left(\fint_{\Omega_{R/2}}|w|^q\,dx\right)^{2/q} \le C_q\fint_{\Omega_R}|w|^2+C\Phi_R(w).
\]
Together with the gradient term, this proves \eqref{eq:direct-sided-w12-from-flux}.
\end{proof}

\begin{remark}[Scaling of the weak Neumann residual]
Let us record the scaling of the weak Neumann residual used in both iterations below. In the setting of \Cref{prop:direct-sided-w12-from-flux}, let $r,H,\rho>0$ and set
\[
v(y):=\frac{w(\zz+ry)}{H}, \qquad \widetilde\Omega_r:=r^{-1}(U_+-\zz).
\]
If $S\ge\rho r$, then for every $\psi\in C_c^\infty(B_\rho)$, integration by parts and the definition of $\Phi_S$ give
\begin{equation}\label{eq:scaled-weak-residual-from-flux}
\begin{aligned}
\left|\int_{\widetilde\Omega_r}\nabla v\cdot\nabla\psi\,dy\right| &\le \left(\frac Sr\right)^{n-2} \frac{\Phi_S(w)}{H\epk\Rk}\|\psi\|_{L^\infty} = \left(\frac Sr\right)^{n-2} \frac{\Phi_S(w)}{H(S)^2} \left(\frac{H(S)}H\right)^2 \frac{H}{\epk\Rk}\|\psi\|_{L^\infty},
\end{aligned}
\end{equation}
where $H(S)>0$ is any comparison amplitude.
\end{remark}

\section{Neumann linearization and the flat scale}
\label{sec:linear}

The goal of this section is to prove the analogue of \cite[Proposition~8.1]{CFFS25} for the free-boundary centered domains constructed in \Cref{sec:fb-tree}. We first obtain affine control at the flat scale. The decay below that scale is postponed to the final section, where the strict inequality $\alpha_\star<1$ allows a flux exponent above the critical value.

\subsection{Neumann compactness}

Throughout this section $U_\pm\subset B_{\Rk/2}(\zk)$ are the signed domains from \Cref{defn:fb-tree-signed-pieces}.

We use the following compactness lemma for harmonic functions on domains trapped between two nearby half-spaces; it is proved in \Cref{lem:linear-neumann-compactness-half-ball-l2}.

\begin{lem}[Global Neumann compactness]
\label{lem:linear-neumann-compactness-global}
Let
\[
m\ge2, \qquad q>2, \qquad N\in\mathbb N_0, \qquad \vartheta\in(0,1).
\]
For every $\varepsilon_{\rm cmp}>0$ there exists $\delta_{\rm cmp}=\delta_{\rm cmp} (\varepsilon_{\rm cmp},m,q,N,\vartheta)\in(0,\frac14)$ with the following property. Let $\Omega\subset\mathbb R^m$ be open and let
\[
v\in H^1(\Omega\cap B_{\delta_{\rm cmp}^{-1}}) \cap L^q(\Omega\cap B_{\delta_{\rm cmp}^{-1}})
\]
be harmonic in $\Omega\cap B_{\delta_{\rm cmp}^{-1}}$. Suppose that there is $\nu\in\mathbb S^{m-1}$ such that, for every $1\le\rho\le\delta_{\rm cmp}^{-1}$,
\[
B_\rho\cap\{\nu\cdot x\ge\delta_{\rm cmp}\rho\} \subset \Omega\cap B_\rho \subset B_\rho\cap\{\nu\cdot x\ge-\delta_{\rm cmp}\rho\},
\]
and
\[
\left(\fint_{\Omega\cap B_\rho}|v|^2\,dx\right)^{1/2} +\rho\left(\fint_{\Omega\cap B_\rho}|\nabla v|^2\,dx\right)^{1/2} +\left(\fint_{\Omega\cap B_\rho}|v|^q\,dx\right)^{1/q} \le \rho^{N+\vartheta}.
\]
Assume also that, for every $1\le\rho\le\delta_{\rm cmp}^{-1}$ and every $\varphi\in C_c^\infty(B_\rho)$,
\[
\left| \int_{\Omega\cap B_\rho}\nabla v\cdot\nabla\varphi\,dx \right| \le \delta_{\rm cmp}\rho^{m+N+\vartheta-2} \|\varphi\|_{L^\infty(B_\rho)}.
\]
Then there is a harmonic polynomial $P_N$ of degree at most $N$, even with respect to the hyperplane $\nu^\perp$, such that
\[
\left( \fint_{\Omega\cap B_{1/2}}|v-P_N|^2\,dx \right)^{1/2} \le\varepsilon_{\rm cmp}.
\]
\end{lem}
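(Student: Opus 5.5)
We argue by contradiction and compactness. Suppose the statement fails for some $\varepsilon_{\rm cmp}>0$. Then there are $\delta_j\downarrow0$, open sets $\Omega_j$, unit vectors $\nu_j$ (after rotating, $\nu_j=e_m$), and harmonic functions $v_j$ satisfying all the hypotheses with $\delta_{\rm cmp}=\delta_j$. However, for every even harmonic polynomial $P$ of degree at most $N$,
\[
\Bigl(\fint_{\Omega_j\cap B_{1/2}}|v_j-P|^2\Bigr)^{1/2}>\varepsilon_{\rm cmp}.
\]
The slab hypothesis shows that $\Omega_j\cap B_\rho$ converges in Hausdorff distance, and in measure, to $B_\rho\cap\{x_m>0\}$ for every $\rho$. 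We extend $v_j$ to $\{x_m>0\}$ as follows. On compact subsets of $\{x_m>0\}$, each such set eventually lies in $\Omega_j$, and the $L^2$ bounds together with interior harmonic estimates give locally uniform $C^k$ bounds. Passing to a diagonal subsequence, $v_j\to v$ in $C^\infty_{\rm loc}(\{x_m>0\})$, with $v$ harmonic in the half-space.

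\textbf{Step 1: no loss of mass near the boundary.} This is where the $L^q$ bound with $q>2$ is used. Hölder's inequality gives
\[
\int_{\Omega_j\cap B_\rho\cap\{|x_m|<t\}}|v_j|^2\le |\Omega_j\cap B_\rho\cap\{|x_m|<t\}|^{1-2/q}\,\|v_j\|_{L^q(\Omega_j\cap B_\rho)}^2\le C_\rho (t+\delta_j)^{1-2/q}.
\]
Hence the $L^2$ mass in thin slabs is uniformly small. Combined with the interior convergence, this yields
\[
\int_{\Omega_j\cap B_\rho}|v_j-v|^2\to0
\]
for every $\rho$, and also that $v$ satisfies the growth bound
\[
\Bigl(\fint_{B_\rho^+}|v|^2\Bigr)^{1/2}\le C\rho^{N+\vartheta}\qquad\text{for all }\rho\ge1.
\]
The gradient bound passes to the limit by lower semicontinuity, so $v\in H^1(B_\rho^+)$.

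\textbf{Step 2: the Neumann condition.} Take $\varphi\in C_c^\infty(B_\rho)$, not necessarily vanishing on $\{x_m=0\}$. Split
\[
\int_{\Omega_j}\nabla v_j\cdot\nabla\varphi
\]
into the region $\{x_m>t\}$, where $\nabla v_j\to\nabla v$ uniformly, and the thin slab $\{|x_m|<t\}$. On the slab, Cauchy--Schwarz with the uniform $H^1$ bound gives a contribution of at most $C\|\nabla\varphi\|_\infty (t+\delta_j)^{1/2}$. Since the residual bound tends to $0$, letting $j\to\infty$ and then $t\to0$ gives
\[
\int_{\{x_m>0\}}\nabla v\cdot\nabla\varphi=0.
\]
Thus $v$ is weakly harmonic with zero Neumann data on $\{x_m=0\}$. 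Its even reflection $\tilde v$ is therefore harmonic in all of $\mathbb R^m$, by the standard reflection argument.

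\textbf{Step 3: Liouville and the contradiction.} Since $\tilde v$ is entire harmonic with $\|\tilde v\|_{L^2(B_\rho)}\lesssim\rho^{N+\vartheta+m/2}$ and $\vartheta<1$, the mean-value/Liouville argument shows that $\tilde v$ is a harmonic polynomial of degree at most $N$. It is even in $x_m$ by construction. Taking $P=\tilde v$, Step 1 gives
\[
\fint_{\Omega_j\cap B_{1/2}}|v_j-P|^2\to0,
\]
which contradicts the choice of the sequence.

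The main obstacle is the boundary-layer control in Steps 1 and 2. The domains $\Omega_j$ are arbitrary open sets trapped in slabs, with no regularity, so convergence can only be extracted in the interior. The proof relies on the $L^q$ non-concentration for the $L^2$ convergence, and on the $H^1$ bound together with the vanishing weak residual for the Neumann condition, to show that nothing is lost in the thin layer.
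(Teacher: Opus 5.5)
Your proof is correct and follows the same contradiction, compactness and Liouville scheme as the paper. The only difference is packaging: you carry out the boundary-layer analysis (H\"older non-concentration from $q>2$, and the thin-slab Cauchy--Schwarz estimate that turns the vanishing weak residual into a Neumann condition) directly on the whole half-space using interior harmonic estimates, whereas the paper puts exactly these steps into the half-ball \Cref{lem:linear-neumann-compactness-half-ball-l2}, applies it at each rescaled scale $2R$, and patches the limits by a diagonal argument (which also requires it to check that the limits agree on overlaps).
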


The proof combines \Cref{lem:linear-neumann-compactness-half-ball-l2} with the Liouville classification of even harmonic functions of polynomial growth; details are given in \Cref{app:proofs-standard-auxiliary-lemmas}.

\subsection{Affine improvement to the flat scale}

We iterate \Cref{lem:linear-neumann-compactness-global} to obtain affine control down to the flat scale.

Since $3\le n<\nAC\le7$, fix the dimensional exponent
\begin{equation}\label{eq:asym-decay-first-flux-exponent}
p_1:=1+\frac{4}{5\max\{2,n-2\}}.
\end{equation}
Thus $p_1=7/5$ when $n=4$. This exponent is uniformly flux-admissible, \eqref{eq:flux-admissible}, for the selected range $3/4\le\alpha\le101/100$. Indeed, for $3\le n\le6$,
\[
p_1\alpha\le\frac{101}{100}p_1<\frac{n-1}{n-2}, \qquad 2p_1\le\frac{14}{5}<\frac{301}{101} \le1+\frac2\alpha=\gamma_\alpha.
\]

Choose $\cttc>0$ so that
\begin{equation}\label{eq:asym-decay-flat-scale-chi-smallness}
\cttc\le\frac12 \min\left\{ \frac{1}{\frac95+\frac{n-1}{p_1}}, \frac{2p_1-1}{n+\frac45}, \frac{2}{n+\frac13} \right\} = \frac{5}{18 +10\frac{n-1}{p_1}}.
\end{equation}
In the case $n=4$, $\alpha<1$, we make this choice small enough that also
\begin{equation}\label{eq:single-selection-flat-scale-reserves}
10\cttc<\frac{2(1-\alpha)}{2\alpha+1}.
\end{equation}
Define the flat scale, leaving its dependence on $k$ implicit, by
\begin{equation}\label{eq:flat-scale-definition}
R_\flat:=\epk^\cttc\Rk.
\end{equation}
Below, taking a smaller $\cttc$ only changes how large $k$ must be.

\begin{prop}[Flat-scale affine control]
\label{prop:asymmetric-excess-decay-almost-critical}
With the choices above, after increasing $k$, the following holds.

Let $\zz\in B_{\Rk/16}(\zk)\cap\cZ$. There exist coefficients, depending on $\zz$, with
\begin{equation}\label{eq:asymmetric-excess-flat-scale-normalization}
a_+^\flat,a_-^\flat\in\mathbb S^{n-1}, \quad b_+^\flat,b_-^\flat\in\mathbb R,\qquad |a_+^\flat-e_{\rm root}|+|a_-^\flat+e_{\rm root}|\le C\epk, \qquad |b_+^\flat+a_+^\flat\cdot\zk| +|b_-^\flat+a_-^\flat\cdot\zk| \le C\epk\Rk,
\end{equation}
such that, writing
\[
L_\ast^\flat(x):=a_\ast^\flat\cdot x+b_\ast^\flat, \qquad \ast\in\{+,-\},
\]
one has the flat-scale $W^{1,2}$ estimate
\begin{equation}\label{eq:asymmetric-excess-flat-scale-affine-w12}
\sum_{\ast=\pm} \fint_{U_\ast\cap B_{8R_\flat}(\zz)} |u-L_\ast^\flat|^2\,dx +(R_\flat)^2 \sum_{\ast=\pm} \fint_{U_\ast\cap B_{8R_\flat}(\zz)} |\nabla u-a_\ast^\flat|^2\,dx \le C\epk^{2+2\cttc/3}(R_\flat)^2,
\end{equation}
where $C$ is dimensional.
\end{prop}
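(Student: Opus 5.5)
We will prove the flat-scale estimate by an improvement-of-flatness iteration for $w_\pm:=u\mp e_{\rm root}\cdot x-b$, run on the signed domains $U_\pm$ from the root-adjacent scale $\Rk/16$ down to $8R_\flat$. At each step we invoke \Cref{prop:direct-sided-w12-from-flux} with $p_{\rm fl}=p_1$, feed its output into \Cref{lem:linear-neumann-compactness-global} with $N=1$, and obtain an affine correction. We treat $U_+$; $U_-$ is symmetric.

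\emph{Initialization.} At $R_0\simeq\Rk/16$, take $L_+^{(0)}:=e_{\rm root}\cdot(x-\zk)$. The root slab \eqref{eq:direct-sided-root-slab}, the bound $h_{B_{\rm root}}\le C\epk$ from \Cref{lem:fb-tree-small-neck-radii-near-flat-active-balls}, and the splitting of $u$ away from the slab give
\[
\fint_{U_+\cap B_{R_0}(\zz)}|u-L_+^{(0)}|^2\le C\epk^2R_0^2 .
\]
The John hypothesis of \Cref{prop:direct-sided-w12-from-flux} holds at every scale $R\ge C\epk\Rk$, since all relevant neck radii are $\le C\epk\Rk$ by \eqref{eq:fb-tree-terminal-neck-radius-bookkeeping}. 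This leaves a margin at $R_\flat=\epk^\cttc\Rk$.

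\emph{Iteration.} Fix $\theta$ small and $r_j:=\theta^jR_0$. The inductive hypothesis at scale $r_j$ is an affine map $L^{(j)}=a^{(j)}\cdot x+b^{(j)}$ with
\[
\Big(\fint_{U_+\cap B_{r_j}}|u-L^{(j)}|^2\Big)^{1/2}\le H_j:=\epk r_j(r_j/R_0)^{\vartheta},\qquad \vartheta:=\cttc/3 ,
\]
together with $|a^{(j)}-e_{\rm root}|\le C\epk$ and $|b^{(j)}+a^{(j)}\cdot\zk|\le C\epk\Rk$. The slope bound $|a^{(j)}-e_{\rm root}|\le C\epk$ is what permits applying \Cref{lem:direct-sided-local-flux-slab}; it follows by summing the geometric increments $|a^{(j+1)}-a^{(j)}|\le CH_j/r_j$.

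For the inductive step, rescale $v(y):=(u-L^{(j)})(\zz+r_jy)/H_j$ on $\widetilde\Omega=r_j^{-1}(U_+-\zz)$.
\begin{enumerate}[(i)]
\item \eqref{eq:direct-sided-w12-from-flux} at all intermediate scales $\rho r_j$, $1\le\rho\le\delta_{\rm cmp}^{-1}$, supplies the $L^2$, $W^{1,2}$ and $L^q$ bounds of \Cref{lem:linear-neumann-compactness-global}. Bounds at larger scales are inherited from earlier steps with growth $\rho^{1+\vartheta}$.
\item The slab condition comes from \Cref{lem:fb-tree-mesoscopic-signed-slab-flatness} combined with \Cref{lem:decay-of-monneau-excess}.
\item The weak Neumann residual is estimated via \eqref{eq:scaled-weak-residual-from-flux} and \eqref{eq:direct-sided-phi-flux-bound}.
\end{enumerate}
The resulting even harmonic polynomial of degree $\le1$ is affine with gradient normal to the slab direction. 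Absorbing it into $L^{(j)}$ gives $L^{(j+1)}$, with improvement $\theta^{1+\vartheta}$ and hence $H_{j+1}$.

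The $a_+^\flat\in\mathbb S^{n-1}$ normalization is recovered by projecting to the sphere, at a cost $O(\epk^2)$. On regular free boundary $|\nabla u|=1$, so the linearized slope differs from a unit vector by a quadratic error; this is harmless. At $r_j\simeq8R_\flat$ we obtain
\[
H^2\lesssim \epk^2R_\flat^2\epk^{2\cttc\vartheta}.
\]
We then need the stronger exponent $\epk^{2+2\cttc/3}R_\flat^2$. Since $(R_\flat/\Rk)^{2\vartheta}=\epk^{2\cttc/3}$, the choice $\vartheta=1/3$ would actually be required. We will therefore aim for a per-step decay rate $\vartheta=1/3$, which is an allowed parameter in \Cref{lem:linear-neumann-compactness-global}, rather than $\cttc/3$. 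The $W^{1,2}$ part of \eqref{eq:asymmetric-excess-flat-scale-affine-w12} then follows from one more application of \Cref{prop:direct-sided-w12-from-flux}.

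\emph{Main obstacle.} The critical point is verifying the residual smallness
\[
\frac{\Phi_{\rho r_j}}{H_j^2}\rho^{n-2}\ll\delta_{\rm cmp}
\]
uniformly for $r_j\ge R_\flat$ with $p_1<3/2$. Using \eqref{eq:direct-sided-phi-flux-bound}, the first term is $\Rk^2\epk^3(r/\Rk)^{1-(n-1)/p_1}$ against $H^2=\epk^2r^2(r/\Rk)^{2/3}$. Their ratio is
\[
\epk\,(r/\Rk)^{-1-(n-1)/p_1-2/3}\ge \epk\cdot\epk^{-\cttc(5/3+(n-1)/p_1)} ,
\]
which is small precisely by the first condition in \eqref{eq:asym-decay-flat-scale-chi-smallness}. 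The second term $\Rk^2\epk^{2p_1+1}(\Rk/r)^{n-2}$ is handled similarly, using the second condition $\cttc(n+\frac45)<2p_1-1$. We expect the bookkeeping of these two exponent conditions, and keeping $\epk$-smallness uniform down to $R_\flat$, to be the delicate step. The third condition on $\cttc$ controls the accumulated intercept and $L^q$ errors.
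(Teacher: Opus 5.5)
Your proposal is correct and follows the paper's proof. The paper runs a dyadic $C^{1,\vartheta}$ improvement-of-flatness iteration on $U_\pm$, started from $e_{\rm root}\cdot(x-\zk)$, with \Cref{prop:direct-sided-w12-from-flux} at $p_{\rm fl}=p_1$ supplying the growth, $L^q$ and weak-residual hypotheses of \Cref{lem:linear-neumann-compactness-global} with $N=1$; the first two conditions in \eqref{eq:asym-decay-flat-scale-chi-smallness} make the flux negligible down to $R_\flat$, exactly as in your residual computation. The differences are cosmetic: the paper takes $\gamma=2/5$ (slightly above your corrected $\vartheta=1/3$, which already suffices), traps $U_+$ with the recentered root slab \eqref{eq:direct-sided-root-slab}, renormalizes the slope to $\mathbb S^{n-1}$ at every step, and uses an explicit base block of $j_0$ scales with the fixed root map to supply the history $1\le\rho\le\delta_{\rm cmp}^{-1}$; also, your ratio display should read $\le$ rather than $\ge$.
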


\begin{proof}
We prove it in $U_+$; the construction on $U_-$ is identical after replacing $e_{\rm root}$ by $-e_{\rm root}$.

Set
\[
\gamma:=\frac25, \qquad q_{\rm nc}:=\frac52\le\frac{2n}{n-2}, \qquad h(r):=\epk\left(\frac r{\Rk}\right)^\gamma r, \qquad r_i:=2^{-i}\Rk, \qquad h_i:=h(r_i).
\]
We prove the following one-step statement. The constants $j_0$ and $C_{\rm ind}\ge1$ will be chosen below. Suppose that, for some $j\ge j_0$, affine maps
\[
L_i(x):=a_i\cdot x+b_i, \qquad a_i\in\mathbb S^{n-1}, \qquad c_i:=L_i(\zz), \qquad 4\le i\le j,
\]
have been constructed, starting from $L_4(x)=e_{\rm root}\cdot(x-\zk)$, satisfy
\begin{equation}\label{eq:alt-flat-induction}
\left( \fint_{U_+\cap B_{r_i}(\zz)}|u-L_i|^2\,dx \right)^{1/2} \le C_{\rm ind}h_i \qquad\text{for}\quad 4\le i\le j.
\end{equation}
If $r_{j+1}\ge R_\flat$, then, once $k$ is sufficiently large after all constants have been fixed, we construct $L_{j+1}$ so that \eqref{eq:alt-flat-induction} also holds at $j+1$. From here, we will later deduce the bound in $W^{1,2}$.

Since $\frac{\epk\Rk}{R_\flat} =\epk^{1-\cttc}\to0,$ after increasing $k$, every terminal neck ball $N$ meeting $B_{3r/4}(\zz)$, with $R_\flat\le r\le\Rk/16$, satisfies $r_N\le C_\delta\epk\Rk\le c_Jr$ by \Cref{lem:fb-tree-nonfinal-neck-terminals-neck-scale}. Thus the terminal-radius hypothesis in \Cref{prop:direct-sided-w12-from-flux} holds throughout this range. The root slab \eqref{eq:direct-sided-root-slab}, recentered at $\zz$, gives
\begin{equation}\label{eq:alt-flat-volume}
c r^n\le |U_\pm\cap B_r(\zz)|\le C r^n \qquad\text{for}\quad R_\flat\le r\le\Rk/16.
\end{equation}

\medskip
\noindent\emph{Step 1: Consequences of the induction hypothesis.}
For $4\le i<j$, \eqref{eq:alt-flat-induction} at the two consecutive scales, \eqref{eq:alt-flat-volume}, and \Cref{lem:affine-comparison-proportional} give
\begin{equation}\label{eq:alt-flat-jumps}
r_i|a_{i+1}-a_i|+|c_{i+1}-c_i| \le C C_{\rm ind}h_i.
\end{equation}
Summing this estimate gives
\begin{equation}\label{eq:alt-flat-slope-sum}
r_i|a_j-a_i|+|c_j-c_i|\le C C_{\rm ind}h_i.
\end{equation}
Summing from $L_4=e_{\rm root}\cdot(x-\zk)$ yields
\begin{equation}\label{eq:alt-flat-normalization}
|a_j-e_{\rm root}|\le C C_{\rm ind}\epk, \qquad |b_j+a_j\cdot\zk|\le C C_{\rm ind}\epk\Rk.
\end{equation}

We now use the flux estimate only for the map $L_j$. For $R_\flat\le r\le\Rk/16$, put $t:=r/\Rk$. The preceding slope and intercept bounds, \eqref{eq:direct-sided-phi-flux-bound}, and \eqref{eq:asym-decay-flat-scale-chi-smallness}, with $p_{\rm fl}=p_1$, give
\[
\frac{\Phi_r(u-L_j)}{h(r)^2} \le C(C_{\rm ind})\left[ \epk t^{-1-(n-1)/p_1-2\gamma} +\epk^{2p_1-1}t^{-n-2\gamma} \right].
\]
Set
\[
\eta_k :=\left[ \epk^{1-\cttc(1+(n-1)/p_1+2\gamma)} +\epk^{2p_1-1-\cttc(n+2\gamma)} \right]^{1/2}.
\]
The bound \eqref{eq:asym-decay-flat-scale-chi-smallness} gives $\eta_k\to0$. Uniformly in $\zz$, $r$, and $j$,
\begin{equation}\label{eq:alt-flat-flux}
\Phi_r(u-L_j)\le C(C_{\rm ind})\eta_k^2 h(r)^2.
\end{equation}
Here and below, $C(C_{\rm ind})$ may increase from line to line.

With $\delta_{\rm cmp}$ to be chosen below, suppose that $j_0$ is such that
\begin{equation}\label{eq:alt-flat-window}
2^{j_0}\ge64\delta_{\rm cmp}^{-1}.
\end{equation}
Set
\[
v_j(y):=\frac{u(\zz+r_jy)-L_j(\zz+r_jy)}{h_j}, \qquad \Omega_j:=r_j^{-1}(U_+-\zz).
\]
For $1\le\rho\le\delta_{\rm cmp}^{-1}$, there is an index $i\ge4$ such that
\[
B_{\rho r_j}(\zz)\subset B_{r_i/2}(\zz), \qquad 2\rho r_j\le r_i<4\rho r_j.
\]
Since $h_i/h_j\le(4\rho)^{1+\gamma}$, \eqref{eq:alt-flat-induction}, \eqref{eq:alt-flat-slope-sum}, \Cref{lem:affine-comparison-proportional,prop:direct-sided-w12-from-flux}, and \eqref{eq:alt-flat-flux} give
\begin{equation}\label{eq:alt-flat-growth}
\begin{aligned}
& \left(\fint_{\Omega_j\cap B_\rho}\hspace{-2mm}|v_j|^2\,dy\right)^{1/2} \hspace{-2mm}+\rho\left( \fint_{\Omega_j\cap B_\rho}\hspace{-2mm}|\nabla v_j|^2\,dy \right)^{1/2}\hspace{-2mm} +\left( \fint_{\Omega_j\cap B_\rho}\hspace{-2mm}|v_j|^{q_{\rm nc}}\,dy \right)^{1/q_{\rm nc}} \hspace{-2mm}\le \left( K C_{\rm ind} +C(C_{\rm ind})\eta_k \right)\rho^{1+\gamma}.
\end{aligned}
\end{equation}
Here $K$ is independent of $C_{\rm ind}$ and $\delta_{\rm cmp}$.

The same flux bound and \eqref{eq:scaled-weak-residual-from-flux}, with
\[
r=r_j, \qquad H=h_j, \qquad S=r_i, \qquad H(S)=C_{\rm ind}h_i,
\]
give, for $\varphi\in C_c^\infty(B_\rho)$,
\begin{equation}\label{eq:alt-flat-residual}
\left| \int_{\Omega_j}\nabla v_j\cdot\nabla\varphi\,dy \right| \le C(C_{\rm ind})\eta_k^2\rho^{n+2\gamma} \|\varphi\|_{L^\infty}.
\end{equation}
Finally, \eqref{eq:alt-flat-window} gives $B_{\rho r_j}(\zz)\subset B_{\Rk/8}(\zk)$. The root slab \eqref{eq:direct-sided-root-slab} and the bounds following \eqref{eq:alt-flat-slope-sum} yield
\begin{equation}\label{eq:alt-flat-slab}
B_\rho\cap\{a_j\cdot y\ge\sigma_{k,j}\rho\} \subset\Omega_j\cap B_\rho \subset B_\rho\cap\{a_j\cdot y\ge-\sigma_{k,j}\rho\},
\end{equation}
where
\begin{equation}\label{eq:alt-flat-slab-width}
0\le\sigma_{k,j} \le C C_{\rm ind}\left(\frac{\epk\Rk}{r_j}+\epk\right) \le C C_{\rm ind}\epk^{1-\cttc}.
\end{equation}
This proves the estimates needed for the one-step improvement.

\medskip
\noindent\emph{Step 2: Choice of constants and one-step improvement.}
Choose $\varepsilon_{\rm cmp}>0$ so that
\begin{equation}\label{eq:alt-flat-tolerance}
(K+1)\varepsilon_{\rm cmp}\le2^{-3-\gamma},
\end{equation}
and let $\delta_{\rm cmp}$ be supplied by \Cref{lem:linear-neumann-compactness-global} with
\[
(m,q,N,\vartheta)=(n,q_{\rm nc},1,\gamma).
\]
Choose $j_0\ge4$ so that \eqref{eq:alt-flat-window} holds.

For $4\le i\le j_0$, take
\[
L_i(x):=e_{\rm root}\cdot(x-\zk).
\]
The root slab \eqref{eq:direct-sided-root-slab}, \Cref{lem:fb-tree-small-neck-radii-near-flat-active-balls}, and \eqref{eq:fb-tree-centered-vee-excess} give
\[
\int_{U_+\cap B_{\Rk/8}(\zk)} |u-e_{\rm root}\cdot(x-\zk)|^2\,dx \le C\epk^2\Rk^{n+2}.
\]
Since the base block is finite, \eqref{eq:alt-flat-volume} proves \eqref{eq:alt-flat-induction} for $4\le i\le j_0$ once $C_{\rm ind}=C_{\rm ind}(j_0)$ is chosen sufficiently large.

Only now, after $\delta_{\rm cmp}$, $j_0$, and $C_{\rm ind}$ have been fixed, choose the lower bound on $k$. In addition to the preliminary geometric requirements and the applicability thresholds in \Cref{lem:direct-sided-local-flux-slab,prop:direct-sided-w12-from-flux} with normalization bound $C C_{\rm ind}$, require $r_{j_0}\ge32R_\flat$ and
\begin{equation}\label{eq:alt-flat-final-k-choice}
C(C_{\rm ind})\eta_k\le C_{\rm ind}, \qquad \frac{C(C_{\rm ind})\eta_k^2 \delta_{\rm cmp}^{-(1+\gamma)}} {(K+1)C_{\rm ind}}\le\delta_{\rm cmp}, \qquad C C_{\rm ind}\epk^{1-\cttc}\le\delta_{\rm cmp}.
\end{equation}
These inequalities are possible because $\delta_{\rm cmp}$ and $C_{\rm ind}$ are now fixed and $\eta_k,\epk\to0$.

Assume that the maps have been constructed through $j\ge j_0$, with $r_{j+1}\ge R_\flat$. Equations \eqref{eq:alt-flat-growth}, \eqref{eq:alt-flat-residual}, \eqref{eq:alt-flat-slab}, \eqref{eq:alt-flat-slab-width}, and \eqref{eq:alt-flat-final-k-choice} show that $(K+1)^{-1}C_{\rm ind}^{-1}v_j$ satisfies every hypothesis of \Cref{lem:linear-neumann-compactness-global} on $1\le\rho\le\delta_{\rm cmp}^{-1}$. Hence there is
\[
P_j(y)=q_j\cdot y+d_j, \qquad q_j\cdot a_j=0,
\]
such that
\begin{equation}\label{eq:alt-flat-output}
\left( \fint_{\Omega_j\cap B_{1/2}} |((K+1)C_{\rm ind})^{-1}v_j-P_j|^2\,dy \right)^{1/2} \le\varepsilon_{\rm cmp}.
\end{equation}
The induction estimate \eqref{eq:alt-flat-induction}, restricted to $B_{r_j/2}(\zz)$ using \eqref{eq:alt-flat-volume}, together with \Cref{lem:affine-comparison-proportional} and \eqref{eq:alt-flat-output}, yields
\[
|q_j|+|d_j| \le C((K+1)^{-1}+\varepsilon_{\rm cmp}), \qquad (K+1)(|q_j|+|d_j|)\le C.
\]

Set $A_j:=(K+1)C_{\rm ind}h_j$ and define the corrected affine map
\[
\bar L_{j+1}(x) :=L_j(x)+A_jP_j\!\left(\frac{x-\zz}{r_j}\right), \qquad \bar a_{j+1}:=a_j+\frac{A_j}{r_j}q_j.
\]
Normalize its slope while preserving its value at $\zz$:
\[
a_{j+1}:=\frac{\bar a_{j+1}}{|\bar a_{j+1}|}, \qquad L_{j+1}(x) :=\bar L_{j+1}(x)+(a_{j+1}-\bar a_{j+1})\cdot(x-\zz).
\]
Since $q_j\perp a_j$ and
\[
\frac{A_j}{r_j} =(K+1)C_{\rm ind}\epk\left(\frac{r_j}{\Rk}\right)^\gamma=o(1),
\]
the unit-slope normalization changes the corrected affine map on $B_{r_j/2}(\zz)$ by
\[
\sup_{B_{r_j/2}(\zz)}|L_{j+1}-\bar L_{j+1}| \le C r_j\left(\frac{A_j}{r_j}\right)^2 =o(C_{\rm ind}h_j).
\]
Multiplying \eqref{eq:alt-flat-output} by $A_j$, using \eqref{eq:alt-flat-tolerance}, and enlarging the final lower bound on $k$ to absorb this quadratic error,
\[
\left( \fint_{U_+\cap B_{r_{j+1}}(\zz)} |u-L_{j+1}|^2\,dx \right)^{1/2} \le2^{-2-\gamma}C_{\rm ind}h_j \le C_{\rm ind}h_{j+1}.
\]
This proves the one-step statement. The comparison leading to \eqref{eq:alt-flat-jumps} then applies also to the new consecutive pair, so the induction closes from the base block.

\medskip
\noindent\emph{Step 3: Terminal extraction.}
Run the same construction on $U_-$, beginning with $-e_{\rm root}\cdot(x-\zk)$. By the final choice of $k$, choose $i\ge j_0$ so that $16R_\flat\le r_i<32R_\flat$ and set $L_\pm^\flat:=L_{i,\pm}$. Summing \eqref{eq:alt-flat-jumps} gives \eqref{eq:asymmetric-excess-flat-scale-normalization}.

Here $r_{i+1}\ge8R_\flat$, so \eqref{eq:alt-flat-growth} was established at $i$. Since $r_i\simeq R_\flat$, taking $\rho=1$ there on both sides and restricting, using \eqref{eq:alt-flat-volume}, to $B_{8R_\flat}(\zz)$ gives
\[
\begin{aligned}
&\sum_{\ast=\pm} \fint_{U_\ast\cap B_{8R_\flat}(\zz)} |u-L_\ast^\flat|^2\,dx +(R_\flat)^2 \sum_{\ast=\pm} \fint_{U_\ast\cap B_{8R_\flat}(\zz)} |\nabla u-a_\ast^\flat|^2\,dx \le C C_{\rm ind}^2h_i^2.
\end{aligned}
\]
Finally, $h_i^2 \le C\epk^{2+2\cttc\gamma}(R_\flat)^2 \le C\epk^{2+2\cttc/3}(R_\flat)^2,$ because $\gamma=2/5>1/3$ and $C_{\rm ind}$ is fixed. This is \eqref{eq:asymmetric-excess-flat-scale-affine-w12}.
\end{proof}

\subsection{Comparing the two flat-scale planes}
\label{sec:dilation-test-functions}

We compare the two affine approximations at the flat scale, obtaining a dichotomy between improved slope cancellation and geometric separation.

\begin{lem}
\label{lem:asymmetric-plane-dichotomy}
Let $a_\pm^\flat,b_\pm^\flat$ be the flat-scale affine maps from \Cref{prop:asymmetric-excess-decay-almost-critical} at $\zz\in\cZ\cap B_{\Rk/16}(\zk)$. Write
\[
\bar b_\ast^\flat:=a_\ast^\flat\cdot\zz+b_\ast^\flat, \qquad L_\ast^\flat(x)=a_\ast^\flat\cdot(x-\zz)+\bar b_\ast^\flat, \qquad \ast\in\{+,-\}.
\]
Then, we have
\begin{equation}\label{eq:asymmetric-plane-max-lower-bound}
u\ge \max\{L_+^\flat,L_-^\flat\} -C\epk^{1+\cttc/6}R_\flat \qquad\text{in }B_{5R_\flat}(\zz).
\end{equation}
Moreover, one of the following alternatives holds:
\begin{enumerate}
\item[(a)] either the two slopes match to the improved order
\begin{equation}\label{eq:asymmetric-plane-good-alternative}
|a_+^\flat+a_-^\flat|\le C\epk^{1+\cttc/6};
\end{equation}
\item[(b)] or the two positive half-spaces do not meet in $B_{4R_\flat}(\zz)$:
\begin{equation}\label{eq:asymmetric-plane-separated-alternative}
\{L_+^\flat>0\}\cap\{L_-^\flat>0\}\cap B_{4R_\flat}(\zz)=\varnothing.
\end{equation}
\end{enumerate}
\end{lem}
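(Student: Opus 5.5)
The plan is to first upgrade the $L^2$/$W^{1,2}$ flat-scale estimate \eqref{eq:asymmetric-excess-flat-scale-affine-w12} to a pointwise one-sided bound on each signed side, and then deduce the dichotomy by looking at where the two affine maps could both be positive.

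\emph{Step 1: one-sided lower bound on each side.} Fix $\ast\in\{+,-\}$ and work on $U_\ast\cap B_{8R_\flat}(\zz)$. The estimate \eqref{eq:asymmetric-excess-flat-scale-affine-w12} gives $\fint_{U_\ast\cap B_{8R_\flat}}|u-L_\ast^\flat|^2\le C\epk^{2+2\cttc/3}R_\flat^2$. By Cauchy--Schwarz this becomes an $L^1$ bound with relative size $\theta:=C\epk^{1+\cttc/3}$. By the root slab \eqref{eq:direct-sided-root-slab} and the normalization \eqref{eq:asymmetric-excess-flat-scale-normalization}, the region $\{a_\ast^\flat\cdot(x-\zz)>R_\flat\}\cap B_{8R_\flat}(\zz)$ lies in $U_\ast$ up to a slab of width $C\epk\Rk=C\epk^{1-\cttc}R_\flat\ll R_\flat$.

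On this region I would apply the first form of \Cref{lem:one-sided-lower-bound}. Its hypothesis requires an average of $(u-L)_-$ only over the set $\{e\cdot(x-y)>R/8\}$. To use it, write $L_\ast^\flat=a_\ast^\flat\cdot(x-y)$ with $y$ a suitable point on $\{L_\ast^\flat=0\}$ near $\zz$; such a $y$ exists since $|\bar b_\ast^\flat|\le C\epk\Rk$. The lemma then yields $u\ge L_\ast^\flat-C\theta R_\flat$ on $B_{6R_\flat}(y)\supset B_{5R_\flat}(\zz)$. Because $|\bar b_\ast^\flat|$ is only bounded by $C\epk^{1-\cttc}R_\flat$, which is possibly larger than $R_\flat/8$, it may be cleaner to take $y:=\zz-\bar b_\ast^\flat a_\ast^\flat$, so that $|y-\zz|\ll R_\flat$. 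The loss from passing from $\theta=\epk^{1+\cttc/3}$ to the stated $\epk^{1+\cttc/6}$ leaves room to absorb these geometric errors. Taking the maximum over $\ast$ gives \eqref{eq:asymmetric-plane-max-lower-bound}.

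\emph{Step 2: the dichotomy.} Suppose (b) fails, and pick $x_0\in B_{4R_\flat}(\zz)$ with $L_+^\flat(x_0)>0$ and $L_-^\flat(x_0)>0$. The key input is that $u$ vanishes somewhere nearby on the common zero region. The free boundary point $\zz$ lies within $C\epk\Rk$ of both planes, and \eqref{eq:asymmetric-plane-max-lower-bound} at $\zz$ gives $\max\{\bar b_+^\flat,\bar b_-^\flat\}\le C\epk^{1+\cttc/6}R_\flat$.

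Write $a_-^\flat=-a_+^\flat+w$ with $|w|\le C\epk$. Then $L_+^\flat+L_-^\flat=w\cdot(x-\zz)+\bar b_+^\flat+\bar b_-^\flat$. I would use that $u$ has nondegeneracy: by \Cref{lem:nondegeneracy-of-stable-solutions}, free-boundary points are dense on scale $\epk\Rk$ near the ridge, so \Cref{cor:centered-vee-lower-slab} places a free-boundary point $p$ in each $CR_\flat\epk$-neighbourhood along the ridge within $B_{4R_\flat}$. At each such $p$, $\max L_\pm^\flat(p)\le C\epk^{1+\cttc/6}R_\flat$. Choosing $p$ along the direction $w^\perp$-component inside the ridge and moving a distance $\sim R_\flat$ in direction $\pm w_{\rm tan}$ (the component of $w$ tangent to $a_+^\flat{}^\perp$) forces $|w_{\rm tan}|R_\flat\lesssim \epk^{1+\cttc/6}R_\flat$ unless the positive half-spaces are disjoint in that ball. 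The normal component of $w$ is quadratic, $O(\epk^2)$, since both slopes are unit vectors. Together these give (a).

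\emph{Main obstacle.} The hardest part is exactly this last point: ensuring free-boundary points exist throughout the ridge region of $B_{4R_\flat}(\zz)$ when the planes cross. The worry is a large neck-free zone where $u>0$ across the ridge. I would handle it using \Cref{lem:fb-tree-orientation-compatibility}(i) (no holes along branching ridges) combined with the volume bound on $\mathcal X_{\rm neck}$ from \Cref{lem:fb-tree-terminal-neck-bookkeeping}.
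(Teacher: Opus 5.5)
Your Step~1 matches the paper. The paper also recenters at $y_\ast=\zz-\bar b_\ast^\flat a_\ast^\flat$, averages $(u-L_\ast^\flat)_-$ over a cap $B_{7R_\flat}(y_\ast)\cap\{L_\ast^\flat>7R_\flat/8\}\subset U_\ast\cap B_{8R_\flat}(\zz)$, and applies \Cref{lem:one-sided-lower-bound}. Your root-slab justification of that inclusion works equally well. Note that $|\bar b_\ast^\flat|\le C\epk\Rk=C\epk^{1-\cttc}R_\flat=o(R_\flat)$, so $R_\flat/8$ is never an issue, and no geometric error enters the constant. The idea of Step~2 is also the paper's: evaluate the max lower bound at a zero of $u$ reached from $x_0$ by moving a distance $\sim R_\flat$ along $w/|w|$. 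This direction is almost tangent to the ridge because $a_+^\flat\cdot w=|w|^2/2$.

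The gap is exactly the step you flag. You need, for every ridge point $y_0$ in $B_{5R_\flat}(\zz)$, a zero of $u$ within $o(R_\flat)$ (or a small fixed fraction of $R_\flat$) of $y_0$. The tools you cite do not give this:
\begin{itemize}
\item \Cref{lem:nondegeneracy-of-stable-solutions} only gives density of $\FB(u)$ around points already known to lie on $\FB(u)$.
\item \Cref{cor:centered-vee-lower-slab} only says that $\FB(u)$ lies inside the slab, not that it fills it.
\end{itemize}
Your alternative through \Cref{lem:fb-tree-orientation-compatibility}(i) and the neck-volume bound is left unexecuted. It would need a descent through the tree with regular terminal balls treated separately, and a volume bound is not what locates zeros.

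The missing input is short:
\begin{itemize}
\item Since $\rb(\zz)\le C\epk\Rk=o(R_\flat)$, \Cref{lem:decay-of-monneau-excess,prop:fixed-direction-monneau-doubling} give a direction $e_0$ with $\bbE_\zz(64R_\flat;e_0)=o(1)$.
\item Its Monneau term plus the Lipschitz bound give $\|u-V_{\zz,e_0}\|_{L^\infty(B_{16R_\flat}(\zz))}=o(R_\flat)$. Alternatively, root-scale closeness to $V_{\zk,e_{\rm root}}$, upgraded the same way, already gives an error $C\epk^{2/(n+2)}\Rk=o(R_\flat)$.
\item \Cref{lem:consequences-of-closeness-to-a-vee}(b) then yields $\dist(y_0,\{u=0\})=o(R_\flat)$ for every ridge point $y_0\in(\zz+e_0^\perp)\cap B_{5R_\flat}(\zz)$. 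This is the mean-value obstruction: a harmonic function cannot stay uniformly close to a vee on a ball centred on its ridge.
\end{itemize}
To conclude, the rough alignment $|a_\pm^\flat\mp e_0|+|\bar b_\pm^\flat|/R_\flat=o(1)$ puts $x_0+\frac{R_\flat}{2}\frac{w}{|w|}$ within $o(R_\flat)$ of such a $y_0$. On $B_{R_\flat/8}(y_0)$ one then has $\max\{L_+^\flat,L_-^\flat\}\ge\frac18|w|R_\flat$. If $|w|\ge M\epk^{1+\cttc/6}$ with $M$ large, \eqref{eq:asymmetric-plane-max-lower-bound} forces $u>0$ on that ball, a contradiction. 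With this ingredient your argument closes and coincides with the paper's.
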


\begin{proof}
Set
\begin{equation}\label{eq:asymmetric-plane-dichotomy-theta-flat}
\theta_\flat:=\epk^{1+\cttc/6}.
\end{equation}
Recall that $a_+^\flat,a_-^\flat\in\mathbb S^{n-1}$. We use the flat-scale $W^{1,2}$ estimate \eqref{eq:asymmetric-excess-flat-scale-affine-w12} throughout the proof.

Let $e_0$ minimize $\bbE_\zz(8R_\flat;e)$. By \Cref{lem:fb-tree-small-neck-radii-near-flat-active-balls}, $\frac{\rb(\zz)}{8R_\flat} \le C\epk^{1-\cttc}=o(1),$ and so \Cref{lem:decay-of-monneau-excess} gives
\[
\bbE_\zz(8R_\flat;e_0)=\bbE_\zz(8R_\flat)=o(1).
\]
This $o(1)$, and all those below, are uniform for $\zz\in\cZ\cap B_{\Rk/16}(\zk)$. The fixed-direction estimate in \eqref{eq:fixed-direction-monneau-doubling}, applied with outer radius $64R_\flat$ and ratio $1/8$, gives
\[
\bbE_\zz(64R_\flat;e_0)\le C\bbE_\zz(8R_\flat;e_0)=o(1).
\]
Apply \Cref{lem:fb-tree-mesoscopic-signed-slab-flatness} with radius $8R_\flat$, orienting $e_0$ so that its positive cap belongs to $U_+$. This is admissible for large $k$: the ball lies in $B_{\Rk/2}(\zk)$, the relation $R_\flat\gg\epk\Rk$ makes the scale-separation hypothesis automatic, and $\bbE_\zz(64R_\flat;e_0)=o(1)$ makes the relative slab width vanish. Comparing $L_+^\flat$ and $L_-^\flat$ with the two affine branches of $V_{\zz,e_0}$ on the resulting proportional signed caps, using \eqref{eq:asymmetric-excess-flat-scale-affine-w12} and \Cref{lem:affine-comparison-proportional}, yields
\begin{equation}\label{eq:flat-dichotomy-rough-vee-alignment}
|a_+^\flat-e_0|+|a_-^\flat+e_0| +R_\flat^{-1}\bigl(|\bar b_+^\flat|+|\bar b_-^\flat|\bigr)=o(1).
\end{equation}

For $\ast\in\{+,-\}$, let
\[
y_\ast:=\zz-\bar b_\ast^\flat a_\ast^\flat, \qquad L_\ast^\flat(x)=a_\ast^\flat\cdot(x-y_\ast).
\]
By \eqref{eq:flat-dichotomy-rough-vee-alignment}, $|y_\ast-\zz|=o(R_\flat)$. Set
\[
D_\ast:=B_{7R_\flat}(y_\ast)\cap \{L_\ast^\flat>7R_\flat/8\}.
\]
For large $k$, the outer inclusion in \eqref{eq:fb-tree-mesoscopic-signed-slab-flatness} and \eqref{eq:flat-dichotomy-rough-vee-alignment} give
\[
D_\ast\subset U_\ast\cap B_{8R_\flat}(\zz).
\]
Since $|D_\ast|\simeq R_\flat^n$, the $L^2$ estimate \eqref{eq:asymmetric-excess-flat-scale-affine-w12} and Cauchy's inequality imply
\[
\frac1{7R_\flat}\fint_{D_\ast}(u-L_\ast^\flat)_-\,dx \le C\epk^{1+\cttc/3}.
\]
Applying \Cref{lem:one-sided-lower-bound} with center $y_\ast$, direction $a_\ast^\flat$, and radius $7R_\flat$, and using $B_{5R_\flat}(\zz)\subset B_{21R_\flat/4}(y_\ast)$ for large $k$, gives
\[
u\ge L_\ast^\flat-C\epk^{1+\cttc/3}R_\flat \qquad\text{in }B_{5R_\flat}(\zz).
\]

Set
\[
\widetilde u(y):=R_\flat^{-1}u(\zz+R_\flat y), \qquad \widetilde L_\ast(y):=R_\flat^{-1}L_\ast^\flat(\zz+R_\flat y).
\]
Then
\[
\widetilde u\ge \widetilde L_\ast-C\epk^{1+\cttc/3} \qquad\text{in }B_5,\qquad \ast\in\{+,-\}.
\]
Since $\epk^{1+\cttc/3}\le\theta_\flat$, we have
\begin{equation}\label{eq:flat-dichotomy-max-lower-bound}
\widetilde u\ge \max\{\widetilde L_+,\widetilde L_-\}-C\theta_\flat \qquad\text{in }B_5.
\end{equation}
Scaling this estimate back proves \eqref{eq:asymmetric-plane-max-lower-bound}.

Now suppose that \eqref{eq:asymmetric-plane-separated-alternative} fails, namely
\[
\{\widetilde L_+>0\}\cap\{\widetilde L_->0\}\cap B_4\ne\varnothing .
\]
We claim that this forces
\begin{equation}\label{eq:flat-dichotomy-rescaled-good}
|a_+^\flat+a_-^\flat|\le C\theta_\flat .
\end{equation}

Indeed, choose $y'\in B_4$ with $\widetilde L_+(y'),\widetilde L_-(y')>0$. The rough alignment \eqref{eq:flat-dichotomy-rough-vee-alignment} gives
\[
|e_0\cdot y'|=o(1),\qquad |a_+^\flat+a_-^\flat|=o(1).
\]
Since the two slopes have unit length, $a_+^\flat\cdot (a_+^\flat+a_-^\flat)=\frac{|a_+^\flat+a_-^\flat|^2}{2}.$ Consequently, with $v:=(a_+^\flat+a_-^\flat)/|a_+^\flat+a_-^\flat|$,
\[
|e_0\cdot v| \le |e_0-a_+^\flat|+\frac{|a_+^\flat+a_-^\flat|}{2}=o(1).
\]
Set $y_1:=y'+\frac12v$. Since $\widetilde L_+(y')+\widetilde L_-(y')>0$ and $\nabla \widetilde L_++\nabla \widetilde L_-=a_+^\flat+a_-^\flat$, for every $y\in B_{1/4}(y_1)$,
\[
\widetilde L_+(y)+\widetilde L_-(y)\ge \frac14|a_+^\flat+a_-^\flat|, \qquad \max\{\widetilde L_+(y),\widetilde L_-(y)\}\ge\frac18|a_+^\flat+a_-^\flat|.
\]
Moreover $B_{1/4}(y_1)\subset B_5$ and $e_0\cdot y_1=o(1)$. Projecting $y_1$ orthogonally onto $e_0^\perp$, we obtain $y_0\in e_0^\perp$ such that
\[
B_{1/8}(y_0)\subset B_{1/4}(y_1)\subset B_5.
\]
Suppose, toward a contradiction, that $|a_+^\flat+a_-^\flat|\ge M\theta_\flat$. Choosing $M$ dimensional large in \eqref{eq:flat-dichotomy-max-lower-bound} gives $B_{1/8}(y_0)\subset \{\widetilde u>0\}.$ The vee term in $\bbE_\zz(64R_\flat;e_0)=o(1)$, followed by Lipschitz interpolation, gives in the present rescaling
\[
\bigl\|\widetilde u-|e_0\cdot y|\bigr\|_{L^\infty(B_{16})}=o(1).
\]
Since $y_0\in e_0^\perp\cap B_5$, \Cref{lem:consequences-of-closeness-to-a-vee} yields
\[
\dist(y_0,\{\widetilde u=0\})=o(1),
\]
contradicting $B_{1/8}(y_0)\subset\{\widetilde u>0\}$. This proves \eqref{eq:flat-dichotomy-rescaled-good}.

Thus, if the half-spaces are not separated in $B_4$, the slope-matching estimate above must hold. Scaling back gives \eqref{eq:asymmetric-plane-good-alternative}.
\end{proof}

\subsection{Stability and the neck-charge gain}

We now use the stability inequalities at the flat scale to improve the power controlling the neck charge.

\begin{lem}
\label{lem:positive-dilation-gap-at-the-neck-scale}
There exist constants $c>0$ and $\mu\in(0,\frac1{32})$ with the following property.

Let $\lambda > 0$, $a\in \R^n$, and $Q\in O(n)$, and denote the similarity transformation $v$ of $u$:
\[
v(y)=\lambda^{-1}u(a+\lambda Qy).
\]
Let us also denote
\[
(\zz, r) = \left(\lambda^{-1}Q^T(\zeta-a), \lambda^{-1}\rb(\zeta)\right),\qquad \zeta\in\cZ,
\]
a transported bad center and bad radius pair for $v$. Suppose that $\zz\in B_{3/4}$, $r\le\mu$, and
\[
p=(t,0,\ldots,0),\qquad |t|\ge1.
\]
Then
\[
\int_{B_{2r}(\zz)\cap\{v>0\}}\cK_p^2(v)\,dy \ge cr^{n-2},
\]
where $\cK_p$ is computed in the current coordinates and it is given as in \Cref{lem:dilation-sternberg-zumbrun-inequality}.
\end{lem}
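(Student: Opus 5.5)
We argue by contradiction and compactness, following the proof of \Cref{lem:positive-excess-at-the-neck-scale}. Suppose the claim fails for every choice of $\mu$ and $c$. Then there are similarity transforms $v_j$ of $u$ (equivalently, classical stable solutions satisfying the standing normalization), transported neck pairs $(\zz_j,r_j)$ with $\zz_j\in B_{3/4}$ and $r_j\to0$, and points $p_j=(t_j,0,\ldots,0)$ with $|t_j|\ge1$, such that
\[
r_j^{2-n}\int_{B_{2r_j}(\zz_j)\cap\{v_j>0\}}\cK_{p_j}^2(v_j)\,dy\to0.
\]
Rescale at the neck scale by setting $w_j(x):=r_j^{-1}v_j(\zz_j+r_jx)$. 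The Jacobi family $J_{p_j}$ transforms naturally under this rescaling. Each $\partial_i v_j$ becomes $\partial_i w_j$. For the dilation field we compute
\[
A_{p_j}(\zz_j+r_jx)=\frac{(\zz_j-p_j)\cdot\nabla w_j(x)+r_j\bigl(x\cdot\nabla w_j-w_j\bigr)}{|p_j|}.
\]
Since $|\zz_j|\le 3/4$ and $|p_j|\ge1$, the vector $(\zz_j-p_j)/|p_j|$ has length comparable to one and stays bounded away from zero. Along a subsequence it converges to a nonzero vector $d$, while the second term is $O(r_j)$ on bounded sets. The quantity $\cK^2$ is $0$-homogeneous with respect to multiplying the whole family by a constant, and it scales like $|\nabla\cdot|^2$. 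Hence $\int_{B_2\cap\{w_j>0\}}\cK_j^2\,dx\to0$, where $\cK_j$ is computed for the rescaled family
\[
\tilde J_j=\bigl(d_j\cdot\nabla w_j+O(r_j),\ \partial_2w_j,\ldots,\partial_{n-1}w_j\bigr).
\]

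Next we pass to the limit. By \Cref{lem:neck-scale-compactness} together with \Cref{lem:local-comparability-of-neck-radii}, which gives uniform Hessian bounds on fixed balls, and the diagonal argument used in \Cref{lem:positive-excess-at-the-neck-scale}, the functions $w_j$ converge locally in $C^2$ on the positive phase to a global classical stable solution $w_\infty$. This limit retains Hessian mass $\eta_0^n$ in $B_1$. The fields $\tilde J_j$ converge in $C^1_{\rm loc}$ inside $\{w_\infty>0\}$ to the translation family
\[
J_\infty=\bigl(d\cdot\nabla w_\infty,\ \partial_2w_\infty,\ldots,\partial_{n-1}w_\infty\bigr).
\]
Fatou's lemma, applied exactly as in \Cref{lem:neck-scale-compactness}, then gives $\cK^2_{J_\infty}\equiv0$ in $B_2\cap\{w_\infty>0\}$.

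The decisive point is that $J_\infty$ is the projection of $\nabla w_\infty$ onto an $(n-1)$-dimensional subspace. The vectors $d,e_2,\ldots,e_{n-1}$ span such a subspace, with the correct rank, provided $d$ is not contained in $\mathrm{span}\{e_2,\ldots,e_{n-1}\}$. This holds because $d_1=\lim(\zz_{j,1}-t_j)/|p_j|$ satisfies $|d_1|\ge(1-3/4)/1\cdot c>0$ uniformly. After a linear (non-orthogonal, but invertible and bounded) change of coordinates, $J_\infty$ equals $M\nabla w_\infty$ for a fixed rank-$(n-1)$ matrix $M$. The argument of \Cref{lem:zero-defect-two-dimensional-reduction} applies verbatim: $\cK^2=0$ forces $J_\infty/|J_\infty|$ to be locally constant. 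Hence $\nabla w_\infty$ lies in a fixed two-dimensional subspace on each component, and by unique continuation $w_\infty$ depends on at most two variables on each component meeting $B_1$. Isolating these components as in the proof of \Cref{lem:positive-excess-at-the-neck-scale} and invoking \Cref{prop:low-dimensional-stable-solutions-flat} shows that the Hessian vanishes there. This contradicts the retained mass $\eta_0^n$.

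The main obstacle is controlling the dilation field uniformly. We need the nondegeneracy $|d_1|\ge c$, which the hypotheses $|t|\ge1$ and $|\zz|\le3/4$ provide. We also need $r_j\to0$, which is why $\mu$ must be chosen small. The precise value of $\mu$ is fixed implicitly by the contradiction argument, by choosing $\mu_j\to0$ along the sequence.
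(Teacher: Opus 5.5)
Your proposal is correct and follows the paper's own argument essentially step for step. Both rescale at the transported neck radius and write the rescaled dilation field as $d_j\cdot\nabla w_j+\varepsilon_j(x\cdot\nabla w_j-w_j)$, with $|(d_j)_1|\ge\tfrac14$ and $\varepsilon_j=r_j/|t_j|\to0$. Both then pass to a classical stable limit that keeps Hessian mass $\eta_0^n$ and has vanishing defect for $(d\cdot\nabla w_\infty,\partial_2w_\infty,\ldots,\partial_{n-1}w_\infty)$, and conclude by the two-dimensional reduction and planar rigidity.

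There are only cosmetic slips. $\cK^2$ is $2$-homogeneous, not $0$-homogeneous, under multiplying the family by a constant; no such normalization is needed, since your $\tilde J_j$ is exactly $J_{p_j}(\zz_j+r_j\,\cdot\,)$. The lower bound should read $|(d_j)_1|\ge 1-\tfrac{3}{4|t_j|}\ge\tfrac14$. Finally, no change of coordinates is needed, because $J_\infty=M\nabla w_\infty$ holds directly with $M$ of rank $n-1$ (a linear image, not a projection).
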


\begin{proof}
Suppose the conclusion fails. For every $j$, we can find a similarity transformation $\widehat u_j$ of $u$, a transported neck pair $(\zz_j,r_j)$, and a point
\[
p_j=(t_j,0,\ldots,0),\qquad |t_j|\ge1,
\]
such that
\[
\zz_j\in B_{3/4},\qquad r_j\le j^{-1}, \qquad r_j^{2-n}\int_{B_{2r_j}(\zz_j)\cap\{\widehat u_j>0\}} \cK_{p_j}^2(\widehat u_j)\,dx<j^{-1}.
\]
Rescale once more at the transported neck radius:
\[
v_j(y):=\frac{\widehat u_j(\zz_j+r_jy)}{r_j}.
\]
The neck normalization \eqref{eq:def_rB} and \Cref{lem:local-comparability-of-neck-radii} are invariant under these similarities. Hence
\[
\int_{B_1\cap\{v_j>0\}}|D^2v_j|^n\,dy=\eta_0^n, \qquad \|D^2v_j\|_{L^\infty(B_4\cap\{v_j>0\})}\le C.
\]

Set
\[
q_j:=\frac{\zz_j-p_j}{|t_j|}\in B_{7/4}\cap \{|x_1|\ge 1/4\}, \qquad \varepsilon_j:=\frac{r_j}{|t_j|}\to 0.
\]
In the rescaled variables the dilation family is
\[
\widehat J_j:= \left( q_j\cdot\nabla v_j +\varepsilon_j(y\cdot\nabla v_j-v_j), \partial_2v_j,\ldots,\partial_{n-1}v_j \right).
\]
Using the projective defect $\cK_\Phi$ from \Cref{lem:finite-jacobi-sternberg-zumbrun}, the exact scaling identity is
\[
\int_{B_2\cap\{v_j>0\}}\cK_{\widehat J_j}^2\,dy\to0.
\]
After passing to a subsequence,
\[
q_j\to q_\infty\in\overline{B_{7/4}}\cap\{|x_1|\ge1/4\}.
\]
Arguing as in the proof of \Cref{lem:positive-excess-at-the-neck-scale}, we obtain a global classical stable limit $v_\infty$ satisfying
\begin{equation}\label{eq:positive-dilation-gap-retained-hessian-mass}
\int_{B_1\cap\{v_\infty>0\}}|D^2v_\infty|^n\,dy=\eta_0^n,
\end{equation}
and $\widehat J_j\to\widehat J_\infty$ in $C^1_{\rm loc}(\{v_\infty>0\})$, where
\[
\widehat J_\infty:= \bigl(q_\infty\cdot\nabla v_\infty, \partial_2v_\infty,\ldots,\partial_{n-1}v_\infty\bigr).
\]
The lower-semicontinuity argument at the end of the proof of \Cref{lem:neck-scale-compactness} gives
\[
\cK_{\widehat J_\infty}^2\equiv0 \qquad\text{in }B_2\cap\{v_\infty>0\}.
\]
Since $|(q_\infty)_1|\ge1/4$, the argument in the proof of \Cref{lem:zero-defect-two-dimensional-reduction}, with $F$ replaced by $\widehat J_\infty$, shows that $v_\infty$ is two-dimensional on every positivity component meeting $B_1$. Repeating the final paragraph of the proof of \Cref{lem:positive-excess-at-the-neck-scale} contradicts \eqref{eq:positive-dilation-gap-retained-hessian-mass}.
\end{proof}

Combining \Cref{lem:asymmetric-plane-dichotomy,lem:positive-excess-at-the-neck-scale,lem:positive-dilation-gap-at-the-neck-scale} now yields a quantitative bound for the total neck charge.

\begin{prop}
\label{prop:flat-scale-asymmetric-planes-control-neck-radii}
With $R_\flat$ as in \eqref{eq:flat-scale-definition}, after increasing $k$, for every center $\zz\in\cZ\cap B_{\Rk/16}(\zk)$ we have
\begin{equation}\label{eq:flat-scale-asymmetric-planes-neck-radius-bound}
\varrho_\zz(R_\flat) \le C\epk^{1+\cttc/6}.
\end{equation}
\end{prop}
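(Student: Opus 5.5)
The plan is to use the dichotomy of \Cref{lem:asymmetric-plane-dichotomy} at the center $\zz$ and scale $R_\flat$. In each alternative we choose a finite Jacobi family that nearly vanishes on both affine pieces $L_\pm^\flat$. We then insert it into \Cref{lem:finite-jacobi-sternberg-zumbrun} with a cutoff $\eta\in C_c^\infty(B_{4R_\flat}(\zz))$, $\eta\equiv1$ on $B_{2R_\flat}(\zz)$, $|\nabla\eta|\le C/R_\flat$. The left-hand side is bounded below by summing the neck-scale gaps. The right-hand side is bounded above by the flat-scale estimate \eqref{eq:asymmetric-excess-flat-scale-affine-w12}. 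Two preliminary facts will be used throughout:
\begin{itemize}
\item every $\zz'\in\cZ_{R_\flat/2}\cap B_{R_\flat/2}(\zz)$ has $\rb(\zz')\le C\epk\Rk=C\epk^{1-\cttc}R_\flat\ll R_\flat$ by \Cref{lem:fb-tree-small-neck-radii-near-flat-active-balls}, so $B_{2\rb(\zz')}(\zz')\subset B_{2R_\flat}(\zz)$;
\item these balls have bounded overlap by \Cref{defn:neck-centers}.
\end{itemize}
Hence, once each neck ball carries a gap $c\,\rb(\zz')^{n-2}$ for the chosen family, summing gives
\[
R_\flat^{n-2}\varrho_\zz(R_\flat)^2\le C\int_{B_{2R_\flat}(\zz)\cap\{u>0\}}\cK_\Phi^2 \le CR_\flat^{-2}\int_{B_{4R_\flat}(\zz)\cap\{u>0\}}|\Phi|^2 .
\]

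\emph{Case (a).} Take $\Phi=\nabla^e u$ with $e=a_+^\flat$, i.e.\ \Cref{lem:tangential-sternberg-zumbrun-inequality}, together with the gap \Cref{lem:positive-excess-at-the-neck-scale}: the minimum over $e$ there dominates the value at $e=a_+^\flat$. On $U_+$ we have $|\nabla^e u|\le|\nabla u-a_+^\flat|$. On $U_-$, \eqref{eq:asymmetric-plane-good-alternative} gives $a_-^\flat=-a_+^\flat+O(\theta_\flat)$, so $|\nabla^e u|\le|\nabla u-a_-^\flat|+C\theta_\flat$. On the remaining part of $\{u>0\}\cap B_{4R_\flat}(\zz)$, which lies in $\mathcal X_{\rm neck}$ by \eqref{eq:fb-tree-signed-cover}, we only use $|\nabla u|\le1$. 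By \eqref{eq:fb-tree-terminal-neck-bookkeeping} this region has volume at most $C_\alpha\Rk^n\epk^{1+\gamma_\alpha}$. Combining with \eqref{eq:asymmetric-excess-flat-scale-affine-w12},
\[
R_\flat^{-n}\int|\Phi|^2\le C\epk^{2+2\cttc/3}+C\theta_\flat^2+C\epk^{2+2/\alpha-n\cttc}.
\]
The last term is $O(\theta_\flat^2)$ by the smallness of $\cttc$ in \eqref{eq:asym-decay-flat-scale-chi-smallness}, using $2/\alpha>2/1.01$. This yields $\varrho_\zz(R_\flat)^2\le C\epk^{2+\cttc/3}$, which is the claim.

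\emph{Case (b).} The zero hyperplanes of $L_\pm^\flat$ are not parallel here, since otherwise case (a) would hold. Because the positive half-spaces are disjoint in $B_{4R_\flat}(\zz)$ while both slopes are close to $\pm e_0$ by \eqref{eq:flat-dichotomy-rough-vee-alignment}, the intersection $\Pi=\{L_+^\flat=0\}\cap\{L_-^\flat=0\}$ is an $(n-2)$-plane at distance $\gtrsim R_\flat$ from $\zz$. We pick $p\in\Pi$ closest to $\zz$ and rotate so that $p-\zz$ is parallel to $e_1$ and the coordinates $x_2,\dots,x_{n-1}$ span the directions of $\Pi$. Both $L_\pm^\flat$ then satisfy $(x-p)\cdot\nabla L-L\equiv0$ and $\partial_i L\equiv0$ for $2\le i\le n-1$. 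Consequently every component of $J_p$ from \Cref{lem:dilation-sternberg-zumbrun-inequality} is bounded pointwise on $U_\pm\cap B_{4R_\flat}(\zz)$ by a linear combination of $|\nabla u-a_\pm^\flat|$ and $|u-L_\pm^\flat|/|p|$. Here we use $|x-p|/|p|\le C$ for $x\in B_{4R_\flat}(\zz)$, since $|p-\zz|\gtrsim R_\flat$. After rescaling by $R_\flat$ and centering at $\zz$, the point $p$ lies at distance $t\ge1$ along the $e_1$-axis, so \Cref{lem:positive-dilation-gap-at-the-neck-scale} applies to every neck with transported radius at most $\mu$. The same volume argument as in case (a) for the neck region, together with \eqref{eq:asymmetric-excess-flat-scale-affine-w12}, gives
\[
\varrho_\zz(R_\flat)^2\le C\epk^{2+2\cttc/3}.
\]

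The main obstacle will be case (b). There are three points to check:
\begin{itemize}
\item the normalization of $p$ must keep $|t|\ge1$ uniformly, including when $\Pi$ is very far away, where the dilation field degenerates toward a translation;
\item the neck gap of \Cref{lem:positive-dilation-gap-at-the-neck-scale} must be uniform in this regime;
\item the flat-scale estimate controls only $W^{1,2}$ on $U_\pm$, and not on the full positivity set.
\end{itemize}
For the first two points I would argue that the dilation family converges to the translation family as $t\to\infty$, and the proof of \Cref{lem:positive-dilation-gap-at-the-neck-scale} already handles all $|t|\ge1$ by passing to the normalized vector $q_j$, so large $t$ poses no extra difficulty. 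For the third, the neck region is absorbed by its small volume as above. Its boundary does not enter, since the stability inequality is an interior integral over $\{u>0\}$.
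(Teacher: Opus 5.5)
Your proposal is correct and follows essentially the paper's own proof. The paper uses the same case split: the tangential field with $e=a_+^\flat$ when the slopes match or the separated zero hyperplanes are parallel, and otherwise the dilation family $J_p$ centered at the nearest point of the codimension-two intersection, with $|t|\ge 4$ after normalization. It likewise absorbs $\mathcal X_{\rm neck}$ through its volume bound and sums the neck-scale gaps over the boundedly overlapping doubled threshold balls. The one detail to make explicit is why parallel hyperplanes fall into alternative (a): the case $a_-^\flat=a_+^\flat$ must be excluded, and \eqref{eq:asymmetric-excess-flat-scale-normalization} does this for large $k$.
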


\begin{proof}
Let $a_\pm^\flat,b_\pm^\flat$ be the coefficients supplied by \Cref{prop:asymmetric-excess-decay-almost-critical} at $\zz$, and let $L_\pm^\flat$ be the corresponding affine functions. Write $\theta_\flat:=\epk^{1+\cttc/6}.$

Translate $\zz$ to the origin and rescale $R_\flat$ to one, retaining the same notation for the solution, signed domains, affine maps, and neck data. The threshold identities \eqref{eq:def_rB}, the packing properties in \Cref{defn:neck-centers}, and the neck-scale gaps in \Cref{lem:positive-excess-at-the-neck-scale,lem:positive-dilation-gap-at-the-neck-scale} are invariant under this change of variables. The required estimate is
\[
\sum_{\zeta\in\cZ_{1/2}\cap B_{1/2}}\rb(\zeta)^{n-2} \le C(u)\theta_\flat^2.
\]
The displayed sum is exactly $\varrho_0(1)^2$ in the normalized variables, and every center in the sum has $\rb(\zeta)\le1/2$.

By \Cref{prop:asymmetric-excess-decay-almost-critical},
\begin{equation}\label{eq:flat-plane-radius-w12-input}
\sum_{\ast=\pm}\int_{U_\ast\cap B_{8}} \Bigl(|u-L_\ast^\flat|^2+|\nabla u-a_\ast^\flat|^2\Bigr)\,dx \le C\theta_\flat^2 .
\end{equation}

For large $k$,
\[
B_{4R_\flat}(\zz)\subset B_{\Rk/8}(\zk),
\]
because $|\zz-\zk|\le\Rk/16$ and $R_\flat/\Rk=\epk^\cttc\to0$. The part of $\{u>0\}\cap B_4$ not covered by $U_+\cup U_-$ is contained in the terminal-neck region $\mathcal X_{\rm neck}$. Since $R_\flat=\epk^\cttc\Rk$, the terminal-neck volume estimate \eqref{eq:fb-tree-terminal-neck-boundary-localization-bookkeeping} gives, in the present normalized variables,
\[
|\mathcal X_{\rm neck}\cap B_4| \le C_\alpha\epk^{1+\gamma_\alpha-n\cttc} =C_\alpha\epk^{2+2/\alpha-n\cttc},
\]
where we used \eqref{eq:gamma-alpha}. The selected range of $\alpha$ and the choice \eqref{eq:asym-decay-flat-scale-chi-smallness} give $\frac2\alpha-\left(n+\frac13\right)\cttc \ge \frac{99}{101}>0.$ Thus, after increasing $k$,
\begin{equation}\label{eq:flat-plane-radius-exceptional-volume}
|\mathcal X_{\rm neck}\cap B_4|\le C\theta_\flat^2 .
\end{equation}
Also, \Cref{lem:fb-tree-small-neck-radii-near-flat-active-balls} gives, in these normalized variables,
\begin{equation}\label{eq:flat-plane-radius-small-necks}
\rb(\zeta)\le C\epk\Rk/R_\flat=C\epk^{1-\cttc}\to0 \quad\text{for}\quad \zeta\in\cZ\cap B_1.
\end{equation}
After increasing $k$, every such radius is smaller than the fixed small constants required below, and
\[
B_{2\rb(\zeta)}(\zeta)\subset B_{3/4} \qquad\text{for every }\zeta\in\cZ_{1/2}\cap B_{1/2}.
\]
Finally, the construction in \Cref{defn:neck-centers} makes the threshold balls within each dyadic neck-radius class disjoint, so their doubles have dimensional overlap; doubles from different classes are disjoint by the recursive exclusion in \eqref{eq:Xkprime}. Hence
\[
\sum_{\zeta\in\cZ_{1/2}\cap B_{1/2}} \mathbf1_{B_{2\rb(\zeta)}(\zeta)}\le N_n .
\]

Choose $\eta\in C_c^\infty(B_4)$ with $\eta\equiv1$ on $B_{3/4}$. First suppose that either \Cref{lem:asymmetric-plane-dichotomy}-(a), \eqref{eq:asymmetric-plane-good-alternative}, holds, or the separated zero hyperplanes are parallel. In the first case,
\[
|a_+^\flat+a_-^\flat|\le C\theta_\flat .
\]
In the second case, the root-direction bounds \eqref{eq:asymmetric-excess-flat-scale-normalization} exclude $a_-^\flat=a_+^\flat$ for large $k$, and hence $a_-^\flat=-a_+^\flat$. Thus the displayed bound holds in both cases.

Set $e:=a_+^\flat$. The tangential Sternberg--Zumbrun inequality in \Cref{lem:tangential-sternberg-zumbrun-inequality} gives
\[
\int_{B_{3/4}\cap\{u>0\}}\cA_e^2(u)\,dx \le C\int_{B_4\cap\{u>0\}}|\nabla^e u|^2\,dx .
\]
On $U_+$ the integrand is controlled by $|\nabla u-a_+^\flat|^2$; on $U_-$ it is controlled by $|\nabla u-a_-^\flat|^2+|a_+^\flat+a_-^\flat|^2$. On the remaining terminal-neck region we use the Lipschitz bound. Consequently, \eqref{eq:flat-plane-radius-w12-input} and \eqref{eq:flat-plane-radius-exceptional-volume} give
\begin{equation}\label{eq:flat-plane-radius-transverse-gap}
\int_{B_{3/4}\cap\{u>0\}}\cA_e^2(u)\,dx\le C\theta_\flat^2 .
\end{equation}

It remains to consider the case in which matching fails and the separated zero hyperplanes are nonparallel. Let
\[
P:=\{L_+^\flat=0\}\cap\{L_-^\flat=0\}
\]
be their affine codimension-two intersection. The separated alternative, \Cref{lem:asymmetric-plane-dichotomy}-(b) gives $P\cap B_4=\varnothing$. Rotate the two-dimensional normal plane to $P$ so that the closest point of $P$ to the origin is
\[
p=(t,0,\ldots,0),\qquad |t|\ge4,
\]
and
\[
P=p+\operatorname{span}\{e_2,\ldots,e_{n-1}\}.
\]
For the affine model $L_\ast^\flat=a_\ast^\flat\cdot y+b_\ast^\flat$, this means
\[
a_{\ast,i}^\flat=0\quad\text{for}\quad 2\le i\le n-1, \qquad a_\ast^\flat\cdot p+b_\ast^\flat=0, \qquad \ast\in\{+,-\}.
\]
Consequently the dilation Jacobi field $A_p$ and the derivatives $\partial_i$, $2\le i\le n-1$, vanish on the affine two-plane model. More precisely, on $U_\ast\cap B_4$, the vector $J_p$ from \Cref{lem:dilation-sternberg-zumbrun-inequality} satisfies
\[
|J_p|^2 \le C\Bigl(|u-L_\ast^\flat|^2+|\nabla u-a_\ast^\flat|^2\Bigr)
\]
with $C$ universal because $|t|\ge4$ and $x\in B_4$. Applying \eqref{eq:finite-jacobi-sternberg-zumbrun} to $J_p$ with the cutoff $\eta$, and using \eqref{eq:flat-plane-radius-w12-input} and \eqref{eq:flat-plane-radius-exceptional-volume}, gives
\begin{equation}\label{eq:flat-plane-radius-dilation-gap}
\int_{B_{3/4}\cap\{u>0\}}\cK_p^2(u)\,dx \le C\theta_\flat^2 .
\end{equation}
Here the exceptional contribution is estimated using only the bound $|J_p|\le C$ on $\mathcal X_{\rm neck}\cap B_4$: after the present normalization, $|t|\ge4$, $|x-p|/|t|\le C$ for $x\in B_4$, $|\nabla u|\le1$, and $u(0)=0$ with the Lipschitz bound gives $|u|\le C$ in $B_4$.

For every $\zeta\in\cZ_{1/2}\cap B_{1/2}$, the neck-scale gap \Cref{lem:positive-excess-at-the-neck-scale} in the first two cases and \Cref{lem:positive-dilation-gap-at-the-neck-scale} in the last gives
\[
c\,\rb(\zeta)^{n-2}\le
\begin{cases}
\displaystyle \int_{B_{2\rb(\zeta)}(\zeta)\cap\{u>0\}}\cA_e^2(u)\,dx, &\text{in the matching or parallel cases},\\[6pt]
\displaystyle \int_{B_{2\rb(\zeta)}(\zeta)\cap\{u>0\}}\cK_p^2(u)\,dx, &\text{in the nonparallel case}.
\end{cases}
\]
The dilation gap in \Cref{lem:positive-dilation-gap-at-the-neck-scale} is applicable because $\zeta\in B_{1/2}$, $|t|\ge4$, and \eqref{eq:flat-plane-radius-small-necks} makes $\rb(\zeta)$ smaller than its threshold. The rigid change of coordinates used above preserves transported neck pairs. Summing and using bounded overlap, followed by the global upper bound \eqref{eq:flat-plane-radius-transverse-gap} or \eqref{eq:flat-plane-radius-dilation-gap} then gives
\[
\sum_{\zeta\in\cZ_{1/2}\cap B_{1/2}}\rb(\zeta)^{n-2} \le C(u)\theta_\flat^2 .
\]

Scaling back by $R_\flat$ gives \eqref{eq:flat-scale-asymmetric-planes-neck-radius-bound}.
\end{proof}

\begin{cor}
\label{cor:flat-scale-neck-charge-improves-critical-exponent}
For the fixed exponent $\cttc>0$ above,
\[
\alpha_\star\le \frac{1}{1+\cttc/6}<1.
\]
\end{cor}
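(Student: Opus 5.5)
The plan is to show that for any $\alpha'>\frac{1}{1+\cttc/6}$ one has $\limsup_{R\to\infty}F_u^{\alpha'}(R)<\infty$ fails to be forced. More precisely, we argue by contradiction. Suppose $\alpha_\star>\beta:=\frac1{1+\cttc/6}$, and fix any $\alpha'\in(\beta,\alpha_\star)$. Then $\limsup_R F_u^{\alpha'}(R)<\infty$, so the numerator is uniformly bounded by the denominator:
\[
\bbE_\zz(8R)\le C'\varrho_\zz(2R)^{\alpha'}\qquad\text{for all }R\text{ and }\zz\in\cZ_R .
\]
We evaluate this at the selected data. Take $\zz=\zk$ and $R=R_\flat/2$, so that $\varrho_{\zk}(2R)=\varrho_{\zk}(R_\flat)$ and $\zk\in\cZ_{R_\flat/2}$ by \eqref{eq:rbepk} for large $k$. \Cref{prop:flat-scale-asymmetric-planes-control-neck-radii} gives $\varrho_{\zk}(R_\flat)\le C\epk^{1+\cttc/6}$. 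Hence
\[
\bbE_{\zk}(4R_\flat)\le C\epk^{\alpha'(1+\cttc/6)}.
\]

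For the lower bound, \Cref{prop:optimized-denominator-logarithmic-lower-bounds} with $R=4R_\flat\le 8R_k$ gives
\[
\bbE_{\zk}(4R_\flat)\ge \frac{\eps_k}{C(1+\log(8R_k/(4R_\flat)))}.
\]
Since $\Rk\simeq R_k$ and $R_\flat=\epk^\cttc\Rk$, the logarithm is at most $C(1+|\log\epk|)$. Moreover $\eps_k\ge\epk$. This yields $\bbE_{\zk}(4R_\flat)\ge c\,\epk/|\log\epk|$. Comparing the two bounds gives
\[
\epk\le C|\log\epk|\,\epk^{\alpha'(1+\cttc/6)}.
\]
Because $\alpha'(1+\cttc/6)>1$, the right-hand side is $o(\epk)$ as $\epk\to0$, which is a contradiction. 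Therefore $\alpha_\star\le\beta$, and $\beta<1$ holds trivially since $\cttc>0$.

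The main subtlety is the logical direction in the definition of $\alpha_\star$. One has $\alpha_\star=\inf S$, and $S$ is upward closed, as shown in the proof of \Cref{lem:selection-of-initial-center-and-scale}. So $\alpha'<\alpha_\star$ means $\alpha'\notin S$, which gives a finite $\limsup$. That only bounds $F_u^{\alpha'}(R)$ for $R$ large, but this suffices because $R_\flat\to\infty$.

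A second point to check is that the selected sequences $\zk,\epk$ do not depend on $\alpha'$. They were built with the fixed $\alpha$ of \eqref{eq:single-selection-alpha-choice}, and $\alpha'$ enters only through the finite-limsup bound. Hence no circularity arises.

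One should also verify that $\zk\in\cZ\cap B_{\Rk/16}(\zk)$, so that \Cref{prop:flat-scale-asymmetric-planes-control-neck-radii} applies at that center; this is immediate.
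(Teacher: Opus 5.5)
Your argument is correct and is essentially the paper's proof in contrapositive form. The paper shows directly that, at the test center $\zk$ and scale $R_\flat/2$, the lower bound of \Cref{prop:optimized-denominator-logarithmic-lower-bounds} and the neck-charge bound of \Cref{prop:flat-scale-asymmetric-planes-control-neck-radii} force $F_u^s(R_\flat/2)\to+\infty$ for every $s>1/(1+\cttc/6)$, which is exactly your comparison. The one point you assert rather than check, $R_\flat\to\infty$, does need an argument: the paper gets it from $\rmin\le\rb(\zk)\le C\Rk\epk^{2/(n-2)}$ together with $\cttc<2/(n-2)$, and the same inequalities also give $\zk\in\cZ_{R_\flat/2}$.
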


\begin{proof}
Use the selected sequence and the flat-scale objects above. In particular, $R_\flat=\epk^\cttc\Rk$.

Apply \Cref{prop:optimized-denominator-logarithmic-lower-bounds} with $R=4R_\flat$ and the selected center $\zk$. It gives
\[
\bbE_{\zk}(4R_\flat) \ge \frac{\eps_k}{C(1+\log(R_k/R_\flat))}.
\]
Since $\epk\le \eps_k$, and \eqref{eq:optimized-denominator-selected-scale-comparable} gives $R_k\le C\Rk$, while $R_\flat=\epk^\cttc\Rk$, we have
\[
1+\log(R_k/R_\flat)\le C|\log\epk|
\]
after increasing $k$. Hence
\[
\bbE_{\zk}(4R_\flat) \ge \frac{\epk}{C|\log\epk|}.
\]
On the other hand, \Cref{prop:flat-scale-asymmetric-planes-control-neck-radii} gives
\[
\varrho_{\zk}(R_\flat)\le C(u)\epk^{1+\cttc/6}.
\]
We may evaluate the selection functionals at $R=R_\flat/2$. Indeed, \Cref{lem:refined-center-and-scale-selection} gives $\frac{\rb(\zk)}{R_\flat} \le C\epk^{2/(n-2)-\cttc}\to0,$ so $\zk\in\cZ_{R_\flat/2}$. The inequalities $\rb(\zk)\ge\rmin$ and $\rb(\zk)\le C\Rk\epk^{2/(n-2)}$ also give
\[
R_\flat=\epk^\cttc\Rk \ge c\,\epk^{\cttc-2/(n-2)}\to\infty,
\]
because $\cttc<2/(n-2)$. In the definition of $F_u^s(R_\flat/2)$, the numerator at the test center $\zk$ is $\bbE_{\zk}(4R_\flat)$ and the denominator is $\varrho_{\zk}(R_\flat)^s$. Therefore, for every fixed $s>1/(1+\cttc/6)$,
\[
F_u^s(R_\flat/2) \ge c(u)\,\frac{\epk}{|\log\epk|} \frac{1}{\epk^{s(1+\cttc/6)}} \to+\infty.
\]
Hence $\limsup_{R\to\infty}F_u^s(R)=+\infty$. Since it holds for every $s>1/(1+\cttc/6)$, the definition of $\alpha_\star$ gives
\[
\alpha_\star\le \frac{1}{1+\cttc/6},
\]
as desired.
\end{proof}

\section{Final contradiction argument}
\label{sec:final-contradiction-argument}

From now on, we fix $n=4$, and we remain in the standing contradiction setup of \Cref{ass:standing-global-assumptions}. \Cref{prop:critical-dimension-bigger-than-four} ensures that $4<\nAC$, as required by that setup. \Cref{cor:flat-scale-neck-charge-improves-critical-exponent} rules out the branch $\alpha_\star=1$ in \eqref{eq:single-selection-alpha-choice}. Consequently, the exponent fixed before the selection satisfies
\[
\alpha=\frac{1+\max\{\alpha_\star,\frac12\}}2\in\left(\frac12,1\right).
\]
Define now
\begin{equation}\label{eq:below-critical-second-flux-exponent}
p_2:=1+\frac1{2\alpha}.
\end{equation}
Then $p_2>3/2$.

\subsection{An above-critical flux exponent}

\begin{remark}
\label{rem:above-critical-flux-exponent}
The reserve \eqref{eq:single-selection-flat-scale-reserves} gives
\begin{equation}\label{eq:below-critical-parameter-gaps}
2-\frac3{p_2} =\frac{2(1-\alpha)}{2\alpha+1}>10\cttc, \qquad 2p_2-3=\frac{1-\alpha}{\alpha} >2-\frac3{p_2}>10\cttc.
\end{equation}
Moreover, in dimension four, $1<p_2\alpha=\alpha+1/2<3/2=(n-1)/(n-2)$, while $\gamma_\alpha-2p_2=2p_2-3>0$. Thus $p_2$ is flux-admissible in \eqref{eq:flux-admissible}.
\end{remark}

For the rest of the section, we use $p_2$ and fix
\begin{equation}
\label{eq:sigma-def} \sigma:=\cttc/100.
\end{equation}
Recalling \eqref{eq:flat-scale-definition}, set
\begin{equation}\label{eq:below-critical-lower-endpoint}
A_\#:=\epk^{1+\sigma}R_\flat .
\end{equation}

The proof now has three steps. First we verify the estimates from \Cref{sec:fb-tree-quantitative} at every scale between $A_\#$ and $R_\flat$. We then replace the two flat-scale maps by a centered opposite pair and estimate the combined excess at a slightly power-smaller scale. Finally we compare this upper bound with the logarithmic lower bound from the optimized selection in \Cref{prop:optimized-denominator-logarithmic-lower-bounds}.

\subsection{Frozen-plane decay}

We now freeze a single direction across the intermediate scales and derive uniform signed-sheet oscillation and energy bounds.

\begin{prop}[Oscillation decay from $R_\flat$ to $A_\#$]
\label{prop:below-critical-frozen-plane-oscillation}
After increasing $k$, there is $e_k\in\mathbb S^3$, with $|e_k-e_{\rm root}|\le C\epk$, such that the centered opposite maps
\[
L_{k,+}^\flat(x):=e_k\cdot(x-\zk), \qquad L_{k,-}^\flat(x):=-e_k\cdot(x-\zk)
\]
satisfy, for each sign and every $r\in[A_\#,R_\flat]$,
\begin{equation}\label{eq:below-critical-frozen-plane-oscillation-w12}
\fint_{U_\pm\cap B_r(\zk)} \bigl|u-L_{k,\pm}^\flat\bigr|^2\,dx+r^2 \fint_{U_\pm\cap B_{3r/4}(\zk)} \bigl|\nabla(u-L_{k,\pm}^\flat)\bigr|^2\,dx \le C A_\#^2 .
\end{equation}
Finally,
\begin{equation}\label{eq:below-critical-centered-vee-lower-bound}
u\ge |e_k\cdot(x-\zk)|-CA_\# \qquad\text{in }B_{5R_\flat}(\zk).
\end{equation}
\end{prop}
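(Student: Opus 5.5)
The plan is to rerun the Neumann compactness iteration of \Cref{prop:asymmetric-excess-decay-almost-critical}, now centered at $\zz=\zk$ and starting at scale $R_\flat$ from the maps $L_\pm^\flat$. This time we use the above-critical flux exponent $p_2$ of \Cref{rem:above-critical-flux-exponent}. Set $r_i:=2^{-i}R_\flat$ and take the target amplitude at every scale to be a constant multiple of $A_\#$, i.e.\ $h(r)\equiv A_\#$; this is an oscillation bound rather than a decay-to-zero bound. The induction hypothesis is
\[
\Bigl(\fint_{U_+\cap B_{r_i}(\zk)}|u-L_i|^2\Bigr)^{1/2}\le C_{\rm ind}\bigl(A_\# + 2^{-i\gamma'}\epk^{1+\cttc/3}R_\flat\bigr)
\]
for a small $\gamma'>0$. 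The base case is \eqref{eq:asymmetric-excess-flat-scale-affine-w12}.

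At each step, \Cref{lem:linear-neumann-compactness-global} with $N=1$ yields an even affine correction $q_j\cdot y+d_j$. With $N=1$ the even limit is affine and its slope is tangent to the slab. We will use the key quantitative input \eqref{eq:direct-sided-phi-flux-bound} with $p_{\rm fl}=p_2$. For $r\in[A_\#,R_\flat]$, write $t=r/\Rk\ge\epk^{1+\sigma+\cttc}$. Using \eqref{eq:below-critical-parameter-gaps}, the two flux terms divided by $A_\#^2$ are bounded by
\[
\epk^{3-2(1+\sigma+\cttc)}t^{1-3/p_2}\le \epk^{1-2\sigma-2\cttc-(1+\sigma+\cttc)(3/p_2-1)}
\]
and by $\epk^{2p_2+1-2-2\sigma-2\cttc-2(1+\sigma+\cttc)}$, respectively. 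Both exponents are positive because $2-3/p_2>10\cttc$ and $2p_2-3>10\cttc$. Hence the weak Neumann residual is $o(1)$ relative to $A_\#^2$ uniformly over the range, and \Cref{prop:direct-sided-w12-from-flux} gives the $W^{1,2}$ part at each scale. The slab hypothesis \eqref{eq:alt-flat-slab} needs the neck balls to satisfy $r_N\le c_J r$ for $r\ge A_\#$. Here I expect to use \Cref{prop:flat-scale-asymmetric-planes-control-neck-radii}: it gives $\rb\le C\epk^{(2+\cttc/3)/(n-2)}R_\flat=C\epk^{1+\cttc/6}R_\flat\ll A_\#$, since $\sigma<\cttc/6$. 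Then \Cref{lem:fb-tree-nonfinal-neck-terminals-neck-scale} transfers this bound to $r_N$.

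The slopes should stay frozen. The slope corrections add up to
\[
\sum_i A_\#/r_i\lesssim A_\#/A_\#=O(1)\cdot\text{(tiny)},
\]
so more carefully I would track $|a_i-a^\flat|\le C\sum_i h_i/r_i$. Dominated by the smallest scale, this sum is $CA_\#/A_\#$, which is too crude. The fix is to exploit the compactness output: the even affine limit has slope tangent to the slab, and a tangent slope would tilt the sheet, contradicting the slab trapping at the next scale. I would therefore instead keep slopes exactly fixed at $a_\pm^\flat$ and only update constants $c_i$, requiring the compactness lemma with $N=0$ plus a bounded tangential linear term absorbed through the decay exponent $\vartheta$ just below $1$. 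This follows the ``slopes stay fixed, constants converge geometrically'' picture of the overview.

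The last step replaces $L_\pm^\flat$ by a centered opposite pair. At the bottom scale, the $L^2$ bound and \Cref{lem:one-sided-lower-bound} give $u\ge\max\{L_+,L_-\}-CA_\#$ in a neighborhood of $\zk$. Since $\zk\in\FB(u)$, this yields $L_\pm(\zk)\le CA_\#$. Apply \Cref{lem:asymmetric-plane-dichotomy} at $\zk$. In case (b), the separated nonparallel planes are incompatible with $\zk$ being close to both (a Hessian-mass contradiction via neck existence at $\zk$, using \Cref{lem:one-sided-vee-trapping-controls-neck-radius} at a scale $\gg A_\#$). Hence (a) holds, with $|a_++a_-|\le C\epk^{1+\cttc/6}$. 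Refining that argument at scale $A_\#$ gives $|a_++a_-|R_\flat\lesssim A_\#$ and $|L_\pm(\zk)|\lesssim A_\#$. We then set $e_k=a_+^\flat$ (normalized), which changes the maps by $O(A_\#)$ on $B_{R_\flat}$. Estimate \eqref{eq:below-critical-centered-vee-lower-bound} then follows from the max-lower bound \eqref{eq:asymmetric-plane-max-lower-bound} propagated to accuracy $A_\#$.

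The main obstacle is this slope-matching step, $|a_++a_-|\le CA_\#/R_\flat=C\epk^{1+\sigma}$, which improves the exponent from $1+\cttc/6$ to $1+\sigma$; it is only marginally stronger. I expect it to come from applying the dichotomy argument at the smallest scale $A_\#$ rather than at $R_\flat$.
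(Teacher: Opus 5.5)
Your skeleton matches the paper: Neumann compactness with the flux exponent $p_2$, $N=0$, frozen slopes, then centering via \Cref{lem:asymmetric-plane-dichotomy}. Two steps, however, do not work as written.

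\textbf{The amplitude.} With $h\equiv A_\#$, the constant corrections only satisfy $|c_{i+1}-c_i|\lesssim C_{\rm ind}A_\#$, because the compactness error is a fixed fraction of the amplitude. Over the $\simeq(1+\sigma)|\log_2\epk|$ dyadic steps they accumulate to $C|\log\epk|A_\#$. You would therefore get $C|\log\epk|^2A_\#^2$ instead of $CA_\#^2$ in \eqref{eq:below-critical-frozen-plane-oscillation-w12}. A constant amplitude also makes $H(R)/R$ of order one at $R\simeq A_\#$, so the comparison \eqref{eq:below-critical-proof-direction-comparison} between $a_+^\flat$, the local slab direction and the intercept degenerates at the bottom scales.

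Your remedy, a growth exponent $\vartheta$ just below $1$, is incompatible with the flux. An amplitude $A_\#(r/R_\flat)^{\vartheta}$ costs an extra factor $\epk^{-2\vartheta(1+\sigma)}$ at $r=A_\#$. The margin in the exponents you computed is a small fixed number: the second one is $2p_2-3-4\cttc-4\sigma\le\frac13$, since $\alpha\ge\frac34$.

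The paper instead takes $H(r)=A_\#(r/R_\flat)^{\beta_{\rm osc}}$ with $2\beta_{\rm osc}(1+\sigma)<6\cttc-4\sigma$. Then:
\begin{itemize}
\item $\Phi_r(u-L)\le C\epk^{\lambda_{\rm osc}}H(r)^2$ on all of $[A_\#,R_\flat]$;
\item the jumps $|c_{i+1}-c_i|\lesssim H_i$ sum geometrically to $CA_\#$;
\item $H(R)/R\le\epk^{\beta_{\rm osc}(1+\sigma)}\to0$;
\item slopes stay frozen automatically, because even harmonic limits with growth $\rho^{\beta_{\rm osc}}$ are constant.
\end{itemize}
No tangential linear term ever has to be absorbed. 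Indeed, a linear term comparable to the amplitude would violate the $\rho^{\vartheta}$ growth hypothesis of \Cref{lem:linear-neumann-compactness-global}. The ``tilted sheet contradicts slab trapping'' argument is not needed.

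\textbf{The centering.} Your ``main obstacle'' is inverted. Since $\sigma=\cttc/100<\cttc/6$, alternative (a) directly gives
\[
|a_+^\flat+a_-^\flat|\le C\epk^{1+\cttc/6}\le C\epk^{1+\sigma}=CA_\#/R_\flat .
\]
Alternative (b) cannot be ruled out: the exact model pair $\pm e\cdot(x-\zk)$ satisfies it. No Hessian-mass argument is relevant.

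What is needed is the two-sided bound $|L_\pm^\flat(\zk)|\le CA_\#$. Your max-lower-bound argument only yields $L_\pm^\flat(\zk)\le CA_\#$. The paper gets both sides from $u(\zk)=0$, the Lipschitz bound, volume comparability of $U_\pm\cap B_r(\zk)$, the induction at a scale $r\in[A_\#,2A_\#]$, and $|c_j|\le CA_\#$ (which again requires the decaying amplitude).

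Then, in case (b) with $g:=a_+^\flat+a_-^\flat\ne0$, evaluate at $x=\zk+3R_\flat g/|g|$. This gives $L_\pm^\flat(x)=\frac32R_\flat|g|+L_\pm^\flat(\zk)$. These two values cannot both be positive, hence $|g|\le CA_\#/R_\flat$.

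Finally, \eqref{eq:below-critical-centered-vee-lower-bound} follows at once from \eqref{eq:asymmetric-plane-max-lower-bound}. This is because $\epk^{1+\cttc/6}R_\flat\le A_\#$, and replacing $L_\pm^\flat$ by $L_{k,\pm}^\flat$ costs only $CA_\#$ on $B_{5R_\flat}(\zk)$. Nothing needs to be propagated down to scale $A_\#$.
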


\begin{proof}
Set $\zz:=\zk$. Let $L_\pm^\flat=a_\pm^\flat\cdot x+b_\pm^\flat$ be the flat-scale maps supplied by \Cref{prop:asymmetric-excess-decay-almost-critical} at $\zk$. We first work on $U_+$, writing $L:=L_+^\flat$; the same argument applies on $U_-$. The flat-scale estimate in \Cref{prop:asymmetric-excess-decay-almost-critical} is $o(A_\#)$, because $\cttc/3>4\sigma$. This starts the iteration below.

Fix $\beta_{\rm osc}\in(0,1/2)$ so that
\begin{equation}\label{eq:below-critical-proof-beta-choice}
2\beta_{\rm osc}(1+\sigma)<6\cttc-4\sigma,
\end{equation}
and set
\[
r_i:=2^{-i}R_\flat,\qquad H(r):=A_\#\left(\frac r{R_\flat}\right)^{\beta_{\rm osc}},\qquad H_i:=H(r_i).
\]
As in Step~2 of the proof of \Cref{prop:asymmetric-excess-decay-almost-critical}, we prove a one-step statement. The constants $j_0$ and $C_{\rm ind}\ge1$ will be chosen below. Suppose that $j\ge j_0$, $r_{j+1}\ge A_\#$, and that constants $c_0,\ldots,c_j$, with $c_0=0$, satisfy
\begin{equation}\label{eq:below-critical-proof-induction}
\left( \fint_{U_+\cap B_{r_i}(\zz)}|u-L-c_i|^2\,dx \right)^{1/2} \le C_{\rm ind}H_i \qquad\text{for}\quad 0\le i\le j.
\end{equation}

\medskip
\noindent\emph{Step 1: Slab trapping at the inductive scales.}
Set
\begin{equation}\label{eq:below-critical-neck-scale}
r_{\rm neck}:=C_\flat\epk^{1+\cttc/6}R_\flat =C_\flat\epk^{1+7\cttc/6}\Rk,
\end{equation}
where $C_\flat$ is fixed sufficiently large. By \Cref{lem:fb-tree-small-neck-radii-near-flat-active-balls}, $\rb(\zz)\le C\epk\Rk=o(R_\flat)$, so $\zz$ occurs in the sum defining $\varrho_\zz(R_\flat)^2$. Hence \eqref{eq:flat-scale-asymmetric-planes-neck-radius-bound} gives, after increasing $C_\flat$,
\begin{equation}\label{eq:below-critical-proof-local-neck-ratio}
\rb(\zz)\le r_{\rm neck},\qquad 
\frac{r_{\rm neck}}{A_\#} \le C\epk^{\cttc/6-\sigma}\to0.
\end{equation}
In particular, $\rb(\zz)/r_i=o(1)$ at every inductive scale. For $A_\#\le R\le R_\flat$, let $\nu_R$ minimize $\bbE_\zz(8R;\cdot)$, oriented toward $U_+$, and put
\begin{equation}\label{eq:below-critical-proof-uniform-slab-error}
E_R:=C\bbE_\zz(8R;\nu_R) \le C\omega_*\!\left(C\frac{r_{\rm neck}}R\right) \le\Theta_k:=C\omega_*\!\left(C\frac{r_{\rm neck}}{A_\#}\right) \to0.
\end{equation}
By \Cref{lem:fb-tree-mesoscopic-signed-slab-flatness},
\begin{equation}\label{eq:below-critical-proof-variable-local-slab}
\begin{aligned}
B_R(\zz)\cap\{\pm\nu_R\cdot(x-\zz)>E_RR\} \subset U_\pm\cap B_R(\zz) \subset\{\pm\nu_R\cdot(x-\zz)>-E_RR\}.
\end{aligned}
\end{equation}
In particular, $|U_\pm\cap B_R(\zz)|\simeq R^n$ uniformly on this range. The induction at two consecutive scales and \Cref{lem:affine-comparison-proportional} therefore give
\begin{equation}\label{eq:below-critical-proof-jumps}
|c_i-c_{i-1}|\le C C_{\rm ind}H_{i-1}\quad\text{for}\quad 1\le i\le j, \qquad |c_j-c_i|\le C C_{\rm ind}H_i\quad\text{for}\quad 0\le i\le j, \qquad |c_j|\le C C_{\rm ind}A_\#.
\end{equation}
Consequently,
\begin{equation}\label{eq:below-critical-proof-intercept}
\frac{|c_j|}{\epk\Rk} \le C C_{\rm ind}\epk^{\cttc+\sigma}\to0
\end{equation}
once $C_{\rm ind}$ is fixed.

We now improve this coarse control at each scale. Let $r_j\le R\le R_\flat$, choose $i\le j$ so that $R\le r_i<2R$, and set
\[
Q_R:=B_{R/8}\!\left(\zz+\frac R4\nu_R\right).
\]
For large $k$, the first inclusion in \eqref{eq:below-critical-proof-variable-local-slab} gives $Q_R\subset U_+\cap B_R(\zz)$. On $Q_R$, $V_{\zz,\nu_R}=\nu_R\cdot(x-\zz)$; hence \eqref{eq:below-critical-proof-induction}, \Cref{defn:monneau-excess}, and $|Q_R|\simeq R^n$ give
\[
\left( \fint_{Q_R}|L+c_i-\nu_R\cdot(x-\zz)|^2\,dx \right)^{1/2} \le C C_{\rm ind}H_i+C E_RR.
\]
Using \eqref{eq:below-critical-proof-jumps} and \Cref{lem:affine-comparison-proportional}, we obtain the two required conclusions in one estimate:
\begin{equation}\label{eq:below-critical-proof-direction-comparison}
\begin{aligned}
|a_+^\flat-\nu_R|+\frac{|L(\zz)+c_j|}{R} &\le C C_{\rm ind}\frac{H(R)}R+C E_R \le C C_{\rm ind}\epk^{\beta_{\rm osc}(1+\sigma)} +C\omega_*\!\left(C\epk^{\cttc/6-\sigma}\right) =:\vartheta_k\to0.
\end{aligned}
\end{equation}
Thus the induction first controls the accumulated displacement, while \eqref{eq:below-critical-proof-direction-comparison} shows that the affine zero plane of $L+c_j$ is in fact $o(R)$-centered and identifies $a_+^\flat$ with $\nu_R$ at every inductive scale. After increasing $\vartheta_k$ by a dimensional factor, combining \eqref{eq:below-critical-proof-variable-local-slab} and \eqref{eq:below-critical-proof-direction-comparison} gives
\begin{equation}\label{eq:below-critical-proof-uniform-local-slab}
\begin{aligned}
B_R(\zz)\cap\{\pm a_+^\flat\cdot(x-\zz)>\vartheta_kR\} \subset U_\pm\cap B_R(\zz) \subset\{\pm a_+^\flat\cdot(x-\zz)>-\vartheta_kR\}
\end{aligned}
\qquad\text{for}\quad r_j\le R\le R_\flat.
\end{equation}

\medskip
\noindent\emph{Step 2: Flux control and one-step improvement.}

As in Section~11, we use a dyadic compactness iteration, but here we improve oscillation (a $C^{\beta_{\rm osc}}$ iteration) rather than flatness (a $C^{1,\gamma}$ iteration). Accordingly, we subtract constants rather than planes. Because we ask only for H\"older decay of the oscillation, the required flux estimates are weaker; after proving $\alpha<1$, the above-critical flux exponent $p_2$ from \Cref{rem:above-critical-flux-exponent} provides them down to the target scale $A_\#$.

The point that must be checked anew is that the flux remains negligible on the whole interval $[A_\#,R_\flat]$. Set
\[
\lambda_{\rm osc}:= 6\cttc-4\sigma-2\beta_{\rm osc}(1+\sigma)>0, \qquad \eta_k:=\epk^{\lambda_{\rm osc}/2}.
\]
If $r=\epk^s\Rk$, then $\cttc\le s\le1+\cttc+\sigma$. Both exponents below decrease with $s$, and equations \eqref{eq:below-critical-parameter-gaps} and \eqref{eq:below-critical-proof-beta-choice} give
\begin{equation}\label{eq:below-critical-proof-flux-exponents}
\begin{aligned}
1+s(1-3/p_2)-2\cttc-2\sigma -2\beta_{\rm osc}(s-\cttc) &>\lambda_{\rm osc},\\
2p_2-1-2s-2\cttc-2\sigma -2\beta_{\rm osc}(s-\cttc) &>\lambda_{\rm osc}.
\end{aligned}
\end{equation}
Consequently, \eqref{eq:direct-sided-phi-flux-bound} and the invariance of $\Phi_r$ under constant shifts give, uniformly on this range,
\begin{equation}\label{eq:below-critical-proof-flux}
\frac{\Phi_r(u-L-c_j)}{H(r)^2} =\frac{\Phi_r(u-L)}{H(r)^2} \le C\epk^{\lambda_{\rm osc}} =C\eta_k^2.
\end{equation}

With $\delta_{\rm cmp}$ to be chosen below, assume $2^{j_0}\ge32\delta_{\rm cmp}^{-1}$, and set
\[
v_j(y):=\frac{u(\zz+r_jy)-L(\zz+r_jy)-c_j}{H_j}, \qquad \Omega_j:=r_j^{-1}(U_+-\zz).
\]
For $1\le\rho\le\delta_{\rm cmp}^{-1}$, choose $i\le j$ so that
\[
2\rho r_j\le r_i<4\rho r_j.
\]
Then \eqref{eq:below-critical-proof-induction}, \eqref{eq:below-critical-proof-jumps}, and \eqref{eq:below-critical-proof-uniform-local-slab} give
\begin{equation}\label{eq:below-critical-proof-l2-input}
\left( \fint_{U_+\cap B_{2\rho r_j}(\zz)} |u-L-c_j|^2\,dx \right)^{1/2} \le C C_{\rm ind}H_i \le K C_{\rm ind}\rho^{\beta_{\rm osc}}H_j.
\end{equation}
Here we used $H_i/H_j\le(4\rho)^{\beta_{\rm osc}}$. Since
\[
4A_\#\le2\rho r_j\le2\delta_{\rm cmp}^{-1}r_j\le R_\flat/16,
\]
equations \eqref{eq:asymmetric-excess-flat-scale-normalization} and \eqref{eq:below-critical-proof-intercept} give the required affine normalization. It remains only to verify the terminal-radius condition in \Cref{prop:direct-sided-w12-from-flux}. Indeed, let $A_\#\le R\le R_\flat/16$, let $N\in\cN$ meet $B_{3R/4}(\zz)$, and choose $\zeta_N\in\cZ\cap\lsup{4}{N}$. The estimates in \Cref{lem:fb-tree-nonfinal-neck-terminals-neck-scale,lem:fb-tree-small-neck-radii-near-flat-active-balls} give
\[
r_N+\rb(\zeta_N)\le C_\delta\epk\Rk=o(R_\flat), \qquad |\zeta_N-\zz|\le \frac34R+5r_N<R_\flat/2.
\]
Thus $\zeta_N$ occurs in the same neck-charge sum as $\zz$, and \eqref{eq:flat-scale-asymmetric-planes-neck-radius-bound} gives
\begin{equation}\label{eq:below-critical-proof-terminal-neck-control}
r_N\le C_\delta r_{\rm neck}\le c_JR.
\end{equation}
Here the last inequality follows from \eqref{eq:below-critical-proof-local-neck-ratio}. Therefore \Cref{prop:direct-sided-w12-from-flux}, applied with $R=2\rho r_j$, and \eqref{eq:below-critical-proof-l2-input} and \eqref{eq:below-critical-proof-flux} give
\begin{equation}\label{eq:below-critical-proof-growth}
\begin{aligned}
&\left(\fint_{\Omega_j\cap B_\rho}|v_j|^2\,dy\right)^{1/2} +\rho\left( \fint_{\Omega_j\cap B_\rho}|\nabla v_j|^2\,dy \right)^{1/2} +\left(\fint_{\Omega_j\cap B_\rho}|v_j|^3\,dy\right)^{1/3} \\
&\hspace{30mm}\le \bigl(KC_{\rm ind}+C(C_{\rm ind})\eta_k\bigr) \rho^{\beta_{\rm osc}} \qquad\text{for}\quad 1\le\rho\le\delta_{\rm cmp}^{-1},
\end{aligned}
\end{equation}
where $K$ is independent of $C_{\rm ind}$ and $\delta_{\rm cmp}$. Similarly, \eqref{eq:scaled-weak-residual-from-flux}, with $S=2\rho r_j$, gives
\begin{equation}\label{eq:below-critical-proof-residual}
\left| \int_{\Omega_j}\nabla v_j\cdot\nabla\varphi\,dy \right| \le C\eta_k^2\rho^{n-2+2\beta_{\rm osc}}\|\varphi\|_{L^\infty} =C\eta_k^2\rho^{2+2\beta_{\rm osc}}\|\varphi\|_{L^\infty} \qquad \quad\text{for}\quad \varphi\in C_c^\infty(B_\rho),\ 1\le\rho\le\delta_{\rm cmp}^{-1}.
\end{equation}
These are precisely the $N=0$ counterparts of \eqref{eq:alt-flat-growth} and \eqref{eq:alt-flat-residual}. Finally, \eqref{eq:below-critical-proof-uniform-local-slab}, at $R=\rho r_j$, gives
\begin{equation}\label{eq:below-critical-proof-slab}
B_\rho\cap\{a_+^\flat\cdot y>\vartheta_k\rho\} \subset\Omega_j\cap B_\rho \subset B_\rho\cap\{a_+^\flat\cdot y>-\vartheta_k\rho\} \qquad\text{for}\quad 1\le\rho\le\delta_{\rm cmp}^{-1}.
\end{equation}

We now follow exactly the order of choices in Step 2 of the proof of \Cref{prop:asymmetric-excess-decay-almost-critical}. Choose $\varepsilon_{\rm cmp}>0$ so that
\begin{equation}\label{eq:below-critical-proof-tolerance}
(K+1)\varepsilon_{\rm cmp}\le2^{-1-\beta_{\rm osc}},
\end{equation}
let $\delta_{\rm cmp}$ be furnished by \Cref{lem:linear-neumann-compactness-global} for
\[
(m,q,N,\vartheta)=(4,3,0,\beta_{\rm osc}),
\]
choose $j_0$ with $2^{j_0}\ge32\delta_{\rm cmp}^{-1}$, and then fix $C_{\rm ind}\ge1$. Only after these choices do we increase $k$. The flat-scale estimate \eqref{eq:asymmetric-excess-flat-scale-affine-w12} and \eqref{eq:below-critical-proof-variable-local-slab} give
\begin{equation}\label{eq:below-critical-proof-base}
\max_{0\le i\le j_0} \frac1{H_i} \left( \fint_{U_+\cap B_{r_i}(\zz)}|u-L|^2\,dx \right)^{1/2} \le C2^{(2+\beta_{\rm osc})j_0}\epk^{\cttc/3-\sigma}.
\end{equation}
We require $r_{j_0+1}\ge A_\#$, that the right-hand side of \eqref{eq:below-critical-proof-base} be at most $C_{\rm ind}$, all applicability thresholds in \Cref{lem:direct-sided-local-flux-slab,prop:direct-sided-w12-from-flux}, and
\begin{equation}\label{eq:below-critical-proof-final-k}
C C_{\rm ind}\epk^{\cttc+\sigma}\le1, \qquad C(C_{\rm ind})\eta_k\le C_{\rm ind}, \qquad \frac{C\eta_k^2\delta_{\rm cmp}^{-\beta_{\rm osc}}} {(K+1)C_{\rm ind}}\le\delta_{\rm cmp}, \qquad \vartheta_k\le\delta_{\rm cmp}.
\end{equation}
Thus $c_i=0$, $0\le i\le j_0$, initializes \eqref{eq:below-critical-proof-induction}.

Suppose now that the induction has reached $j\ge j_0$ and $r_{j+1}\ge A_\#$. Equations \eqref{eq:below-critical-proof-growth}, \eqref{eq:below-critical-proof-residual}, \eqref{eq:below-critical-proof-slab}, and \eqref{eq:below-critical-proof-final-k} show that $((K+1)C_{\rm ind})^{-1}v_j$ satisfies \Cref{lem:linear-neumann-compactness-global} on $1\le\rho\le\delta_{\rm cmp}^{-1}$. Hence there is a constant $d_j$ such that
\begin{equation}\label{eq:below-critical-proof-output}
\left( \fint_{\Omega_j\cap B_{1/2}} \left|((K+1)C_{\rm ind})^{-1}v_j-d_j\right|^2\,dy \right)^{1/2} \le\varepsilon_{\rm cmp}.
\end{equation}
As in the estimate following \eqref{eq:alt-flat-output},
\[
|d_j|\le C\bigl((K+1)^{-1}+\varepsilon_{\rm cmp}\bigr), \qquad (K+1)|d_j|\le C.
\]
Set
\[
c_{j+1}:=c_j+(K+1)C_{\rm ind}H_jd_j.
\]
Then
\[
|c_{j+1}-c_j|\le C C_{\rm ind}H_j, \qquad \left( \fint_{U_+\cap B_{r_{j+1}}(\zz)}|u-L-c_{j+1}|^2\,dx \right)^{1/2} \le (K+1)C_{\rm ind}\varepsilon_{\rm cmp}H_j \le\frac12C_{\rm ind}H_{j+1}.
\]
This closes the induction.

\medskip
\noindent\emph{Step 3: Remove the shifts and conclude.}
The jumps in \eqref{eq:below-critical-proof-jumps} form a geometric series:
\begin{equation}\label{eq:below-critical-proof-constant-sum}
|c_j|\le C C_{\rm ind}\sum_{i<j}H_i\le C A_\#.
\end{equation}
Combining this with \eqref{eq:below-critical-proof-induction} gives, at every dyadic radius in the stated range,
\[
\left( \fint_{U_+\cap B_{r_j}(\zz)}|u-L|^2\,dx \right)^{1/2} \le C A_\#.
\]
For a non-dyadic radius, restrict \eqref{eq:below-critical-proof-induction} from the next larger dyadic ball and use \eqref{eq:below-critical-proof-uniform-local-slab}. Thus
\[
\left( \fint_{U_+\cap B_r(\zz)}|u-L_+^\flat|^2\,dx \right)^{1/2} \le CA_\# \qquad\text{for}\quad A_\#\le r\le R_\flat.
\]

To estimate $L_+^\flat(\zz)$, choose an index $j_\ast$ such that $r_{j_\ast}\in[A_\#,2A_\#]$; this radius belongs to $[A_\#,R_\flat]$. Since $u(\zz)=0$, the Lipschitz bound for $u$, the unit slope of $L$, volume comparability, and \eqref{eq:below-critical-proof-induction} give $|L(\zz)+c_{j_\ast}|\le C A_\#$.
Together with \eqref{eq:below-critical-proof-constant-sum}, this gives
\[
|L_+^\flat(\zz)|\le CA_\#.
\]

Finally, for $r\le R_\flat/16$, apply \eqref{eq:direct-sided-w12-from-flux} directly to $u-L$, using \eqref{eq:below-critical-proof-terminal-neck-control}. The just-proved $L^2$ estimate controls its first term, while \eqref{eq:below-critical-proof-flux} bounds its flux term by $CA_\#^2$. For the fixed range $r\in[R_\flat/16,R_\flat]$, use \eqref{eq:asymmetric-excess-flat-scale-affine-w12} and \eqref{eq:below-critical-proof-uniform-local-slab}. Hence
\[
r^2\fint_{U_+\cap B_{3r/4}(\zz)}|\nabla(u-L_+^\flat)|^2\,dx \le C A_\#^2,
\]
for every $r\in[A_\#,R_\flat]$. Applying the same argument on $U_-$ gives these three estimates with $+$ replaced by $-$.

It remains to center the two maps. Write
\[
\bar b_\pm:=L_\pm^\flat(\zz), \qquad g:=a_+^\flat+a_-^\flat.
\]
We have just proved
\begin{equation}\label{eq:below-critical-proof-original-intercepts}
|\bar b_+|+|\bar b_-|\le CA_\#.
\end{equation}
In alternative \eqref{eq:asymmetric-plane-good-alternative} of \Cref{lem:asymmetric-plane-dichotomy},
\[
|g|\le C\epk^{1+\cttc/6} \le C\epk^{1+\sigma}=C A_\#/R_\flat.
\]
If instead \eqref{eq:asymmetric-plane-separated-alternative} holds and $g\ne0$, take $x=\zz+3R_\flat g/|g|$. Since $|a_\pm^\flat|=1$,
\[
L_\pm^\flat(x)=\frac32R_\flat|g|+\bar b_\pm.
\]
The two values cannot both be positive, and \eqref{eq:below-critical-proof-original-intercepts} again gives $|g|\le CA_\#/R_\flat$. Thus this estimate holds in either alternative.

Set $e_k:=a_+^\flat$ and $L_{k,\pm}^\flat(x):=\pm e_k\cdot(x-\zk)$. Then
\[
\sup_{B_{5R_\flat}(\zz)} |L_\pm^\flat-L_{k,\pm}^\flat| +R_\flat |\nabla L_\pm^\flat-\nabla L_{k,\pm}^\flat| \le CA_\#.
\]
The triangle inequality now proves \eqref{eq:below-critical-frozen-plane-oscillation-w12}. The normalization \eqref{eq:asymmetric-excess-flat-scale-normalization} gives $|e_k-e_{\rm root}|\le C\epk$.

Finally, \eqref{eq:asymmetric-plane-max-lower-bound}, the preceding comparison, and $\epk^{1+\cttc/6}R_\flat\le A_\#$ yield
\[
u\ge \max\{L_{k,+}^\flat,L_{k,-}^\flat\}-CA_\# =|e_k\cdot(x-\zk)|-CA_\#
\]
in $B_{5R_\flat}(\zz)$, proving \eqref{eq:below-critical-centered-vee-lower-bound}.
\end{proof}

\subsection{Gaussian calibration}

We next convert the frozen-plane estimates into a below-flat-scale bound for the combined Gaussian excess.

\begin{lem}
\label{lem:combined-excess-below-flat-scale}
Let $e_k$ and $P_\pm:=L_{k,\pm}^\flat$ be given by \Cref{prop:below-critical-frozen-plane-oscillation}. Fix
\[
\tau:=\frac{\sigma}{100}, \qquad \rho_k:=\epk^\tau R_\flat.
\]
Then, after increasing $k$,
\[
\bbE_{\zk}(\rho_k;e_k)^2\le C_\alpha\epk^{2+\sigma}.
\]
\end{lem}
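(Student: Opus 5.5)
We must bound $\Theta^{e_k}_{\rho_k,\zk}+M^{e_k}_{\rho_k,\zk}$ by $C_\alpha\epk^{2+\sigma}$. Throughout, write $V:=V_{\zk,e_k}=|e_k\cdot(x-\zk)|$ and note $A_\#/\rho_k=\epk^{1+\sigma-\tau}$, so $(A_\#/\rho_k)^2=\epk^{2+2\sigma-2\tau}$, which beats $\epk^{2+\sigma}$ by a factor $\epk^{\sigma-2\tau}$. This slack is what the proof spends.

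\emph{Vee term.} Split $B_{\rho_k}(\zk)$ into $U_+$, $U_-$, the zero set, and $\mathcal X_{\rm neck}$. On $U_\pm$, $V\ge \pm e_k\cdot(x-\zk)=L^\flat_{k,\pm}$. Where $\pm e_k\cdot(x-\zk)\ge0$ the two agree, and \eqref{eq:below-critical-frozen-plane-oscillation-w12} with $r=\rho_k$ gives $\rho_k^n A_\#^2$. The wrong-side part of $U_\pm$ lies in the slab $|e_k\cdot(x-\zk)|\le C A_\#$. This holds by \eqref{eq:below-critical-centered-vee-lower-bound} together with the slab trapping used in the proof of \Cref{prop:below-critical-frozen-plane-oscillation}, and there $|u-V|\le CA_\#$ by Lipschitz. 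On $\{u=0\}$, \eqref{eq:below-critical-centered-vee-lower-bound} gives $V\le CA_\#$. On $\mathcal X_{\rm neck}\cap B_{\rho_k}$, the volume bound \eqref{eq:fb-tree-terminal-neck-boundary-localization-bookkeeping} and $|u-V|\le C\epk\Rk$ near the slab give a contribution $C_\alpha\Rk^{n+2}\epk^{3+\gamma_\alpha}$. Dividing by $\rho_k^{n+2}=\epk^{(n+2)(\tau+\cttc)}\Rk^{n+2}$ leaves $\epk^{3+\gamma_\alpha-6(\tau+\cttc)}$, which is $\le\epk^{2+\sigma}$ because $\gamma_\alpha>2$ and $\cttc$ is small. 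Hence $M^{e_k}_{\rho_k,\zk}\le C\epk^{2+2\sigma-2\tau}+C_\alpha\epk^{2+\sigma}$.

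\emph{Ridgewise Weiss term.} Fix $x_\circ\in(\zk+e_k^\perp)\cap B_{\rho_k}(\zk)$. Both $W_{\rho_k,x_\circ}(V)=W_\infty$ (\Cref{lem:universal-gaussian-weiss-limit}, translation invariant along $e_k^\perp$) and the point $x_\circ$ lie on the ridge of $V$. I would write
\[
\wW_{\rho_k,x_\circ}=W_{\rho_k,x_\circ}(V)-W_{\rho_k,x_\circ}(u)
\]
and estimate the difference directly, splitting the Gaussian integral into the inner ball $B_{\rho_k\epk^{-\tau}}(\zk)=B_{R_\flat}(\zk)$ and its complement. The Gaussian tail outside $B_{R_\flat}$ is $\le Ce^{-c\epk^{-2\tau}}$, which is negligible.

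Inside, work with the difference $w:=u-V$. The $D$-term contributes
\[
\rho_k^{-2}\int (u^2-V^2)G
=\rho_k^{-2}\int (2Vw+w^2)G .
\]
The $H$-term contributes $\int(\nabla V\cdot\nabla w+\tfrac12|\nabla w|^2)G$ plus the phase-indicator difference. The quadratic pieces are controlled at all dyadic scales $r\in[\rho_k,R_\flat]$ by \eqref{eq:below-critical-frozen-plane-oscillation-w12}. After weighting by $G_{\rho_k}$, the factor $e^{-r^2/\rho_k^2}$ decays faster than the polynomial growth, so these pieces are $\le C(A_\#/\rho_k)^2$. The phase-indicator difference is supported in the slab $\{V\le CA_\#\}$ together with $\mathcal X_{\rm neck}$, and is bounded as in the vee term by $C A_\#/\rho_k+C_\alpha\epk^{2+\sigma}$.

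\emph{Main obstacle.} The last bound is only linear in $A_\#/\rho_k$, and the linear cross terms $\int\nabla V\cdot\nabla w\,G$ and $\rho_k^{-2}\int Vw\,G$ are likewise only first order. I would cancel all first-order terms at once by the first variation of the Gaussian Weiss energy at the critical point $V$ (the ``two-sheet calibration''). Integrating by parts on each of $\{\pm e_k\cdot(x-\zk)>0\}$, and using $\Delta V=0$ there together with $\nabla G=-2(x-x_\circ)G/\rho_k^2$ and $(x-x_\circ)\cdot\nabla V=V$ (valid since $x_\circ$ is on the ridge), one gets
\[
\int\nabla V\cdot\nabla w\,G-2\rho_k^{-2}\int Vw\,G=\text{(boundary term on }e_k^\perp)+\int\text{over }\{V\le CA_\#\}.
\]
The boundary term on $e_k^\perp$ involves only $w=u$ there. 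The remaining linear contributions are $u$ on $e_k^\perp$ and the measure of the zero phase near the ridge; these combine into
\[
\int_{\{u=0\}}\bigl(1-\ldots\bigr)G\;-\;2\int_{e_k^\perp}u\,G .
\]
This combination vanishes to first order precisely by the Bernoulli condition $|\nabla u|=1$, applied on regular sheets through \Cref{lem:fb-tree-regular-terminal-balls-split-into-two-signed-sheets}. Integrating the flux of $u\mp L^\flat_{k,\pm}$ against $G$ over $\Gamma_\pm$, with the $p_2$-flux bound \Cref{prop:fb-tree-sided-higher-power-transverse-estimate} and Hölder, should leave only errors of order $C_\alpha\epk^{2+\sigma}$.

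If this cancellation turns out to be too delicate, the fallback is the following. Use \eqref{eq:gaussian-weiss-deficit-weighted-comparison} at $x_\circ$ to bound $\wW$ by $\int(u-(x-x_\circ)\cdot\nabla u)^2/(|x-x_\circ|^{n+2}+\rho_k^{n+2})$. On $U_\pm$ write $u-(x-x_\circ)\cdot\nabla u=w-(x-x_\circ)\cdot\nabla w$, which is controlled dyadically by \eqref{eq:below-critical-frozen-plane-oscillation-w12}. Beyond $R_\flat$, use the weight together with the flat-scale estimate \eqref{eq:asymmetric-excess-flat-scale-affine-w12} and the root-scale bound. On the necks, use the volume bound from \Cref{lem:fb-tree-terminal-neck-bookkeeping}.

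Taking the supremum over $x_\circ$ and combining with the vee term gives $\bbE_{\zk}(\rho_k;e_k)^2\le C_\alpha\epk^{2+\sigma}$, using $2\sigma-2\tau>\sigma$.
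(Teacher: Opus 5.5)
Your Monneau estimate is fine. Across the ridge you do not even need the slab claim: on $U_+\cap\{e_k\cdot(x-\zk)<0\}$ one has $0\le V\le u-L^\flat_{k,+}$, hence $|u-V|\le 3|u-L^\flat_{k,+}|$.

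\textbf{The gap: the first-order cancellation in the Weiss term.} Take $x_\circ\in\zk+e_k^\perp$, $G:=G_{\rho_k}(\cdot-x_\circ)$ and $w:=u-V$. Integrating by parts on $\{\pm e_k\cdot(x-\zk)>0\}$ gives the exact identity
\[
\wW_{\rho_k,x_\circ}=2\int_{\zk+e_k^\perp}u\,G\,d\cH^{n-1}+\frac12\int_{\{u=0\}}G\,dx-\frac12\int|\nabla w|^2G\,dx+\rho_k^{-2}\int w^2G\,dx ,
\]
which uses no equation at all.

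The frozen-plane bound \eqref{eq:below-critical-frozen-plane-oscillation-w12} controls $u-L^\flat_{k,\pm}$ on $U_\pm$, not $u-V$. On the zero phase $|\nabla w|=1$, and on the parts of $U_\pm$ lying across the ridge $|\nabla w|\approx 2$. So $\frac12\int|\nabla w|^2G$ is first order, not $O((A_\#/\rho_k)^2)$ as you claim. It is exactly this term that balances the two first-order terms you isolated. Those two are nonnegative and enter $\wW$ with the same sign, so they cannot cancel each other, with or without the Bernoulli condition.

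For example, $u=(|e_k\cdot(x-\zk)|-h)_+$ satisfies the frozen-plane bounds with $A_\#=h$ and vanishes on the ridge. Its zero-phase term is $\simeq h/\rho_k$, and its deficit is $O(h^2/\rho_k^2)$ only because of the $-\frac12\int_{\{u=0\}}|\nabla V|^2G$ piece of the gradient term. So the decisive step is asserted rather than proved, and the bookkeeping behind it is wrong.

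\textbf{What is missing: a phasewise calibration.}
\begin{itemize}
\item On each $U_\ast$, compare $u$ with the full affine map $P_\ast=\pm e_k\cdot(x-x_\circ)$ extended across the ridge, so that $\nabla(u-P_\ast)$ is small on all of $U_\ast$.
\item Use harmonicity of $u$ in $U_\ast$ to get a fixed-radius identity for the phase Weiss energies. Then average it via $W_{\rho_k,x_\circ}=\rho_k^{-6}\int_0^\infty r^5e^{-r^2/\rho_k^2}{\bf W}(u(x_\circ+\cdot),r)\,dr$.
\item The across-the-ridge contributions then cancel between the two phases, because $P_+^2=P_-^2$ and $\mathbf 1_{\{P_+>0\}}+\mathbf 1_{\{P_->0\}}=1$ a.e.
\item The zero-phase remainder is quadratic, since $|P_\ast|\le CA_\#$ there. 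The artificial neck boundaries are handled by \eqref{eq:fb-tree-terminal-neck-boundary-localization-bookkeeping}.
\item The only genuinely first-order term left is $\int_{\Gamma^{\rm reg}_\ast}P_\ast(1+e_\ast\cdot\nu_\ast)K$, with a kernel $K\le C\rho_k^{-4}$. Here $|P_\ast|\le CA_\#$ by \eqref{eq:below-critical-centered-vee-lower-bound}, and $1+e_\ast\cdot\nu_\ast=\frac12|\nabla u-e_\ast|^2$ because $|\nabla u|=1$. The $p_2$-flux bound and H\"older then give $CA_\#\rho_k^{-4}\Rk^3\epk^2=C\epk^{3+\sigma-3\cttc-4\tau}$.
\end{itemize}

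\textbf{Your fallback fails for a structural reason.} The kernel in \eqref{eq:gaussian-weiss-deficit-weighted-comparison} decays only like $|x-x_\circ|^{-n-2}$, so it charges the homogeneity defect at every scale.
\begin{itemize}
\item On $[\rho_k,R_\flat]$ the frozen-plane bounds give $\sum_r(A_\#/r)^2\lesssim(A_\#/\rho_k)^2$, which is fine.
\item Above $R_\flat$ you only have the flat-scale iteration, with error $\epk(r/\Rk)^{2/5}r$. Its contributions $\epk^2(r/\Rk)^{4/5}$ give only $O(\epk^2)$.
\item Beyond $\Rk$ you have nothing better than $\bbE_{\zk}(8\Rk)\le C\epk$.
\end{itemize}
This loses exactly the factor $\epk^\sigma$ that the contradiction needs. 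In fact, since $r\mapsto\wW_{r,x_\circ}$ is nonincreasing, the lemma itself implies $\wW_{8\Rk,x_\circ}\lesssim\epk^{2+\sigma}$. That cannot be read off from large-scale information known only to order $\epk^2$. It has to come from a calibration at scale $\rho_k$, combined with the exponential localization of the Gaussian average, whose tail beyond $R_\flat/4$ is $O(e^{-c\epk^{-2\tau}})$. Your main route does have this localization; what it lacks is the correct calibration.
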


\begin{proof}
We divide the proof into three steps. Set
\[
\rho:=\epk^\tau R_\flat, \qquad T:=\frac{R_\flat}{4},
\]
and fix
\[
s\in(\zk+e_k^\perp)\cap B_\rho(\zk).
\]
Since $\rho=o(R_\flat)$, for $k$ sufficiently large we have
\[
B_T(s)\cup B_\rho(\zk) \subset B_{R_\flat/2}(\zk) \subset B_{\Rk/8}(\zk).
\]
For $\ast\in\{+,-\}$, write
\[
e_+:=e_k, \qquad e_-:=-e_k, \qquad P_\ast:=e_\ast\cdot(x-s), \qquad w_\ast:=u-P_\ast, \qquad H_\ast:=\{P_\ast>0\}.
\]
Since $s-\zk\perp e_k$, the maps $P_\pm$ coincide with $L_{k,\pm}^\flat$ from \Cref{prop:below-critical-frozen-plane-oscillation}. We denote by $\nu_\ast$ the exterior unit normal to $U_\ast$ and, for almost every $0<r<R_\flat/4$, set
\[
\mathcal W_{\ast,s}(r):= r^{-4}\int_{U_\ast\cap B_r(s)}(|\nabla u|^2+1)\,dx -r^{-5}\int_{U_\ast\cap\partial B_r(s)}u^2\,d\cH^3.
\]

\medskip\noindent\emph{Step 1: Fixed-radius and Gaussian-averaging
identities.} Arguing as in Step 3 of the proof of \cite{CFFS25}*{Lemma 9.2}, and retaining the artificial-boundary term, we obtain
\begin{equation}\label{eq:combined-excess-fixed-radius-phase-identity}
\begin{split}
\mathcal W_{\ast,s}(r)-\frac{|B_1|}{2} &=r^{-5}\int_{U_\ast\cap\partial B_r(s)} ((x-s)\cdot\nabla u-u)w_\ast\,d\cH^3 +r^{-5}\int_{\partial B_r(s)}P_\ast^2 (\mathbf1_{U_\ast}-\mathbf1_{H_\ast})\,d\cH^3 \\
&\quad+r^{-4}\int_{\Gamma_\ast^{\rm reg}\cap B_r(s)} P_\ast(1+e_\ast\cdot\nu_\ast)\,d\cH^3 +r^{-4}\int_{\Gamma_\ast^{\rm neck}\cap B_r(s)} \bigl[w_\ast\partial_{\nu_\ast}u +(u+P_\ast)e_\ast\cdot\nu_\ast\bigr]\,d\cH^3.
\end{split}
\end{equation}

By \Cref{lem:fb-tree-signed-decomposition}, $U_+\cap U_-=\varnothing$, while \eqref{eq:fb-tree-signed-cover} gives
\[
U_+\cup U_-\subset\{u>0\}\cap B_{\Rk/2}(\zk).
\]
Set
\[
Y:=\{u>0\}\setminus(U_+\cup U_-), \qquad {\bf W}_s(u,r):={\bf W}(u(s+\cdot),r).
\]
The phase partition then gives, for almost every $0<r<T$,
\begin{equation}\label{eq:combined-excess-whole-energy-splitting}
\begin{split}
{\bf W}_s(u,r) ={}&\sum_{\ast=\pm}\mathcal W_{\ast,s}(r) +r^{-4}\int_{Y\cap B_r(s)}(|\nabla u|^2+1)\,dx -r^{-5}\int_{Y\cap\partial B_r(s)}u^2\,d\cH^3.
\end{split}
\end{equation}

In dimension $4$, the coarea formula and an integration by parts give
\begin{equation}\label{eq:combined-excess-gaussian-spherical-average}
W_{\rho,s}(u) =\rho^{-6}\int_0^\infty r^5e^{-r^2/\rho^2} {\bf W}_s(u,r)\,dr, \qquad \rho^{-6}\int_0^\infty r^5e^{-r^2/\rho^2}\,dr=1.
\end{equation}
We shall also use $W_\infty=|B_1|$. Since $P_\ast=L_{k,\ast}^\flat$, applying \eqref{eq:below-critical-frozen-plane-oscillation-w12} with $r=R_\flat$, restricting to $B_{R_\flat/2}(\zk)$, and using $|U_\ast\cap B_{R_\flat}(\zk)|\le |B_{R_\flat}|$, we obtain
\begin{equation}\label{eq:combined-excess-endpoint-w12}
\sum_{\ast=\pm}\int_{U_\ast\cap B_{R_\flat/2}(\zk)} \bigl(w_\ast^2+R_\flat^2|\nabla w_\ast|^2\bigr)\,dx \le CA_\#^2R_\flat^4.
\end{equation}

For later use, we record the kernel associated with the averaging weight in \eqref{eq:combined-excess-gaussian-spherical-average}:
\[
\begin{aligned}
K_{\rho,T}(z) &:=\rho^{-6}\mathbf1_{\{|z|<T\}} \int_{|z|}^{T}r e^{-r^2/\rho^2}\,dr =\frac{1}{2\rho^4} \left(e^{-|z|^2/\rho^2}-e^{-T^2/\rho^2}\right) \mathbf1_{\{|z|<T\}} \le C\rho^{-4}e^{-|z|^2/\rho^2}.
\end{aligned}
\]

\medskip\noindent\emph{Step 2: Estimates for the four terms.}
We multiply \eqref{eq:combined-excess-fixed-radius-phase-identity} by the averaging weight, integrate over $0<r<T$, and sum over the two signs. By the coarea formula and Fubini's theorem, we obtain four terms:
\[
\begin{split}
\mathrm I & := \rho^{-6}\sum_{\ast=\pm} \int_{U_\ast\cap B_T(s)} e^{-|x-s|^2/\rho^2} ((x-s)\cdot\nabla u-u)w_\ast\,dx,\\
\mathrm {II} & := \rho^{-6}\sum_{\ast=\pm} \int_{B_T(s)}e^{-|x-s|^2/\rho^2}P_\ast^2 (\mathbf1_{U_\ast}-\mathbf1_{H_\ast})\,dx, \\
\mathrm {III} & := \sum_{\ast=\pm} \int_{\Gamma_\ast^{\rm reg}\cap B_T(s)} P_\ast(1+e_\ast\cdot\nu_\ast) K_{\rho,T}(x-s)\,d\cH^3, \\
\mathrm {IV} & := \sum_{\ast=\pm} \int_{\Gamma_\ast^{\rm neck}\cap B_T(s)} \bigl[w_\ast\partial_{\nu_\ast}u +(u+P_\ast)e_\ast\cdot\nu_\ast\bigr] K_{\rho,T}(x-s)\,d\cH^3.
\end{split}
\]

\medskip\noindent We first estimate $\mathrm I$.
Since $(x-s)\cdot\nabla P_\ast=P_\ast$, $(x-s)\cdot\nabla u-u=(x-s)\cdot\nabla w_\ast-w_\ast.$ Consequently, Young's inequality, the bound $|x-s|\le T\le R_\flat$, the inclusion $B_T(s)\subset B_{R_\flat/2}(\zk)$, and \eqref{eq:combined-excess-endpoint-w12}, give
\[
\begin{aligned}
|\mathrm I| &\le \rho^{-6}\sum_{\ast=\pm} \int_{U_\ast\cap B_T(s)} e^{-|x-s|^2/\rho^2} \bigl(|x-s|\,|\nabla w_\ast|\,|w_\ast|+w_\ast^2\bigr)\,dx \\
&\le C\rho^{-6}\sum_{\ast=\pm} \int_{U_\ast\cap B_T(s)} \bigl(R_\flat^2|\nabla w_\ast|^2+w_\ast^2\bigr)\,dx \le CA_\#^2R_\flat^4\rho^{-6}.
\end{aligned}
\]

\medskip\noindent We next consider $\mathrm {II}$. Notice that
$P_-^2=P_+^2$ and $\mathbf1_{H_+}+\mathbf1_{H_-}=1$. Since $U_+\cap U_-=\varnothing$ by \Cref{lem:fb-tree-signed-decomposition}, we obtain
\[
\mathrm {II} =-\rho^{-6}\int_{B_T(s)\setminus(U_+\cup U_-)} P_+^2e^{-|x-s|^2/\rho^2}\,dx\le0.
\]

On $\{u=0\}$, the lower bound \eqref{eq:below-critical-centered-vee-lower-bound} gives $|P_+|\le CA_\#$. Hence
\[
\begin{aligned}
\rho^{-6}\int_{\{u=0\}\cap B_T(s)} P_+^2e^{-|x-s|^2/\rho^2}\,dx &\le CA_\#^2\rho^{-6} \int_{\R^4}e^{-|x-s|^2/\rho^2}\,dx \le C\left(\frac{A_\#}{\rho}\right)^2.
\end{aligned}
\]
On the other hand, \eqref{eq:fb-tree-signed-cover} gives
\[
Y\cap B_{R_\flat/2}(\zk)\subset\mathcal X_{\rm neck}.
\]
Thus \Cref{lem:lipschitz-bound} and \eqref{eq:below-critical-centered-vee-lower-bound}, together with $A_\#\le\epk\Rk$, give the pointwise bound
\[
u+|P_\ast|\le C\epk\Rk \quad\text{on }Y\cap B_{R_\flat/2}(\zk), \qquad \ast\in\{+,-\}.
\]
Since $\frac{\epk\Rk}{\rho} =\epk^{1-\cttc-\tau}\to0,$ we conclude that
\[
0\le-\mathrm {II} \le C\left(\frac{A_\#}{\rho}\right)^2 +C\rho^{-4} \left|\mathcal X_{\rm neck}\cap B_{R_\flat/2}(\zk)\right|.
\]

\medskip\noindent We now estimate $\mathrm {III}$.
Since $u=0$ on $\Gamma_\ast^{\rm reg}\cap B_T(s)$, \eqref{eq:below-critical-centered-vee-lower-bound} gives $|P_\ast|\le CA_\#$. The bound for $K_{\rho,T}$ therefore gives
\[
|\mathrm {III}| \le CA_\#\rho^{-4} \sum_{\ast=\pm} \int_{\Gamma_\ast^{\rm reg}} |1+e_\ast\cdot \nu_\ast|\,d\cH^3.
\]
Set $e_+^0:=e_{\rm root}$ and $e_-^0:=-e_{\rm root}$. Since $e_\ast$, $e_\ast^0$, and $\nabla u$ are unit vectors, we have
\[
1-e_\ast\cdot \nabla u \le2(1-e_\ast^0\cdot \nabla u)+|e_\ast-e_\ast^0|^2,
\]
On $\Gamma_\ast^{\rm reg}$, $\nu_\ast=-\nabla u$, and hence
\[
1-e_\ast^0\cdot\nabla u =\left|\partial_{\nu_\ast}(u-e_\ast^0\cdot x)\right|.
\]
Using the preceding unit-vector inequality and H\"older's inequality, we obtain
\[
\begin{aligned}
\sum_{\ast=\pm} \int_{\Gamma_\ast^{\rm reg}} |1+e_\ast\cdot\nu_\ast|\,d\cH^3 &\le C\sum_{\ast=\pm} \cH^3(\Gamma_\ast^{\rm reg})^{1-1/p_2} \left( \int_{\Gamma_\ast^{\rm reg}} \left|\partial_{\nu_\ast} (u-e_\ast^0\cdot x)\right|^{p_2}\,d\cH^3 \right)^{1/p_2} +C|e_k-e_{\rm root}|^2 \sum_{\ast=\pm}\cH^3(\Gamma_\ast^{\rm reg}) \\
&\le C\Rk^3\epk^2.
\end{aligned}
\]
Recall from \Cref{rem:above-critical-flux-exponent} that $p_2$ is flux-admissible. In the last inequality we used \Cref{lem:perimeter-bound} and \Cref{prop:fb-tree-sided-higher-power-transverse-estimate}, the latter with $p=p_2$, together with \Cref{prop:below-critical-frozen-plane-oscillation}, which gives $|e_k-e_{\rm root}|\le C\epk$. Thus
\[
|\mathrm {III}| \le CA_\#\rho^{-4}\Rk^3\epk^2.
\]

\medskip\noindent Finally, we estimate $\mathrm {IV}$.
The kernel estimate gives
\[
|\mathrm {IV}| \le C\rho^{-4}\sum_{\ast=\pm} \int_{\Gamma_\ast^{\rm neck}\cap B_T(s)} \bigl(|w_\ast|\,|\partial_{\nu_\ast}u|+u+|P_\ast|\bigr)\,d\cH^3.
\]
By \Cref{lem:lipschitz-bound},
\[
|\partial_{\nu_\ast}u|\le1, \qquad |w_\ast|\le u+|P_\ast|.
\]
Moreover, \eqref{eq:fb-tree-terminal-neck-radius-bookkeeping} gives $u\le C\epk\Rk$ on $\Gamma_\ast^{\rm neck}\cap B_T(s)$, and hence \eqref{eq:below-critical-centered-vee-lower-bound} gives $|P_\ast|\le u+CA_\#\le C\epk\Rk$. Therefore
\[
|\mathrm {IV}| \le C\epk\Rk\,\rho^{-4} \cH^3\left( (\Gamma_+^{\rm neck}\cup\Gamma_-^{\rm neck}) \cap B_{R_\flat/2}(\zk) \right).
\]

\medskip\noindent\emph{Step 3: Gaussian remainder terms and the Monneau
estimate.} We first account for the remaining terms in \eqref{eq:combined-excess-whole-energy-splitting} and for the part of the Gaussian average corresponding to $r>T$. Define
\[
\begin{aligned}
\mathcal J_{\rm bulk} & :=\rho^{-6}\int_0^T r e^{-r^2/\rho^2} \int_{Y\cap B_r(s)}(|\nabla u|^2+1)\,dx\,dr\ge0,\\
\mathcal J_{\rm bdry} &:=\rho^{-6}\int_0^T e^{-r^2/\rho^2} \int_{Y\cap\partial B_r(s)}u^2\,d\cH^3\,dr =\rho^{-6}\int_{Y\cap B_T(s)} u^2e^{-|x-s|^2/\rho^2}\,dx,\\
\mathcal J_{\rm tail} & :=\rho^{-6}\int_T^\infty r^5e^{-r^2/\rho^2} \bigl(W_\infty- {\bf W}_s(u,r)\bigr)\,dr.
\end{aligned}
\]

Combining \eqref{eq:combined-excess-whole-energy-splitting} with \eqref{eq:combined-excess-gaussian-spherical-average}, we obtain
\[
\wW_{\rho,s} =-\mathrm I-\mathrm {II}-\mathrm {III}-\mathrm {IV} -\mathcal J_{\rm bulk}+\mathcal J_{\rm bdry} +\mathcal J_{\rm tail}.
\]
Since $\mathcal J_{\rm bulk}\ge0$, the term $-\mathcal J_{\rm bulk}$ can be omitted when taking an upper bound. The pointwise bound $u\le C\epk\Rk=o(\rho)$ on $Y\cap B_T(s)$, obtained above, gives
\[
\mathcal J_{\rm bdry} \le C\rho^{-4} \left|\mathcal X_{\rm neck}\cap B_{R_\flat/2}(\zk)\right|.
\]

For the tail, recall that $\zk\in\FB(u)$, and hence $u(\zk)=0$. By \Cref{lem:lipschitz-bound}, $|\nabla u|\le1$. Since $|s-\zk|\le\rho$, it follows that $u\le r+\rho$ on $\partial B_r(s)$. Consequently, for $r\ge T$,
\[
W_\infty- {\bf W}_s(u,r) \le W_\infty+r^{-5}\int_{\partial B_r(s)}u^2\,d\cH^3 \le C.
\]
It follows that
\[
\mathcal J_{\rm tail} \le C\rho^{-6}\int_T^\infty r^5e^{-r^2/\rho^2}\,dr \le Ce^{-c(R_\flat/\rho)^2}.
\]
Combining the estimates for $\mathrm I$--$\mathrm {IV}$ with the two remainder estimates, we obtain
\begin{equation}\label{eq:combined-excess-four-term-error}
\begin{aligned}
\wW_{\rho,s} \le{}& CA_\#^2R_\flat^4\rho^{-6} +CA_\#\rho^{-4}\Rk^3\epk^2 +C\rho^{-4} \left|\mathcal X_{\rm neck}\cap B_{R_\flat/2}(\zk)\right| \\
& \quad +C\epk\Rk\,\rho^{-4} \cH^3\left( (\Gamma_+^{\rm neck}\cup\Gamma_-^{\rm neck}) \cap B_{R_\flat/2}(\zk) \right) +Ce^{-c(R_\flat/\rho)^2}.
\end{aligned}
\end{equation}

It remains to show that every term on the right-hand side of \eqref{eq:combined-excess-four-term-error} has the required order. Since
\[
A_\#=\epk^{1+\sigma}R_\flat, \qquad R_\flat=\epk^\cttc\Rk, \qquad \rho=\epk^\tau R_\flat,
\]
the first two terms are respectively
\[
\epk^{2+2\sigma-6\tau} \qquad\text{and}\qquad \epk^{3+\sigma-3\cttc-4\tau}.
\]
The terminal-neck volume and boundary bounds \eqref{eq:fb-tree-terminal-neck-boundary-localization-bookkeeping} give
\[
\begin{aligned}
&\rho^{-4} \left|\mathcal X_{\rm neck}\cap B_{R_\flat/2}(\zk)\right| +\epk\Rk\,\rho^{-4} \cH^3\left( (\Gamma_+^{\rm neck}\cup\Gamma_-^{\rm neck}) \cap B_{R_\flat/2}(\zk) \right) \le C_\alpha\epk^{1+\gamma_\alpha-4\cttc-4\tau}.
\end{aligned}
\]
Finally,
\[
e^{-c(R_\flat/\rho)^2}=e^{-c\epk^{-2\tau}}.
\]
Recalling from \eqref{eq:gamma-alpha} that $\gamma_\alpha=1+2/\alpha$, and using $\sigma=\cttc/100$, $\tau=\sigma/100$, and \eqref{eq:asym-decay-flat-scale-chi-smallness}, we have
\[
6\tau<\sigma, \qquad 3\cttc+4\tau<1, \qquad 4\cttc+4\tau+\sigma<\gamma_\alpha-1.
\]
Thus every term on the right-hand side of \eqref{eq:combined-excess-four-term-error} is at most $C_\alpha\epk^{2+\sigma}$, and hence
\[
\wW_{\rho,s}\le C_\alpha\epk^{2+\sigma}
\]
uniformly for $s\in(\zk+e_k^\perp)\cap B_\rho(\zk)$.

It remains to estimate the Monneau term. The endpoint estimate \eqref{eq:combined-excess-endpoint-w12}, the centered vee lower bound \eqref{eq:below-critical-centered-vee-lower-bound}, and \eqref{eq:fb-tree-signed-cover} give
\[
M_{\rho,\zk}^{e_k} \le CA_\#^2R_\flat^4\rho^{-6}+C(A_\#/\rho)^2 +C\rho^{-4}|\mathcal X_{\rm neck}\cap B_{R_\flat/2}(\zk)| \le C_\alpha\epk^{2+\sigma}.
\]

Taking the supremum over $s$ in the Weiss estimate and combining it with the Monneau bound, we obtain the desired result, which concludes the proof.
\end{proof}

\subsection{End of the proof}

We now combine the preceding estimates to prove the classification theorem and its two corollaries.

\begin{proof}[Proof of \Cref{thm:main-classification}]
Suppose that the Hessian conclusion fails. By \Cref{lem:reduction,lem:selection-of-initial-center-and-scale,lem:refined-center-and-scale-selection}, we obtain the contradiction setup, selected sequence, and geometric objects fixed above. Use the exponent $\alpha$ chosen there. \Cref{cor:flat-scale-neck-charge-improves-critical-exponent} and \eqref{eq:single-selection-alpha-choice} give $\alpha<1$, while \Cref{rem:above-critical-flux-exponent} shows that the second flux exponent $p_2$ is admissible with the reserves used above. By \Cref{lem:combined-excess-below-flat-scale}, recalling \eqref{eq:flat-scale-definition} and taking
\[
\tau=\sigma/100,\qquad \rho_k=\epk^\tau R_\flat,
\]
there is $e_k\in\mathbb S^3$ such that
\[
\bbE_{\zk}(\rho_k) \le \bbE_{\zk}(\rho_k;e_k) \le C\epk^{1+\sigma/2}.
\]
On the other hand, \Cref{prop:optimized-denominator-logarithmic-lower-bounds} gives
\[
\bbE_{\zk}(\rho_k) \ge \frac{\eps_k}{C\bigl(1+\log(8R_k/\rho_k)\bigr)} .
\]
Combining the two estimates,
\begin{equation}\label{eq:final-optimized-denominator-contradiction}
\eps_k \le C\bigl(1+\log(8R_k/\rho_k)\bigr)\epk^{1+\sigma/2}.
\end{equation}
Here $\rho_k\le R_\flat\le\Rk\le R_k$ for large $k$, so the scale $\rho_k$ lies in the range of \Cref{prop:optimized-denominator-logarithmic-lower-bounds}.

We now estimate the logarithm. Since $\rho_k=\epk^{\cttc+\tau}\Rk$, and \eqref{eq:optimized-denominator-selected-scale-comparable} gives $\Rk/R_k\ge c>0$, we have
\[
1+\log(8R_k/\rho_k)\le C|\log\epk|.
\]
Moreover $\epk=\tilde\zeta_k^{\alpha\beta_\circ}\eps_k\le\eps_k$. Therefore \eqref{eq:final-optimized-denominator-contradiction} yields
\[
1\le C|\log\epk|\,\epk^{\sigma/2},
\]
after division by $\eps_k>0$. The right-hand side tends to zero, a contradiction. Hence every global classical stable solution in $\mathbb R^4$ satisfies $D^2u=0$ in each positivity component.
\end{proof}

With the classification established, we conclude by deriving its two stated corollaries.
\begin{proof}[Proof of \Cref{cor:main-local-hessian}]
The point-picking and compactness argument of \cite[Corollary~1.7 and the proof of Proposition~9.4]{CFFS25} applies with \Cref{thm:main-classification}: failure of the estimate would produce a global classical stable solution $v$ in $\mathbb R^4$ with $|D^2v(0)|=1$, contradicting \Cref{thm:main-classification}. Differentiating the free-boundary conditions controls its second fundamental form by $D^2u$.
\end{proof}

\begin{proof}[Proof of \Cref{cor:main-finite-index}]
It follows from \cite[Theorem~1.2]{FFS26} once we have \Cref{thm:main-classification} and \cite[Theorem~1.1]{Jerison-Savin}.
\end{proof}

\appendix

\section{Auxiliary proofs}
\label{app:proofs-standard-auxiliary-lemmas}

For the reader's convenience, we collect here proofs of the auxiliary results used in the paper.

\subsection{Affine comparison on a set of proportional volume}

We begin with an $L^2$ estimate for an affine function on any subset of proportional volume.

\begin{lem}
\label{lem:affine-comparison-proportional}
Let $c_1\in(0,1)$, $R>0$, $y\in\mathbb R^n$, and let $E\subset B_R(y)$ be measurable. Assume
\[
|E|\ge c_1|B_R|.
\]
Then every affine function $\ell(x)=a\cdot x+b$ satisfies
\begin{equation}\label{eq:affine-comparison-proportional}
R|a|+|\ell(y)| \le C(n,c_1) \left(\fint_E|\ell|^2\,dx\right)^{1/2}.
\end{equation}
\end{lem}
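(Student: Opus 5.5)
The plan is to reduce by scaling and translation to $y=0$, $R=1$. Both sides of \eqref{eq:affine-comparison-proportional} scale the same way under $x\mapsto y+Rx$: with $\tilde\ell(x):=\ell(y+Rx)$ we have $\nabla\tilde\ell=Ra$ and $\tilde\ell(0)=\ell(y)$, while the average over $E$ becomes an average over $\tilde E:=R^{-1}(E-y)\subset B_1$ with $|\tilde E|\ge c_1|B_1|$. So it suffices to show
\[
|a|+|b|\le C(n,c_1)\Bigl(\fint_E (a\cdot x+b)^2\,dx\Bigr)^{1/2}
\]
for every measurable $E\subset B_1$ with $|E|\ge c_1|B_1|$.

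Both sides are homogeneous of degree one in $(a,b)$, so we may normalize $|a|+|b|=1$. The key step is then a uniform lower bound on $\fint_E\ell^2$ over the compact set of normalized affine maps and all admissible $E$. We get this from a sublevel-set (Remez-type) estimate rather than compactness in $E$. Put $t:=\min\{|a|,|b|/2\}$ and consider $S_\eta:=\{x\in B_1:|a\cdot x+b|<\eta\}$.

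\textbf{Case $|a|\ge t$ large relative to $\eta$.} Here $S_\eta$ lies in a slab of width $2\eta/|a|$, so $|S_\eta|\le C_n\eta/|a|$.

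\textbf{Case $|a|$ small, say $|a|\le|b|/2$.} Then $|\ell|\ge|b|-|a|\ge|b|/2$ on $B_1$, so $S_\eta=\varnothing$ once $\eta<|b|/2$.

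Under the normalization $|a|+|b|=1$, either $|a|\ge1/3$ or $|b|\ge2/3$ with $|a|\le|b|/2$, or both $|a|<1/3$ and $|b|\ge 2/3$. In every case one can choose $\eta=\eta(n,c_1)>0$ with $|S_\eta|\le\frac{c_1}{2}|B_1|$: take $\eta=\min\{1/3,\ c_1|B_1|/(6C_n)\}$. Then $|E\setminus S_\eta|\ge\frac{c_1}{2}|B_1|$, and hence
\[
\fint_E\ell^2\ge\frac{|E\setminus S_\eta|}{|E|}\,\eta^2\ge\frac{c_1}{2}\,\eta^2,
\]
using $|E|\le|B_1|$. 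This gives the bound with $C=(2/c_1)^{1/2}\eta^{-1}$.

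The last step is to recover $|\ell(y)|=|b|$ together with $R|a|$, which the normalization already includes. The only delicate point is the case split ensuring that $S_\eta$ is small uniformly. Specifically, we must check that when $|a|$ is not bounded below, the constant term dominates on all of $B_1$; this is exactly the case $|a|<1/3$, where $|b|>2/3$ forces $|\ell|>1/3\ge\eta$ on $B_1$. No earlier results of the paper are needed.
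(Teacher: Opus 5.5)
Your proposal is correct and follows essentially the same route as the paper: you reduce to $y=0$, $R=1$, split according to whether the constant term dominates on $B_1$, and otherwise bound the sublevel set $\{|\ell|<\eta\}$ by a slab of width comparable to $\eta/|a|$, so that $E$ retains proportional measure where $|\ell|\ge\eta$. Normalizing $|a|+|b|=1$ is equivalent to the paper's tracking of $N=|a|+|b|$; note that your auxiliary quantity $t$ is never actually used.
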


\begin{proof}
After translating and rescaling, assume that $y=0$ and $R=1$, and set $N:=|a|+|b|$. If $|b|\ge2|a|$, then $|\ell|\ge |b|-|a|\ge N/3$ throughout $B_1$, and the conclusion follows.

Suppose instead that $|b|<2|a|$, so $|a|\ge N/3$. For $s\in(0,1)$, the set
\[
\{x\in B_1:|\ell(x)|\le sN\}
\]
is contained in a slab orthogonal to $a$ of width at most $6s$. Consequently, its measure is at most $C_n s|B_1|$. Choose $s=s(n,c_1)$ so that $C_n s\le c_1/2$. Since $|E|\ge c_1|B_1|$,
\[
|E\cap\{|\ell|>sN\}|\ge \frac{c_1}{2}|B_1|.
\]
Therefore
\[
\left(\fint_E|\ell|^2\,dx\right)^{1/2} \ge c(n,c_1)N.
\]
Scaling back proves \eqref{eq:affine-comparison-proportional}.
\end{proof}

\subsection{Comparison of unit-slope vees}

We deduce the comparison estimate for unit-slope vees from \Cref{lem:affine-comparison-proportional}.

\begin{proof}[Proof of \Cref{lem:unit-slope-vee-comparison}]
The two sets
\[
E_\sigma:=\{x\in B_r(x_0):\sigma\ell_1(x)\ell_2(x)\ge0\}, \qquad \sigma\in\{\pm1\},
\]
cover $B_r(x_0)$. Choose $\sigma$ so that $|E_\sigma|\ge |B_r|/2$. On $E_\sigma$,
\[
|\ell_1-\sigma\ell_2| =\bigl||\ell_1|-|\ell_2|\bigr|.
\]
Applying \Cref{lem:affine-comparison-proportional} to $\ell_1-\sigma\ell_2$ on $E_\sigma$, and then using
\[
\|\ell_1-\sigma\ell_2\|_{L^\infty(B_r(x_0))} \le |(\ell_1-\sigma\ell_2)(x_0)| + r|\nabla\ell_1-\sigma\nabla\ell_2|,
\]
gives the result.
\end{proof}

\subsection{Sobolev--Poincar\'e on John domains}

We record the Sobolev--Poincar\'e estimate that upgrades the sided $L^2$ and energy control on the John domains.

\begin{lem}[Sobolev--Poincar\'e on a John domain]
\label{lem:john-domain-direct-sided-lq-upgrade}
Let $n\ge3$, $0<J\le1$, and $R>0$. Let $\Omega\subset\mathbb R^n$ be a $J$-John domain (see \Cref{defn:c-john-domains}) such that
\[
\diam\Omega\le J^{-1}R, \qquad |\Omega|\ge JR^n .
\]
Then, for every $w\in W^{1,2}(\Omega)$ and $2<q\le 2n/(n-2)$,
\begin{equation}\label{eq:john-domain-direct-sided-lq-upgrade}
\left(\fint_{\Omega} |w|^q\,dx\right)^{1/q} \le C_{n,q,J} \left[ \left(\fint_{\Omega}|w|^2\,dx\right)^{1/2} +R\left(\fint_{\Omega}|\nabla w|^2\,dx\right)^{1/2} \right].
\end{equation}
\end{lem}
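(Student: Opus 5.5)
The plan is to reduce, by scaling, to $R=1$, and then combine two standard facts about John domains. The first is the Sobolev–Poincaré inequality of Bojarski/Martio type. The second is control of the mean of $w$ by its $L^2$ norm.

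After dilating by $R^{-1}$, the domain $\Omega$ is a $J$-John domain with $\diam\Omega\le J^{-1}$ and $|\Omega|\ge J$. Both sides of \eqref{eq:john-domain-direct-sided-lq-upgrade} scale identically, because the left side and the first term on the right are averages, and the gradient term carries the factor $R$. So it suffices to treat $R=1$.

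First, invoke the Sobolev–Poincaré inequality on John domains (Bojarski; see also Hajłasz–Koskela). It states that for a $J$-John domain, $p=2<n$ and $2^*=2n/(n-2)$,
\[
\Bigl(\int_\Omega |w-w_\Omega|^{2^*}\Bigr)^{1/2^*}\le C(n,J)\Bigl(\int_\Omega|\nabla w|^2\Bigr)^{1/2},
\]
with a constant depending only on $n$ and the John constant, and independent of the diameter. The proof goes by covering $\Omega$ with Whitney-type balls chained to the John center along the John curves. One applies the ball Poincaré inequality on each ball and sums the chains via a Riesz-potential or maximal-function bound. This is the step I expect to be the main obstacle if one insists on a self-contained proof. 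I would cite it rather than reprove it, noting only that the John condition yields the pointwise bound
\[
|w(x)-w_{B_0}|\le C\int_\Omega\frac{|\nabla w(y)|}{|x-y|^{n-1}}\,dy
\]
for the central ball $B_0$. The Hardy–Littlewood–Sobolev inequality then gives the $L^{2^*}$ bound, at least in the weak form, and the truncation argument upgrades it to the strong form.

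Second, since $\diam\Omega\le J^{-1}$ and $|\Omega|\ge J$, the measure of $\Omega$ is comparable to $1$, with bounds depending only on $n$ and $J$. Hence $\fint$ and $\int$ differ by constants $C(n,J)$. For $2<q\le 2^*$, Hölder's inequality on the finite-measure set $\Omega$ gives
\[
\Bigl(\fint_\Omega|w-w_\Omega|^q\Bigr)^{1/q}\le\Bigl(\fint_\Omega|w-w_\Omega|^{2^*}\Bigr)^{1/2^*}.
\]
The mean itself is controlled by Cauchy–Schwarz: $|w_\Omega|\le(\fint_\Omega|w|^2)^{1/2}$. By the triangle inequality in $L^q$,
\[
\Bigl(\fint_\Omega|w|^q\Bigr)^{1/q}\le |w_\Omega|+\Bigl(\fint_\Omega|w-w_\Omega|^q\Bigr)^{1/q},
\]
and the two preceding bounds give \eqref{eq:john-domain-direct-sided-lq-upgrade} with $R=1$. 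Scaling back completes the argument. The constant depends on $q$ only through the Hölder step, and it is in fact uniform in $q$.
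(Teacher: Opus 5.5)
Your proposal is correct and is essentially the paper's argument. You cite the Sobolev--Poincar\'e inequality $\|w-w_\Omega\|_{L^{2^*}(\Omega)}\le C(n,J)\|\nabla w\|_{L^2(\Omega)}$ on John domains, pass to averages using the size bounds on $\Omega$, apply H\"older for $2<q\le 2^*$, and control the mean by $|w_\Omega|\le(\fint_\Omega|w|^2)^{1/2}$. Your rescaling to $R=1$ only repackages the paper's direct conversion to averaged form; as a side remark, for exponent $2$ the Riesz-potential bound yields the strong $L^{2^*}$ estimate directly via Hardy--Littlewood--Sobolev, so no truncation step is needed.
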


\begin{proof}
By the Sobolev--Poincar\'e inequality for John domains \cite[Theorem~8]{Hajlasz2001},
\[
\|w-w_\Omega\|_{L^{2^*}(\Omega)} \le C(n,J)\|\nabla w\|_{L^2(\Omega)}, \qquad 2^*:=\frac{2n}{n-2}.
\]
The arclength-carrot condition in \Cref{defn:c-john-domains} is the form used there; density gives the estimate for $W^{1,2}$ functions. The assumptions on the diameter and volume yield
\[
\left(\fint_\Omega |w-w_\Omega|^{2^*}\,dx\right)^{1/2^*} \le C(n,J)R\left(\fint_\Omega |\nabla w|^2\,dx\right)^{1/2}.
\]
H\"older's inequality gives the same bound for $2<q\le2^*$, while $|w_\Omega|\le(\fint_\Omega|w|^2)^{1/2}.$ Combining the two estimates proves the claim.
\end{proof}

\subsection{Neumann compactness in an almost half-ball}

We establish a local compactness lemma that identifies small-residual harmonic functions on nearly flat half-domains with even harmonic limits.

\begin{lem}
\label{lem:linear-neumann-compactness-half-ball-l2}
Let $m\ge2$ and $p>2$. For every $\eta>0$ there exists $\delta=\delta(\eta,m,p)>0$ such that the following holds. Let $\Omega_\delta\subset\mathbb R^m$ be open, and let $v\in H^1(\Omega_\delta\cap B_1)$ be harmonic, with
\[
\|v\|_{W^{1,2}(\Omega_\delta\cap B_1)} +\|v\|_{L^p(\Omega_\delta\cap B_1)}\le1.
\]
Assume
\[
B_1\cap\{x_m\ge\delta\}\subset\Omega_\delta, \qquad \Omega_\delta\cap B_1\subset\{x_m\ge-\delta\},
\]
and
\[
\left|\int_{\Omega_\delta\cap B_1} \nabla v\cdot\nabla\varphi\,dx\right| \le\delta\|\varphi\|_{L^\infty(B_1)} \quad\text{for}\quad \varphi\in C_c^\infty(B_1).
\]
Then there is a harmonic function $w$ in $B_1$, even in $x_m$, such that
\[
\left(\int_{\Omega_\delta\cap B_1}|v-w|^2\,dx\right)^{1/2}\le\eta.
\]
\end{lem}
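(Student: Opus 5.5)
We argue by contradiction and compactness. If the statement failed for some $\eta>0$, there would be $\delta_j\downarrow0$, open sets $\Omega_j:=\Omega_{\delta_j}$ and harmonic $v_j$ satisfying all the hypotheses with $\delta=\delta_j$, but with $\|v_j-w\|_{L^2(\Omega_j\cap B_1)}>\eta$ for every harmonic $w$ in $B_1$ that is even in $x_m$. Extend $v_j$, $\nabla v_j$ by zero outside $\Omega_j\cap B_1$. The uniform $W^{1,2}\cap L^p$ bound gives, up to a subsequence, weak limits $v_j\mathbf 1_{\Omega_j}\rightharpoonup v_\infty$ in $L^2(B_1^+)$ and $\nabla v_j\mathbf 1_{\Omega_j}\rightharpoonup G$ in $L^2(B_1^+)$. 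Here $B_1^\pm:=B_1\cap\{\pm x_m>0\}$; the slab $\{|x_m|<\delta_j\}$ has vanishing measure and $\Omega_j\cap B_1^-$ lies in it. On every compact $K\subset B_1^+$ we have $K\subset\Omega_j$ for large $j$, so $v_j$ is harmonic on $K$ with bounded $L^2$ norm. Interior estimates then give $C^k$ convergence on $K$, so $v_\infty\in W^{1,2}(B_1^+)$ is harmonic there and $G=\nabla v_\infty$.

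The key step is upgrading to strong $L^2$ convergence on $\Omega_j\cap B_1$, with no mass lost near the flat face. Split $\Omega_j\cap B_1$ into $\Omega_j\cap\{x_m>s\}$ and $\Omega_j\cap\{|x_m|\le s\}$. On the first piece the convergence is locally uniform, with only a thin neighbourhood of $\partial B_1$ left over. On the second piece, and on that neighbourhood of $\partial B_1$, Hölder's inequality with the $L^p$ bound gives
\[
\int_{\Omega_j\cap S}|v_j|^2\le|S|^{1-2/p}\|v_j\|_{L^p}^2,
\]
which is small uniformly in $j$ when $|S|$ is small. This is exactly where $p>2$ is used. The same reasoning bounds $\int_{\{|x_m|\le s\}}|v_\infty|^2$. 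Letting $s\to0$ yields $\|v_j-v_\infty\|_{L^2(\Omega_j\cap B_1)}\to0$, where $v_\infty$ is extended by zero to $B_1^-$.

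Next we identify the limit boundary condition. Fix $\varphi\in C_c^\infty(B_1)$. The hypothesis and weak convergence give
\[
\int_{B_1^+}\nabla v_\infty\cdot\nabla\varphi=\lim_j\int_{\Omega_j\cap B_1}\nabla v_j\cdot\nabla\varphi=0.
\]
The contribution from $\Omega_j\cap\{x_m<0\}$ vanishes, since its measure tends to zero and $\nabla v_j$ is bounded in $L^2$. Hence $v_\infty$ is a weak solution of the Neumann problem $\partial_m v_\infty=0$ on $\{x_m=0\}\cap B_1$. By the standard reflection argument, its even reflection $w(x',x_m):=v_\infty(x',|x_m|)$ lies in $W^{1,2}(B_1)$ and is weakly harmonic there: test against $\varphi(x',x_m)+\varphi(x',-x_m)$. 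Weyl's lemma then makes $w$ harmonic and even in $x_m$.

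Finally, $w=v_\infty$ on $B_1^+$. The thin part $\Omega_j\cap\{|x_m|\le s\}$ is controlled for both $v_j$ and $w$: $w$ is bounded in $L^2\cap L^p$ near the face, via the limit bound and the reflection. So the strong convergence gives $\|v_j-w\|_{L^2(\Omega_j\cap B_1)}\to0$, contradicting the choice of the sequence. The main obstacle I expect is this non-concentration step near the almost flat boundary and near $\partial B_1$. The weak limit alone cannot rule out loss of $L^2$ mass in the collapsing slab, and it is precisely the assumed $L^p$ bound with $p>2$ that closes this gap.
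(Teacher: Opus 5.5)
Your proposal is correct and follows essentially the same route as the paper: argue by contradiction, use interior compactness in $B_1^+$, derive the zero weak Neumann condition from the vanishing residual and the shrinking slab $\Omega_j\cap\{x_m<0\}$, reflect evenly to get a harmonic $w$, and rule out concentration in the slab via the $L^p$ bound with $p>2$. The only differences are cosmetic. Near $\partial B_1$ you use the same $L^p$ non-concentration, whereas the paper applies Rellich on the Lipschitz domains $B_1\cap\{x_m>\tau\}$. You identify the boundary condition through the weak limit of the zero-extended gradients, whereas the paper truncates to $\{x_m>\tau\}$ and lets $\tau\downarrow0$.
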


\begin{proof}[Proof of \Cref{lem:linear-neumann-compactness-half-ball-l2}] Suppose the conclusion fails for some $\eta_0>0$. Then there are $\delta_j\downarrow0$, domains $\Omega_j$, and functions $v_j$ satisfying the hypotheses, but whose $L^2(\Omega_j\cap B_1)$-distance from every harmonic function even in $x_m$ is larger than $\eta_0$.

For each fixed $\tau>0$, the set $D_\tau:=B_1\cap\{x_m>\tau\}$ lies in $\Omega_j$ for large $j$. Rellich compactness and a diagonal argument give
\[
v_j\rightharpoonup v_\infty\quad\text{in }W^{1,2}_{\rm loc}(B_1^+), \qquad v_j\to v_\infty\quad\text{in }L^2(D_\tau),
\]
where $B_1^+:=B_1\cap\{x_m>0\}$, and $v_\infty$ is harmonic in $B_1^+$.

We identify its boundary condition. If $\varphi\in C_c^\infty(B_1)$, then for $j$ large,
\[
\left| \int_{\Omega_j\cap B_1}\nabla v_j\cdot\nabla\varphi - \int_{B_1\cap\{x_m>\tau\}}\nabla v_j\cdot\nabla\varphi \right| \le C_\varphi\tau^{1/2}.
\]
Indeed, the difference is supported in $\{|x_m|\le\tau\}$, and the gradients are uniformly bounded in $L^2$. The weak Neumann residual makes the first integral $o(1)$. Letting first $j\to\infty$ and then $\tau\downarrow0$, we obtain
\[
\int_{B_1^+}\nabla v_\infty\cdot\nabla\varphi=0 \qquad\text{for every }\varphi\in C_c^\infty(B_1).
\]
Thus $v_\infty$ has zero weak normal derivative on $\{x_m=0\}$, and its even reflection $w_\infty$ is harmonic in $B_1$.

It remains to pass from convergence away from the boundary to convergence on the moving domains. The $L^p$ bound, with $p>2$, gives
\[
\int_{\Omega_j\cap B_1\cap\{|x_m|\le\tau\}}|v_j|^2 \le C\tau^{1-2/p},
\]
while the $L^2$-mass of $w_\infty$ in the same slab tends to zero with $\tau$. Hence
\[
\limsup_{j\to\infty} \int_{\Omega_j\cap B_1}|v_j-w_\infty|^2 \le C\tau^{1-2/p} +C\int_{B_1\cap\{|x_m|\le\tau\}}|w_\infty|^2 .
\]
Letting $\tau\downarrow0$ contradicts the assumed separation. The same argument also proves the sequential version used in \Cref{lem:linear-neumann-compactness-global}.
\end{proof}

\subsection{Global Neumann compactness}

We globalize \Cref{lem:linear-neumann-compactness-half-ball-l2} and identify the limiting even harmonic function as a polynomial.

\begin{proof}[Proof of \Cref{lem:linear-neumann-compactness-global}]
This is the $L^2$, weak-residual version of \cite[Lemma~8.7]{CFFS25}. If the conclusion failed, rotate the fixed normals to $e_m$ and take a sequence with $\delta_j\downarrow0$ that stays a fixed normalized $L^2$-distance from every admissible polynomial.

For each integer $R\ge1$, rescale by $s=2R$ and normalize by $C_ms^{N+\vartheta}$, where $C_m:=4\max\{1,\omega_m^{1/2}\}$ and $\omega_m$ is the volume of the unit ball in $\mathbb R^m$. The hypotheses then give the slab, growth, and weak-residual assumptions of \Cref{lem:linear-neumann-compactness-half-ball-l2}. A nested diagonal subsequence gives a limit on every $B_R$. The limits agree on overlaps, because each is the strong $L^2$ limit of the same unscaled sequence on the moving domains, and therefore define one entire harmonic function $h$, even with respect to $e_m^\perp$, satisfying
\[
\left(\fint_{B_R}|h|^2\,dx\right)^{1/2} \le CR^{N+\vartheta}\quad\text{for}\quad R\ge1.
\]
Interior estimates give, for fixed $x$,
\[
|D^{N+1}h(x)| \le C_xR^{-N-1} \left(\fint_{B_{2R}(x)}|h|^2\,dx\right)^{1/2} \le C_xR^{\vartheta-1}\to0.
\]
Thus $h$ is an even harmonic polynomial of degree at most $N$. The strong $L^2$ convergence on the moving domains now contradicts the assumed separation from every such polynomial.
\end{proof}

\bibliographystyle{plain}

\bibliography{references}
\end{document}